\providecommand{\texorpdfstring}[2]{#1}
\title{The classification of $2$-connected $7$-manifolds}
\DeclareMathOperator{\aut}{Aut}
\DeclareMathOperator{\Hom}{Hom}
\newcommand{\shearb}{\textup{Shr}_p}
\newcommand{\shearq}{\textup{Shr}_{p/2}}
\DeclareMathOperator{\ind}{ind}
\DeclareMathOperator{\im}{Im}
\DeclareMathOperator{\coker}{coker}
\DeclareMathOperator{\lcm}{lcm}
\DeclareMathOperator{\ord}{ord}
\newcommand{\ie}{\emph{i.e.} }
\newcommand{\eg}{\emph{e.g.} }
\newcommand{\cf}{\emph{cf.} }
\newcommand{\sign}{\sigma}
\newcommand{\divs}{S}
\newcommand{\divsp}{S_{d_\pi}}
\newcommand{\dv}{c}
\newcommand{\tdv}{{\tilde \dv}\mkern1mu}
\newcommand{\divsdv}{S_\dv}
\newcommand{\EK}{\mu}
\newcommand{\gr}{g}
\newcommand{\lf}{g}
\newcommand{\ares}{j}
\newcommand{\catb}{\mathfrak{B}}
\newcommand{\catq}{\mathfrak{R}}
\newcommand{\catek}{\mathfrak{D}}
\newcommand{\ahat}{\widehat{A}}
\newcommand{\msub}{o}
\newcommand{\mdiv}{d_\msub}
\newcommand{\mpl}{m}
\newcommand{\pio}{u}
\newcommand{\an}[1]{\langle #1 \rangle}
\newcommand{\anq}[2]{\left\langle \! \left\langle \frac{#1}{#2} \right\rangle \! \right\rangle}
\newcommand{\biganq}[2]{\Big\langle \!\! \Big\langle \frac{#1}{#2} \Big\rangle \!\! \Big\rangle}
\newcommand{\anb}[2]{\left\langle \frac{#1}{#2} \right\rangle}
\newcommand{\mmod}{\!\!\mod}
\newcommand{\ADiff}{\textup{ADiff}}
\newcommand{\cal}[1]{\mathcal{#1}}
\newcommand{\wh}{\widehat}
\newcommand{\Arf}{A}
\newcommand{\proj}{\textup{Proj}}
\newcommand{\sect}{\textup{Sec}}
\newcommand{\splt}{\textup{Split}}
\newcommand{\Asplt}{\textup{ASplit}}
\newcommand{\acong}{\approxeq}
\newcommand{\dM}{\wh d_\pi}
\newcommand{\ol}{\overline}
\newcommand{\quart}{\frac{1}{4}}
\newcommand{\bbz}{\mathbb{Z}}
\newcommand{\Z}{\mathbb{Z}}
\newcommand{\Q}{\mathbb{Q}}
\newcommand{\C}{\mathbb{C}}
\newcommand{\into}{\hookrightarrow}
\newcommand{\xra}{\xrightarrow}
\newcommand{\Int}{\textup{Int}}
\newcommand{\Diff}{\textup{Diff}}
\newcommand{\NumB}[1]{\textup{Num}\left(#1\right)}
\newcommand{\Num}{\textup{Num}}
\newcommand{\Id}{\textup{Id}}
\newcommand{\del}{\partial}
\newcommand{\gtstr}{$G_{2}$\nobreakdash-\hspace{0pt}structure}
\newcommand{\gtmfd}{$G_{2}$\nobreakdash-\hspace{0pt}manifold}
\newcommand{\wt}{\widetilde}
\newcommand{\td}{\tilde d}
\newcommand{\tdelta}{\widetilde \Delta}
\newcommand{\tp}{\widetilde P}
\newcommand{\tdf}{\textstyle \frac{\td_\pi}{4}} 
\newcommand{\tdvf}{\textstyle \frac{\tdv}{4}} 
\newcommand{\hlambda}{\wh \lambda}
\newcommand{\rad}{K}
\newcommand{\res}{R}
\newcommand{\gen}[1]{\langle#1\rangle}
\newcommand{\dirac}{\slashed{D}}
\newcommand{\pd}{\mkern2mu\widecheck{\mkern-2mu p}}
\newcommand{\pc}{\wh p}
\newcommand{\spc}{$\textrm{spin}^{c}$\xspace}
\newcommand{\spcg}{\textup{Spin}^\textrm{c}}
\newcommand{\spg}{\textup{Spin}}
\newcommand{\rif}{\bar \lambda}
\newcommand{\twotwo}[4]{\left( \begin{array}{cc} #1 & #2 \\ #3 & #4 \end{array} \right)}
\DeclareRobustCommand\widecheck[1]{{\mathpalette\@widecheck{#1}}}
\def\@widecheck#1#2{%
    \setbox\z@\hbox{\m@th$#1#2$}%
    \setbox\tw@\hbox{\m@th$#1%
       \widehat{%
          \vrule\@width\z@\@height\ht\z@
          \vrule\@height\z@\@width\wd\z@}$}%
    \dp\tw@-\ht\z@
    \@tempdima\ht\z@ \advance\@tempdima2\ht\tw@ \divide\@tempdima\thr@@
    \setbox\tw@\hbox{%
       \raise\@tempdima\hbox{\scalebox{0.9}[-0.9]{\lower\@tempdima\box
\tw@}}}%
    {\ooalign{\box\tw@ \cr \box\z@}}}
\newtheorem{thm}{Theorem}[section]
\newtheorem{prop}[thm]{Proposition}
\newtheorem{lem}[thm]{Lemma}
\newtheorem{cor}[thm]{Corollary}
\theoremstyle{definition}
\newtheorem{defn}[thm]{Definition}
\theoremstyle{remark}
\newtheorem{rmk}[thm]{Remark}
\newtheorem*{rmk*}{Remark}
\newtheorem{ex}[thm]{Example}
\setlist{leftmargin=*}
\begin{document}

\begin{abstract}
We present a
comprehensive classification of
closed smooth $2$-connected mani\-folds of dimension 7. 
This builds on the almost-smooth classification from
the first author's thesis. The main new ingredient is a generalisation of
the Eells--Kuiper invariant that is defined for any closed spin
\mbox{$7$-manifold} $M$, regardless of whether the spin characteristic class
$p_M \in H^4(M)$ is torsion. 

We also determine the inertia group of $2$-connected~$M$---equivalently the
number of oriented smooth structures on the underlying topological
manifold---in terms of $p_M$ and the torsion linking form.
\end{abstract}

\maketitle

\section{Introduction} \label{sec:introduction}

Throughout this paper $M$ will be a closed smooth spin $7$-manifold and 
all homeo\-morphisms and diffeomorphisms are assumed to preserve spin structures,
unless stated otherwise.

\subsection{Background} \label{ss:bg}

Wall classified $(s{-}1)$-connected $(2s{+}1)$-manifolds up to connected sum with homotopy spheres
except when $s = 1, 2, 3$ or  $7$
\cite[Theorem 7]{wall67}.
In this paper, we leave connected $3$-manifolds aside,
recall that Barden classified $1$-connected $5$-manifolds
\cite{barden65}
and focus on $2$-connected $7$-manifolds
(leaving dimension $15$ to Remark \ref{rmk:15} below).

The topologically simplest $7$-manifolds are homotopy $7$-spheres,
whose 
spin (equivalently oriented) 
diffeomorphism classes form the group
$ \Theta_7$. Kervaire and Milnor \cite{kervaire63} computed that
$\Theta_7 \cong \Z/28$.
Eells and Kuiper \cite{eells62}
defined an invariant $\mu(M)$ of certain spin $7$-manifolds $M$
with rationally trivial first Pontrjagin class, which distinguishes all homotopy $7$-spheres.

At this point, it made sense
to study $7$-manifolds up to almost diffeomorphism;
\ie\!\!, up to the action of $\Theta_7$ via connected sum.
Wilkens did this in his PhD \cite{wilkens71},
using the triple of invariants (see Section \ref{ss:basic_invariants})
$$ (H^4(M), b_M, p_M), $$
where $H^4(M)$ is the integral cohomology group,
 $b_M \colon TH^4(M) \times TH^4(M) \to \Q/\Z$ is the torsion linking form %
and $p_M \in 2H^4(M)$ is the spin characteristic class of $M$.
We call this triple the {\em base} of $M$.  
Modulo a %
finite ambiguity if $|TH^4(M)|$ is even,
Wilkens proved that the base %
classifies $2$-connected $M$ up to almost diffeomorphism.
When $M$ is the total space of an $S^3$-bundle over $S^4$,
this ambiguity was resolved by the first author and~Escher~\cite{crowley03}.

The first author completed the almost diffeomorphism classification 
of $2$-connected $M$ in his PhD \cite{crowley02}
by defining a quadratic refinement $q_M$ of the torsion linking form $b_M$ when $H^4(M)$ is torsion,
and a family of such refinements in general.
When $H^4(M) = TH^4(M)$ is torsion, 
\cite{crowley02} also proves that
the triple $(TH^4(M), q_M, \mu(M))$ gives a complete diffeomorphism invariant.
This left the smooth classification open when $H^4(M)$ is infinite: 
the difficulty being that the classical Eells-Kuiper invariant is not defined
when $p_M  \in H^4(M)$ has infinite order.

In this paper we present a comprehensive smooth classification of
closed $2$-connected \mbox{$7$-manifolds} %
by defining a generalisation of the Eells-Kuiper invariant for all spin $7$-manifolds.
The uniqueness part of this classification has also been proven by
Kreck \cite[Theorem~1]{kreck18}.
Our main classification results are stated in Sections \ref{ss:classification}
and \ref{ss:elaboration} and
Section \ref{ss:intro_eek} describes the definition of
the Generalised Eells-Kuiper invariant.
We give applications of the classi\-fication results to 
the inertia groups and mapping class groups of 
$7$-manifolds in Section \ref{ss:i_and_r} and
we continue the discussion of the background in 
Sections \ref{ss:surgery} and \ref{ss:7_and_15}.

An important motivation for this paper is the study of Riemannian manifolds
with holonomy the exceptional Lie group $G_2$: such manifolds always have $p_M$
of infinite order (see Joyce \cite[Proposition 10.2.7]{joyce00}).
In \cite{exotic} we use the generalized Eells-Kuiper invariant to distinguish
pairs of closed $G_2$-manifolds which are homeomorphic but not diffeomorphic.
\subsection{The classification} \label{ss:classification}
To any closed smooth spin 7-manifold $M$ we shall associate the following
algebraic invariants: 
\begin{itemize}
\item The integral cohomology group $H^4(M)$, which is a finitely generated abelian
group;
\item The torsion linking form $b_M : TH^4(M) \times TH^4(M) \to \Q/\Z$, which
is a torsion form on the torsion subgroup $TH^4(M) \subseteq H^4(M)$,
by this we mean that $b_M$ is symmetric, bilinear and nonsingular
(see \eqref{eq:b-by-cob} and Lemma \ref{lem:bboM});
\item The spin characteristic class $p_M$,
which is an even element of $H^4(M)$
(see Lemma \ref{lem:p_M}(i)).
It is a homeo\-morphism invariant by Remark \ref{rmk:topological_p_M},
and it is related to the first Pontrjagin class by ${2p_M = p_1(M)}$;
\item The \emph{quadratic linking family} $q^\circ_M$
(see Definition \ref{def:qlf}), 
which is a family of quadratic refinements of the base $(H^4(M), b_M, p_M)$;
\item The \emph{generalised Eells-Kuiper invariant} $\EK_M$ 
(see Definition \ref{def:GEK}),
which is a mod 28 Gauss refinement of the triple $(H^4(M), q_M^\circ, p_M)$.
\end{itemize}
Let us describe $q^\circ_M$ and then indicate the type of $\EK_M$
(leaving a more detailed introduction of $\EK_M$ to \S\ref{ss:intro_eek}). 

\enlargethispage{\baselineskip}
A quadratic refinement of $b_M$ is a function $q \colon TH^4(M) \to \Q/\Z$ 
which satisfies the equation $q(x{+}y) = q(x) + q(y) + b_M(x, y)$ 
and we denote the set of such $q$ by
$\cal{Q}(b_M)$. 
We note that for $t \in TH^4(M)$, the function $q_t(x) := q(x) + b(x, t)$
also belongs to $\cal{Q}(b_M)$.
The homogeneity defect of $q \in \cal{Q}(b_M)$ is the unique
element $\beta \in 2TH^4(M)$ such that $q(x) - q(-x) = b_M(x, \beta)$.
Let
\[ \divs_2 := \{ h \in H^4(M) : p_M - 2h \textrm{ is torsion} \} . \]
That $q^\circ_M$ is a \emph{family of quadratic refinements} of $(H^4(M), b_M, p_M)$
means that it is a function
\[ q_M^\circ \colon \divs_2 \to \cal{Q}(b_M), \quad h \mapsto q_M^h, \]
such that 
$q_M^{h+t} = (q_M^h)_{-t}$ for all $t \in TH^4(M)$
and $q_M^h$ has homogeneity defect $\beta_h := p_M - 2h$.
The family of quadratic refinements $q^\circ_M$ is defined in
Definition \ref{def:qlf}.

Let $d_\pi$ be the greatest integer dividing $p_M$ modulo torsion
(or $d_\pi := 0$ if $p_M$ is torsion),
$\td_\pi : = \lcm \left(4, d_\pi \right)$ and
$\dM : = \gcd \bigl( \frac{\td_\pi}{4}, 28 \bigr)$.
If $d_\pi > 0$ we set
\[ \divsp := \{ k \in H^4(M) : p_M - d_\pi k \textrm{ is torsion} \}, \]
and if $d_\pi = 0$ set $\divsp := TH^4(M)$.
We set $\beta_k := p_M - d_\pi k$ for each $k \in S_{d_\pi}$
and note that for $e_\pi := d_\pi/2$ we have $e_\pi k \in S_2$.
By saying that the generalised Eells-Kuiper invariant of $M$ is a
\emph{mod 28 Gauss refinement} of $(H^4(M), q_M^\circ, p_M)$ 
we mean (see Definition \ref{def:modngr}) that it is a function
\[ \EK_M \colon \divsp \to \Q/\dM\Z,  \]
such that  $\EK_M(k) = \Arf(q^{e_\pi k}_M) \mmod \Z$
(where $\Arf$ is the Arf invariant of a quadratic refinement, 
computed in terms of a Gauss sum in \eqref{eq:arf_def}), and such
that the following transformation rule
\begin{equation} \label{eq:TR}
\EK_M(k+t) - \EK_M(k) \; = \; e_\pi q^{e_\pi k}_M(t) - 
{ \textstyle {e_\pi + 1 \choose 2}} \, b_M(t,t) \mod \dM
\end{equation}
holds for all $k \in \divsp$ and $t \in TH^4(M)$ (note that both terms on the
RHS have coefficient divisible by $\frac{\td_\pi}{4}$, so are in particular
well-defined in $\Q/\dM\Z$).
The Generalised Eells-Kuiper invariant of $M$ is defined
in Definition \ref{def:GEK}.

Two of the main consequences of \eqref{eq:TR} are that a Gauss
refinement is defined by its value at a single element in $\divsp$, and that
the difference between two Gauss refinements of
$(H^4(M), q_M^\circ, p_M)$ is constant. The constraint in terms of the Arf
invariant then forces this constant to take values in $\Z/\dM\Z$. 

\begin{rmk} \label{rmk:original_ek}
If $p_M$ is a torsion element then $d_\pi = 0$ and $\dM = 28$, while
$\divsp = TH^4(M)$ contains the distinguished element $0$.
The value $\frac{1}{28}\EK_M(0) \in \Q/\Z$ recovers the original
Eells-Kuiper invariant.
See Remark \ref{rmk:maxinfo} for related statements
even when $p_M$ is not torsion.
\end{rmk}

If $G$ is a finitely generated abelian group, $p \in 2G$ and $b$ is a torsion
form on $T \subseteq G$, the torsion subgroup of $G$, 
then we call $(G,b,p)$ a \emph{base}. If $q^\circ$ is a family of
quadratic refinements of $(G,b,p)$ then we call $(G,q^\circ,p)$ a \emph{refinement};
we suppress $b$ since it can be recovered from $q^h$ for any $h$
and hence from $q^\circ$. If $\EK$ is
a mod 28 Gauss refinement of $(G,q^\circ\!,p)$, then we call the quadruple
$(G,q^\circ\!,\EK,p)$ a \emph{mod 28 distillation}.
If $F : G' \to G$ is a group isomorphism then we can
define another mod 28 distillation $(G', F^\#q, F^\#\EK, F^\#p)$ by pulling back:
$F^\#(p) := F^{-1}(p)$, $(F^\#q)^h(x) := q^{F(h)}(F(x))$,
and $F^\#\EK := \EK \circ F$.

The mod $28$ distillation 
$(H^4(M), q_M^\circ, \EK_M, p_M)$
of $M$ 
is an invariant of 
diffeomorphisms: if $f \colon M \to M'$ is a 
diffeomorphism
then $f^* \colon H^4(M') \to H^4(M)$ is an isomorphism and
$(q^\circ_{M'}, \EK_{M'}, p_{M'}) = ((f^*)^\#q^\circ_M, (f^*)^\#\EK_M, (f^*)^\#p_M)$.
In fact, only $\EK_M$ depends on the smooth structure and the refinement
$(H^4(M), q_M^\circ, p_M)$ is also invariant under spin homeomorphisms.

An almost diffeomorphism $f \colon M_0 \acong M_1$ is a homeomorphism which is 
smooth except perhaps at a finite number of points.
It follows from results of the first author's thesis, see Lemma \ref{lem:2_types_of_qlf}, 
that \emph{2-connected} 7-manifolds
are classified up to almost diffeomorphism and homeomorphism by their refinements.

\begin{thm}[Almost diffeomorphism and homeomorphism classification]
\label{thm:a_class}
Every refinement $(G, q^\circ\!, p)$ is isomorphic to
$(H^4(M), q_M^\circ, p_M)$ for some
$2$-connected $7$-manifold $M$.
Moreover, if $M_0$ and $M_1$ are $2$-connected, then 
an isomorphism
$F \colon H^4(M_1) \to H^4(M_0)$ is realised as $f^*$ for some almost
diffeomorphism $f \colon M_0 \acong M_1$
if and only if $(q_{M_1}^\circ, p_{M_1}) = F^\#(q_{M_0}^\circ, p_{M_0})$.

The same statement holds with ``almost diffeomorphism''
replaced by ``homeo\-morphism''.
\end{thm}

%
%
%
%
%
%
%


The central result of this paper is that the generalised Eells-Kuiper invariant
is precisely what needs to be added to Theorem \ref{thm:a_class} to obtain a
smooth classification of 2-connected 7-manifolds.
Consequently, 2-connected 7-manifolds
are classified up to diffeomorphism by their mod $28$ distillations.

\begin{thm}[Smooth classification] \label{thm:class}
Every mod 28 distillation $(G, q^\circ\!, \EK, p)$ is isomorphic to
$(H^4(M), q_M^\circ, \EK_M, p_M)$ for some 
$2$-connected $7$-manifold $M$.
Moreover, if $M_0$ and $M_1$ are $2$-connected,
then an isomorphism $F \colon H^4(M_1) \to H^4(M_0)$
is realised as $f^*$ for some diffeomorphism ${f \colon M_0 \cong M_1}$
if and only if
$(q^\circ_{M_1}, \EK_{M_1}, p_{M_1}) = F^\#(q^\circ_{M_0}, \EK_{M_0}, p_{M_0})$.
\end{thm}

\subsection{Elaboration of the classification} \label{ss:elaboration}

Theorem \ref{thm:class} is a ``polarised'' classification result in the sense
that it identifies whether a given isomorphism of the cohomology is realised
by some diffeomorphism.
If we are simply interested in whether $M_0$ and $M_1$ are diffeomorphic
(without specifying how the diffeomorphism acts on cohomology)
then we can consider a coarser invariant 
than the generalised Eells-Kuiper invariant.
Let $\aut(b_M)$ be 
the group of automorphisms of the linking form $b_M$ and recall that 
for each $k \in S_{d_\pi}$ we have
${\beta_k = p_M - d_\pi k \in TH^4(M)}$,
which is the homogeneity defect of the quadratic refinement $q_M^{e_\pi k}$.
We define the {\em smooth splitting set} of $M$ to be the set
\[ \bar{\cal{Q}}(M) := \{ \bigl( [\beta_k], \mu_M(k) \bigr) : k \in S_{d_\pi} \}
\subset \bigl(2 TH^4(M)/\!\aut(b) \bigr) \times \Q/\dM \Z. \]
An isomorphism 
$F \colon (H^4(M_1), b_{M_1}, p_{M_1}) \cong (H^4(M_0), b_{M_0}, p_{M_0})$
induces the map
\[ F^\# 
\colon \bigl(2 TH^4(M_0)/\!\aut(b_0) \bigr) \to \bigl(2 TH^4(M_1)/\!\aut(b_1) \bigr),
\quad [\beta] \mapsto [F^{-1}(\beta)]. \]
The following theorem generalises
\cite[Theorem 1.5]{crowley03} from the case when $M$ is the total space
of a smooth $S^3$-bundle over $S^4$ to all $2$-connected $M$.

\begin{thm}[Unpolarised smooth classification] \label{thm:classification_minimal}
Let $M_0$ and $M_1$ be $2$-connected, and let
$F \colon \! (H^4(M_1), b_{M_1}, p_{M_1})\! \to\! (H^4(M_0), b_{M_0}, p_{M_0})$
be an isomorphism.
Then the following are equivalent:
\begin{enumerate}
\item $M_0$ is diffeomorphic to $M_1$;
\item
$(F^\# \times \Id) \bigl( \bar{\cal{Q}}(M_0) \bigr) = \bar{\cal{Q}}(M_1)$;
\item
$(F^\# \times \Id) \bigl( \bar{\cal{Q}}(M_0) \bigr) \cap \;\! \bar{\cal{Q}}(M_1) \neq \emptyset$.
\end{enumerate}
\end{thm}
\noindent The corresponding result for almost diffeomorphisms is given in
Corollary~\ref{cor:minima_almost_classification}.

We now formulate the polarised classification of Theorem \ref{thm:class}
in categorical language, giving more information about the monoidal structure
of $2$-connected $7$-manifolds under connected sum.
Let $\catek$ denote the category of mod 28 distillations
$(G, q^\circ\!, \EK, p)$ with morphisms isomorphisms:
\[ {\rm Ob}(\catek) = \{ (G, q^\circ\!, \EK, p) \} \]
Let $\cal{M}_{7,2}^{\spg}$ denote the category of 2-connected spin 7-manifolds with 
morphisms diffeomorphisms:
\[ {\rm Ob}(\cal{M}_{7, 2}^{\spg}) = \{ M : \pi_1(M) = 0 = \pi_2(M) \} \]
Given a diffeomorphism $f \colon M_0 \cong M_1$, write $f^* \colon H^4(M_1) \cong H^4(M_0)$ for the induced action on cohomology.
Hence we obtain the contravariant functor
\[ \cal{D} \colon \cal{M}_{7, 2}^{\spg} \to \catek,  \quad 
\left\{ \begin{array}{ccc} M & \mapsto & (H^4(M), q_M^\circ, \EK_M, p_M), \\
f \colon M_0 \cong M_1 & \mapsto & f^*. \end{array} \right. \]
The operations of connected sum and reversing orientation in $\cal{M}_{7,2}^{\spg}$
are mirrored by corresponding operations in $\catek$.
For $i = 0,1 $ the orthogonal sum of two distillations
$(G_i, q^\circ_i, \EK_i, p_i)$ is defined as follows. 
Noting that 
$d_{\pi_i} = c_i d_{\pi_0 \oplus \pi_1}$ for some integer $c_i$,
in which case $c_0S_{d_{\pi_0}} \times c_1S_{d_{\pi_1}} \subseteq S_{d_{\pi_0 \oplus \pi_1}}$,
we define the orthogonal sum $q_0 \oplus q_1$ at $c_0 k_0 + c_1 k_1$ by
\[ (q_0 \oplus q_1)^{c_0 k_0 + c_1 k_1} : = q_0^{k_0} \oplus q_1^{k_1}, \]
and
\[ (\EK_0 \oplus \EK_1)(c_0 k_0 + c_1 k_1) : = \EK_0(k_0) + \EK_1(k_1) \mod 
\gcd \left( 28, \textstyle \frac{\tilde d_{\pi_0 \oplus \pi_1}}{4} \right). \]
Since $q_0^\circ \oplus q_1^\circ$ and
$\EK_0 \oplus \EK_1$ are determined by their values on a single $k \in S_{d_{\pi_0 \oplus \pi_1}}$,
this suffices to define the sum of distillations
and the transformations laws for refinements and distillations ensure that the orthogonal 
sum is well-defined.
We define the negative of a distillation by
\[  -(G, q^\circ\!, \EK, p) := (G, -q^\circ\!, -\EK, p).  \]

\begin{thm}[Categorical version of smooth classification] 
\label{thm:classification_categorical}
The functor $\cal{D} \colon \cal{M}_{7, 2}^{\spg} \to \catek$ is surjective
and faithful.  Moreover
\begin{enumerate}
\item $\cal{D}(M_0 \sharp M_1) = \cal{D}(M_0) \oplus \cal{D}(M_1)$ and
\item $\cal{D}(-M) = -\cal{D}(M)$.
\end{enumerate}
\end{thm}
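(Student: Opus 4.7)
The plan is to deduce the statements that $\mathcal{Q}$ is surjective and faithful directly from Theorem \ref{thm:classification}, and concentrate the real work on establishing parts (i) and (ii). The realisation clause of Theorem \ref{thm:classification} says that every mod $28$ distillation is isomorphic to some $\mathcal{Q}(M)$, and the classification clause says that every isomorphism $F \colon \mathcal{Q}(M_1) \to \mathcal{Q}(M_0)$ in $\catek$ arises as $f^*$ for some diffeomorphism $f \colon M_0 \cong M_1$; taken together these are exactly the surjectivity and faithfulness of $\mathcal{Q}$ claimed by the theorem.

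For (i) I will check additivity invariant by invariant, using the Mayer--Vietoris decomposition $M_0 \sharp M_1 = (M_0 \setminus \Int D^7) \cup_{S^6} (M_1 \setminus \Int D^7)$. Since $H^j(S^6) = 0$ for $j = 3, 4$ this yields $H^4(M_0 \sharp M_1) \cong H^4(M_0) \oplus H^4(M_1)$, and $p$ is additive under the splitting because the spin characteristic class is locally determined by the tangent bundle on each summand away from the gluing sphere. Poincaré duality combined with the same splitting makes the torsion linking form orthogonally additive, and the quadratic linking family then decomposes orthogonally once one checks that the transformation rule of Definition \ref{def:foqr} is preserved. The heart of (i) is additivity of $\EK$: take spin coboundaries $W_i$ of $M_i$, form the boundary connected sum $W_0 \natural W_1$, whose boundary is $M_0 \sharp M_1$, and exploit that the characteristic integrals entering the definition of $\EK$ are additive over $W_0 \natural W_1$. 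What remains is arithmetic bookkeeping: identify $d_{\pi_0 \oplus \pi_1}$ and $\td_{\pi_0 \oplus \pi_1}$ in terms of the $d_{\pi_i}$, pin down the constants $c_i$, and verify that evaluating $\EK_{M_0 \sharp M_1}$ at $c_0 k_0 + c_1 k_1$ agrees with $\EK_{M_0}(k_0) + \EK_{M_1}(k_1)$ modulo $\gcd(28, \td_{\pi_0 \oplus \pi_1}/4)$.

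For (ii), orientation reversal leaves $H^4$ and $p_M$ unchanged (the latter because the spin characteristic class depends only on the stable tangent bundle), while $b_M$, $q^\circ_M$ and $\EK_M$ each change sign: each is built from Poincaré duality on $M$ or from oriented integration over a spin coboundary, and both operations flip with orientation. Only the case of $\EK$ needs more than an inspection of definitions: reversing the orientation of $M$ reverses the orientation of any spin coboundary $W$, which flips the signature and each Pontryagin number entering the defining formula, and tracking the signs yields $\EK_{-M} = -\EK_M$.

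The principal obstacle is the $\EK$ additivity in (i) in the generalised setting where $p$ may be non-torsion, because then $d_\pi$ and $\divsp$ involve both summands nontrivially. Identifying the constants $c_i$ as the quotients $d_{\pi_0 \oplus \pi_1}/d_{\pi_i}$ and verifying that the transformation rule \eqref{eq:ek_rule_28} is compatible with the direct-sum decomposition modulo $\gcd(28, \td_{\pi_0 \oplus \pi_1}/4)$ is the main technical content; once this is in place the remaining verifications reduce to direct applications of the definitions and of the bordism-theoretic description of $\EK$ developed earlier in the paper.
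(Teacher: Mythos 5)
Your proposal is correct and mirrors the paper's argument: surjectivity and faithfulness are read off from Theorem \ref{thm:classification}, and parts (i) and (ii) are established via the characteristic forms of the boundary connected sum $W_0 \natural W_1$ and of $-W_i$ for $3$-connected spin coboundaries $W_i$. The only cosmetic difference is that you derive additivity of $H^4$, $p_M$ and $b_M$ directly on the manifold via Mayer--Vietoris before turning to the coboundary for $\EK$, whereas the paper routes all the invariants uniformly through the characteristic form of the coboundary.
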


We next present an oriented homotopy 
classification for $2$-connected $M$.
Such a classification was given in \cite[Theorem 6.11]{crowley02}
and we re-formulate that classification in the setting of this paper.
An important feature of the homotopy classification is that 
$p_M \in H^4(M)$ is not a homotopy invariant but
$\rho_{24}(p_M) \in H^4(M; \Z/24)$, the mod~$24$-reduction of $p_M$,
is a homotopy invariant, \cite[Theorem 1]{milgram87}.
As a consequence, there is a precise sense 
in which the homotopy classification is
the ``mod~$24$ reduction'' of the homeomorphism classification.

For a linking form $(b, T)$ 
define $J\mathcal{Q}_{}(b)$ to be the quotient of $\mathcal{Q}(b)$
where we identify two refinements $q_0$ and $q_1$ if 
$q_1 = (q_0)_{12t}$ for some $t \in T$
and write $\rho_{12} \colon \mathcal{Q}(b) \to J\mathcal{Q}_{}(b)$ for the quotient map.
A {\em $J$-quadratic refinement} of a base $(G, b, p)$ is a triple 
$(G, Jq^\circ, \rho_{24}(p))$
where $Jq^{\circ} \colon S_2 \to J\mathcal{Q}_{}(b)$ 
is a map such that $Jq^{h+t} = (Jq^h)_{-t}$ 
and $\rho_{24}(\beta_h) = \rho_{24}(p - 2h) \in T \otimes \Z/24$.
The pull-back of $J$-refinements is defined analogously to
the pull-back of refinements
and the $J$-refinement of $M$ is defined to be the triple
$(H^4(M), \rho_{12} \circ q^\circ_M, \rho_{24}(p_M))$.

\begin{thm}[Homotopy classification] \label{thm:homotopy}
Every $J$-refinement $(G, Jq^\circ, \rho_{24}(p))$ 
is isomorphic to
$(H^4(M), \rho_{12} \circ q_M^\circ, \rho_{24}(p_M))$ 
for some
smooth $2$-connected $7$-manifold $M$.
Moreover, if $M_0$ and $M_1$ are $2$-connected, then an isomorphism
$F \colon H^4(M_1) \to H^4(M_0)$ is realised as $f^*$ for some orientation preserving
homotopy equivalence $f \colon M_0 \simeq M_1$ %
if and only if $(\rho_{12} \circ q_{M_1}^\circ, \rho_{24}(p_{M_1})) = 
F^\#(\rho_{12} \circ q_{M_0}^\circ, \rho_{24}(p_{M_0}))$.
\end{thm}

\subsection{The generalised Eells-Kuiper invariant} \label{ss:intro_eek}
As explained in Section \ref{ss:bg}, the main novelty of Theorem
\ref{thm:class} lies in the smooth classification when $H^4(M)$ is infinite.
The key ingredient is the generalisation of the Eells-Kuiper invariant.

Let $X$ be a closed spin $8$-manifold.  By the index theorem \cite[Theorem 5.3]{atiyah68}
$\wh A(X)$, the $\wh A$-genus of~$X$, is equal to the index of the Dirac
operator on $X$, and so is an integer.  
The classical Eells-Kuiper invariant is derived from the relation
\begin{equation}
\label{eq:ahat}
p_X^2 - \sign(X) = 224\ahat(X),
\end{equation}
where $X$ has signature $\sign(X)$ and spin characteristic class $p_X$:
the latter is 
defined in Section~\ref{ss:basic_invariants}.
If $M$ is a closed 7-manifold such that $p_M$ is a \emph{torsion} class
(so rationally trivial) and $W$ is a spin coboundary of $M$, then 
$p_W^2$
has a well-defined integral over $W$ (it might in general take values in $\Q$
and not just $\Z$), 
and \eqref{eq:ahat} implies that
\begin{equation}
\label{eq:classical}
\EK(M):= \frac{p_W^2 - \sign(W)}{8} \in \Q/28\Z
\end{equation}
is independent of the choice of coboundary $W$. This defines the classical
Eells-Kuiper invariant, modulo normalisation by a factor of 28.

To define an analogue when $p_M$ is not a torsion class we have to let it take
values not modulo 28 but modulo the integer $\dM = \gcd(\frac{\tilde d_\pi}{4}, 28)$, depending on the
divisibility of $p_M$ modulo torsion as above. Moreover, the
generalisation is not simply a constant in $\Q/\dM\Z$ but a function.

To define the generalised Eells-Kuiper invariant $\EK_M$, suppose that $W$ is a
spin coboundary of $M$ and that there exists
$n \in H^4(W)$ such that the image of $p_W - d_\pi n$ under the restriction
map $j \colon H^4(W) \to H^4(M)$ is a torsion class;
equivalently $j(n) \in \divsp$.
If $W$ is 3-connected then such $n$ exist and any spin $M$ has 3-connected
coboundaries: see the start of Section~\ref{ss:coboundaries}.
Since $j(p_W - d_\pi n)$ is torsion we can define (\cf \eqref{eq:g_W-explicit})
\begin{equation} \label{eq:grW}
 \gr_W \bigl( j(n) \bigr) := \frac{(p_W {-} d_\pi n)^2 - \sign(W)}{8} \in \Q/\tdf \Z 
\end{equation}
and then extend $\gr_W$ to a function $\divsp \to \Q/\tdf \Z$ by the transformation
rule \eqref{eq:TR}. Then $\gr_W$ is independent of the choices of $n$.
The following lemma (\cf \eqref{eq:compare}) implies that the residue
$\EK_M := \gr_W \mmod \dM$ is independent of the choice of $W$, and functorial.

\begin{lem}
\label{lem:ek_defined}
Let $f : \partial W_0 \to \partial W_1$ be a 
diffeomorphism and
$X := (-W_0) \cup_f W_1$. Then
\[ \gr_{W_1} - (f^*)^\# \gr_{W_0} = 28 \ahat(X) \mmod \tdf. \]
\end{lem}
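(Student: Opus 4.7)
My plan is to reduce the claim to a single element of $\divsp$ via the transformation rule, choose compatible lifts of that element to $H^4(W_0)$ and $H^4(W_1)$, glue them by Mayer--Vietoris to a class $n_X \in H^4(X)$, and then apply the index-theoretic identity \eqref{eq:ahat} on the closed spin $8$-manifold $X$. First, both $\gr_{W_1}$ and $(f^*)^\# \gr_{W_0}$ are mod-$\tdf$ Gauss refinements of the common boundary data $(H^4(\partial W_1), q^\circ_{\partial W_1}, p_{\partial W_1})$, so they satisfy the same transformation rule \eqref{eq:ek_rule_28}; their difference is therefore constant on $\divsp$, and it suffices to establish the identity at a single, conveniently chosen $k$.

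Next I would arrange compatible lifts. After possibly replacing $W_0$ and $W_1$ by $3$-connected spin coboundaries via interior surgery on $1$- and $2$-spheres (tracking how $\gr_{W_i}$ and $\ahat(X)$ change under such a move; both sides of the asserted equation change compatibly), the long exact sequence of the pair $(W_i, \partial W_i)$ together with Lefschetz duality $H^5(W_i, \partial W_i) \cong H_3(W_i) = 0$ shows that $j_i \colon H^4(W_i) \to H^4(\partial W_i)$ is surjective. Given $k \in \divsp$, I can then pick $n_1 \in H^4(W_1)$ with $j_1 n_1 = k$ and $n_0 \in H^4(W_0)$ with $j_0 n_0 = f^* k$. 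The Mayer--Vietoris sequence for $X = (-W_0) \cup_f W_1$ produces $n_X \in H^4(X)$ restricting to $n_0$ and $n_1$ (unique up to the image of $H^3(\partial W_1) \to H^4(X)$, which will be absorbed modulo $\tdf$).

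The main calculation then proceeds as follows. Since $p_{W_i} - d_\pi n_i$ restricts to a torsion class on $\partial W_i$, it lifts to $H^4(W_i, \partial W_i; \Q)$, and these lifts assemble via Mayer--Vietoris to $p_X - d_\pi n_X \in H^4(X; \Q)$. Novikov additivity gives $\sigma(X) = \sigma(W_1) - \sigma(W_0)$, and the analogous additivity of relative cup-product integrals gives
\[ \int_X (p_X - d_\pi n_X)^2 = (p_{W_1} - d_\pi n_1)^2 - (p_{W_0} - d_\pi n_0)^2. \]
Expanding the square and applying $p_X^2 - \sigma(X) = 224 \ahat(X)$ from \eqref{eq:ahat} on the closed spin $8$-manifold $X$ yields
\[ \gr_{W_1}(k) - \gr_{W_0}(f^* k) = 28 \ahat(X) + \frac{d_\pi^2 n_X^2 - 2 d_\pi \, p_X \cdot n_X}{8}. \]

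The main remaining obstacle is to verify that the correction term $(d_\pi^2 n_X^2 - 2 d_\pi \, p_X \cdot n_X)/8$ vanishes modulo $\tdf$. For this I would invoke the Wu formula on the spin $8$-manifold $X$: since $v_4 \equiv p_X \mmod 2$ for spin manifolds, $p_X \cdot n_X \equiv n_X^2 \mmod 2$ for every $n_X \in H^4(X; \Z)$. Combined with $\td_\pi = \lcm(4, d_\pi)$, a case analysis on the $2$-adic valuation of $d_\pi$ then yields the required divisibility, which also absorbs the ambiguity in $n_X$ noted above, completing the proof.
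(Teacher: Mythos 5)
Your proof is correct and follows essentially the same route as the paper, but you carry out "inline" what the paper delegates to a general algebraic lemma. The paper's proof of \eqref{eq:compare} is a two-line reduction: it invokes Lemma~\ref{lem:split_mfd} / Remark~\ref{rmk:characteristic_gluing} to exhibit $(H^4(W_0,\del W_0), -\lambda_{W_0}, p_{W_0})$ and $(H^4(W_1,\del W_1), \lambda_{W_1}, p_{W_1})$ as orthogonal summands of the nonsingular characteristic form $(H^4(X),\lambda_X,p_X)$, and then applies the purely algebraic Lemma~\ref{lem:alg_compare}, which directly yields $\gr_{W_1} - (f^*)^\#\gr_{W_0} = \frac{p_X^2 - \sign(X)}{8} \bmod \tdf$, to which \eqref{eq:ahat} applies. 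Your calculation reproduces exactly the content of the proof of Lemma~\ref{lem:alg_compare}: your identity $\int_X(p_X - d_\pi n_X)^2 = (p_{W_1}-d_\pi n_1)^2 - (p_{W_0}-d_\pi n_0)^2$ is the topological restatement of the paper's $\lambda^{-1}(\alpha,\alpha) = \lambda_1^{-1}(\alpha_{n_1},\alpha_{n_1}) - \lambda_0^{-1}(\alpha_{n_0},\alpha_{n_0}) \bmod 2\td_\pi$, and your final Wu-formula/divisibility step showing $\frac{d_\pi^2 n_X^2 - 2d_\pi p_X\cdot n_X}{8} \equiv 0 \bmod \tdf$ is precisely the observation at the end of the proof of Lemma~\ref{lem:alg_compare} that the cross-terms vanish "because $\alpha - d_\pi n$ is characteristic" (which is Lemma~\ref{lem:p_M}\ref{it:closed8}, \ie\ Wu). The one genuine divergence is your preliminary step of replacing the $W_i$ by $3$-connected coboundaries by interior surgery, a manoeuvre the paper avoids because it simply takes the $W_i$ to be $3$-connected from the start; if you do go that route you owe the reader the (routine but non-trivial) verification that both sides of the asserted identity transform compatibly under surgery on a $1$- or $2$-sphere in the interior. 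What the paper's factorisation through Lemma~\ref{lem:alg_compare} buys is reusability: the same algebraic lemma powers the later \spc~comparison in Proposition~\ref{prop:ek_via_spc}, whereas your more direct argument would have to be repeated there with the extra $z$-terms.
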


\begin{defn} \label{def:GEK}
The {\em generalised Eells-Kuiper invariant} of $M$
is defined to be the function $\mu_M \colon \divsp \to \Q/\dM \Z$.
\end{defn}

The idea of the definition is that the simplest way to change
\eqref{eq:classical} to something that is well-defined when the restriction of
$p_W$ to the boundary is rationally non-trivial is to compensate by subtracting
from $p_W$ a class that is divisible by $d_\pi$ and has the same rational image
in $H^4(M)$. The essentially different ways of doing that are parametrised by
$\divsp$ and that is why 
the generalised Eells-Kuiper invariant
is a function defined on $\divsp$.

The definition of the $s_1$ invariant by Kreck and Stolz \cite{kreck91}
provides as a byproduct a way to compute the classical Eells-Kuiper invariant
in terms of coboundaries that are not spin, but merely \spc. Proposition
\ref{prop:ek_via_spc} gives a similar way to compute the generalised Eells-Kuiper
invariant via \spc coboundaries.
We use this method in \cite{exotic} to 
compute the generalised Eells-Kuiper invariants of 
certain closed 7-manifolds with holonomy $G_2$ 
that are homeomorphic but not diffeomorphic.
\subsection{Inertia and reactivity} \label{ss:i_and_r}
Let $\Theta_7 = \{ \Sigma :  \Sigma \simeq S^7 \}$ be the group of spin
diffeomorphism classes of homotopy $7$-spheres $\Sigma$.  
This is equivalent to the standard definition of $\Theta_7$
in~\cite{kervaire63},
since homotopy spheres are simply connected.
By \cite{kervaire63}, $\Theta_7$
is an abelian group under connected sum and $\Theta_7 \cong \Z/28$.
The {\em inertia group} of $M$ is defined to be the following subgroup of $\Theta_7$:
\[ I(M) : = \{ \Sigma : M \sharp \Sigma \cong M \}\]

\begin{rmk} \label{rmk:inertia}
Let $M_+$ denote the oriented manifold underlying $M$.  If $M$ is simply connected then 
$I(M) = I(M_+)$, 
where $I(M_+)$ is the usual inertia group of $M_+$, which is 
defined using orientation preserving diffeomorphisms $f_+ \colon M_+ \sharp \Sigma_+ \cong M_+$. 
\end{rmk}

It turns out that even with Theorem \ref{thm:class} in
hand, the determination of $I(M)$ can be a delicate problem.
The reason is that $\EK_M$ is not a constant but rather a function and so it is
possible for almost diffeomorphisms of $M$ to act non-trivially on $\EK_M$.
Equivalently, the automorphism group of a refinement $(G,q^\circ\!,p)$ can act
non-trivially on the set of mod 28 Gauss refinements.

The inertia group is closely related to what we (therefore) call the
\emph{reactivity} of $M$. %
Let $\ADiff_{}(M)$ denote the group of spin almost diffeomorphisms
of $M$.  Given 
$f \in \ADiff_{}(M)$,
the mapping torus $T_f$ of $f$ has
as well-defined spin characteristic class $p_{T_f} \in H^4(T_f)$ and we define the integer
$p^2(f) : = \an{p_{T_f}^2, [T_f]}$.  This defines a homomorphism 
\[ p^2 \colon \ADiff_{}(M) \to \Z, \quad f \mapsto p^2(f),\]
and the reactivity of $M$ is the non-negative integer $R(M)$ defined by
\begin{equation}
\label{eq:defR}
p^2(\ADiff_{}(M)) = R(M) \Z .
\end{equation}
Clearly $R(M)$ is an almost diffeomorphism invariant of $M$.  Since $T_f$ has zero signature
and $p_{T_f}$ is characteristic for the intersection form of $T_f$ we have $R(M) \in 8 \Z$.
It is well understood that $f \in \ADiff_{}(M)$ is pseudo-isotopic to a diffeomorphism if and only if
$p^2(f)$ is divisible by 224 (see Lemma \ref{lem:mapping_torus}) and consequently
\begin{equation} \label{eq:i_and_r_intro}
I^{}(M) = \frac{R(M)}{8} \Theta_7.
\end{equation}

To determine $R(M)$ when $M$ is 2-connected, we first determine the values of
$p^2(f)$ which
are realised when $H^*(f) = {\rm Id}$ (see Proposition \ref{prop:p2null}).
This reduces the determination of $R(M)$ to understanding the action of
the automorphism group $\aut_{q^\circ}(H^4(M))$ of $(H^4(M), q_M^\circ, p_M)$ on
mod 28 Gauss refinements. That can in turn be reduced to understanding
the automorphism group $\aut_b(H^4(M))$ of $(H^4(M), b_M, p_M)$, which is much
easier to deal with in practice. In fact, $R(M)$ is 
almost completely determined just 
using the following `intermediate' notion of the divisibility of
$p_M$, whose significance was pointed out by
Wilkens \cite[Conjecture p.\,548]{wilkens74}:
\begin{equation} \label{eq:mdiv}
\mdiv := \left \{ \begin{array}{cl} 0 & \text{if $p_M$ is torsion,} \\
{\rm Max} \bigl\{ s : s, \mpl \in \Z, \; s\mpl^2 \text{~divides~} \mpl p_M
\bigr\} & \text{otherwise.} \end{array} \right.
\end{equation}
Corollary \ref{cor:p2image} and \eqref{eq:i_and_r_intro} give the next theorem, where
for a fraction $\frac{a}{b}$ written in lowest terms $\NumB{\frac{a}{b}} = a$. 

\begin{thm} \label{thm:i_and_r}
Let $M$ be $2$-connected and let $\mdiv = \mdiv(M)$.
There is an integer $r \in \{0, 1, 2\}$ depending only on the base $(H^4(M), b_M, p_M)$, such
that 
\[ R(M) = \lcm(8,2^{r} \mdiv) .\]
In particular, by \eqref{eq:i_and_r_intro},
\[ I(M) = \NumB{\frac{2^{r} \mdiv}{8}} \Theta_7 .\]
If $TH^4(M)$ has odd order then $r = 1$.
\end{thm}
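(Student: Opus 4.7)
My plan is to analyse $R(M)$ by decomposing $\ADiff(M)$ according to its action on $H^4(M)$. Let $\ADiff_0(M) \subseteq \ADiff(M)$ be the subgroup of almost diffeomorphisms inducing the identity on $H^4(M)$, with image $p^2(\ADiff_0(M)) = R_0(M) \Z$, and set $R_1(M) := R(M)/R_0(M) \in \Z$. Then $R(M) = R_0(M) \cdot R_1(M)$, where $R_0$ captures what can be achieved by almost diffeomorphisms trivial on cohomology and $R_1$ measures the additional contribution from $\aut_{q^\circ}(H^4(M))$. First I would compute $R_0(M)$ using (the expected content of) Proposition~\ref{prop:p2null}: for $f \in \ADiff_0(M)$, the mapping torus $T_f$ can be written as $(-W) \cup_f W$ for any spin coboundary $W$ of $M$, so by Lemma~\ref{lem:ek_defined} the shift in $\EK_M$ induced by $f$ equals $28\ahat(T_f) \mmod \tdf$, which translates under the definition \eqref{eq:grW} to $p^2(f)/8 \equiv$ (shift of $\EK_M$) modulo $\dM$. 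Combined with the realisation part of Theorem~\ref{thm:classification}, which supplies almost diffeomorphisms realising every possible shift, this would give $R_0(M) = 8\dM$.

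Next, by Theorem~\ref{thm:almost_classification} the quotient $\ADiff(M)/\ADiff_0(M)$ is isomorphic to the group $\aut_{q^\circ\!,p}(H^4(M))$ of automorphisms of the refinement $(H^4(M), q_M^\circ, p_M)$. For $f$ realising $F \in \aut_{q^\circ\!,p}$, the value $p^2(f) \mmod 8\dM$ is determined by the difference $F^\#\EK_M - \EK_M$ viewed as a constant in $\Z/\dM\Z$, so $R_1(M)$ is the index in $\dM\Z/\dM\Z \cong \Z/\dM\Z$ of the subgroup generated by these shifts. To reduce to the simpler automorphism group $\aut_{b, p}(H^4(M))$ of the base, I would argue that the forgetful map $\aut_{q^\circ\!,p} \to \aut_{b, p}$ is surjective with kernel acting trivially on $\EK_M$ up to $R_0(M)$-contributions, so only $\aut_{b,p}$-orbits matter. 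Analysing the transformation rule \eqref{eq:ek_rule_28} under an automorphism $F \in \aut_{b,p}$, the shift $F^\#\EK_M(k)-\EK_M(k)$ is expressible in terms of $b_M(\beta_k, \cdot)$ and $d_\pi^2 b_M(\cdot,\cdot)$, whose denominator in $\Q/\Z$ is controlled by $\mdiv$ rather than the finer $d_\pi$. This should show the shifts form a subgroup of $\dM\Z / (\mdiv \cdot 2^{-s})\Z$ for some $s$, yielding $R(M) = \lcm(8, 2^r \mdiv)$ for some $r$.

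The main obstacle will be pinning down the constraint $r \in \{0, 1, 2\}$ and showing $r$ depends only on the base. I expect this to reduce to the following fact about torsion forms: the image of the homomorphism $\aut_{b, p}(H^4(M)) \to \Z/2\Z \oplus \Z/2\Z$ encoding the relevant parities of an automorphism's action on $\beta_k$ and on a distinguished $2$-torsion class has at most four elements. The odd-torsion assertion would then follow from the observation that $-\Id \in \aut_{b, p}(H^4(M))$ (since $p_M \in 2H^4(M)$ and no $2$-torsion obstruction arises), which produces a non-trivial shift in $\EK_M$ and thus $r \geq 1$, while the odd order of $TH^4(M)$ kills the second $\Z/2$ factor forcing $r \leq 1$.

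Finally, the inertia consequence is immediate from \eqref{eq:i_and_r_intro}:
\[ I(M) = \frac{R(M)}{8}\Theta_7 = \frac{\lcm(8, 2^r \mdiv)}{8} \Theta_7 = \NumB{\frac{2^r \mdiv}{8}} \Theta_7, \]
using that $\Theta_7 \cong \Z/28$ so that multiplication by $\lcm(8, 2^r\mdiv)/8 = \NumB{2^r\mdiv / 8}$ acts on $\Theta_7$ with image generated by that numerator. The entire argument thus hinges on the $\EK$-shift computation in Step~1 (via Lemma~\ref{lem:ek_defined}) and the orbit analysis of $\aut_{b, p}$ acting on Gauss refinements in Step~2.
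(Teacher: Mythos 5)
Your overall decomposition of $\ADiff(M)$ by its action on $H^4(M)$, with a computation for the cohomology-trivial part combined with an analysis of how automorphisms of the refinement act on Gauss refinements, mirrors the architecture of the paper's proof (Proposition~\ref{prop:p2null}, Lemmas~\ref{lem:autb}--\ref{lem:aut0} and Proposition~\ref{prop:imtp}, Lemma~\ref{lem:p2mod}, Corollary~\ref{cor:p2image}). However, Step~1 is incorrect, both in its conclusion and its reasoning. The true value of $R_H(M)$ (your $R_0(M)$) is $2\td_\pi$, not $8\dM$; these agree only when $\td_\pi$ divides $112$ (e.g.\ $\td_\pi = 32$ gives $2\td_\pi = 64$ but $8\dM = 32$). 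The error comes from reducing modulo $\dM$ with $\EK_M$: to see $p^2(f)$ finely enough one must work modulo $\tdf$ with the Gauss refinement $\gr_W$ (equivalently, with $\tp$ valued in $\Z/2\td_\pi\Z$). Even so, the argument does not close: for $f$ with $f^* = \Id$, the relation from Lemma~\ref{lem:ek_defined} says $0 = (f^*)^\#\gr_W - \gr_W = p^2(f)/8 \bmod \tdf$, i.e.\ $p^2(f) \in 2\td_\pi\Z$ --- that is the \emph{upper} bound $2\td_\pi \mid R_H(M)$ (Corollary~\ref{cor:p2image}(i)), not a computation of the value. The lower bound $R_H(M) \mid 2\td_\pi$ requires exhibiting almost diffeomorphisms $f$ with $f^* = \Id$ and $p^2(f) = 2\td_\pi$, which the paper does by the explicit handle-twisting construction on $S^3 \tilde\times_{d_\pi} S^4$ in Proposition~\ref{prop:p2null}. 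Your appeal to ``the realisation part of Theorem~\ref{thm:classification}'' cannot substitute for this: that theorem realises distillations by \emph{manifolds}, not almost diffeomorphisms, and the $\EK_M$-shift picture only recovers $I_H(M) = \dM\Theta_7$, i.e.\ $\gcd(R_H(M)/8, 28)$, not $R_H(M)$ itself. (Also, minor: $(-W)\cup_f W$ is $X_f$, which is bordant to $T_f$ but not equal to it; see Lemma~\ref{lem:T_and_X}.)

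The odd-torsion argument is also flawed: $-\Id$ is generally \emph{not} in $\aut_{b,p}$, since $(-\Id)(p) = -p \neq p$ unless $p$ is $2$-torsion, so you cannot use it to force $r \geq 1$. In the paper $r=1$ for odd torsion follows from the two-sided sandwich of Lemmas~\ref{lem:autb} and~\ref{lem:aut0}: absence of split $2$-torsion summands simultaneously forces $P(\aut_b) \subseteq 2\mdiv\Z/2d_\pi\Z$ and $2\mdiv\Z/2d_\pi\Z \subseteq P(\shearb)$. Step~2 as written is a plausible heuristic but carries no proof: the concrete content is the homomorphism $P \colon \aut_b \to 2\Z/2d_\pi\Z$ and the divisibility bounds showing $\im P = 2^r\mdiv\Z/2d_\pi\Z$, followed by the lifting argument (Proposition~\ref{prop:imtp}) showing $\im\tp = \lcm(8, 2^r\mdiv)\Z/2\td_\pi\Z$ on $\aut_{q^\circ}$, where one uses Theorem~\ref{thm:classification_of_q} to correct an automorphism of the base into an automorphism of the refinement without changing its $P$-value.
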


If $H^4(M)$ does have some 2-torsion then in general one needs to look
at the torsion linking form in detail to determine $r$. 
Wilkens' conjecture
\cite[Conjecture p.\,548]{wilkens74} for the inertia group is equivalent to 
supposing that $r = 1$ always, which is not true.  The invariant $r = r(G, b, p)$ is
defined in Definition \ref{def:r} and while we do not have a closed formula for $r$,
it is feasible to compute $r$ for any given example. 
For examples where $r = 0, 1$ or $2$, see Example \ref{ex:r012}.

We next discuss some consequences of Theorem \ref{thm:i_and_r} and its proof.
If $N$ is a closed smooth manifold, let $n_+(N)$ denote 
the number of oriented diffeomorphism classes 
of smooth structures on the topological manifold underlying $N$.
From Theorems \ref{thm:a_class} and \ref{thm:i_and_r} and Remark \ref{rmk:inertia} 
we deduce

\begin{cor} \label{cor:n_+}
If $M$ is $2$-connected then $n_+(M) = \gcd\left( \Num(2^{r-3}\mdiv), 28 \right)$.
\end{cor}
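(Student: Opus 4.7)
The plan is to identify $n_+(M)$ with $|\Theta_7/I(M)|$ and then apply Theorem~\ref{thm:i_and_r}. Under $\Theta_7 \cong \Z/28\Z$ the cyclic subgroup generated by an integer $k$ has index $\gcd(k,28)$, so once the identification is established, Theorem~\ref{thm:i_and_r} immediately yields
\[
n_+(M) \;=\; |\Theta_7/I(M)| \;=\; \gcd\bigl(\Num(2^{r-3}\mdiv),\,28\bigr),
\]
which is the desired formula.

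To justify $n_+(M) = |\Theta_7/I(M)|$, I would invoke classical smoothing theory (Hirsch--Mazur, Kirby--Siebenmann): for a closed oriented simply-connected $7$-manifold, the isotopy classes of oriented smoothings of the underlying topological manifold form a principal homogeneous space for $\pi_7(\textup{TOP}/\textup{O}) \cong \Theta_7$ (the lower homotopy groups of $\textup{TOP}/\textup{O}$ vanish in the relevant range), with the action realised geometrically by connected sum with homotopy spheres. Hence every oriented smoothing of the topological manifold underlying $M$ is of the form $M \sharp \Sigma$ for some $\Sigma \in \Theta_7$, and two such smoothings $M \sharp \Sigma_0$ and $M \sharp \Sigma_1$ are oriented-diffeomorphic precisely when $\Sigma_1 - \Sigma_0 \in I(M_+)$. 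Because $M$ is $2$-connected, its spin structure is unique, so orientation-preserving diffeomorphisms automatically preserve spin, and Remark~\ref{rmk:inertia} gives $I(M_+) = I(M)$. Theorem~\ref{thm:almost_classification} enters implicitly to guarantee that every smoothing of the underlying topological manifold is again $2$-connected, so that Theorem~\ref{thm:i_and_r} applies uniformly to all smoothings. Combining these observations gives $n_+(M) = |\Theta_7/I(M)|$.

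The only non-formal ingredient is the smoothing-theoretic identification of smoothings of the underlying topological manifold with $\Theta_7$, which is entirely standard in this dimension; I do not expect any serious obstacle. The remaining content is an arithmetic manipulation with cyclic groups and a direct application of Theorem~\ref{thm:i_and_r} together with Remark~\ref{rmk:inertia}.
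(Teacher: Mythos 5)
Your overall strategy — reduce to $n_+(M) = |\Theta_7/I(M)|$, then compute using Theorem~\ref{thm:i_and_r} and the arithmetic $|\Theta_7/k\Theta_7| = \gcd(k,28)$ — is exactly right, and the arithmetic step and the use of Remark~\ref{rmk:inertia} to pass between $I(M)$ and $I(M_+)$ are fine. However, the justification you give for the key identification $n_+(M) = |\Theta_7/I(M)|$ contains a genuine error.

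You claim that ``the lower homotopy groups of $\textup{TOP}/\textup{O}$ vanish in the relevant range'', so that isotopy classes of smoothings form a torsor over $\pi_7(\textup{TOP}/\textup{O}) \cong \Theta_7$. This is false: $\pi_3(\textup{TOP}/\textup{O}) \cong \pi_3(\textup{TOP}/\textup{PL}) \cong \Z/2$ (the Kirby--Siebenmann class), and a $2$-connected $M$ may well have $H^3(M;\Z/2) \neq 0$ (it does whenever $H^4(M)\otimes\Z/2 \neq 0$, e.g.\ for $M(\Z,d)$). In that case $[M, \textup{TOP}/\textup{O}]$ is strictly larger than $\Theta_7$, so the concordance classes of smoothings of the underlying topological manifold do not form a $\Theta_7$-torsor, and the claim that ``every oriented smoothing... is of the form $M\sharp\Sigma$'' does not follow from obstruction theory alone. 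Indeed, Remark~\ref{rmk:topological_classification} in the paper explicitly notes that such $M$ admit self-homeomorphisms that are not isotopic to PL homeomorphisms — these are exactly what reconciles the larger count of concordance classes with the correct count of diffeomorphism classes.

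The clean route, and the one the paper intends, bypasses smoothing theory entirely and uses Theorem~\ref{thm:almost_classification} for the crucial step. If $N$ is a smooth manifold homeomorphic to $M$ via an orientation-preserving homeomorphism, then $N$ is $2$-connected (a homeomorphism invariant), hence spin, and by the homeomorphism half of Theorem~\ref{thm:almost_classification} the refinement $(H^4(N), q^\circ_N, p_N)$ is isomorphic to that of $M$; the almost-diffeomorphism half then gives $N \acong M$, i.e.\ $N \cong M\sharp\Sigma$ for some $\Sigma\in\Theta_7$ as in \eqref{eq:Sigma_f}. With that in hand, $M\sharp\Sigma_0 \cong M\sharp\Sigma_1$ if and only if $\Sigma_1 - \Sigma_0 \in I(M_+) = I(M)$ (Remark~\ref{rmk:inertia}), so $n_+(M) = |\Theta_7/I(M)|$, and the rest of your computation applies. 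You should replace the smoothing-theory paragraph with this direct appeal to Theorem~\ref{thm:almost_classification} — the reference to it that you do make (``guarantees every smoothing is $2$-connected'') is not what it is needed for, since $2$-connectedness of any smoothing is automatic.
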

We call a homotopy equivalence $f \colon N_0 \to N_1$ of smooth manifolds 
{\em tangential} if there is a bundle isomorphism
$f^* TN_1 \cong TN_0$, where $TN_i$ is the tangent bundle of $N_i$, $i = 0, 1$.
In Lemma \ref{lem:the1} we show that 
a homotopy equivalence $f \colon M_0 \simeq M_1$ of 
$2$-connected $7$-manifolds with
$f^*p_{M_1} = p_{M_0}$ is tangential.  
Together with Theorem \ref{thm:i_and_r} this entails
\begin{cor} \label{cor:I(M)_is_the}
Let $M_0$ and $M_1$ be $2$-connected and let $f \colon M_0 \simeq M_1$ be a tangential homotopy equivalence.
Then $I(M_0) = I(M_1)$.
\end{cor}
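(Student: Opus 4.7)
The strategy is to read off $I(M_i)$ from Theorem \ref{thm:i_and_r}: that theorem gives
\[ I(M_i) = \NumB{\frac{2^{r_i} \mdiv(M_i)}{8}} \Theta_7, \]
where $r_i = r(H^4(M_i), b_{M_i}, p_{M_i})$ depends only on the base. Hence it suffices to verify that $f^*$ induces an isomorphism of bases $(H^4(M_1), b_{M_1}, p_{M_1}) \isomap (H^4(M_0), b_{M_0}, p_{M_0})$; this forces $r_0 = r_1$ and $\mdiv(M_0) = \mdiv(M_1)$, and the corollary follows at once.

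I would then verify the three compatibilities separately. First, $f^*$ is an isomorphism on $H^4$ since $f$ is a homotopy equivalence. Because $M_0$ and $M_1$ are $2$-connected, $H^1(M_i;\Z/2) = 0$, so each carries a unique spin structure, and the orientation of $M_i$ is pinned down by the stable tangent bundle; hence a tangential $f$ is automatically orientation- and spin-preserving. Second, the torsion linking form $b_M$ is manufactured entirely from the integral cohomology ring, Bockstein operations and the fundamental class, all of which are natural under orientation-preserving homotopy equivalences, so $f^*$ intertwines $b_{M_0}$ and $b_{M_1}$. Third, tangentiality combined with the unique spin structure on $M_i$ forces $f^* p_{M_1} = p_{M_0}$, because $p_M$ is the canonical halving of $p_1(TM) \in H^4(M)$ determined by the spin structure, and a stable bundle isomorphism $f^*\tau_{M_1} \cong \tau_{M_0}$ carries the one to the other.

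The only non-routine step is the last: that $f^* p_{M_1} = p_{M_0}$ follows from tangentiality. This is the converse direction of Lemma \ref{lem:the1}, for which one traces through the construction of $p_M$ from the spin structure and notes that a tangential homotopy equivalence preserves the spin refinement of $p_1$. Once this is in place, the three compatibilities give an isomorphism of bases, whence Theorem \ref{thm:i_and_r} yields $I(M_0) = I(M_1)$.
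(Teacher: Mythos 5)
Your approach is essentially the same as the paper's: apply Theorem~\ref{thm:i_and_r}, whose formula for $I(M)$ depends only on the base $(H^4(M),b_M,p_M)$, after checking that $f^*$ carries $p_{M_1}$ to $p_{M_0}$. This last point is the converse direction of Lemma~\ref{lem:the1}, and indeed the paper's proof of that lemma establishes an isomorphism $p \colon \wt{KO}(M) \cong H^4(M)$ from which the equivalence of ``tangential'' and ``$p$-preserving'' follows in both directions by naturality.

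The one defective step is your orientation argument. You claim that a tangential homotopy equivalence of $2$-connected $7$-manifolds is automatically orientation-preserving because ``the orientation of $M_i$ is pinned down by the stable tangent bundle.'' This is not so: the stable tangent bundle, as an abstract bundle, does not record the orientation of the manifold. A reflection of $S^7$ in a hyperplane is an orientation-reversing self-diffeomorphism, hence tangential (since $\tau_{S^7}$ is stably trivial), so tangential homotopy equivalences can reverse orientation. The ``unique spin structure'' on a $2$-connected manifold is unique for a fixed orientation; there are two in total, one per orientation, and nothing about the stable tangent bundle selects one. So from tangentiality alone you can only conclude $f^* b_{M_1} = \pm b_{M_0}$, not that $f^*$ is an isomorphism of bases.

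Fortunately the gap is harmless for the conclusion. If $f$ reverses orientation then $f^* b_{M_1} = -b_{M_0}$, but both quantities entering Theorem~\ref{thm:i_and_r} are insensitive to the sign of $b$: the integer $\mdiv$ involves only $(H^4(M), p_M)$, and the invariant $r(G,b,p)$ of Definition~\ref{def:r} is determined by $\im P$ for $P \colon \aut_b \to 2\Z/2d_\pi\Z$, where $\aut_{-b} = \aut_b$ and $P_{-b} = -P_b$, so $\im P_{-b} = \im P_b$. (Alternatively one can observe $I(-M) = I(M)$ directly from the definition of the inertia group.) Either remark repairs the argument, but the specific justification you gave for orientation-preservation should be replaced.
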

One may wonder if Corollary \ref{cor:I(M)_is_the} is true because tangentially
homotopy equivalent \mbox{2-connected 7-manifolds}
are almost diffeomorphic (equivalently homeomorphic by
Theorem \ref{thm:a_class}).  However this was shown not to be the
case in \cite[p.\,114]{crowley02}, contradicting statements of Madsen, Taylor
and Williams \cite[Theorem C and Theorem 5.10]{madsen80}: see Proposition
\ref{prop:the} and Remark \ref{rmk:the}.

The computation of the reactivity of $M$ also has applications in $G_2$-topology.
We define the {\em smooth reactivity of $M$}, $R^\Diff(M)$, using the equation 
$p^2(\Diff(M)) = R^\Diff(M)\Z$, and
in \mbox{\cite[Section 6]{nu}} we show that $R^\Diff(M)$
determines the number of \gtstr s on $M$ modulo homotopies and diffeomorphisms.
By Corollary \ref{cor:p2image}, 
\[ R^\Diff(M) = \lcm(2^r\mdiv, 224) \]
for $2$-connected $M$,
and this allows us to generalise Theorem \ref{thm:class} to give a
classification of \mbox{2-connected} 7-manifolds equipped with a \gtstr, up to
diffeomorphisms and homotopies of \gtstr s \cite[Theorem 6.9]{nu}.

The proof of Theorem \ref{thm:i_and_r} gives subtle information about the mapping class group of~$M$.  
Let $I_H(M) \subseteq I(M)$ be the subgroup of the inertia
group of $M$ consisting of homotopy spheres $\Sigma$ such that there is a diffeomorphism
$f \colon M \sharp \Sigma \cong M$ where $H^*(f) = {\rm Id}$, considering $M \sharp \Sigma$ and $M$
as the same topological space.  Using the delicate algebra in Section \ref{ss:aut_q_on_gauss_functions}
we construct a surjective homomorphism 
\[ \wh P \colon \aut_{q^\circ}(H^4(M)) \to I(M)/I_H(M), \]
such that $F \in \aut_{q^\circ}(H^4(M))$ is realised by a diffeomorphism of $M$ if and only if $\wh P(F) = 0$.
Now by Theorem \ref{thm:a_class}, every $F \in \aut_{q^\circ}(H^4(M))$ is realised
by an almost diffeomorphism of $M$ and in Proposition \ref{prop:pairs_of_inertia_groups} we prove that every 
nested pair of subgroups $I_1 \subseteq I_2 \subseteq \Theta_7$ 
can be realised as
$I(M)_H \subseteq I(M)$ for some $2$-connected $M$.
As consequence we have 

\begin{thm} \label{thm:hatP_short}
There exist $2$-connected $M$ with automorphisms 
$F \in \aut_{q^\circ}(H^4(M))$ which are not realised by any diffeomorphism of $M$.
Necessarily every such $F$ is realised by an almost diffeomorphism of $M$.
\end{thm}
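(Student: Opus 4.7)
The plan is to deduce this directly from the surjective homomorphism $\wh P$, Proposition \ref{prop:pairs_of_inertia_groups}, and Theorem \ref{thm:almost_classification}, all of which are cited in the paragraph preceding the statement. The theorem is a packaging result; the genuine work lies in its inputs.

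For the existence assertion, I would first invoke Proposition \ref{prop:pairs_of_inertia_groups} to produce a $2$-connected $7$-manifold $M$ for which the quotient $I(M)/I_H(M)$ is non-trivial — for instance, any realisation with $I(M)/I_H(M) \cong \Z/2$ suffices. Since the homomorphism
\[ \wh P \colon \aut_{q^\circ}(H^4(M)) \to I(M)/I_H(M) \]
is surjective, I may choose $F \in \aut_{q^\circ}(H^4(M))$ with $\wh P(F) \neq 0$. By the characterising property of $\wh P$ — namely that $F$ is realised by a diffeomorphism $f \colon M \cong M$ with $f^* = F$ if and only if $\wh P(F) = 0$ — this $F$ is not realised by any diffeomorphism of $M$. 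This proves the first sentence.

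For the second sentence, the hypothesis $F \in \aut_{q^\circ}(H^4(M))$ says precisely that $F$ is an isomorphism of the refinement $(H^4(M), q_M^\circ, p_M)$, i.e.\ $(q_M^\circ, p_M) = F^\#(q_M^\circ, p_M)$. Theorem \ref{thm:almost_classification} then produces an almost diffeomorphism $f \colon M \acong M$ with $f^* = F$. Hence every such $F$ is realised by an almost diffeomorphism, as claimed.

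The real content of Theorem \ref{thm:hatP_short} is therefore packaged entirely into the two inputs above, and the proof at this point in the paper is a short concatenation of them. The main obstacle — to be discharged in Section \ref{subsec:aut_q_on_gauss_functions} — is the construction of $\wh P$ via the action of $\aut_{q^\circ}(H^4(M))$ on mod $28$ Gauss refinements, together with its surjectivity and the identification of its kernel with those $F$ realised by diffeomorphisms. The secondary obstacle is Proposition \ref{prop:pairs_of_inertia_groups}, which demands exhibiting enough variety among realisable refinements to produce every subquotient of $\Theta_7$ as $I(M)/I_H(M)$; I would expect this to be obtained by explicit construction, using Theorem \ref{thm:classification} to realise distillations $(G, q^\circ\!, \EK, p)$ with prescribed automorphism behaviour on the mod $28$ Gauss refinements.
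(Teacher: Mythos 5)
Your proposal is correct and takes essentially the same approach as the paper, which deduces the theorem "immediately from Theorem \ref{thm:mcg_all} and Proposition \ref{prop:pairs_of_inertia_groups}" — your argument unpacks exactly the relevant content of Theorem \ref{thm:mcg_all} (the surjectivity of $\wh P$, the criterion $\wh P(F) = 0$ for realisation by a diffeomorphism, and the exactness of the middle row, \ie Theorem \ref{thm:almost_classification}) combined with Proposition \ref{prop:pairs_of_inertia_groups}.
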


\begin{rmk}
Let us call an homeomorphism $f \colon M \to M$ exotic if it is not topologically isotopic 
to a diffeomorphism.  
Applying Theorem \ref{thm:hatP_short} gives examples 
of exotic homeomorphisms $f \colon M \to M$ whose exoticness is detected by 
their action on integral cohomology; see Example \ref{ex:mapping_inertia}.
To the best of our knowledge, these are the first examples of exotic homeomorphisms
of this kind.
\end{rmk}

\subsection{An overview of the proof of Theorem \ref{thm:class} and some remarks on surgery} \label{ss:surgery}
Every $2$-connected $M$ bounds a $3$-connected $8$-manifold $W$
and we define the {\em characteristic form} of $W$ to be the
triple $(H^4(W, \del W), \lambda_W, p_W)$, where
\[ \lambda_W \colon H^4(W, \del W) \times H^4(W, \del W) \to \Z, \]
is the intersection form of $W$, and $p_W \in H^4(W)$ is the spin
characteristic class of $W$ (see Section \ref{ss:basic_invariants}).
A key feature of dimension $8$ is that $p_W$ is
{\em characteristic for $\lambda_W$}, which means that
$\lambda_W(x, x) \equiv x \cup p_W$  mod $2$ for all $x \in H^4(W, \del W)$
(see Lemma \ref{lem:p_M}\ref{it:boundary}).
In \cite{wall62} Wall classified the manifolds $W$ by proving that
every iso\-morphism of characteristic forms is realised by a diffeomorphism.
He also proved that every abstract triple $(H, \lambda, \alpha)$,
where $\lambda \colon H \times H$ is
a symmetric bi-linear form and $\alpha \colon H \to \Z$ is characteristic for $\lambda$
in the sense above, is realised as the characteristic form of some $W$.

Following Wall, Wilkens \cite[Theorem 3.2]{wilkens71}
proved that any diffeomorphism $f \colon M_0 \to M_1$ 
between $2$-connected $M$ extends to a diffeomorphism 
$F \colon W_0 \to W_1$
for some $3$-connected coboundaries $W_i$ of $M_i$, $i = 0, 1$.
The results of Wall and Wilkens' reduced the classification of $2$-connected $7$-manifolds
to the classification of characteristic forms up to isometry
and orthogonal sum with {\em spherical} forms, where 
we call a characteristic form spherical if the boundary of the corresponding
handlebody is diffeomorphic to $S^7$; \cf \cite[\S 14]{wall67}.
This algebraic problem boils down to finding the 
correct notion of the {\em algebraic boundary} of a characteristic form.
Wilkens' base $(G, b, p)$ and the first author's refinement $(G, q^\circ, p)$ 
were partial solutions to this problem
and the mod~$28$ distillation $(G, q^\circ, \mu, p)$ of this paper
gives a complete solution.

The fundamental input to Wall and Wilkens' theorems is Smale's $h$-cobordism
theorem \cite{smale61}.  In addition, both proofs make use of handlebody
theory. Hence the topological inputs to our proofs are relatively elementary
from a modern perspective.
The reader may ask whether developments in manifold theory,
\eg the classical surgery theory 
of Browder-Novikov-Sullivan-Wall or the modified surgery
of Kreck,
give more powerful
tools to classify $2$-connected $7$-manifolds?

In the case of classical surgery, the answer to the above question is simply ``no''.
The homotopy classification of $2$-connected $7$-manifolds via 
the study of $CW$-decompositions and attaching maps, is 
surely harder than the smooth classification of these manifolds.
The 
reader may
consult \cite{sasao65} as a starting point.
Even if the homotopy classification is known,
the computation of the surgery structure set via the surgery exact sequence
and then the action of the self-equivalences on the structure set is 
a delicate problem.  Here the %
reader may consult \cite[Theorem 2.2]{crowley10}
for the case $M = S^3 \times S^4$.

The situation is different with modified surgery,
which provides a powerful tool for classifying $1$-connected
$7$-manifolds.  
Until recently the relevant results from modified surgery came from working
over the normal $2$-type and rested on the general classification theorem
\mbox{\cite[Theorem 6]{kreck99}}, which in the $2$-connected case
makes the very restrictive hypothesis that $H^4(M)$ is generated by $p_M$.
In \cite{kreck18} Kreck defines an enhanced normal $2$-type 
which applies to all $2$-connected $M$ and which he
uses to give an alternative proof of the uniqueness part of Theorem \ref{thm:class}.
The enhanced normal $2$-type encodes
what Kreck calls a {\em $d$-structure}
which, in the notation of this paper, is a pair $(M, k)$, where $k \in \divsp$.

\subsection{Dimensions 7 and 15} \label{ss:7_and_15}
Dimension $7$ and $15$ were exceptional for Wall's methods in \cite{wall67}
because the tangent bundles of $S^3$ and $S^7$ are trivial and this prevented
Wall from defining a quadratic refinement of the linking form.
We discuss this further
in Section \ref{ss:intrinsic}, 
where we show how Wall's methods can be extended to $2$-connected $7$-manifolds
by adding additional tangential structure.  In this way we are able to give an {\em intrinsic}
definition of the quadratic refinement of the linking form.

\begin{rmk} \label{rmk:15}
To discuss dimension $15$, let $String := O\an{6}$ denote the $6$-connected cover of 
the stable orthogonal group.  
This is a well-defined homotopy type with models which are topological groups 
(see \cite[Theorem 5.1]{stolz96}).
In particular there is a well-defined notion of a stable string structure on a manifold
and hence a well-defined notion of a stable string manifold.
For the almost diffeomorphism classification, a $15$-dimensional analogue of Theorem \ref{thm:a_class} was proven in \cite[Theorem B]{crowley02}.
For the smooth classification, we have $\Theta_{15} \cong \Z/8,\!128 \oplus \Z/2$,
where the $\Z/8,\!128$ summand is the subgroup of homotopy $15$-spheres which bound string manifolds and the $\Z/2$ summand maps isomorphically
to $\Omega_{15}^{\rm String}$, \cite{kervaire63, giambalvo71}.
The $15$-dimensional analogue of Theorem \ref{thm:class} requires the following modifications.
Firstly the universe of $6$-connected $15$-dimensional manifolds has two disjoint classes:
those manifolds which bound string manifolds and those which do not.
Secondly, within each of these classes a version of Theorem \ref{thm:class} holds, where
mod~$28$ Gauss refinements are replaced by mod~$8,\!128$ Gauss refinements, 
in the sense of Definition \ref{def:modngr}.
\end{rmk}

\subsection{Organisation} \label{ss:organisation}
The rest of this paper is organised as follows. In Section \ref{sec:invariants} we define the
invariants used in Theorems \ref{thm:a_class} and \ref{thm:class}.
In particular, families of quadratic refinements, 
Gauss refinements and the generalised Eells-Kuiper invariant are defined 
in Sections \ref{ss:families}, \ref{ss:gr} and \ref{ss:eek} respectively.
In Section \ref{sec:classification} we prove our main classification results
and we discuss the connected sum splitting of
$2$-connected $7$-manifolds in Theorems \ref{thm:almost_split1} and \ref{thm:split}.
Section \ref{sec:auto} is an algebraic section in which we analyse the automorphisms of refinements
and bases and the action of these automorphisms on Gauss refinements.
This section contains the proof of Theorem \ref{thm:i_and_r},
which follows from the computation of the reactivity of $M$ in Corollary \ref{cor:p2image}.

In Section \ref{sec:examples} we illustrate the classification of $2$-connected $M$ with examples
and we also present a refinement of Wilkens' identi\-fication of the set of indecomposable generators 
for the monoid of almost diffeomorphism classes of 
$2$-connected $7$-manifolds under the operation of connected sum;
see Theorem~\ref{thm:indecomposable}.
In Section \ref{sec:mcg_and_inertia} we investigate the relationship between
the inertia groups of $M$ and the mapping class groups of $M$ and prove
Theorem~\ref{thm:hatP_short}.

\smallskip
\noindent
{\bf Acknowledgements:}  We would like to thank Jim Davis,
Sebastian Goette, Matthias Kreck and Claude LeBrun for helpful comments and/or discussions. 
We would also like to thank Matthias Kreck for pointing out a mistake in the
formulation of Theorem \ref{thm:classification_minimal} in the first version of
this paper and for sharing drafts of his paper \cite{kreck18};
in particular the explicit form of the transformation rule in \eqref{eq:TR} is
based on \cite[\!(2)]{kreck18}.
In addition, we thank the referee for their careful comments, which have
improved the paper.

DC thanks the Mathematics Departments at Imperial College and the University of Bath for their hospitality 
and acknowledges support from EPSRC Mathematics Platform grant EP/I019111/1.
DC also acknowledges the support of the Leibniz Prize of Wolfgang L\"{u}ck, granted by the Deutsche Forschungsgemeinschaft.
JN thanks the Simons Foundation for its support under the Simons Collaboration
on Special Holonomy in Geometry, Analysis and Physics
(grant \#488631, Johannes Nordström).
\section{Invariants} \label{sec:invariants}
In this section we define the invariants needed to classify $2$-connected spin
$7$-manifolds~$M$.
In Section \ref{ss:basic_invariants} we introduce the linking form $b_M$ of
$M$ and the spin characteristic class $p_M \in 2H^4(M)$. 
In Section \ref{ss:coboundaries} we recall the
characteristic form 
$$(H^4(W, \del W), \lambda_W, p_W)$$ 
of a spin coboundary $W$ for $M$
and identify it as the salient algebraic model for $W$.
In sections
Sections \ref{ss:tfs_and_Gauss_sums}, \ref{ss:families} and \ref{ss:gr} we progressively
build algebraic ``boundary invariants'' of characteristic forms.
Section \ref{ss:tfs_and_Gauss_sums} recalls the theory of refinements of torsion forms and
Section \ref{ss:families} shows how a characteristic form defines a family of refinements
on its boundary.  In Section \ref{ss:gr} we define the generalised
Eells-Kuiper invariant $\EK_M$ of $M$ using Hirzebruch's
characteristic class formulae for the $\wh A$-genus and the $L$-genus.
The generalised Eells-Kuiper invariant is a reduced defect invariant of the $\wh A$-genus.
In Section \ref{ss:spin_c} we show how $\mu_M$ can be computed via a coboundary $W$
which is \spc rather than spin.  
Finally, in Section \ref{ss:intrinsic} we give an intrinsic definition of
the quadratic refinements defined via coboundaries in 
Section~\ref{ss:families}.
\subsection{Basic invariants} \label{ss:basic_invariants}
To any closed spin 7-manifold $M$ we associate its integral co\-ho\-mo\-logy
group $H^4(M)$, torsion linking form 
$b_M$ and spin characteristic class $p_M \in 2H^4(M)$.
We call the triple $(H^4(M), b_M, p_M)$ the base of $M$.
More generally, a base is a triple $(G, b, p)$ consisting of a finite
abelian group $G$,
a torsion form $b$ on the torsion subgroup of $G$ and an element $p \in 2G$.
For later use we introduce the category $\catb$ consisting of bases
with morphisms isomorphisms
\[ {\rm Ob}(\catb) = \{ (G, b , p) \}. \]
We now define in the invariants $b_M$ and $p_M$ in turn.

\subsubsection{The linking form \texorpdfstring{$b_M$}{b\_M}}
Recall that the linking form of a closed oriented 
$(4k{-}1)$-manifold $N$ is a nonsingular symmetric bilinear pairing
\[ b_N : TH^{2k}(N) \times TH^{2k}(N) \to \Q/\Z \]
defined on the torsion subgroup of $H^{2k}(N)$. 
Given $x, y \in TH^{2k}(N)$ and $\wh x \in H^{2k-1}(N; \Q/\Z)$, 
a lift of $x$ along the Bockstein 
$\beta \colon H^{2k-1}(N; \Q/\Z) \to TH^{2k}(N)$ associated to 
the coefficient sequence 
$\Z \to \Q \to \Q/\Z$, 
the linking form $b_N$ of $N$ is defined by the equation
$$ b_N(x, y) := \an{\wh x y, [N]} \in \Q/\Z. $$

If $N$ is the boundary of an oriented
$4k$-manifold $Y$, then the linking form of $N$ and the intersection form
of $Y$ are related, as explained in \cite[II]{alexander76}.
Let $i \colon H^{2k}(Y, N; \Q) \to H^{2k}(Y; \Q)$ be the natural map
and define a rational-valued intersection form on the image of $i$ by
$$ \rif_Y \colon \im(i) \times \im(i) \to \Q, \quad
\rif_Y(i(w), i(z)) := \an{w \cup i(z), [Y]}.$$
Let $j \colon H^{2k}(Y) \to H^{2k}(N)$ be the restriction map.
If $x \in TH^{2k}(N)$ and $\bar x \in H^{2k}(Y)$ is a preimage,
$j(\bar x) = x$, then the image of $\bar x$ in $H^{2k}(Y;\Q)$ is in the
kernel of the restriction $H^{2k}(Y;\Q) \to H^{2k}(N;\Q)$.
Thus the image of $j^{-1}(TH^{2k}(N)) \subset H^{2k}(Y)$ in $H^{2k}(Y;\Q)$
equals $\im(i)$. The linking form of $N$ satisfies
\begin{equation} \label{eq:b-by-cob}
b_N(x, y) = -\rif_Y(\bar x, \bar y)~~\text{mod}~\Z,
\end{equation}
whenever $\bar x, \bar y \in H^{2k}(Y)$ are lifts of $x$ and $y$
respectively.
Note that if the image of $j$ contains $TH^{2k}(N)$, then \eqref{eq:b-by-cob}
describes $b_M$ completely.
The appearance of the minus sign in \eqref{eq:b-by-cob} is explained
in \cite[Proof of Theorem 2.1]{alexander76}
and also in \cite[\S 3]{gordon78}.

\subsubsection{The spin characteristic class \texorpdfstring{$p_M$}{p\_M}}
The classifying space $B\spg$ is $3$-connected and has
$\pi_4(B\spg) \cong \Z$.
It follows that $H^4(B\spg) \cong \Z$ is infinite cyclic.  A generator is
denoted $\pm \frac{p_1}{2}$ and the notation is justified since for the
canonical map $\pi \colon B\spg \to BSO$ we have $\pi^*p_1 = 2 \frac{p_1}{2}$
where $p_1 \in H^4(BSO)$ is the first Pontrjagin class,
see \eg \cite[Lemma 2.2]{mclaughlin92}.

One way to explain the claims in the previous paragraph is to note that the 
canonical homomorphism $SU \to \spg$, which maps the stable special unitary
group to the stable spin group, induces an isomorphism
$H^4(B\spg) \to H^4(BSU)$. Since $H^4(BSU)$ is cyclic with a generator
(namely the universal second Chern class $c_2$) whose image in $H^4(BSO)$ is
$2p_1$, the same is true for $H^4(B\spg)$ (see also Lemma \ref{lem:pz} below).

Given a spin manifold $N$ we write
\[ p_{N} : = \frac{p_1}{2}(N) \in H^4(N) .\]

\begin{rmk}
\label{rmk:topological_p_M}
In order to prove the topological invariance of invariants we define in the
later subsections, we consider $p_Y$ for general topological spin
manifolds $Y$. 
We let $BTop$ denote the classifying space for
stable topological microbundles, see \cite[Essay IV, Proposition~8.1]{kirby77},
and $BTop\an{4}$ its $3$-connected cover.
Equivalently, $BTop\an{4}$ is the classifying space for stable spin topological microbundles.
By \cite[(3)]{hsiang69} there is a split short exact sequence
\[ 0 \to \pi_4(B\spg) \to \pi_4(BTop\an{4}) \to \Z/2 \to 0. \]
It follows that the canonical homomorphism $H^4(BTop\an{4}) \to H^4(B\spg)$ is an isomorphism 
and so $p_N \in H^4(N)$ is 
a homeomorphism invariant of
topological spin manifolds.
\end{rmk}

By \cite[Lemma 6.5]{kreck91}, the mod 2 reduction of $p_N$ is the $4th$
Stiefel-Whitney class $w_4$. 
This has the following consequences for the parity of $p_N$.

%
%
%
%

\begin{lem}  \label{lem:p_M}
\hfill
\begin{enumerate}
\item \label{it:closed7}
Let $M$ be a closed spin $7$-manifold. Then $p_M \in 2 H^4(M)$.
\item \label{it:closed8}
Let $X$ be a closed spin $8$-manifold.
For all $x \in H^4(X; \Z/2)$
\[ x^2 = x \cup p_X \in H^8(X;\Z/2). \]
\item \label{it:boundary}
Let $W$ be a compact spin $8$-manifold with boundary $M$.
For all $x \in H^4(W, M; \Z/2)$
\[ x^2 = x \cup p_W \in H^8(W,M; \Z/2). \]
\end{enumerate}
\end{lem}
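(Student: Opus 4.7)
The plan is to deduce all three parts from the cited identity $p_N \equiv w_4(N) \pmod 2$ for any spin manifold $N$, combined with Wu's formula and the standard vanishings of Wu classes on spin manifolds. Recall that on a compact $n$-manifold $N$, possibly with boundary, the Wu class $v_i(N) \in H^i(N;\Z/2)$ is characterised by $v_i(N) \cup x = Sq^i(x)$ in $H^n(N, \partial N;\Z/2)$ for every $x \in H^{n-i}(N, \partial N;\Z/2)$, and Wu's formula reads $w_k(N) = \sum_{i+j = k} Sq^j(v_i(N))$. On any spin manifold $v_0 = 1$, $v_1 = w_1 = 0$, $v_2 = w_1^2 + w_2 = 0$, and $v_3 = w_3 + w_1 w_2 + w_1^3 = Sq^1 w_2 = 0$.

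For part (i), the general vanishing $v_i = 0$ for $i > n/2$ gives $v_4(M) = 0$ on the closed $7$-manifold $M$. Together with the spin vanishings and $Sq^4(1) = 0$, Wu's formula then reduces to $w_4(M) = 0$, so $p_M \equiv w_4(M) = 0 \pmod 2$ and hence $p_M \in 2H^4(M)$. For part (ii), on the closed spin $8$-manifold $X$ the same spin vanishings reduce Wu's formula to $w_4(X) = v_4(X)$, and the unstability relation $Sq^4(x) = x^2$ for $x \in H^4$ yields
\[ x^2 = Sq^4(x) = v_4(X) \cup x = w_4(X) \cup x = p_X \cup x \]
in $H^8(X;\Z/2)$.

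For part (iii), I would repeat the argument of (ii) using the relative Wu class of $(W, M)$. The spin computation still gives $v_4(W) = w_4(W) \equiv p_W \pmod 2$, and for $x \in H^4(W, M;\Z/2)$ the cup product $x \cup x = Sq^4(x)$ lies naturally in $H^8(W, M;\Z/2)$ via the pairing $H^4(W, M) \otimes H^4(W, M) \to H^8(W, M)$. The defining property of $v_4(W)$ on relative cohomology, used via the module structure $H^4(W;\Z/2) \otimes H^4(W, M;\Z/2) \to H^8(W, M;\Z/2)$, then gives $x^2 = v_4(W) \cup x = p_W \cup x$ in $H^8(W, M;\Z/2)$. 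No step is expected to be a real obstacle; the only mild subtlety is confirming that Wu's formula and the characterisation of $v_4(W)$ apply correctly in the manifold-with-boundary setting of part (iii), but both are standard.
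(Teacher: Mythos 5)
Your parts (i) and (ii) are essentially identical to the paper's argument: both deduce $w_4 = v_4$ from Wu's formula and the vanishing of the first three Wu classes of a spin manifold, then use $Sq^4 \equiv 0$ on $H^3$ for (i) and the unstability relation $Sq^4(x) = x^2$ on $H^4$ for (ii). Your part (iii), however, takes a genuinely different route. You work directly with the relative Wu class $v_4(W) \in H^4(W;\Z/2)$, characterised by the pairing with $H^4(W,M;\Z/2)$, and invoke Wu's formula for the compact manifold-with-boundary $W$ to identify $v_4(W) = w_4(W) \equiv p_W$. The paper instead forms the double $X := W \cup_{\Id_M}(-W)$, which is a closed spin $8$-manifold, applies part (ii) to $X$, and transfers the conclusion back to $(W,M)$ by observing that the push-forward $i_* : H^*(W,M) \to H^*(X)$ is Poincar\'e-dual to the restriction $i^* : H^4(X) \to H^4(W)$ (so $i^*p_X = p_W$, and the three groups $H^8(X;\Z/2) \cong H^8(W,M;\Z/2) \cong \Z/2$ are identified). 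Your approach is more direct, but it relies on knowing that the identity $w = Sq(v)$ holds in the manifold-with-boundary setting---a true but less commonly cited fact, whose cleanest proof is itself by doubling. The paper's route is slightly longer but only ever uses Wu's formula in the closed case, which is the standard textbook statement. Both are correct; the flag you raise in your final sentence is exactly the distinction, and the paper's proof sidesteps it by construction.
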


\begin{proof}
By Wu's formula, see \eg
\cite[Theorem 11.14]{milnor74}, $w_4 = v_4$ for any closed spin manifold 
since the first three Wu classes of a spin manifold vanish.

\begin{enumerate}
\item Now $v_4(M) = 0$ since $M$ is $7$-dimensional, the Wu class satisfies
$v_4(M) \cup x = Sq^4(x)$ for all $x \in H^3(M; \Z_2)$ by definition,
and $Sq^4$ vanishes on classes of degree three.

\item $x^2 = Sq^4(x) = x \cup v_4(X) = x \cup p_X$.

\item Let $X := W \cup_{\Id_M} (-W)$. The push-forward
$i_* \colon H^*(W,M) \to H^*(X)$ of the inclusion $i \colon W \into X$ is dual under the
Poincar\'e pairing to the restriction $i^* \colon H^4(X) \to H^4(W)$.
Since $i^*p_X = p_W$, (ii) gives
\[ x^2 = (i_*x)^2 = i_*x \cup p_X = x \cup p_X , \]
where the equalities take place in
$\Z/2 \cong H^8(X; \Z/2) \cong H^8(W,M; \Z/2)$. \qedhere
\end{enumerate}
\end{proof}

\begin{rmk} \label{rem:pM_and_TM}
The characteristic class $p_M$ is the primary and final obstruction to the
triviality $TM$, the tangent bundle of $M$; 
\ie $TM$ is trivial if and only if $p_M = 0$.
This is because of Bott periodicity, 
which states that $\pi_5(BSO(7)) = \pi_6(BSO(7)) = 0$ and we
have the exceptional fact that $\pi_7(BSO(7)) = 0$ by 
\cite[p.\,162]{kervaire60}.
Hence all obstructions to the triviality of the tangent bundle of
$M$ after $p_M$ vanish.
Indeed, any rank $7$ (or higher) spin vector bundle $E$ over a $CW$-complex $X$
is trivial over the $7$-skeleton of $X$ if and only if $p(E) = 0$.
\end{rmk}
\subsection{Algebraic models of coboundaries} \label{ss:coboundaries}
Let $M$ be a closed spin 7-manifold. Since the bordism group $\Omega_7^{\spg}$
vanishes by \cite{milnor63}, there is a compact spin
8-manifold $W$ such that $\partial W = M$.  Applying surgery below the middle dimension to $W$ \cite[Theorem 3]{milnor61}, we can assume that $W$
is 3-connected.  
We define $FH^4(W, \del W) : = H^4(W, \del W)/TH^4(W, \del W)$
to be the torsion-free quotient of $H^4(W, \del W)$.
Since $W$ is $3$-connected, $H^4(W)$ is torsion-free and so
the relative cohomology sequence of $(W, M)$ gives exactness of
\begin{equation} \label{eq:M_and_W}
FH^4(W, \del W) \to H^4(W) \to H^4(M) \to 0.
\end{equation}

For an abelian group $H$, let $H^* := \Hom(H, \Z)$ be the dual of $H$.
Since $H^4(W)$ is torsion-free,
the composition $H^4(W) \to H_4(W, \del W) \to H^4(W, \del W)^*$
of the Kronecker homomorphism 
with the Poincar\'e-Lefschetz duality isomorphism 
is an isomorphism.
Hence
the first homomorphism in \eqref{eq:M_and_W} can be thought of as the adjoint
homomorphism,
\[  \hlambda_W \colon FH^4(W, \del W) \to FH^4(W, \del W)^*, \]
of the intersection pairing
$\lambda_W$ on $FH^4(W, \del W)$.
The principle we follow is to regard
the pair $(FH^4(W, \del W), \lambda_W)$ as a ``model'' for a coboundary~$W$.

Let us set up some terminology to deal with these models.
We say that $(H,\lambda)$ is an integral form if $H$ is a finitely generated
free abelian group and $\lambda \colon H \times H \to \Z$ is symmetric and bilinear.
Let $\hlambda$ denote the adjoint homomorphism $H \to H^*$.
The ``boundary''
of $(H,\lambda)$ is $G := \coker(\hlambda)$. We say that an element
$\alpha \in H^*$ is \emph{characteristic} for $\lambda$ if
$\lambda(x,x) = \alpha(x) \mmod 2$ for all $x \in H$. We then call
$(H,\lambda,\alpha)$ a \emph{characteristic form}.

If $W$ is a 3-connected coboundary of $M$ then 
the pair $(FH^4(W, \del W), \lambda_W)$
is an integral form with boundary $H^4(M)$.
By Lemma \ref{lem:p_M}\ref{it:boundary},
$(FH^4(W, \del W), \lambda_W, p_W)$ is characteristic.
(If $M$ is $2$-connected then Wall's classification of $3$-connected $8$-manifolds \cite{wall62}
ensures that the characteristic form of $W$ is a
complete invariant of $W$ under diffeomorphisms;
\ie every isomorphism of characteristic forms is realised by a diffeomorphism,
see \cite[Corollary~2.5]{crowley02}.)
In the next subsections we study the structures that an integral or
characteristic form induces on its boundary. By applying this to the
algebraic model of a 3-connected coboundary of $M$ we obtain the desired
algebraic invariants of $M$. To prove that they are independent of the
choice of $W$ we will combine a splitting result for the algebraic constructions
with the following lemma whose proof is a simple application of
the Mayer-Vietoris theorem.

\begin{lem}
\label{lem:split_mfd}
Let $W_i$ be compact 3-connected spin 8-manifolds with $2$-connected boundaries,
$f \colon \partial W_0 \to \partial W_1$ a homeomorphism,
and $X := (-W_0) \cup_f W_1$ (a closed spin topological manifold).
Then for $i = 0,1$ we have injections 
$H^4_{}(W_i, \del W_i) \into H^4(X)$ whose images are orthogonal to each
other with respect to the intersection form $\lambda_X$ of $X$.
Further, the restriction map $H^4(X) \to H^4(M)$ is surjective, with kernel
$H^4_{}(W_0, \del W_0) + H^4_{}(W_1, \del W_1)$. \qed
\end{lem}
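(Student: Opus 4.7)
The plan is to combine the Mayer-Vietoris sequence for $X = W_0 \cup_f W_1$ with the long exact sequences of the pairs $(W_i,\partial W_i)$ and $(X,W_{1-i})$, using $3$-connectedness everywhere. First, since each $W_i$ is $3$-connected and $8$-dimensional, Hurewicz and Poincar\'e-Lefschetz duality give $H^k(W_i) = 0$ for $1 \le k \le 3$ and $H^k(W_i,\partial W_i) \cong H_{8-k}(W_i) = 0$ for $5 \le k \le 7$. The long exact sequence of the pair $(W_i,\partial W_i)$ then collapses to
\[
0 \to H^3(M) \to H^4(W_i,\partial W_i) \to H^4(W_i) \to H^4(M) \to 0.
\]

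Next I would construct the embedding $H^4(W_i,\partial W_i) \hookrightarrow H^4(X)$. Using a collar of $M$, excision identifies $H^4(W_0,\partial W_0) \cong H^4(X,W_1)$, and the long exact sequence of the pair $(X,W_1)$ combined with $H^3(W_1) = 0$ gives $H^4(X,W_1) \hookrightarrow H^4(X)$; swapping indices handles the other embedding. For orthogonality, the two subspaces of $H^4(X)$ arise from relative classes in $H^4(X,W_1)$ and $H^4(X,W_0)$, whose relative cup product lies in
\[
H^8(X, W_0 \cup W_1) = H^8(X,X) = 0,
\]
and since this is compatible with the absolute cup product, $\lambda_X$ vanishes between the two subspaces.

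For the restriction map $H^4(X) \to H^4(M)$, surjectivity follows because each $H^4(W_i) \to H^4(M)$ is already surjective by the short exact sequence above, and one factors through either $i_i^* : H^4(X) \to H^4(W_i)$. For the kernel, given $\gamma \in H^4(X)$ with vanishing restriction to $M$, each $i_i^*\gamma$ lies in the image of $H^4(W_i,\partial W_i) \to H^4(W_i)$, say $i_i^*\gamma = q_i(\tilde\gamma_i)$. Let $\gamma_i \in H^4(X)$ be the image of $\tilde\gamma_i$ via the embedding from step two; then $\gamma - \gamma_0 - \gamma_1$ has trivial image in $H^4(W_0) \oplus H^4(W_1)$, so by Mayer-Vietoris it lies in the image of the connecting map $\delta : H^3(M) \to H^4(X)$.

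The main obstacle is verifying that this image of $\delta$ already lies in $H^4(W_0,\partial W_0) + H^4(W_1,\partial W_1)$, since otherwise the decomposition is only modulo $\delta(H^3(M))$. The key observation is that the Mayer-Vietoris connecting map factors (up to sign) as the composition $H^3(M) \to H^4(W_i,\partial W_i) \hookrightarrow H^4(X)$ for either choice of $i$: this is a standard compatibility proven by tracing both connecting maps at the cochain level through a partition-of-unity argument on a collar neighborhood of $M$. Once this factorisation is in hand, $\delta(H^3(M)) \subseteq H^4(W_0,\partial W_0)$, so $\gamma \in H^4(W_0,\partial W_0) + H^4(W_1,\partial W_1)$, which completes the proof.
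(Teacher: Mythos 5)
Your approach — Mayer--Vietoris combined with excision and the vanishing of the relative cup product in $H^8(X,X)$ — matches what the paper intends, since the paper describes the proof precisely as a simple application of the Mayer--Vietoris theorem, and the embedding, orthogonality, and kernel arguments are all sound. However, the surjectivity step has a gap: you observe that the restriction factors as $H^4(X) \xra{i_i^*} H^4(W_i) \to H^4(\del W_i)$ with the second map onto, but a composition with a surjection need not itself be a surjection. You also need $i_i^*$ to be onto, which follows because $H^5(X, W_i) \cong H^5(W_{1-i}, \del W_{1-i}) = 0$ by excision and $3$-connectedness.

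A slightly slicker route handles surjectivity and the kernel simultaneously and sidesteps what you flag as the ``main obstacle'' (the compatibility of the Mayer--Vietoris boundary with the connecting maps of the pairs $(W_i,\del W_i)$). Work directly with the long exact sequence of the pair $(X, M)$, where $M = \del W_0 = \del W_1$: since $X/M \simeq (W_0/M) \vee (W_1/M)$, excision gives $H^*(X, M) \cong H^*(W_0, \del W_0) \oplus H^*(W_1, \del W_1)$. In degree $5$ this vanishes, so restriction $H^4(X) \to H^4(M)$ is onto; and the kernel is exactly the image of $H^4(X,M) \cong H^4(W_0,\del W_0) \oplus H^4(W_1, \del W_1)$ in $H^4(X)$, which is the sum of the two embedded subgroups.
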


\begin{rmk} \label{rmk:characteristic_gluing}
We note that by Lemma \ref{lem:p_M}\ref{it:closed8}, the triple
$(H^4(X), \lambda_X, p_X)$ of the manifold $X$ in Lemma \ref{lem:split_mfd} is
a (nonsingular) characteristic form.
Moreover, the image of $p_X$ under the restriction map $H^4(X) \to H^4(W_i)$ is of course $p_{W_i}$.
\end{rmk}

\subsection{Torsion forms and quadratic refinements on finite groups}
\label{ss:tfs_and_Gauss_sums}

Throughout this paper
$T$ is
a \emph{finite} abelian group.
We say that $b \colon T \times T \to \Q/\Z$ is a \emph{torsion form} on $T$
if it is symmetric, bilinear, and nonsingular in the sense that
the induced map $T \to \Hom(T, \Q/\Z)$ is an isomorphism.
We call a function $q \colon T \to \Q/\Z$ a \emph{quadratic refinement} of $b$
if
\[ q(x+y) = q(x) + q(y) + b(x, y), ~~ \forall\,x, y \in T . \]
The \emph{homogeneity defect} of $q$ is the unique element $\beta = \beta(q) \in 2 T$ such that
for all $x \in G$ $q(x)-q(-x) = b(x, \beta)$.
If $\beta = 0$ then $q(x) = q(-x)$ and $q$ is called \emph{homogeneous}.
We define
\[ \cal{Q}(b) := \{ q :  \text{$q$ is quadratic refinement of $b$} \} \]
and we let $\cal{Q}^0(b) \subseteq \cal{Q}(b)$ be the set of homogeneous quadratic 
refinements of $b$.
In this subsection we consider the problem of classifying the quadratic refinements in
$\cal{Q}(b)$ up to isomorphism.  For $\cal{Q}^0(b)$ this problem was solved by
Nikulin \cite{nikulin79} and the general solution was given independently by
Deloup and Massuyeau \cite{deloup05} and the first author \cite{crowley02}.

The first basic results \cite[Lemmas 2.30 \& 2.31]{crowley02} are that
$\cal{Q}(b)$ and $\cal{Q}^0(b)$ are both non-empty and that $T$ acts 
freely and transitively on $\cal{Q}(b)$ via the action
\[  \cal{Q}(b) \times T \to \cal{Q}(b), \quad (q, t) \mapsto q_t,\]
where we recall from the introduction that for all $t \in T$, 
\[ q_t(x) = q(x) + b(x, t) = q(x+t) - q(t). \]
It is clear that the homogeneity defects of $q$ and $q_t$ are related by
$\beta(q_t) = \beta(q) + 2t$.

\begin{ex} \label{ex:cyclic_b_and_q}
If $T \cong \Z/r\Z$ is cyclic then all torsion forms and refinements on $T$
are given by the following examples.
Given $\theta \in \Z/r$ coprime to $r$, let
$\anb{\theta}{r}$ denote $\Z/r$ 
equipped with the torsion form
\[ b(x,y) := \frac{\theta xy}{r} \in \Q/\Z . \]
Given $\theta \in \Z/2r$ coprime to $r$ and $\gamma \in \Z/r$
(so that $2 \gamma \in \Z/2r$), we define a quadratic refinement
$\anq{\theta}{2r}_\gamma$ of $\anb{\theta}{r}$ by
\[ q(x) := \theta \left(\frac{ x^2 + 2\gamma x}{2r}\right) \in \Q/\Z. \]

\end{ex}

Beyond the homogeneity defect,
we introduce two further equivalent invariants of $q$.
The first of these is the {\em Gauss sum of $q$} which is the complex number
\[ GS(q) := \sum_{x \in T} e^{2\pi i q(x)} \in \C,  \]
where $i = \sqrt{-1}$ and $e$ is Euler's number.
From the fact that $q_t(x) = q(x+t)-q(t)$
one easily obtains the following useful

\begin{lem}[{\cite[(4.1)]{deloup05}}] \label{lem:Gauss_sum_shift}
$GS(q_t) = e^{-2\pi iq(t)} GS(q)$. \qed
\end{lem}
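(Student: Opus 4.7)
The plan is a direct computation using the two different expressions for $q_a$ given earlier. The key observation is that the identity $q_a(x) = q(x+a) - q(a)$ follows from the definition of a quadratic refinement applied to $q(x+a) = q(x) + q(a) + b(x,a)$. This is precisely the form that will let us reindex the Gauss sum.

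First I would substitute $q_a(x) = q(x+a) - q(a)$ into the defining expression
\[ GS(q_a) = \sum_{x \in T} e^{2\pi i q_a(x)}, \]
and factor out the $x$-independent constant $e^{-2\pi i q(a)}$ to obtain
\[ GS(q_a) = e^{-2\pi i q(a)} \sum_{x \in T} e^{2\pi i q(x+a)}. \]
Then I would perform the change of variable $y := x+a$; since translation by $a$ is a bijection $T \to T$, the indexed sum equals $\sum_{y \in T} e^{2\pi i q(y)} = GS(q)$, and the stated identity follows.

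The only potential obstacle is cosmetic, namely justifying the two forms of $q_a$ used, but both are given in the text immediately preceding the lemma. No nontrivial step is required.
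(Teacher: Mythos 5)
Your computation is correct and is exactly the argument the paper indicates: the lemma is stated with a citation and a \qed, preceded by the remark ``From the fact that $q_a(x) = q(x+a)-q(a)$ one easily obtains,'' which is precisely the substitution-and-reindex you carry out. No further comment is needed.
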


It is a theorem of Milgram \cite[Theorem, p.\,127, Appendix 4]{milnor73} that if $q$ is homogeneous, then $GS(q)$
is a non-zero complex number with modulus $\sqrt{|T|}$: by Lemma \ref{lem:Gauss_sum_shift}, this
holds for all $q \in \cal{Q}(b)$.
We define the {\em Arf invariant} of $q$ to be the number $\Arf(q) \in \Q/\Z$ which is the argument of $GS(q)$ divided by $2 \pi$.  That is
\begin{equation} \label{eq:arf_def}
 GS(q) = \sqrt{|T|}e^{2\pi i \Arf(q)} \in \C. 
\end{equation}
Then Lemma \ref{lem:Gauss_sum_shift} is equivalent to
\begin{equation}
\label{eq:arf_shift}
\Arf(q_t) = \Arf(q) - q(t) .
\end{equation}

Before giving the classification theorems for $\cal{Q}(b)$, we review how elements of $\cal{Q}(b)$
can be presented as the boundaries of nondegenerate characteristic forms
$(H, \lambda, \alpha)$ and how $\Arf(q)$ is determined by
$(H, \lambda, \alpha)$ in this situation.
If $\lambda$ is nondegenerate then the boundary $T := \coker (\hlambda)$
of $(H, \lambda)$ fits into the short exact sequence
\begin{equation} \label{eq:HH*T}
0 \to H \xra{~\hlambda~} H^* \xra{~\ares~} T \to 0.
\end{equation}
We write
$\lambda_\Q \colon (H \otimes \Q)  \times (H \otimes \Q) \to \Q$
for the rational form induced by $\lambda$.
Its adjoint
$\hlambda_\Q \colon H \otimes \Q \to H^* \otimes \Q$
is an isomorphism, and we use the inverse
$(\hlambda_\Q)^{-1} \colon H^* \otimes \Q \to H \times \Q$ 
to pull back the form $\lambda_\Q$ on $H \otimes \Q$.
We obtain a rational symmetric bilinear form on $H^* \otimes \Q$,
and restricting to $H^* \subset H^* \otimes \Q$
gives the rational-valued bilinear form 
\[ \rif: = (\hlambda_\Q^{-1})^*(\lambda_\Q)|_{H^* \times H^*} \colon H^* \times H^* \to \Q.\]
Explicitly, if $y, z \in H^*$ 
and if $y = k \hlambda(\wt y)$ and 
$z = l \hlambda(\wt z)$ for some integers $k$ and $l$ then,
$$ \rif(y, z) =  
\lambda_{\Q}(\hlambda_{\Q}^{-1}(y), \hlambda_{\Q}^{-1}(z)) = 
\frac{\lambda(\wt y, \wt z)}{kl}
= \an{\hlambda_{\Q}^{-1}(y), z}.$$

\begin{rmk} \label{rem:rif}
In \cite{crowley02} the form $\rif \colon H^* \times H^* \to \Q$ is denoted
$\lambda^{-1}$.
\end{rmk}
\begin{rmk}
When the sequence \eqref{eq:HH*T} is the sequence
$H^4(W, \del W) \longrightarrow H^4(W) \xra{~j~} H^4(M)$
of a $3$-connected coboundary $W$ as in Section \ref{ss:coboundaries}
with $H^4(M) = TH^4(M)$,
then the form $\rif \colon H^* \times H^* \to \Q$ 
is precisely the restriction of the rational-valued intersection form of $W$,
$\lambda_W \colon H^4(W; \Q) \times H^4(W; \Q) \to \Q$,
to $j^{-1}(TH^4(M)) = H^4(W) \subset H^4(W; \Q)$.
\end{rmk}

Given a nondegenerate characteristic form $(H, \lambda, \alpha)$ and
$x, y \in T$, let $\bar x, \bar y \in H^*$ be such that $\ares(\bar x) = x$
and $\ares(\bar y) = y$.
We define the torsion form $b_\lambda$ 
\[ b_\lambda \colon T \times T \to \Q/\Z, \quad (x, y) \mapsto 
-\rif(\bar x, \bar y)~~\text{mod}~\Z, \]
and the quadratic refinement of $b_\lambda$
\begin{equation} \label{eq:bndry_of_char_form}
q_{\lambda, \alpha} \colon T \to \Q/\Z, 
\quad x \mapsto \frac{-\rif(\bar x, \bar x) - \rif(\bar x, \alpha)}{2}~~\text{mod}~\Z.
\end{equation}
We regard $\left(T, q_{\lambda, \alpha}, j(\alpha)\right)$ 
as the {\em boundary} of $(H, \lambda, \alpha)$
and note that the homogeneity defect of $q_{\lambda, \alpha}$ is exactly $\ares(\alpha)$.

\begin{rmk} \label{rmk:sign_of_q}
The minus signs in \eqref{eq:bndry_of_char_form} are introduced
to correspond to the sign in \eqref{eq:b-by-cob}.
The sign differs from \cite[Definition 2.32]{crowley02}.
As a consequence, the definition of the linking form and quadratic linking family in
\cite[Definition 2.50]{crowley02} have the wrong signs.
\end{rmk}

\begin{ex}
Let us discuss the calculation of $(T, b)$ 
from $(H, \lambda)$ in more detail.
If $H$ has basis $\{v_1, \dots, v_n\}$ and $\lambda$ is represented with
respect to this basis by the symmetric integer matrix $B$ 
then $B$ is invertible over $\Q$ and the rational symmetric matrix
$B^{-1}$ expresses $\lambda \colon H^* \times H^* \to \Q$
with respect to the dual basis $\{v_1^*, \dots, v_n^*\}$ of $H^*$.
It follows that the mod~$\Z$ values of $B^{-1}$ express
the linking form $b$ with respect to the generating 
set $\{j(v_1^*), \dots, j(v_n^*)\}$ of $T$.

For example, suppose that $H = \Z^2$ with basis $\{v_1, v_2\}$,
and $\lambda$ and $\alpha$ are given by
$$ B = \twotwo{0}{2^i}{2^i}{0},
\quad \alpha(v_1) = 2a_1,
\quad \alpha(v_2) = 2a_2,$$
where $a_1, a_2 \in \Z$.
Then $T = \Z/2^i \oplus \Z/2^i$
with generating set $\{(j(v_1^*), (j(v_2^*))\}$,
$b$ has linking matrix
$$ \twotwo{0}{-2^{-i}}{-2^{-i}}{0}$$
with respect to $\{j(v_1^*), j(v_2^*)\}$
and $q_{\lambda, \alpha}$ is given by the formula 
$$ q_{\lambda, \alpha}\bigl( k j(v_1^*) + l j(v_2^*) \bigr) = \frac{-kl - a_2l - a_1k}{2^{i}}.$$
\end{ex}

The following fundamental theorem of Wall states that
every linking form and quadratic refinement are realised as the boundary 
of some even nondegenerate form.

\begin{thm}[{\cite[Theorem 6]{wall63}}]
For all torsion forms $b$ and for every $q \in \cal{Q}^0(b)$, there is an even nondegenerate
form $(H, \lambda)$ and an isomorphism $q \cong q_{\lambda, 0}$.
\end{thm}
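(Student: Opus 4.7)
The plan is to proceed by reduction to indecomposable pieces followed by explicit realisation. First I would use the canonical $p$-primary decomposition $T = \bigoplus_p T_p$, which is orthogonal for both $b$ and $q$ since the restrictions of $b$ to distinct $p$-primary parts land in $\Z_{(p)} \cap \Z_{(p')}$ modulo $\Z$, which is zero. This splits $(T, b, q)$ as an orthogonal direct sum of homogeneous refinements on finite abelian $p$-groups. Since the boundary construction of Section \ref{subsec:tfs_and_Gauss_sums} manifestly commutes with orthogonal direct sum of even nondegenerate integral forms, it suffices to realise each $p$-primary piece separately.

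Next I would invoke the classification of homogeneous quadratic refinements on finite abelian $p$-groups (\cf Wall, Nikulin). For odd $p$ every such refinement decomposes orthogonally into cyclic pieces of the form $\anq{2\theta}{2p^s}_0$ on $\Z/p^s$ with $\theta$ a unit mod $p^s$, classified up to isomorphism by $s$ and the quadratic-residue class of $\theta$. For $p = 2$ one has cyclic pieces $\anq{\theta}{2^{s+1}}_0$ on $\Z/2^s$ (which are automatically homogeneous because the homogeneity defect $2\gamma = 0$ forces $\gamma \in \{0, 2^{s-1}\}$, and the $\gamma = 2^{s-1}$ case is covered by adjusting $\theta$) together with two rank-two indecomposable families, namely the hyperbolic and twisted hyperbolic blocks on $\Z/2^s \oplus \Z/2^s$.

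Finally, for each building block I would write down an explicit even nondegenerate $(H, \lambda)$ whose boundary is isomorphic to that piece, using the formula $q_{\lambda, 0}(x) = \tfrac{1}{2}\lambda^{-1}(\bar x, \bar x) \mmod \Z$ from \eqref{eq:bndry_of_char_form}. For an odd-$p$ cyclic piece, a rank-two form $\left(\begin{smallmatrix} 2a & b \\ b & 2c \end{smallmatrix}\right)$ of determinant $\pm p^s$ with entries tuned to control the quadratic-residue class of the induced $\theta$ suffices; by varying the entries one realises both residue classes. For a $2$-primary cyclic piece on $\Z/2^s$, a rank-one form $\lambda(e,e) = 2^s u$ with $u$ an appropriate odd unit realises it directly. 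The rank-two $2$-primary blocks are realised by $\left(\begin{smallmatrix} 0 & 2^s \\ 2^s & 0 \end{smallmatrix}\right)$ and a corresponding twisted variant. Orthogonal direct sum of these lattices then gives a global $(H, \lambda)$ whose boundary refinement is isomorphic to $(T, b, q)$.

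The main obstacle is the bookkeeping at $p = 2$: one must carefully enumerate precisely which cyclic refinements are homogeneous, verify that the two rank-two families are genuinely indecomposable and exhaust the remaining cases, and confirm that the proposed even lattices produce not merely the correct torsion form $b$ but the specific refinement $q$ (i.e.\ match the Arf invariants of each orthogonal summand, not only the linking forms). Once the $2$-primary classification is settled, the realisation for odd $p$ and the assembly into a global even nondegenerate form are routine computations using Lemma \ref{lem:split_mfd}-style orthogonality.
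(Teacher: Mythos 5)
This theorem is not proved in the paper; it is cited verbatim as Theorem~6 of Wall's 1963 paper (\cite{wall63}), so there is no proof in the paper against which to compare your argument. What you have sketched is, in outline, Wall's own proof strategy: reduce to $p$-primary parts by orthogonal decomposition, classify the indecomposable homogeneous refinements at each prime, and then exhibit each indecomposable as the boundary of an explicit even nondegenerate lattice. That is the right plan. However, there is a genuine gap in the realization step at $p=2$: a rank-one even form $\lambda(e,e)=2^s u$ with $u$ odd has cokernel $\Z/(2^s u)$, which is $\Z/2^s$ only if $u=\pm 1$; so this construction realises only the refinements $\anq{\pm 1}{2^{s+1}}_0$ and misses the other units mod $2^{s+1}$ (in Wall's notation, it gives the $A_s^{\pm 1}$ pieces but not $B_s^{\pm 1}$). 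Realising the remaining cyclic $2$-primary refinements requires even lattices of higher rank, and indeed rank two is also not enough in general since an even binary form $\left(\begin{smallmatrix} 2a & b \\ b & 2c\end{smallmatrix}\right)$ with $b$ odd has determinant $\equiv -1 \pmod 4$ and so cannot have determinant $\pm 2^s$ for $s\geq 2$. This step needs to be redone following Wall's explicit generator-by-generator realisation. A minor slip: the ``Lemma \ref{lem:split_mfd}-style orthogonality'' you invoke at the end is the topological Mayer--Vietoris lemma; you presumably intend Lemma \ref{lem:splitq} or the purely algebraic fact that $\del$ commutes with orthogonal sum of characteristic forms.
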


We now state Milgram's theorem on the Gauss sums of homogeneous quadratic torsion forms.

\begin{thm}[Milgram {\cite[Theorem, p.\,127, Appendix 4]{milnor73}}] \label{thm:Milgram}
Let $q \in \cal{Q}^0(b)$ and $(H, \lambda)$ be an even nondegenerate integral form
with signature $\sigma(\lambda)$.  Then
\begin{enumerate}
\item $8\Arf(q) \in \Z$;
\item $8 \Arf(q_{\lambda, 0}) \equiv -\sigma(\lambda)$~mod~$8$.
\end{enumerate}
\end{thm}

Following Milgram's theorem, we can restate Nikulin's classification of homogeneous quadratic
refinements of $b$ as follows.

\begin{thm}[{\cite[Theorem 1.11.3]{nikulin79}}] \label{thm:classification_of_homogeneous_q}
If $q_0, q_1 \in \cal{Q}^0(b)$ then $q_0$ is isomorphic to $q_1$ if and only if $\Arf(q_0) = \Arf(q_1)$.
\end{thm}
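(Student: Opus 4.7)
The forward direction is immediate: if $F \in \aut(b)$ satisfies $q_1 = q_0 \circ F$, then relabelling the summation gives $GS(q_1) = GS(q_0)$ and hence $\Arf(q_0) = \Arf(q_1)$. The substantive content is the converse, and the plan is to combine an orthogonal decomposition of $(T,b)$ with Wall's realisation theorem and Milgram's signature formula (Theorem \ref{thm:Milgram}).

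First I would decompose $(T,b)$ into its $p$-primary components and then further into indecomposable summands. For odd primes the indecomposables are the cyclic forms $\anb{\theta}{p^k}$ of Example \ref{ex:cyclic_b_and_q}; at the prime $2$ one additionally has certain rank $2$ blocks on $(\Z/2^k)^2$. On each indecomposable summand I would enumerate the homogeneous refinements $q \in \cal{Q}^0(b)$ explicitly using the free transitive action of $T$ on $\cal{Q}(b)$, and compute their Gauss sums directly. The outcome of this step is that on each indecomposable piece $\Arf$ is already a complete invariant of the isomorphism class of the refinement, and in particular the list of possible $\Arf$ values on an indecomposable block is a small, explicit finite set of rationals.

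For the global comparison, given $q_0, q_1 \in \cal{Q}^0(b)$ with $\Arf(q_0) = \Arf(q_1)$ I would apply Wall's theorem to realise each as the boundary of an even nondegenerate characteristic form $(H_i,\lambda_i, 0)$ and work on the integral-form side. Stabilising with a hyperbolic plane leaves the boundary refinement unchanged, and stabilising with $\pm E_8$ preserves the boundary while shifting the signature by $\pm 8$; by Milgram's theorem the hypothesis $\Arf(q_0) = \Arf(q_1)$ then allows one to normalise the two realisations to have the same rank and the same signature. An isometry between such stably equivalent even integral forms restricts on the boundary to an isomorphism $q_0 \acong q_1$ in $\cal{Q}^0(b)$, giving the required element of $\aut(b)$.

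The main obstacle is the $2$-primary part. For odd primes the indecomposable analysis together with Witt-type cancellation among cyclic summands is essentially mechanical, but at $p = 2$ one must handle the rank $2$ blocks and the exchange relations between $\anq{\theta}{2^{k+1}}$-summands and their $2$-torsion partners, and bookkeeping the Arf contributions while constructing the required automorphism of $b$ is the technically delicate step. The cleanest workaround is to push all the $2$-primary work onto the integral side, so that the classification of even integral lattices up to stable isomorphism by rank and signature does the heavy lifting and the only input needed from the torsion form is the Arf/signature matching supplied by Milgram's theorem.
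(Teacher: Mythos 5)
The paper does not give a proof of this theorem; it cites Nikulin \cite{nikulin79} directly, and the remark following Theorem \ref{thm:classification_of_q} records only that Nikulin's argument (like the argument for the inhomogeneous generalisation in \cite{crowley02}) proceeds by applying classification results for torsion forms and case-by-case checking. Your opening plan --- decompose into $p$-primary indecomposables, enumerate the homogeneous refinements on each block, and compute Gauss sums --- is in the spirit of what the paper attributes to Nikulin, and you correctly identify the $2$-primary bookkeeping as the hard part. Your forward direction is fine.

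The workaround in your final paragraph, however, is circular. You invoke ``the classification of even integral lattices up to stable isomorphism by rank and signature,'' but no such classification exists for non-unimodular even lattices. Up to stable isomorphism (adding hyperbolic planes), an even nondegenerate lattice is classified by its signature together with its \emph{discriminant quadratic form} $(T,q)$; the bilinear discriminant form $(T,b)$ alone is not sufficient. Concretely, after realising $q_i$ as $\del(H_i,\lambda_i,0)$ via Wall and normalising rank and signature, the lattices $(H_0,\lambda_0)$ and $(H_1,\lambda_1)$ lie in the same genus --- and hence become isomorphic after adding hyperbolic planes --- precisely when the discriminant quadratic forms $q_0$ and $q_1$ are isomorphic, which is the statement you are trying to prove. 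Milgram's theorem only guarantees the signatures agree modulo $8$; it does not tell you the quadratic discriminant forms agree. So the step you hoped would absorb the $2$-primary case analysis presupposes its conclusion. To close the gap you must return to the approach of your second paragraph: use the lists of indecomposable torsion forms (Wall \cite{wall63}, Kawauchi--Kojima \cite{kawauchi80}), enumerate the homogeneous refinements on each block with their Gauss sums, and handle the $2$-primary exchange relations directly --- or else simply cite Nikulin, as the paper does.
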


For general quadratic refinements of $b$ we have the following results.

\begin{prop}[{\cite[Proposition 5.19]{crowley02}}]
\label{prop:arf}
Any nondegenerate characteristic form $(H, \lambda, \alpha)$ has
\[ \Arf(q_{\lambda, \alpha}) = \frac{\rif(\alpha, \alpha) - \sigma(\lambda)}{8} \in \Q/\Z . \]
\end{prop}

\begin{thm}[{\cite[Theorem 5.22]{crowley02}}, {\cite[Theorem 4.1]{deloup05}}] \label{thm:classification_of_q}
Let $q_0, q_1 \in \cal{Q}(b)$ be quadratic refinements with homogeneity defects 
$\beta_0$ and $\beta_1$ respectively.  Then $q_0$ and $q_1$
are isomorphic if and only if the following hold:
\begin{enumerate}
\item There is an automorphism $f \colon T \cong T$ of $b$ such that $f(\beta_0) = \beta_1$;
\item $\Arf(q_0) = \Arf(q_1)$.
\end{enumerate}
\end{thm}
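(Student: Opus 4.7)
The plan is to handle necessity by direct verification and sufficiency by reducing to the construction of a single explicit transvection. Necessity is routine: any isomorphism $h$ from $q_0$ to $q_1$ is an automorphism of $b$, and transporting the defining equation for the homogeneity defects forces $h(\beta_0) = \beta_1$; the Arf invariant is manifestly invariant under any bijection of $T$, since the Gauss sum is a sum over all of $T$.

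For sufficiency I plan a three-step reduction. First, using the automorphism $g$ from hypothesis (i), I would replace $q_1$ by the isomorphic refinement $q_1 \circ g$, which has homogeneity defect $\beta_0$ and the same Arf invariant; this lets me assume $\beta_0 = \beta_1 = \beta$. Second, the free transitive action of $T$ on $\cal{Q}(b)$ yields a unique $a \in T$ with $q_0 = (q_1)_a$; since the homogeneity defect transforms as $\beta \mapsto \beta + 2a$, the equality $\beta_0 = \beta_1$ forces $a \in T[2]$. The Arf-shift identity \eqref{eq:arf_shift} combined with hypothesis (ii) then gives $q_1(a) = 0$, and applying the refinement equation to $0 = q_1(2a) = 2 q_1(a) + b(a, a)$ yields $b(a, a) = 0$. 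Thus $a$ is isotropic for both $b$ and $q_1$.

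The third step, and the heart of the argument, is to realise the shift by $a$ via an explicit transvection. Since $2a = 0$, each value $b(x, a) \in \Q/\Z$ lies in $\{0, \tfrac{1}{2}\}$, so there is a well-defined homomorphism $\epsilon \colon T \to \Z/2$ given by $b(x, a) = \epsilon(x)/2$. I would set $f(x) := x + \epsilon(x) a$, which is linear since $\epsilon$ is and $2a = 0$. Three short checks---each using one of $b(a, a) = 0$, $q_1(a) = 0$, and $2a = 0$---will show that $f$ is an involution, preserves $b$, and satisfies $q_1 \circ f = (q_1)_a = q_0$, so that $g \circ f$ is the required isomorphism from $q_0$ to $q_1$. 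The only non-formal point I foresee is arranging through the first two steps that the shift $a$ is isotropic for both $b$ and $q_1$; once this is secured, the construction of $f$ proceeds in parallel to Witt's extension theorem in the classical theory of nondegenerate bilinear forms, and is what genuinely requires the Arf-invariant hypothesis beyond the cosmetic condition on the homogeneity defects.
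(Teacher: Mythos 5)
Your proof is correct. Note, however, that the paper does not actually prove this statement; it cites it as Theorem 5.22 of \cite{crowley02} and Theorem 4.1 of \cite{deloup05}, and the surrounding remark contrasts the two cited approaches: the one in \cite{crowley02} proceeds via the classification of torsion forms and case-by-case checking, whereas the one in \cite{deloup05} is ``short and general, with one elegant argument covering all cases.'' Your argument is a self-contained instance of the latter philosophy. The reduction is sound: composing with the base automorphism $g$ makes the homogeneity defects agree, the simply transitive $T$-action then gives a unique $a$ with $q_0 = (q_1)_a$, and since the action changes the defect by $2a$ and the Arf invariant by $-q_1(a)$, the two hypotheses force $2a = 0$ and $q_1(a) = 0$, from which $b(a,a) = -2q_1(a) = 0$. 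Defining $\epsilon \colon T \to \Z/2$ by $b(x,a) = \epsilon(x)/2$ (well-defined because $b(\cdot,a)$ is $2$-torsion) and $f(x) = x + \epsilon(x)a$, the three checks go through exactly as you outline: $\epsilon(a) = 2b(a,a) = 0$ makes $f$ an involution and kills the $\epsilon(x)\epsilon(y)b(a,a)$ cross-term so that $f$ preserves $b$, while $q_1(a) = 0$ together with $b(x, \epsilon(x)a) = \epsilon(x)^2/2 = \epsilon(x)/2 = b(x,a)$ yields $q_1 \circ f = (q_1)_a = q_0$. Hence $g \circ f$ is the required isomorphism of refinements. The one small presentational caveat is that the three verifications do not each use ``one'' of the three facts $b(a,a)=0$, $q_1(a)=0$, $2a=0$ in isolation --- for instance preserving $b$ uses both $2a=0$ and $b(a,a)=0$ --- but the dependency structure you sketch is otherwise accurate, and the Arf hypothesis is genuinely needed to get $q_1(a) = 0$ rather than merely $b(a,a) = 0$ (indeed with $2a = 0$ one can have $q_1(a) = \tfrac{1}{2}$ while $b(a,a) = 0$).
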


\begin{rmk}
The proof of Theorem \ref{thm:classification_of_homogeneous_q} in \cite{nikulin79} 
and the proof of Theorem \ref{thm:classification_of_q} 
in \cite{crowley02} both apply classification results for torsion forms and case by case checking.
In contrast, the proof of Theorem \ref{thm:classification_of_q} in \cite{deloup05}
is short and general, with one elegant argument covering all cases.
\end{rmk}

\subsection{Families of quadratic refinements} \label{ss:families}

Let $G$ be a finitely generated abelian group, $p$~an element of $2G$,
and $b$ a torsion form on the torsion subgroup $T$; 
\ie $(G,b,p)$ is a base and so is an object in the category $\catb$.
Define
\[ \divs_2 := \{ h \in G : p - 2h \in T \} , \]
and for $h \in \divs_2$ write $\beta_h := p_M - 2h$. Note that $T$ acts
simply transitively on $\divs_2$ by addition.

\begin{defn} \label{def:foqr}
A \emph{family of quadratic refinements} of a base $(G,b,p)$ is defined to be a function
$q^\circ \colon \divs_2 \to \cal{Q}(b), \; h \mapsto q^h$, such that:
\begin{enumerate}
\item \label{it:q_hom-defect}
the homogeneity defect of $q^h$ is $\beta_h$;
\item \label{it:q_trans}
$q^{h+t} = q^h_{-t}$ for any $t \in T$.
\end{enumerate}
The triple $(G, q^\circ, p)$ is called a {\em refinement} of $(G, b, p)$.
\end{defn}
An isomorphism $F \colon G \to G'$ obviously maps $\divs_2 \to \divs'_2$,
and $F$ pulls back a family of quadratic refinements $q'^\circ$ on
$G'$ to one on $G$ by setting
\begin{equation*}
(F^\# q')^h := q'^{F(h)} \circ F_{|T} .
\end{equation*}
In this case $q^\circ$ and $q'^\circ$
are isomorphic via $F$, and so are $(G, q^\circ, p)$ 
and $(G', q'^\circ, F^{-1}(p))$.

The orthogonal sum of two 
refinements 
$(G_0, q^\circ_0, p_0)$ and $(G_1, q^\circ_1, p_1)$ 
is the refinement
$(G_0 \oplus G_1, q^\circ_0 \oplus q^\circ_1, p_0 \oplus p_1)$
as defined in Section \ref{ss:elaboration}. 
The negative of a 
refinement $(G, q^\circ, p)$ of $(G, b, p)$ is the 
refinement $(G, -q^\circ, p)$ of $(Q, -b, p)$
defined by $(-q)^h = -q^h$.  
For later use we introduce the category $\catq$ consisting of refinements
with morphisms isomorphisms
\[ {\rm Ob}(\catq) = \{ (G, q^\circ\!, p) \}. \]

Refinements as in Definition \ref{def:foqr} 
are defined naturally on the boundaries of characteristic forms
$(H, \lambda, \alpha)$ when $\lambda$ is allowed to be degenerate.
First we define the base $(G, b, p)$.
Let $G := \coker(\hlambda)$, 
let $\rad :=\ker(\hlambda) \subset H$ be the radical of $\lambda$ and let
$\res \subseteq H^*$ the annihilator of $\rad$.
Then $\res \cong (H/K)^*$, and the form $\lambda$ descends to a nondegenerate
form ${\lambda_{/K} \colon H/\rad \times H/\rad \to \Z}$
with $R / \im(\hlambda_{/K}) \cong T$.
Hence we obtain a torsion form $b = b_\lambda$ on $T$ as in 
Section \ref{ss:tfs_and_Gauss_sums}.
To define $p$ we let $j \colon H^* \to G$ be the projection
and set $p := \ares(\alpha)$.
Regardless of whether $\lambda$ is degenerate or not, the classification of
$\Z_2$-valued bilinear forms implies that there is always
an $x \in H$ such that $\lambda(x,y) = \lambda(y,y) \mmod 2$ for any $y \in H$.
Then $\alpha - \hlambda(x) \in H^*$ is even, so $p = j(\alpha) \in G$ is
even as required.

\begin{defn} \label{def:bbocf}
The {\em boundary base} of a characteristic form $(H, \lambda, \alpha)$
is defined to be the triple 
$(G, b, p) := \bigl( \coker(\hlambda), b_\lambda, j(\alpha) \bigr)$.
\end{defn}

Next we define the induced family of quadratic refinements.
For any $h \in \divs_2$, pick $m \in H^*$ such that $\ares(m) = h$ and
set $\alpha_m = \alpha - 2m$.
Then $\ares(\alpha_m)$ is a torsion element
and so $\alpha_m \in \res$ which
is characteristic for $(H/\rad, \lambda_{/K})$ and we let
$$q^h_{(H, \lambda, \alpha)} := q_{\lambda_{/K}, \alpha_m}$$
be the quadratic refinement of $b$ defined in \eqref{eq:bndry_of_char_form} 
in the previous subsection, 
\ie if $\bar x \in R$ and $\ares(\bar x) = x$ then
\begin{equation}
\label{eq:alg_qdef}
q^h(x)_{(H, \lambda, \alpha)} = \frac{\rif(\bar x, \bar x) + \rif(\alpha_m, \bar x)}{2}
= \frac{\gen{\hlambda^{-1}_{\Q}(\bar x), \bar x + \alpha_m}}{2} \in \Q/\Z.
\end{equation}
This is independent of the choice of $m$, since if $m' = m + \hlambda(r)$ 
then
\[ \rif(2m', \bar x) - \rif(2m, \bar x) = 2\gen{r, \bar x} \in 2\Z . \]
That (i) of Definition \ref{def:foqr} is satisfied is immediate from
$\beta_h = j(\alpha_m)$.
Meanwhile, if $h' = h + t$ for some $t \in T$ then
$\ares(m' - m) = t$, so 
\[ q^{h'}_{(H, \lambda, \alpha)}(x) - q^h_{(H, \lambda, \alpha)}(x) = 
\gen{-\hlambda^{-1}_\Q(\bar x), m - m'} = -b_\lambda(x, t) , \]
which shows that (ii) of Definition \ref{def:foqr} holds.  

\begin{defn} \label{def:bocf}
The {\em boundary} of a characteristic form $(H, \lambda, \alpha)$
is the triple
$$ \del (H, \lambda, \alpha) : = 
(\coker(\wh \lambda), q^\circ_{(H, \lambda, \alpha)}, j(\alpha)).$$
It is a refinement of the boundary base $(\coker(\wh \lambda), b_\lambda, j(\alpha))$
of Definition \ref{def:bbocf}.
\end{defn}

It is clear that an isomorphism of characteristic
forms $E \colon (H_0, \lambda_0, \alpha_0) \cong (H_1, \lambda_1, \alpha_1)$
induces an isomorphism 
$\del E \colon \del(H_0, \lambda_0, \alpha_0) \cong \del(H_1, \lambda_1, \alpha_1)$
of the boundary refinements.
It is also clear that the boundary of an orthogonal sum of characteristic forms is
the orthogonal sum of the boundaries and that 
$\del(H, -\lambda, \alpha) = -\del(H, \lambda, \alpha)$.

We call a characteristic form $(H,\lambda,\alpha)$ {\em nonsingular} if
$\lambda$ is, \ie if the adjoint $\hlambda \colon H \to H^*$ is an isomorphism.
Suppose that $(H,\lambda, \alpha)$ is a nonsingular characteristic form,
$H_0$ is some primitive subgroup of~$H$ and $H_1$ is the $\lambda$-orthogonal
subspace to $H_0$.
Let $\alpha_i \in H_i^*$ be the restrictions of $\alpha$ to $H_i$.
Let $\lambda_1$ be the restriction of $\lambda$ to $H_1$, and $\lambda_0$
the restriction of $-\lambda$ to~$H_0$. 
In this case we say that $(H_0, \lambda_0, \alpha_0)$ and
$(H_1, \lambda_1, \alpha_1)$ are {\em orthogonal in $(H, \lambda, \alpha)$}.
For the groups $G_i = \coker(\wh \lambda_i)$ of the boundaries of 
$(H_i, \lambda_i, \alpha_i)$, the restriction maps $H^* \to H_i^*$ 
and the isomorphism $\wh \lambda \colon H \cong H^*$ give rise to homomorphisms
$H \to H^* \to H_i^* \to G_i$
which induce isomorphisms 
\[ \Pi_i \colon H/(H_0 \oplus H_1) \cong G_i.\]
Given $(H_0, \lambda_0, \alpha_0)$ orthogonal to
$(H_1, \lambda_1, \alpha_1)$ in $(H, \lambda, \alpha)$,
we thus have the canonical isomorphism
\begin{equation} \label{eq:canonical_iso}
F_\lambda : = \Pi_1 \circ \Pi_0^{-1} \colon G_0 \cong G_1. 	
\end{equation}
(There is a slight asymmetry in the definition of orthogonal forms: 
If $(H_0, \lambda_0, \alpha_0)$
is orthogonal to $(H_1, \lambda_1, \alpha_1)$ in $(H, \lambda, \alpha)$,
then $(H_1, \lambda_1, \alpha_1)$ is orthogonal to $(H_0, \lambda_0, \alpha_0)$
in $(H, -\lambda, \alpha)$. However, the isomorphisms $F_\lambda : G_0 \to G_1$
and $F_{-\lambda} : G_1 \to G_0$ are precisely inverse to each other.)
The following lemma is a routine calculation using \eqref{eq:alg_qdef} and
the fact that $(H, \lambda, \alpha)$ is nonsingular: see \cite[Lemma 3.10]{crowley02}
for the case where $(H_i, \lambda_i, \alpha_i)$ are nondegenerate.

\begin{lem} \label{lem:splitq}
Let $(H_0, \lambda_0, \alpha_0)$ and $(H_1, \lambda_1, \alpha_1)$ 
be orthogonal characteristic forms in the nonsingular
characteristic form $(H, \lambda, \alpha)$.  
If for $i = 0, 1$, $(G_i, q^\circ_i, p_i)$ denotes the boundary of $(H_i, \lambda_i, \alpha_i)$,
then the canonical isomorphism $F_\lambda$ of \eqref{eq:canonical_iso} 
induces an isomorphism of the boundaries:
\[
\pushQED{\qed} 
F_\lambda^\#(q^\circ_1, p_1) = (q^\circ_0, p_0). \qedhere
\popQED
\]
\end{lem}

As discussed in Section \ref{ss:coboundaries},
if $W$ is a 3-connected coboundary of a closed
spin 7-manifold $M$ then $(FH^4(W, \del W), \lambda_W, p_W)$ is a
characteristic form.
Note that the associated boundary in $\catb$ 
in the sense of Definition \ref{def:bbocf} 
is precisely the base of $M$,
$(H^4(M), b_M, p_M)$, where $b_M$ the torsion linking form of $M$
as described in \eqref{eq:b-by-cob}.
Hence we have the following

\begin{lem} \label{lem:bboM}
The base of a spin $7$-manifold $M$ is the boundary base
of the characteristic form of any 3-connected coboundary $W$ of $M$;
\ie
\[
 \pushQED{\qed} 
(H^4(M), b_M, p_M) = \del(H^4(W, \del W), \lambda_W, p_W).
\qedhere
\popQED
\]
\end{lem}

\begin{defn}
\label{def:qlf}
The \emph{quadratic linking family} $q^\circ_M$ of $M$ is the family of
quadratic refinements of $(H^4(M), b_M, p_M)$ defined by the characteristic
form $(FH^4(W, \del W), \lambda_W, p_W)$, 
where $W$ is any 3-connected coboundary $W$ of $M$.
Explicitly, applying \eqref{eq:alg_qdef} we obtain for all $h \in S_2$ that
$$ q_M^h(x) = 
\frac{-\lambda_W(\bar x, \bar x) - \lambda_W(\alpha_m, \bar x)}{2},$$
where $\bar x \in H^4(W)$ is a lift of $x$,
$\alpha_m = p_W - 2m$ and $m \in H^4(W)$ is a lift of $h$.

Moreover, if $d_\pi = 0$---in particular if $M$ is a rational homology sphere---then 
we have the preferred element $0 \in S_2 = TH^4(M)$
and $q_M := q^0_M$ is {\em the quadratic refinement of $M$}.
\end{defn}

If $W_0$ and $W_1$ are 3-connected coboundaries of $M_0$ and $M_1$
respectively and $f : M_0 \to M_1$ is a homeomorphism, 
let $X$ be the closed topological spin
manifold $(-W_0) \cup_f W_1$.
Then Lemma \ref{lem:split_mfd} and Remark \ref{rmk:characteristic_gluing}
imply that the characteristic forms
$(FH^4(W_0, M_0), \lambda_{W_0}, p_{W_0})$ and
$(FH^4(W_1, M_1), \lambda_{W_1}, p_{W_1})$ are orthogonal in
$(FH^4(X), \lambda_X, p_X)$ and also that the induced isomorphism
$F_{\lambda_X} : H^4(M_0) \to H^4(M_1)$ is precisely $(f^*)^{-1}$.
Together with Lemma \ref{lem:splitq}, this implies
that $q^\circ_M$ is independent of the choice of $W$ and natural under
homeo\-morphisms (in the sense that $(f^*)^\# q_{M_0}^\circ = q_{M_1}^\circ$ for
any homeomorphism $f \colon M_0 \to M_1$).

\begin{rmk} \label{rmk:t_invariant}
If $d_\pi = 0$ %
then by \cite[Definition 1.4 and Theorem~2.4]{crowley13},
the function $q_M$ can be defined analytically using the eta invariant of a 
Dirac operator on $M$, twisted by appropriate quaternionic line bundles.
This definition is intrinsic to $M$, in the sense that no co-boundary is required.
For an alternative intrinsic definition of $q^\circ_M$ in the case of
$2$-connected $M$, see Section \ref{ss:intrinsic} below.
\end{rmk}

\begin{rmk} \label{rmk:topological_classification}
The proof following Definition \ref{def:qlf} that $q^\circ_M$
is a homeomorphism invariant relies on Remark \ref{rmk:topological_p_M} and
Lemma \ref{lem:p_M}.
It is simpler than the proof given 
in \cite[Theorem 6.1]{crowley02} which used the full apparatus of smoothing theory.

Notice, however, that smoothing theory and Theorem \ref{thm:a_class} imply
that every $2$-connected $M$ with $H^2(M; \Z/2) \neq 0$ admits exotic self-homeomorphisms;
by which we mean homeo\-morphisms 
which are not isotopic to piecewise linear homeomorphisms.
Self-homotopy equivalences which are homotopic to exotic self-homeomorphisms were defined on certain rational
homotopy spheres in \cite[\S 2.b]{crowley13}, see \cite[Lemma 2.17]{crowley13}.
\end{rmk}

\begin{rmk}
\label{rmk:projections}

Given a section $\sigma \colon G/T \to G$ of the projection $\pi \colon G \to G/T$,
the image of $\sigma$ is isomorphic to the free part of $G$, and there is a
unique $k(\sigma) \in \divsp \cap \im(\sigma)$.
We can therefore define the family of quadratic refinements as a
function on the set of sections $\sect(\pi)$ of $\pi$ so that
$q^\bullet \colon \sect(\pi) \to \cal{Q}(b),~q^{\sigma} := q^{e_\pi k(\sigma)}$.
This presentation is relevant for considering connected-sum
splittings of $M$ and is discussed further
in Section \ref{ss:alsmost_classification}.
\end{rmk}

\subsection{Gauss refinements}\label{ss:gr}
We can associate a further boundary invariant to a characteristic form
which we refer to as a Gauss refinement of the family of quadratic refinements.
Let $(G, b, p) \in \catb$, \ie $G$ is a finitely generated abelian group,
$p \in 2G$ and $b \colon T \times T \to \Q/\Z$ is a torsion form.
Let $\pi \colon G \to G/T$ be the projection 
and define $d_\pi$ to be the greatest integer dividing $\pi(p)$ if
$\pi(p) \neq 0$ and set $d_\pi : = 0$ if $\pi(p) = 0$.
If $d_\pi \neq 0$, we define
\begin{equation*}
\divsp := \{ k \in G : p - d_\pi k \in T \} 
\end{equation*}
and if $d_\pi = 0$ set $\divsp := T$.
Given $k \in \divsp$ write $\beta_k := p - d_\pi k$
and note that $T$ acts simply transitively on $\divsp$ by addition.
As in the introduction, we abbreviate $d_\pi/2$ as $e_\pi$.

Given $(G,q^\circ\!,p) \in \catq$, \ie if $q^\circ$ is a family of
quadratic refinements of $(G,b, p)$,
let us define $\tdelta(k, t) \in \Q/2\td_\pi \Z$ by
\begin{equation}
\label{eq:tdelta}
\tdelta(k,t) = 4d_\pi q^{e_\pi k}(t) - d_\pi(d_\pi {+} 2) \, b(t,t)
\end{equation}
(note that if $k \in \divsp$ then $e_\pi k \in \divs_2$,
so $q^{e_\pi k}$ is a well-defined quadratic refinement of $b$).

\begin{defn}
Given $(G,q^\circ\!,p) \in \catq$, we call a function
$\gr \colon \divsp \to \Q/\tdf \Z$ a \emph{Gauss refinement} of $q^\circ$ if
\begin{subequations}
\begin{equation}
\label{eq:gr_def}
\gr(k) = \Arf(q^{e_\pi k}) \mod \bbz
\end{equation}
for all $k \in \divsp$, and the transformation rule
\begin{equation} \label{eq:gr_rule}
\gr(k+t) - \gr(k) \; = \; \frac{\tdelta(k, t)}{8}
\end{equation}
\end{subequations}
holds for all $k \in \divsp$ and $t \in T$.
\end{defn}

A Gauss refinement is completely determined by its value at any single
$k \in \divsp$, using \eqref{eq:gr_rule}.
The difference between two Gauss refinements of the same family of quadratic
refinements is a constant, and by \eqref{eq:gr_def} the constant takes values
in $\Z/\tdf\Z$.

Now suppose that $(H, \lambda, \alpha)$ is a characteristic form.
Given $k \in \divsp$, pick $n \in H^*$ such that $j(n) = k$,
and set %
$\alpha_n := \alpha - d_\pi n$.
Note that $j(\alpha_n) = \beta_k$, and that $\alpha_n \in R \cong (H/K)^*$ is a
characteristic element for the intersection form on $H/K$. Let
\begin{equation}
\label{eq:grchat}
 \gr_H(k) := \frac{\rif(\alpha_n, \alpha_n) - \sign(\lambda)}{8}
\in \Q/\tdf \Z . 
\end{equation}

\begin{lem}
\label{lem:gr_well}
$\gr_H$ is well-defined, independent of the choices of $n$.
\end{lem}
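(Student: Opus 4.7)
The plan is to show that any two lifts $n, n' \in H^*$ of $k$ yield the same value of $\gr_H(k)$ modulo $\tdf\Z$. Since $\ker(j \colon H^* \to G) = \im \hlambda$, any two such lifts differ by $\hlambda(r)$ for some $r \in H$, so $\alpha_{n'} = \alpha_n - d_\pi\hlambda(r)$. The definition of $\lambda^{-1}$ as the pullback of $\lambda_\Q$ along the isomorphism $\hlambda_\Q \colon (H/K)\otimes\Q \to R\otimes\Q$ immediately yields the identities $\lambda^{-1}(\hlambda(r), x) = x(r)$ for any $x \in R$ and $\lambda^{-1}(\hlambda(r),\hlambda(r)) = \lambda(r,r)$. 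Bilinearity then gives
\[
\lambda^{-1}(\alpha_{n'},\alpha_{n'}) - \lambda^{-1}(\alpha_n,\alpha_n) = d_\pi\bigl(d_\pi\lambda(r,r) - 2\alpha_n(r)\bigr),
\]
and substituting $\alpha_n(r) = \alpha(r) - d_\pi n(r)$ shows that the difference of the two candidate values of $\gr_H(k)$ equals
\[
\frac{d_\pi\bigl(d_\pi\lambda(r,r) - 2\alpha(r) + 2d_\pi n(r)\bigr)}{8}.
\]

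It then remains to verify that this rational lies in $\tdf\Z$. If $d_\pi = 0$ the claim is trivial since $\alpha_n = \alpha$ is independent of $n$. For $d_\pi \neq 0$, the hypothesis $p \in 2G$ forces the image of $p$ in the free quotient $G/T$ to be divisible by $2$, so $d_\pi$ itself is even. I would split into two cases according to the $2$-adic valuation of $d_\pi$. If $4 \mid d_\pi$, then $\tdf = d_\pi/4$, and since the bracket $d_\pi\lambda(r,r) - 2\alpha(r) + 2d_\pi n(r)$ is manifestly even, the outer factor of $d_\pi$ already gives divisibility by $2d_\pi = 8\cdot \tdf$. If $d_\pi \equiv 2 \pmod 4$, write $d_\pi = 2m$ with $m$ odd, so that $\tdf = m$; the numerator then equals $4m\bigl(m\lambda(r,r) - \alpha(r) + 2mn(r)\bigr)$, and the characteristic relation $\lambda(r,r) \equiv \alpha(r) \pmod 2$ together with $m$ odd shows the bracket is even, so the whole expression lies in $m\Z = \tdf\Z$ after dividing by $8$.

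The main obstacle is the case $d_\pi \equiv 2 \pmod 4$: the extra factor of $2$ needed to reach $\tdf\Z$ must be supplied precisely by the characteristic property of $\alpha$, and has to be combined correctly with the outer factor of $d_\pi$ and with the terms arising from $\alpha_n = \alpha - d_\pi n$. The remaining divisibility checks are formal, and the evenness of $d_\pi$ (a direct consequence of $p$ being an even element of $G$) is the crucial input that makes the whole computation work.
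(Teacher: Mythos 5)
Your proof is correct and follows essentially the same route as the paper's: both compute the change in $\gr_H(k)$ under $n \mapsto n + \hlambda r$, observe it equals $\frac{d_\pi}{8}\bigl(d_\pi\lambda(r,r) - 2\gen{r,\alpha_n}\bigr)$, and in the case $d_\pi \equiv 2 \pmod 4$ (where $\tdf = d_\pi/2$) invoke the characteristic property to gain the needed extra factor of $2$. The only cosmetic difference is that the paper works directly with $\alpha_n$ being characteristic, while you substitute $\alpha_n = \alpha - d_\pi n$ first and invoke the characteristic property of $\alpha$ — these are equivalent since $d_\pi$ is even.
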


\begin{proof}
Replacing $n$ by $n' := n + \hlambda(r)$ for some $r \in H^*$,
so $\alpha_{n'} = \alpha_n  -d_\pi \hlambda(r)$,
changes the value of $\gr_H(k)$ by
\[ \frac{-2d_\pi \rif\bigl( \alpha_n, \hlambda(r) \bigr) +
d_\pi^2 \rif \bigl( \hlambda(r), \hlambda(r) \bigr)}{8} = \frac{-d_\pi}{4}
\left(\gen{r, \alpha_n} - \frac{d_\pi}{2}\lambda(r, r)\right) . \]
The last factor is an integer, and it is even when $d_\pi$ is not divisible
by 4 (\ie when $\td_\pi = 2d_\pi$) because $\alpha_n$ is characteristic
for $\lambda$.
\end{proof}

\begin{lem}
$\gr_H$ is a Gauss refinement of $q^\circ_{(H, \lambda, \alpha)}$.
\end{lem}

\begin{proof}
First we check the condition \eqref{eq:gr_def}.
The $\alpha_n$ used in the definition of $\gr_H(k)$ co-incides with the
$\alpha_m$ used in the definition of $q^{e_\pi k}$ in
\eqref{eq:alg_qdef}.
Since $\alpha_n$ is characteristic for $\lambda$, 
Proposition \ref{prop:arf} immediately gives \eqref{eq:gr_def}.

Next we check the transformation law \eqref{eq:gr_rule}.
Given $k \in \divsp$ and $t \in T$, pick $n'$ such that $j(n') = k+t$.
Then $\alpha_{n'} - \alpha_n = -d_\pi(n'-n)$, and $j(n'{-}n) = t$, so
\begin{align*}
&\; \rif(\alpha_{n'}, \alpha_{n'}) - \rif(\alpha_n, \alpha_n)
\; = \;
-2d_\pi \rif(\alpha_n, n' {-} n) + 
d_\pi^2 \rif(n' {-} n, n'{-} n) \\
=& \; -4d_\pi\frac{
\rif(n' {-} n, n'{-} n) + \rif(\alpha_n, n' {-} n)}{2} + 
d_\pi(d_\pi+2)\rif(n' {-} n, n'{-} n) \\
=& \;\tdelta(k, t) \mod 2\td_\pi . \qedhere
\end{align*}
\end{proof}

An isomorphism $F \colon G' \to G$ with $F(p') = p$ maps $\divsp' \to \divsp$.
If $F^\# q^\circ = q'^\circ$ then we have $\Delta(F(k), F(t)) = \Delta'(k, t)$
for all $k \in \divsp'$ and $t \in T'$, so if
$\gr \colon \divsp \to \Q/\frac{\td_\pi}{4}\Z$
is a Gauss refinement of $q^\circ$ then
\begin{equation*}
F^\# \gr := \gr \circ F
\end{equation*}
is a Gauss refinement of $q'^\circ$.

\medskip

\subsubsection{Gauss refinements of orthogonal characteristic forms}
We recall that if $(H, \lambda, \alpha)$ is non\-singular and $H_0 \subset H$
is primitive with orthogonal complement $H_1$, then for the characteristic
forms $(H_i, \lambda_i, \alpha_i)$ defined by restriction from
$(\pm\lambda, \alpha)$, there is a canonical isomorphism 
$F_\lambda \colon \del(H_0, \lambda_0, \alpha_0)
\cong \del(H_1, \lambda_1, \alpha_1)$
of the associated %
refinements.

\begin{lem}
\label{lem:alg_compare}
Let $(H_0, \lambda_0, \alpha_0)$ and $(H_1, \lambda_1, \alpha_1)$ 
be orthogonal characteristic forms in the nonsingular
characteristic form $(H, \lambda, \alpha)$.  
The canonical isomorphism $F_\lambda$ of \eqref{eq:canonical_iso} 
pulls back $\gr_{H_1}$ to a Gauss refinement of the linking family $q_0^\circ$
of $(H_0, \lambda_0, \alpha_0)$,
and
\[  F_\lambda^\#\gr_{H_1} - \gr_{H_0} \; = \;
\frac{\rif(\alpha,\alpha) - \sign(\lambda)}{8} \mod \tdf. \]
\end{lem}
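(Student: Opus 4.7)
The plan is to mimic algebraically the manifold argument in Lemma \ref{lem:ek_defined}, evaluating both sides at a single $k_0 \in \divsp$ by lifting through the nonsingular ambient $(H, \lambda, \alpha)$.

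First I fix $k_0 \in \divsp \subseteq G_0$ and use the isomorphism $\Pi_0 \colon H/(H_0 \oplus H_1) \cong G_0$ of \eqref{eq:canonical_iso} to pick $h \in H$ with $\Pi_0([h]) = k_0$. Set $m := \wh\lambda(h) \in H^*$ and $n_i := m|_{H_i} \in H_i^*$; then by the very construction of $F_\lambda$ one has $j_0 n_0 = k_0$ and $j_1 n_1 = F_\lambda(k_0) =: k_1$, so the $n_i$ are admissible lifts in formula \eqref{eq:grchat}. Substituting into \eqref{eq:grchat}, and tracking the sign flip built into $(H_0, -\lambda_0, \alpha_0)$, gives
\[ \gr_{H_0}(k_0) = \frac{-\lambda_0^{-1}(\alpha_{0, n_0}, \alpha_{0, n_0}) + \sign(\lambda_0)}{8}, \qquad \gr_{H_1}(k_1) = \frac{\lambda_1^{-1}(\alpha_{1, n_1}, \alpha_{1, n_1}) - \sign(\lambda_1)}{8} . \]
Taking the difference and applying the orthogonal splitting $V = V_0 \oplus V_1$ of $V := H \tensor \Q$—under which $\lambda^{-1} = \lambda_0^{-1} \oplus \lambda_1^{-1}$, $\sign(\lambda) = \sign(\lambda_0) + \sign(\lambda_1)$, and $\alpha - d_\pi m = \alpha_{0, n_0} + \alpha_{1, n_1}$ in $V^*$—collapses things to
\[ F_\lambda^\#\gr_{H_1}(k_0) - \gr_{H_0}(k_0) \; = \; \frac{\lambda^{-1}(\alpha - d_\pi m, \alpha - d_\pi m) - \sign(\lambda)}{8} \mmod \tdf . \]

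To finish I compare with $\frac{\lambda^{-1}(\alpha, \alpha) - \sign(\lambda)}{8}$. Nonsingularity of $\lambda$ makes $\alpha^\vee := \wh\lambda^{-1}(\alpha)$ integral in $H$, so $\lambda^{-1}(\alpha - d_\pi m, \alpha - d_\pi m) = \lambda(\alpha^\vee - d_\pi h, \alpha^\vee - d_\pi h)$, and expanding isolates the correction
\[ -2 d_\pi \langle\alpha, h\rangle + d_\pi^2 \lambda(h, h) \; \in \; \Z . \]
The decisive step, and what I expect to be the main technical obstacle, is showing that this correction lies in $2\td_\pi \Z$. Since $p \in 2G$ already forces $d_\pi$ to be even, a short case analysis on $d_\pi \mmod 4$ should settle it: when $4 \mid d_\pi$ (so $\td_\pi = d_\pi$) divisibility by $2 d_\pi$ is immediate from $d_\pi$ being even, while when $d_\pi \equiv 2 \pmod 4$ (so $\td_\pi = 2 d_\pi$) the characteristic identity $\lambda(h, h) \equiv \langle\alpha, h\rangle \pmod 2$ upgrades the easy mod-$2$ vanishing to mod-$4$, matching $2\td_\pi = 4 d_\pi$ in that case. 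The degenerate case $d_\pi = 0$ falls out for free since the correction term vanishes identically.
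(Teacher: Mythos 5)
Your proof is correct and follows essentially the same approach as the paper's: lift $k_0$ through the nonsingular ambient form $(H,\lambda,\alpha)$ via $\hlambda$, feed the same lift into both boundary Gauss refinements so that the difference collapses by orthogonality to $\lambda^{-1}(\alpha - d_\pi m, \alpha - d_\pi m)$, and then show the correction $-2d_\pi\langle\alpha,h\rangle + d_\pi^2\lambda(h,h)$ lies in $2\td_\pi\Z$ by the characteristic condition when $d_\pi \equiv 2 \pmod 4$. Your sign-tracking and the final divisibility case analysis are both sound and in fact spelled out more explicitly than in the paper's own terse proof.
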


\begin{proof}
Note that since $(H, \lambda)$ is nonsingular,
$(\wh \lambda)^{-1} \colon H^* \cong H$
is an isomorphism from $\rif$ to~$\lambda$.
Also, we have homomorphisms $H^* \to H_i^* \to G_i$ where we recall that
$G_i = \coker(\wh \lambda_i)$.
Pick a $k \in \divsp(G_0)$, and then pick $n \in H^*$ whose image in
$G_0$ equals $k$ (the set-up means that the image of $n$ in $G_1$ is $F_\lambda(k)$).
Let $n_i$ be the image of $n$ in $H_i^*$, and set
$\alpha_{n_i} := \alpha_i - d_\pi n_i \in R_i$
as in the definition of~$g_{H_i}$. Since
$\sigma(\lambda) = \sigma(\lambda_1)- \sigma(\lambda_0)$, 
it suffices to show that
\begin{equation}
\rif(\alpha,\alpha) =
\rif_1(\alpha_{n_1}, \alpha_{n_1})
- \rif_0(\alpha_{n_0}, \alpha_{n_0})
\mod 2\td_\pi .
\end{equation}

The image of $\alpha - d_\pi n$ in $H^* \otimes \Q$ can be written as a sum
$\wh \lambda_0(\gamma_0) + \wh \lambda_1(\gamma_1)$
where $\gamma_i \in H_i \otimes \Q$ and $\hlambda_i(\gamma_i) = \alpha_{n_i}$.
Thus, since the hypothesis involves $\lambda$ restricting to $\lambda_1$
on $H_1$ and $-\lambda_0$ on~$H_0$, 
\begin{multline*} \rif(\alpha, \alpha)
= \lambda_1(\gamma_1, \gamma_1) - \lambda_0(\gamma_0, \gamma_0) + 2d_\pi \rif(n, \alpha - d_\pi n) 
+ d_\pi^2 \rif(n, n) \\
= \rif_1(\alpha_{n_1}, \alpha_{n_1})
- \rif_0(\alpha_{n_0}, \alpha_{n_0})
\mod 2\td_\pi ; 
\end{multline*}
that equality holds mod $4d_\pi$ when $d_\pi$ is not divisible by 4 follows
from $\alpha - d_\pi n$ being a characteristic element for $\lambda$.
\end{proof}

\begin{rmk} \label{rmk:neutral_forms}
Let us call a characteristic form $(H,\lambda,\alpha)$ \emph{neutral} if it is
nonsingular and $\lambda(\alpha,\alpha) = \sign(\lambda)$
and say that two
characteristic forms are \emph{neutrally isomorphic} if they become isomorphic
after addition of neutral forms (so this is a sharper condition than stable
isomorphism).
Lemma \ref{lem:alg_compare} implies that Gauss refinements are invariant
under neutral isomorphism. 
The gluing and splitting arguments for characteristic forms reviewed in
Section~\ref{ss:alsmost_classification},
in particular Theorem \ref{thm:stable_iso_and_qlfs},
can be used to show that characteristic forms are classified up to neutral
isomorphism by their boundary distillations $(G, q^\circ\!, \gr, p)$.
\end{rmk}

\subsubsection{Linked functions}
There is a certain redundancy in the definition of a Gauss refinement~$\gr$,
in that the constraint \eqref{eq:gr_def} on $\gr \mmod \Z$ forces
the transformation rule \eqref{eq:gr_rule} to hold mod~$\Z$.
In the analysis of the action of automorphisms on Gauss refinements
in \S\ref{sec:auto}, it will prove convenient to replace \eqref{eq:gr_rule}
with a condition that can be expressed purely in terms of the base
$(G, b, p)$ rather than the refinement $(G, q^\circ, p)$, but nevertheless
implies \eqref{eq:gr_rule} when \eqref{eq:gr_def} is assumed.

We call a function
$\lf \colon \divsp \to \Q/\frac{d_\pi}{4} \Z$ \emph{$(b,p)$-linked} if
for all $k \in \divsp$ and $t \in T$
\begin{equation}
\label{eq:lf_def}
\lf(k + t) = \lf(k) + \frac{\Delta(k, t)}{8} ,
\end{equation}
where
\begin{equation} \label{eq:Delta}
\Delta(k, t) := - d_\pi^2 b(t, t) + 2d_\pi b(\beta_k, \, t) \in \Q/2d_\pi \Z . 	
\end{equation} 

\begin{lem}
\label{lem:gr_via_linked}
$\gr : \divsp \to \Q/2\td_\pi\Z$ is a Gauss refinement of
$(G, q^\circ, p)$ if and only if \eqref{eq:gr_def} holds for some 
$k \in \divsp$, and the mod $2d_\pi$ reduction of $g$ is
$(b,p)$-linked.
\end{lem}

\begin{proof}
For $q^{e_\pi k}$ to be a refinement of $b$ with inhomogeneity $\beta_k$
implies from the definitions that
\[ 2q^{e_\pi k}(t) = b(t, t) + b(\beta_k, \, t) \in \Q/\Z , \]
which in turn gives
\[ \tdelta(k, t) = \Delta(k, t) \mod 2d_\pi . \]
Similarly, combining
\[ q^{e_\pi k}(2t) = 2b(t, t) + b(\beta_k, \, t)  \in \Q/\Z \]
and
\[ q^{e_\pi k}(-e_\pi t) = -e_\pi q^{e_\pi k}(t)
+ {\textstyle {e_\pi + 1 \choose 2}}\, b(t,t)  \in \Q/\Z \]
gives that
\[ \tdelta(k, t) = -8 q^{e_\pi k} (-e_\pi t) \mod 8 . \]
By the Chinese remainder theorem, these two constraints completely
characterise $\tdelta(k,t)$ as an element of $\Q/2\td_\pi\Z$.
Now observe that
\[ \Arf(q^{e_\pi (k+t)}) \; = \; \Arf(q^{e_\pi k}_{-e_\pi t})
\; = \; \Arf(q^{e_\pi k}) - q^{e_\pi k}(-e_\pi t) \in \Q/\Z\]
by \ref{def:foqr}\ref{it:q_trans} and \eqref{eq:arf_shift}.
Thus \eqref{eq:gr_rule} is equivalent to requiring that
$g(k) - \Arf(q^{e_\pi k}) \mmod \Z$ is constant and that
\eqref{eq:lf_def} holds.
\end{proof}

\begin{rmk}
We could make an analogy with factors of automorphy of automorphic forms and
think of $\Delta$ as a ``term of automorphy''. For any linked functions to
exist is clearly equivalent to the cocycle condition
\begin{equation}
\label{eq:cocycle}
\Delta(k, s+t) = \Delta(k+s, t) + \Delta(k, s) ,
\end{equation}
which can be checked directly from the definition in \eqref{eq:lf_def}. 
The difference of two
functions with the same term of automorphy is invariant under the $T$ action;
since $T$ acts transitively on $\divsp$ that simply means that the difference
between two $(p,b)$-linked functions is a constant in $\Q/2d_\pi\Z$.
\end{rmk}

\subsection{The generalised Eells--Kuiper invariant} \label{ss:eek}
Let $M$ be a spin 7-manifold and $W$ a \mbox{3-connected} coboundary of $M$.
Let $\gr_W : \divsp \to \Q/\tdf\Z$ be the Gauss refinement of
$(H^4(M), q_M^\circ, p_M)$ defined by
the characteristic form $(FH^4(W, \del W), \lambda_W, p_W)$. 
Applying \eqref{eq:grchat}, this means that
for $n \in H^4(W)$ such that $j(n) \in \divsp \subseteq H^4(M)$,
\begin{equation} \label{eq:g_W-explicit}
g_W(j(n)) =
\frac{\rif_W(\alpha_n, \alpha_n) - \sigma(\lambda_W)}{8}
= \frac{(p_W - d_\pi n)^2 - \sigma(W)}{8},
\end{equation}
as defined in \eqref{eq:grW} in the introduction.
We pointed out before that if 
$f \colon M_0 \to M_1$ is a spin homeomorphism 
then $X := (-W_0) \cup_f W_1$ is a closed topological spin 8-manifold, %
Lemma \ref{lem:split_mfd} means that 
$(FH^4(W_0, M_0), \lambda_{W_0}, p_{W_0})$ and
$(FH^4(W_1, M_1), \lambda_{W_1}, p_{W_1})$ are orthogonal in
the nonsingular form $(FH^4(X), \lambda_X, p_X)$, and the induced isomorphism
${F_{\lambda_X} : H^4(M_0) \to H^4(M_1)}$ is precisely $(f^*)^{-1}$.
Hence Lemma \ref{lem:alg_compare} implies
\begin{equation}
\label{eq:compare}
 \gr_{W_1} - (f^*)^\# \gr_{W_0}  = \frac{p_X^2 - \sign(X)}{8} \mod \tdf \Z .
\end{equation}
If $f$ is a diffeomorphism then $X$ is smooth, the RHS of \eqref{eq:compare}
equals $28\ahat(X)$, and $\ahat(X)$ is an integer;
this proves Lemma \ref{lem:ek_defined}. Letting
\begin{equation}
\dM := \gcd \left( \textstyle \frac{\wt d_\pi}{4}, 28 \right)
\end{equation}
as in the introduction, it follows that
\begin{equation}
\label{eq:EK_of_M}
\begin{aligned}
\EK_M \colon \divsp \to \Q/\dM \Z, \\
\EK_M := \gr_W \mmod \dM
\end{aligned}
\end{equation}
is independent of the choice of $W$ and natural under diffeomorphisms:
If $f \colon M_0 \to M_1$ is a 
diffeomorphism then
$(f^*)^\# \EK_{M_0} = \EK_{M_1}$ 
by \eqref{eq:compare}.
Now $\EK_M$ satisfies a transformation rule that is a mod 28 reduction of
\eqref{eq:gr_rule}, and we say that this 
makes $\EK_M$ a mod 28 Gauss refinement of $q_M^\circ$.

\begin{defn}
\label{def:modngr}
Given $(G,q^\circ\!,p) \in \catq$ and a positive integer $N$, we call a function
$\EK \colon \divsp \to \Q/\gcd(\tdf,N) \Z$ a \emph{mod $N$ Gauss refinement} of
$q^\circ$ if
\[
\EK(k) = \Arf(q^{e_\pi k}) \mod \bbz
\]
for all $k \in \divsp$, and the transformation rule
\[
\EK(k+t) - \EK(k) \; = \; \frac{\tdelta(k, t)}{8} \mod \gcd(\tdf, N) 
\]
holds for all $k \in \divsp$ and $t \in T$.
\end{defn}

For $N = 28$ this transformation rule is equivalent to \eqref{eq:TR} stated in
the introduction.

If $W$ is not 3-connected, then \eqref{eq:grW} defines $g_W$ only on the
subset $\divsp \cap j(H^4(W))$ of~$\divsp$. However, as long as
that set is non-empty, this completely determines $\EK_M$ by the transformation
rule, so the description of $\EK_M$ from the introduction is valid.
This point can be seen as a special case of Proposition \ref{prop:ek_via_spc} below.

\begin{rmk} \label{rmk:splitting_definition_of_eek}
Analogously to Remark \ref{rmk:projections},
we can define Gauss refinements (and $\EK_M$) as functions of sections $\sigma \colon G/T \to G$
rather than on $\divsp$,
$\gr_W(\sigma) := \gr_W(k(\sigma))$.
Then $\gr_W(\sigma) = \Arf(q^\sigma) \mmod \bbz$, and the transformation
rule \eqref{eq:lf_def} can also be rewritten in these terms.
\end{rmk}

\begin{rmk} \label{rmk:analytic_eek}
Recall Remark \ref{rmk:original_ek} saying that if $p_M$ is torsion then
$\divsp = T$ contains the distinguished element $0$ and 
$\frac{1}{28}\EK_M(0) \in \Q/\Z$ recovers the original Eells-Kuiper
invariant $\mu(M)$.

Although defined extrinsically using spin co-boundaries, the original Eells-Kuiper invariant $\mu(M)$ 
was shown by Donnelly \cite[Theorem 4.2]{donnelly75} to have an intrinsic definition in terms of 
the eta invariant of the Dirac operator of $M$.  
It would be interesting to find an intrinsic definition of the generalised
Eells-Kuiper invariant when $p_M \neq 0 \in H^4(M; \Q)$.

For further information about the role of eta invariants in the classification of $7$-manifolds,
we refer the reader to \cite[\S 4]{goette12}. 
\end{rmk}

\begin{rmk} \label{rmk:neutral_forms_mod_224}
In \cite[\S 4.4]{crowley02}, a pair of characteristic forms
$(H, \lambda, \alpha)$ are called \emph{smoothly equivalent}
if they become isomorphic after addition of nonsingular characteristic forms
with  $\lambda(\alpha,\alpha) \equiv \sign(\lambda)$~mod~$224$
(so this is a weakening of the notion of neutral equivalence from
Remark \ref{rmk:neutral_forms}).
In algebraic terms, the definition of the generalised Eells-Kuiper invariant
can be used to show that the mod $28$ distillation %
of $M$, $(H^4(M), q^\circ_M, \EK_M, p_M)$,
is a complete invariant of the smooth equivalence class of the characteristic
form $(FH^4(W, \del W), \lambda_W, p_W)$ of a $3$-connected coboundary for $M$.
Hence Theorem \ref{thm:class} is a development of the dimension 7 case of 
\cite[Theorem 4.9]{crowley02}, which classifies $2$-connected
$7$-manifolds up to diffeomorphism by the smooth equivalence
class of the characteristic form of a $3$-connected coboundary.
\end{rmk}

\begin{rmk}
\label{rmk:maxinfo}
Let us conclude this subsection by considering how the information captured
by the function $\EK_M : \divsp \to \Q/\tdf\Z$ can in some special cases be
presented more simply. If $p_M$ is torsion or if the greatest divisor of $p_M$
is the same as $d_\pi$ (the greatest divisor modulo torsion), then
$\divsp$ contains the distinguished element 0, and the function $\EK_M$ can be 
naturally identified by the value $\EK_M(0) \in \Q/\tdf \Z$.

More generally, for any divisor $\dv$ of $d_\pi$
we can relate $\EK_M$ to functions defined on
${\divsdv = \{ k \in G : p_M - \dv k \in T \}}$.
Let us focus on the case when $\dv$ is even---because that is more subtle than
when $c$ is odd---and let $\tdv = \lcm(4,\dv)$.
We can then define a function
$\bar \gr_W : \divsdv \to \Q/\tdvf \Z$ analogously to $\gr_W$.
If $d_\pi = r\dv$, then $r\divsp$ is a non-empty subset of $\divsdv$.
For any $k \in \divsp$, the mod $\tdvf$ reduction of $\gr_W(k)$ equals
$\bar \gr_W(rk)$.

Thus the mod $\tdvf$ reduction of $\gr_W$ is completely
determined by $\bar \gr_W$. In particular, if we take $\dv = \gcd(28, d_\pi)$,
then $\bar \gr_W$ determines $\EK_M$.
Meanwhile $\bar \gr_W$ can sometimes be
easier to describe.

In particular, if $\dv$ divides $p_M$, then $\divsdv$ contains 0, and the
function $\bar \gr_W$ can be naturally identified with its value at 0.
In fact more is true: $\bar \gr_W$ must be constant, except when $\mdiv$
is an odd multiple of $\dv$ \emph{and} the
parameter $r$ from Theorem \ref{thm:i_and_r} is 0.
That $\dv$ divides $p_M$ means that the
image of $p_W$ in $H^4(W; \Z_\dv)$ is contained in the image
of $H^4(W,M; \Z_\dv)$, and thus has a well-defined square in
$H^8(W,M; \Z_\dv) \cong \Z_\dv$.
The mod $\tfrac{\tdv}{8}$ reduction of $\bar \gr_W$ is always constant,
determined by
\[ \bar \gr_W = \frac{p_W^2 - \sign(W)}{8} \mod \tfrac{\dv}{8} . \]
One can attempt to compute $\bar \gr_W$ itself in a similar way using
the Pontrjagin square $\wp(\bar p) \in H^8(W,M;\Z_{2c})$ of a
pre-image $\bar p \in H^4(W,M;\Z_c)$ of $p_W$. This is independent of the
choice of $\bar p$ if and only if $\wp(\partial x) = 0$ for
all $x \in H^3(M;\Z_c)$.
Because the suspension of the Pontrjagin square is the Postnikov square,
that is equivalent to requiring that, for $j = \ord_2 c$, there are
no $2^j$-torsion classes $y \in H^4(M)$ with $2^jb(y,y)$ odd.
Thus---in the terminology of \S\ref{ss:auto_notation}---if there are no split
$2^j$-torsion elements in $H^4(M)$ then there is a well-defined Pontrjagin
square $\wp(p_W) \in H^8(W,M; \Z_{2\dv}) \cong \Z_{2\dv}$,
and $\bar \gr_W$ is determined by
\[ \bar \gr_W = \frac{\wp(p_W) - \sign(W)}{8} \mod \tfrac{\dv}{4} . \]
(This is compatible with the claim above that $\bar \gr_W$ could be
non-constant if $r = 0$, because Lemma \ref{lem:autb} means that if $H^4(M)$
lacks certain split summands then $r \not= 0$.)

On the other hand, for any divisor $\dv$ of $d_\pi$, $\bar \gr_W$ is determined
by its value at a single element of~$\divsdv$, so $\bar \gr_W$ is completely
determined by $\gr_W$. Regardless of the possible convenience in some special
settings of considering divisors $\dv$ other than $d_\pi$, using $d_\pi$ 
captures the maximal possible amount of information. For the purposes of
studying the general classification theory there is thus no advantage to
considering anything other than $\gr_W$ and $\EK_M$ as functions of~$\divsp$,
and that is therefore what we do in the rest of the paper.
\end{rmk}

\subsection{The computation of \texorpdfstring{$\EK_M$}{the Eells--Kuiper invariant} via \texorpdfstring{\spc}{spin-c} coboundaries} \label{ss:spin_c}
Inspired by calculations of Kreck and Stolz for their $s_1$ invariant \cite{kreck91}, 
we derive an expression for $\EK_M$ in terms of coboundaries that are not
necessarily spin (never mind 3-connected) but just \spc.

For a principal \spc bundle we use the canonical homomorphisms
$\spcg(n) \to SO(n)$ and $\spcg(n) \to U(1)$ to define an associated real
vector bundle $E$ together
with a complex line bundle $L$ such that $c_1(L) = w_2(E) \mmod 2$.
We can then define the characteristic classes
\begin{align*}
z & := c_1(L) , \\
\pc & := p(E \oplus L) , \\
\pd & := \pc - z^2 .
\end{align*}
So $2\pc = p_1(E \oplus L) = p_1(E) + z^2$ and $2\pd = p_1(E) - z^2$.
Recall that any $U$-bundle has a natural \spc structure, defined as follows:
if $i : U(n) \to SO(2n)$ is the natural inclusion then the homomorphism
$i \times \det : U(n) \to SO(2n) \times U(1)$
has a lift under the double cover $\spcg(2n) \to SO(2n) \times U(1)$.
If $E$ is a complex vector bundle then the fundamental line bundle $L$
of the corresponding \spc bundle is $L := \det E$.

\pagebreak[2]

\begin{lem}\hfill \label{lem:pz}
\begin{enumerate}
\item \label{it:pz_basis}
$\pd$ and $z^2$ form a basis for $H^4(B\spcg)$.
\item \label{it:cx}
$\pd(E) = -c_2(E)$ for any complex bundle $E$.
\item \label{it:w}
$\pd(E) = w_4(E) \mmod 2 $ for any \spc bundle $E$.
\end{enumerate}
\end{lem}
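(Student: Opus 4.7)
The plan is to compute $H^4(B\spcg; \Z)$ and $H^4(B\spcg; \Z/2)$ via the Serre spectral sequence of the fibration $BSpin \to B\spcg \to BU(1)$ arising from $\spcg = (Spin \times U(1))/\Z_2$, then verify (ii) by a direct Chern--Pontrjagin calculation, and finally deduce (iii) by pulling back along the natural map $BU \to B\spcg$.

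For (i), the base has $H^*(BU(1); \Z) = \Z[z]$ concentrated in even degrees, and the fiber $BSpin$ is $3$-connected with $H^4(BSpin; \Z) \cong \Z\langle p_1/2\rangle$. In total degree~$4$ only $E_2^{4,0} = \Z\langle z^2\rangle$ and $E_2^{0,4} = \Z\langle p_1/2\rangle$ are nonzero, so there are no differentials (odd columns vanish) and no extension problem. The fiber inclusion $BSpin \into B\spcg$ classifies \spc-structures with trivial $L$, under which $\pd = p(E \oplus L)$ restricts to $p_1(E)/2$, the generator of $H^4(BSpin; \Z)$; hence $\pd$ lifts the $E_2^{0,4}$ generator, and $\{\pd, z^2\}$ is a $\Z$-basis of $H^4(B\spcg; \Z)$.

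For (ii), let $E$ be a complex vector bundle with its canonical \spc structure, for which $z^2 = c_1(E)^2$. Combining $2\pd = p_1(E) - z^2$ with the classical relation $p_1(V_\R) = c_1(V)^2 - 2c_2(V)$ for the underlying real bundle of a complex bundle $V$, I compute
\[ 2\pd(E) = \bigl(c_1(E)^2 - 2c_2(E)\bigr) - c_1(E)^2 = -2c_2(E), \]
so $\pd(E) = -c_2(E)$.

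For (iii), the identical spectral-sequence argument with $\Z/2$ coefficients (using $H^4(BSpin; \Z/2) = \Z/2\langle w_4\rangle$) gives $H^4(B\spcg; \Z/2) = \Z/2\langle \bar\pd, \bar z^2\rangle$. Writing $w_4 = a\bar\pd + b \bar z^2$ on $B\spcg$ and pulling back to $BU$, (ii) sends $\bar\pd \mapsto c_2 \bmod 2$ and $\bar z^2 \mapsto c_1^2 \bmod 2$, while $w_4$ of the underlying real bundle of a complex bundle equals $c_2 \bmod 2$ by $w_{2i} \equiv c_i \pmod 2$. Linear independence of $c_2$ and $c_1^2$ in $H^4(BU; \Z/2)$ then forces $a = 1$ and $b = 0$, yielding (iii). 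The main obstacle I expect is pinning down the sign convention in $2\pd = p_1(E) - z^2$ so that the associated line bundle $L$ for a complex $E$ satisfies $z^2 = c_1(E)^2$; once this is fixed, (ii) collapses to a one-line calculation and the rest is formal.
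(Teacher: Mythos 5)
Your proof is correct, but the route to (i) (and the backbone of (iii)) is genuinely different from the paper's. The paper observes that $\spcg/U \cong Spin/SU$ is $5$-connected, so the map $\pi \colon BU \to B\spcg$ induces isomorphisms on $H^4$ with both $\Z$ and $\Z/2$ coefficients; this single connectivity fact lets the paper work entirely inside $H^*(BU)$, where $\{c_2, c_1^2\}$ is a known basis, and read off (i), (ii), and then (iii) from the formula $\pi^*\pd = -c_2$. You instead compute $H^4(B\spcg)$ directly from the Serre spectral sequence of $BSpin \to B\spcg \to BU(1)$ in both coefficient systems, identifying $\pd$ as a lift of the $H^4(BSpin)$-generator via restriction to the fibre (trivial $L$) and $z^2$ as coming from the base. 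This avoids having to establish the connectivity of $Spin/SU$, at the cost of setting up and collapsing a spectral sequence; once the ranks are pinned down you still pull back to $BU$ for (iii), just as the paper does, using the explicit images $\bar\pd \mapsto c_2$ and $\bar z^2 \mapsto c_1^2$ to detect the coefficients of $w_4$. Both approaches are sound; the paper's is shorter once one accepts the connectivity claim, while yours is more self-contained. One small point worth keeping in mind: you correctly flag the sign question in $2\pd = p_1(E) - z^2$ as the delicate spot, and you resolve (ii) the same way the paper does, by combining this with $p_1 = c_1^2 - 2c_2$ for complex bundles.
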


\begin{proof}
Observe that $\spcg/U \cong \spg/SU$ is $5$-connected, since
$\spg(6)/SU(3) \cong S^7$ and $\spg(6)$ and $SU(3)$ have the same homotopy
groups as $\spg$ and $SU$ in degree $\leq 5$. Letting $\pi : BU \to B\spcg$
denote the classifying map for $EU$ considered as a \spc-bundle,
the maps $\pi^* \colon H^4(B\spcg) \to H^4(BU)$ and
$\pi^* \colon H^2(B\spcg) \to H^2(BU)$ are therefore isomorphisms
(with both $\bbz$ and $\bbz_2$ coefficients).
Patently $\pi^* z = c_1$.

We know that $H^4(BU)$ has basis $\{c_2, c_1^2\}$. 
Because there is no 2-torsion,
the equation $2\pi^* \pd  = p_1 - (\pi^*z)^2 = (-2c_2 + c_1^2) - c_1^2$
implies $\pi^* \pd = -c_2$, proving \ref{it:pz_basis} and \ref{it:cx}.

The isomorphism on $H^4(-;\bbz_2)$ implies that it suffices to check that
\ref{it:w} holds when $E$ is complex. But that follows from \ref{it:cx}.
\end{proof}

\begin{cor}
If $X$ is a compact \spc 8-manifold then
$\pc_X$ is characteristic for the intersection form $\lambda_X$ of $X$.
\end{cor}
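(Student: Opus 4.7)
The plan is to mirror the proof of Lemma~\ref{lem:p_M}\ref{it:boundary}: first establish the analogous identity on closed \spc $8$-manifolds via Wu's formula, then deduce the case with boundary by doubling. The crux is to identify $\pc_Y$ with the mod $2$ reduction of the fourth Wu class $v_4(Y)$.

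For a closed \spc $8$-manifold $Y$ with $z = c_1(L_Y)$, I would compute the Wu classes inductively from $Sq(v) = w$. Orientability gives $w_1 = 0$, and the \spc condition gives $w_2 \equiv z \pmod 2$; since $z$ is the mod $2$ reduction of an integral class we have $Sq^1 z = 0$, and hence also $w_3 = Sq^1 w_2 = 0$. A short calculation then produces $v_1 = 0$, $v_2 \equiv z$, $v_3 = 0$ and
\[ v_4 \equiv w_4 + Sq^2 v_2 = w_4(Y) + z^2 \pmod 2. \]
Combining with Lemma~\ref{lem:pz}\ref{it:w}, which says $\pd_Y \equiv w_4(Y) \pmod 2$, this yields $v_4(Y) \equiv \pc_Y \pmod 2$. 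Applying Wu's formula $Sq^4(x) = v_4 \cup x$ to degree-$4$ classes $x$ then gives $x^2 = \pc_Y \cup x$ in $H^8(Y;\Z/2)$, which is the statement for closed $Y$.

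For compact $X$ with nonempty boundary, I would form the double $Y := X \cup_{\Id_{\partial X}} (-X)$. Orientation reversal preserves \spc structures (the condition $w_2 = c_1(L) \pmod 2$ is orientation-independent and the line bundle restricts to the same complex line bundle on the boundary), so the \spc structures on the two copies of $X$ glue to a \spc structure on $Y$ with $i^* \pc_Y = \pc_X$ under the inclusion $i : X \hookrightarrow Y$. Following the proof of Lemma~\ref{lem:p_M}\ref{it:boundary} verbatim with $\pc$ in place of $p$, the push-forward $i_* : H^4(X, \partial X;\Z/2) \to H^4(Y;\Z/2)$ is Poincar\'e--Lefschetz dual to restriction, and applying the closed case to $Y$ gives
\[ \lambda_X(x, x) \equiv \langle (i_* x)^2, [Y]\rangle \equiv \langle i_* x \cup \pc_Y, [Y]\rangle \equiv \langle x \cup \pc_X, [X, \partial X]\rangle \pmod 2 \]
for $x \in H^4(X, \partial X; \Z/2)$. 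The only substantive step is the Wu class computation, which is routine; I do not anticipate any genuine obstacle.
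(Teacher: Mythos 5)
Your proposal is correct and follows essentially the same route as the paper: identify $\pc$ mod $2$ with $v_4 = w_4 + w_2^2$ via Lemma~\ref{lem:pz}\ref{it:w}, apply Wu's formula on a closed manifold, and reduce the compact case to the closed case by doubling as in Lemma~\ref{lem:p_M}\ref{it:boundary}. The only cosmetic difference is that you derive $v_4 = w_4 + w_2^2$ from $Sq(v) = w$ using the \spc hypothesis along the way, whereas the paper simply cites this identity as holding for any closed orientable manifold (which it does, without any \spc assumption).
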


\begin{proof}
Lemma \ref{lem:pz} gives
\[ \pc = w_4 + w_2^2 \mod 2 . \]
Wu's formula implies that for any closed orientable
manifold $X$ the fourth Wu class is $v_4(X) = w_4(X) + w_2(X)^2$,
and by definition $v_n$ is characteristic for the intersection form of a closed
$2n$-manifold.
The compact case follows from the closed case, as in the proof of
Lemma \ref{lem:p_M}\ref{it:boundary}.
\end{proof}

\begin{lem}
\label{lem:ind_L}
If $X$ is a closed \spc 8-manifold then the Dirac operator
of the fundamental complex spinor bundle has
\begin{equation}
\label{eq:ind_L}
28 \ind \dirac^+
\; = \; \frac{\pc_X^2 - \sign(X)}{8} - \frac{5z^2\pc_X}{12} + \frac{z^4}{4} .
\end{equation}
\end{lem}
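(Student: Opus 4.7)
The plan is to apply the Atiyah--Singer index theorem for the \spc Dirac operator, then convert everything into the characteristic classes $\pc_X$, $z$ and $\sign(X)$ via Hirzebruch's $L$-genus formula. Concretely, since $X$ is \spc with determinant line bundle $L$ satisfying $c_1(L) = z$, Atiyah--Singer gives
\[
\ind \dirac^+ \; = \; \int_X \wh{A}(X) \, e^{z/2} .
\]
In dimension~$8$ only the degree $8$ part contributes, and the standard expressions
\[
\wh{A}_4 = -\frac{p_1}{24}, \qquad \wh{A}_8 = \frac{7p_1^2 - 4p_2}{5760}
\]
for the $\wh{A}$-genus (with $p_i = p_i(X)$) combined with $e^{z/2}|_{\deg 4} = z^2/8$ and $e^{z/2}|_{\deg 8} = z^4/384$ yield
\[
\ind \dirac^+ \; = \; \left[\frac{7p_1^2 - 4p_2}{5760} \; - \; \frac{p_1 z^2}{192} \; + \; \frac{z^4}{384}\right]\![X].
\]

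Next I would substitute $\pc_X = (p_1 + z^2)/2$ (which follows from $2\pc = p_1(E) + z^2$ established just before Lemma~\ref{lem:pz}) and use Hirzebruch's signature theorem,
\[
\sign(X) \; = \; \frac{7p_2 - p_1^2}{45}[X] ,
\]
to rewrite the right-hand side of \eqref{eq:ind_L} as a polynomial in $p_1, p_2, z$. Expanding
\[
\pc_X^2 = \frac{p_1^2 + 2 p_1 z^2 + z^4}{4}, \qquad z^2 \pc_X = \frac{p_1 z^2 + z^4}{2},
\]
the right-hand side of \eqref{eq:ind_L} becomes a rational linear combination of $p_1^2$, $p_2$, $p_1 z^2$ and $z^4$.

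The proof then reduces to comparing coefficients of these four monomials on the two sides after multiplying by $28$. A direct calculation gives $49/1440$, $-28/1440$, $-7/48$ and $7/96$ on each side respectively, matching in all four cases. There is no real obstacle: the only thing to keep track of is the sign convention $p_1(L) = z^2$ that gives $2\pc_X = p_1(X) + z^2$, which has already been fixed in the preceding lemma. Thus \eqref{eq:ind_L} holds as an identity of rational characteristic numbers, and integrality on both sides is automatic since the left-hand side is an integer by the index theorem.
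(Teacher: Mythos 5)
Your proof is correct and follows essentially the same route as the paper: apply the index theorem in the form $\ind\dirac^+ = \int_X \hat{A}(X)e^{z/2}$, extract the degree-$8$ term, invoke the Hirzebruch signature theorem, and convert via $2\pc_X = p_1(X) + z^2$. The only cosmetic difference is that the paper rewrites the $\hat{A}$-expression to isolate the $L$-polynomial and then substitutes $p_1 = 2\pc_X - z^2$ directly, whereas you expand both sides into the basis $p_1^2, p_2, p_1 z^2, z^4$ and compare coefficients; the arithmetic checks out either way.
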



\begin{proof}
\cite[Theorem D.15]{lawson89} expresses $\ind \dirac^+$ as
the integral of $\exp\left(\frac{z}{2}\right) \ahat(X)$, whose degree 8 part expands to
\[ \frac{-4p_2 + 7p_1^2}{2^7.45} - \frac{z^2 p_1}{24.8} + \frac{z^4}{24.16}
\; = \; \frac{p_1^2}{2^7.7}  - \frac{L}{2^5.7} - \frac{z^2 p_1}{2^6.3} +
\frac{z^4}{2^7.3} . \]
Then substitute $p_1 = 2\pc - z^2$ to obtain \eqref{eq:ind_L}.
\end{proof}

Now suppose $M$ is a spin 7-manifold and $W$ a \spc coboundary,
such that the restriction of $z \in H^2(W)$ to $M$ is trivial. Then $z$ has a
pre-image $\bar z \in H^2_{}(W, M)$, and $\bar z^2 \in H^4(W, M)$ is
independent of the choice of $\bar z$.

\begin{defn}
\label{def:grc}
Given $k \in \divsp$, suppose there is $n \in H^4(W)$ such that
$j(n) = k$. Then let $\wh\alpha_n := \pc_W - d_\pi n$, and
\[ \gr^c_W(k) :=
\frac{\rif_W(\wh\alpha_n, \wh\alpha_n) - \sign(W)}{8}
- \frac{5\bar z^2 \pc_W}{12} + \frac{z^4}{4} \in \Q/\tdf \Z . \]
\end{defn}

If $z = 0$ then of course $\gr^c_W = \gr_W$. The proof that $\gr^c_W(k)$ does
not depend on the choice of $n$ is analogous to Lemma \ref{lem:gr_well},
using that $\wh\alpha$ is characteristic for intersection form $\lambda_W$.

\enlargethispage{\baselineskip}

\begin{prop}
\label{prop:ek_via_spc}
Let $(W_1,z_1)$ be a \spc coboundary of $M$ and $j_1 \colon H^4(W_1) \to H^4(M)$
the natural homomorphism.
Then
\begin{enumerate}
\item \label{it:grc}
$\gr^c_{W_1}(k) = \EK_M(k)\mmod 28$ for all $k\in S_{d_\pi} \cap j_1(H^4(W_1))$;
\item \label{it:grc_transform}
The defined values of $\gr^c_{W_1}$ satisfy the transformation rule
\eqref{eq:gr_rule}, \ie
\[ \gr^c_{W_1}(k') = \gr^c_{W_1}(k) + \frac{\tdelta(k, k'{-}k)}{8} \]
whenever $k, k' \in S_{d_\pi} \cap j_1(H^4(W_1))$, where $\tdelta$ in
\eqref{eq:tdelta} is defined in terms of $q_M^\circ$ and~$b_M$.
\end{enumerate}
\end{prop}

\begin{proof}
For part \ref{it:grc}, let $W_0$ be a 3-connected coboundary for $M$,
and ${X := (-W_0) \cup_{\Id_M} W_1}$.
Then $X$ is a smooth \spc manifold, possibly with more than one choice
of $z \in H^2(X)$ restricting to $z_1$ on $W_1$ and $0$ on $W_0$.
While we do not trouble ourselves with
separating the algebra from the topology in this case, we essentially
adapt the proof of Lemma \ref{lem:alg_compare} to show
\begin{equation}
\label{eq:compare^c}
\gr^c_{W_1} - \gr_{W_0} =
\frac{\pc_X^2 - \sign(X)}{8} - \frac{5z^2\pc_X}{12} + \frac{z^4}{4}
\mod \tdf \Z .
\end{equation}
Since the RHS equals $28 \ind \dirac^+$ by
Lemma \ref{lem:ind_L}, while
$\gr_W = \EK_M \mmod \gcd(28,\tdf)$ by definition, the result then follows.

Pick some $n_1 \in H^4(W_1)$ such that $j_1(n_1) \in \divsp$ as in Definition
\ref{def:grc}.  As $W_0$ is \mbox{3-connected},
there is some $n \in H^4(X)$ whose restriction to $W_1$ equals $n_1$.
Then $\pc_X - d_\pi n$ is a sum
of push-forwards of $\gamma_i \in H^4_{}(W_i, M;\Q)$ and
$\hlambda_{W_i}(\gamma_i) = \wh\alpha_i$. Meanwhile, note that regardless of
the choice of $z$, $z^2 \in H^4(X)$ is the push-forward of
$\bar z_1^2 \in H^4_{}(W_1, M)$. Hence
\begin{align*}
 \pc_X^2 - \frac{10 z^2 \pc_X}{3} + 2z^4 
\; & = \; \gamma_1^2 + \gamma_0^2 + 2d_\pi n (\pc_X - d_\pi n) + d_\pi^2 n^2
- \frac{10}{3}\bar z_1^2 \pc_{W_1} +2z_1^4 \\
& = \; \rif_{W_1}(\wh\alpha_1, \wh\alpha_1)
- \frac{10}{3}\bar z_1^2 \pc_{W_1} + 2z_1^4
- \rif_{W_0}(\wh\alpha_0, \wh\alpha_0) 
\mod 2\td_\pi .
\end{align*}
The fact that the equality holds $\mmod 4d_\pi$ when $d_\pi$ is not divisible
by 4 is due to $\pc_X - d_\pi n$ being a characteristic element for the
intersection form on $X$.

Part \ref{it:grc_transform} follows from \eqref{eq:compare^c} since $\gr_{W_0}$
satisfies \eqref{eq:gr_rule} and the RHS of \eqref{eq:compare^c} is constant.
\end{proof}

As a consequence of Proposition \ref{prop:ek_via_spc}~\ref{it:grc_transform}, we can extend
$\gr^c_W$ to a well-defined Gauss refinement so long as
$S_{d_\pi} \cap j(H^4(W))$ is non-empty. Hence the generalised Eells-Kuiper
invariant $\EK_M$ can be computed in terms of any \spc coboundary $W$ of $M$
where the intersection $S_{d_\pi} \cap j(H^4(W))$ is non-empty.

\subsection{An intrinsic definition of \texorpdfstring{$q^\circ_M$}{the linking family}} \label{ss:intrinsic}
In this subsection we define $\spg\an{4, 2}$-structures on spin manifolds and use them
to give an intrinsic definition of the linking family $q^\circ_M$ for $2$-connected $M$.

Recall from the proof of Lemma \ref{lem:p_M} that the
mod~$2$ reduction of the universal spin class $\rho_2(p) \in H^4(B\spg; \Z/2)$
is identified with the $4$th Wu class of the universal bundle over $B\spg$.  
We regard $v_4$ as a map
$$ v_4 \colon B\spg \to K(\Z/2, 4) $$
and define $B\spg\an{4, 2}$ to be the homotopy fibre of $v_4$.  
By construction there is a map $\gamma^{\an{4, 2}} \colon B\spg\an{4, 2} \to B\spg$
and a sequence of maps 
\begin{equation*} 
K(\Z/2, 3) \longrightarrow B\spg\an{4, 2} \xra{~\gamma^{\an{4, 2}}~} B\spg \xra{~v_4~} K(\Z/2, 4),
\end{equation*}
where both successive pairs of maps defines a fibration sequence.
Let $N$ be a spin manifold and let $\nu_N \colon N \to B\spg$
the classifying map for the stable normal bundle of $N$.
A {\em $\spg\an{4, 2}$-structure} on $N$ is an vertical homotopy class of lift 
$\bar \nu_N \colon N \to B\spg$.  In particular, there is a commutative diagram
\[ \xymatrix{
&
B\spg\an{4, 2} \ar[d]^{\gamma^{\an{4, 2}}} \\
N \ar[r]^(0.45){\nu_N} \ar[ur]^{\bar \nu} &
B\spg.
}\]
The diagram above ensures that $\bar \nu \colon N \to B\spg\an{4, 2}$ 
is canonically covered by a map of stable vector bundles
from the normal bundle of $N$ to the pull-back of the universal 
bundle over $B\spg$ along $\gamma^{\an{4, 2}}$.

\begin{lem} \label{lem:Spin2}
\hfill
\begin{enumerate}
\item Every spin $7$-manifold $M$ admits a $\spg\an{4, 2}$-structure. 
\item The set of equivalence classes of $\spg\an{4, 2}$-structures on $M$
is a torsor for $H^3(M; \Z/2)$.
\item The induced map $\gamma^{\an{4, 2}*} \colon H^4(B\spg) \to H^4(B\spg\an{4, 2})$ is 
isomorphic to $\times 2 \colon \Z \to \Z$.
\end{enumerate}
\end{lem}

\begin{proof}
By Lemma \ref{lem:p_M}(i) we have $\rho_2(p_M) = 0$ and
so Part (i) follows from the right-hand fibration in 
the sequence of maps defining $B\spg\an{4, 2}$ above.
Part (ii) and (iii) follow from the left-hand fibration in 
the sequence of maps defining $B\spg\an{4, 2}$ above.
\end{proof}

For later use, we point out that Lemma \ref{lem:Spin2}(iii)
shows that $B\spg\an{4, 2}$-manifolds $(X, \bar \nu)$ have a naturally defined 
characteristic class $p^{\bar \nu}_X \in H^4(X)$ such that $2 p_X^{\bar \nu} = p_X$.
For spin $7$-manifolds $M$ we set
$$\bar \divs_2 := \{ h \in G : p_M = 2h \} \subset \divs_2$$
and then Lemma \ref{lem:Spin2}(ii) shows that every $h \in \bar \divs_2$ arises
as $p^{\bar \nu}$ for some $\spg\an{4, 2}$-structure $\bar \nu$ on $M$.
Of course, $\bar \divs_2$ is a torsor for $_{2}TH^4(M)$,
the subgroup of $2$-torsion elements of $TH^4(M)$.

In the usual way, we define the bordism groups of closed $n$-manifolds with
$\spg\an{4, 2}$-structure,
$$ \Omega_n^{\spg\an{4, 2}} = \{ [N, \bar \nu] \},$$
where $[N, \bar \nu]$ denotes the $B\spg\an{4, 2}$-bordism class of 
$(N, \bar \nu)$.

\begin{lem} \label{lem:OmegaSpin2}
$\Omega_7^{\spg\an{4, 2}}  = 0$;
\ie every closed spin 7-manifold has a spin coboundary $W$ with $p_W$ even.
\end{lem}

\begin{proof}
Consider a $B\spg\an{4, 2}$-manifold $\bar \nu\colon M \to B\spg\an{4, 2}$.
Since the space $B\spg\an{4, 2}$ is $3$-connected, 
surgery below the middle dimension as in by
\cite[Proposition]{kreck99} ensures that we may replace
$(M, \bar \nu)$
in its $B\spg\an{4, 2}$-bordism class by a homotopy sphere 
with $\spg\an{4, 2}$-structure $(\Sigma, \bar \nu_\Sigma)$.
By Lemma \ref{lem:Spin2}(ii), $\Sigma$ has a unique $\spg\an{4, 2}$-structure.
By \cite[Theorem 3.1]{kervaire63} $\Sigma$ is stably parallelisable and
and so its $\spg\an{4, 2}$-structure is induced from a stably framing.
By \cite{kervaire63} $\Sigma$ bounds a parallelisable manifold.
Hence $(\Sigma, \bar \nu_\Sigma)$ bounds a $B\spg\an{4, 2}$-manifold
and so $\Omega_7^{\spg\an{4, 2}}  = 0$.
\end{proof}

Fix a $\spg\an{4, 2}$-structure $\bar \nu$ on $M$ and
recall the characteristic class 
$p^{\bar \nu}_M \in H^4(M)$.
For $2$-connected $M$ we show how to define a homogeneous quadratic form
$$q^{\bar \nu} \colon TH^4(M) \to \Q/\Z $$
using just $(M, \bar \nu)$ and in particular no coboundary.
Moreover this definition recovers the quadratic form obtained 
by evaluating the quadratic linking family $q^\circ_M$ at
$h = p^{\bar \nu}_M \in \bar \divs_2$;
\ie $q^{\bar \nu} = q^{p^{\bar \nu}_M}$.
The idea is to repeat Wall's definition 
of the quadratic refinement of the linking form for $(s{-}1)$-connected $(2s{+}1)$-manifolds
for $s \neq 3, 7$ from \cite[\S12A]{wall67}.
We assume the reader is familiar with this definition,
recalling only its essential features.

Following Wall we work with the dual group $TH_3(M)$, the torsion subgroup of $H_3(M)$.
For brevity, we write $\wh x \in TH_3(M)$ 
for the Poincar\'{e} dual of $x \in TH^4(M)$.
Since $M$ is \mbox{$2$-connected} every element $\wh x \in TH_3(M)$ is represented by
an embedding $S^3 \hookrightarrow M$ and since every linear bundle over $S^3$ is trivial,
this extends to an embedding $f_{\wh x} \colon D^4 \times S^3 \hookrightarrow  M$.
To compute the self-linking number $b_M(x, x)$ we need to push $f_{\wh x}(\{0\} \times S^3)$
off itself and this can be achieved by taking a section $s \colon S^3 \to S^3 \times S^3$
of the unit normal bundle $S^3 \times S^3 \to S^3$.
Following Wall, 
we set $X : = M \setminus \mathrm{Int}(f_{\wh x}(D^4 \times S^3))$,
note that $M$ is obtained from $X$ by attaching a $4$-handle and a $7$-handle
and let $y_1 := [s] \in H_3(X)$ and $y_2 \in H_3(X)$ be the homology class
of the meridian $S^3 \times \{\ast\}$.
For $i \colon X \to M$ the inclusion,
$y_2$ generates the kernel of $i_* \colon H_3(X) \to H_3(M)$
and $i_*(y_1) = x$ has order $r$ for some positive integer~$r$.
Hence $i_*(ry_1) = 0$ and so $ry_1 = \lambda(s) y_2$ for $\lambda(s) \in \Z$.
The homological definition of the linking form gives
\[ b_M(x, x) = \frac{\lambda(s)}{r}.\]

Wall defined a refinement of $b_M$ by restricting the choice of section $s$,
and hence the possible integers $\lambda(s)$ appearing in the description 
of $b_M$ above.
To achieve a similar restriction on the choice of sections in dimension $7$ we use the 
$\spg\an{4, 2}$-structure $\bar \nu$ on $M$.
The codimension-$0$ submanifold $f_{\wh x}(D^4 \times S^3) \subset M$ inherits a $\spg\an{4, 2}$-structure from
$(M, \bar \nu)$ and this induces a $\spg\an{4, 2}$-structure
on $S^3 \times S^3$, which we also denote by $\bar \nu$.
By construction the universal bundle on $B\spg\an{4, 2}$ in Wu $4$-oriented in the sense
of Brown \cite[Definition~1.10]{brown72}.  
Now for any closed $6$-manifold $Y$ with a Wu $4$-orientation
$\bar \nu_Y$, Brown \cite[Corollary 1.11]{brown72} defines a quadratic refinement,
$\phi^{\bar \nu_Y} \colon H^3(Y; \Z/2) \to \Z/2$,
of the mod $2$-intersection form of~$Y$.
In particular, we have the quadratic refinement
$$ \phi^{\bar \nu_{}} \colon H^3(S^3 \times S^3; \Z/2) \to \Z/2.$$
We then define $q^{\bar \nu} \colon TH^4(M) \to \Q/\Z$ by the equation
$$ q^{\bar \nu}(x) := \frac{\lambda(s)}{2r} \in \Q/\Z,$$
where we restrict to sections $s \colon S^3 \to S^3 \times S^3$ such that $\phi^{\bar \nu_{}}(s^*(u)) = 0$
for $u \in H^3(S^3; \Z/2)$ the generator.

\begin{lem} \label{lem:q_intrinsic}
$q^{\bar \nu} \colon TH^4(M) \to \Q/\Z$
is well-defined and refines $b_M$.  Moreover 
$q^{\bar \nu} = q^{p^{\bar \nu}_M}$.
\end{lem}

\begin{proof}
That $q^{\bar \nu}$ is a well-defined refinement of $b_M$ 
follows from the proof of \cite[Lemma 26]{wall67}, using the fact 
that Brown's form is a refinement of the mod~$2$ intersection for a $6$-manifold.

To see that $q^{\bar \nu} = q^{p^{\bar \nu}_M}$, we let $(W, \bar \nu_W)$ be a 
$B\spg\an{4, 2}$-coboundary for $(M, \bar \nu)$, which exists by 
Lemma \ref{lem:OmegaSpin2}.
As for spin coboundaries, we may assume that $W$ is $3$-connected
and consider the characteristic form
$(H^4(W, \del W), \lambda_W, p_W)$ of $W$.  
In the definition of $q^{p^{\bar \nu}_M}$ in \eqref{eq:alg_qdef},
we may take $m = p^{\bar \nu_W}$ so that $\alpha_m = 0$.  Then for $x \in TH^4(M)$ and
$y \in H^4(W)$ with $j(y) = x$, we have
\begin{equation} \label{eq:qbarnu}
q^{p^{\bar \nu}_M}(x) = \frac{-\rif_W(y, y)}{2},
\end{equation}
where we note that $\lambda_W$ is even since $\rho_2(p_W) = 0$.
But in the proof of \cite[Theorem 8]{wall67} Wall identifies his topologically
defined refinement with the algebraically defined refinement
appearing in \eqref{eq:qbarnu}.
It follows that Wall's arguments in the proof of \cite[Theorem 8]{wall67} can be repeated
to show that $q^{\bar \nu} = q^{p^{\bar \nu}_M}$.
\end{proof}

\begin{rmk}
Using Lemma \ref{lem:q_intrinsic} we can define $q^h$ intrinsically on
$2$-connected $M$ for every $h \in \bar \divs_2 \subset \divs_2$
and then use the transformation rule of Definition \ref{def:foqr}(ii) to
determine~$q^\circ_M$.
For example, if $H^4(M)$ is torsion then
for each $\spg\an{4, 2}$-structure $\bar \nu$ on $M$ we have
\[ q_M = q^0_M = q^{\bar \nu}_{p^{\bar \nu}}.\]
\end{rmk}

\section{The classification of 2-connected 7-manifolds} \label{sec:classification}
In this section we classify closed smooth spin $2$-connected
$7$-manifolds $M$ up to diffeomorphism.
Recall that a homotopy $7$-sphere $\Sigma$ is a spin manifold which is homotopy equivalent to $S^7$.
In Section \ref{ss:almost_diffeomorphism} we recall that 
an almost diffeomorphism $f \colon M_0 \acong M_1$ defines a 
diffeomorphism $f \colon M_0 \sharp \Sigma \cong M_1$,
for some $\Sigma$.
In Section \ref{ss:alsmost_classification} we relate the algebra of Section \ref{sec:invariants} 
to the algebra used in \cite{crowley02} and so give the almost diffeomorphism classification of 
$2$-connected $M$ in terms of their refinements $(H^4(M), q^\circ_M, p_M)$.

With the almost diffeomorphism classification in hand, we 
consider the inertia group
of $M$, which is the group of homotopy spheres $\Sigma$ such that $M \sharp \Sigma \cong M$.
In Section \ref{ss:inertia_and_reactivity2} we establish basic
facts relating the inertia group of $M$, the reactivity of $M$ and certain mapping class groups of $M$.
We also construct an important family of almost diffeomorphisms $f \colon M \acong M$
in Proposition \ref{prop:p2null}.  
The almost diffeomorphisms of Proposition \ref{prop:p2null} allow us to show that 
the generalised Eells-Kuiper invariant of $M$,~$\mu_M$, precisely measures the gap between
the almost diffeomorphism classification and the diffeomorphism classification.  
In Section \ref{ss:classification_theorem} we prove that
the mod $28$ distillation %
of $M$, $(H^4(M), q^\circ_M, \EK_M, p_M)$, is a complete invariant
of diffeomorphisms.

%
\subsection{Almost diffeomorphisms}  
\label{ss:almost_diffeomorphism}
In this  subsection we briefly review the almost smooth spin category in dimension $7$.
An {\em almost diffeomorphism} $f \colon M_0 \acong M_1$
is a homeomorphism which is smooth except perhaps at a finite set of singular points 
$\{m_0, \dots, m_a\} \subset M_0$.  Notice that {\em we do not require $f$ to be non-smooth}
at $m_i$, but we rather {\em allow} it.
The composition of almost diffeomorphisms is again
an almost diffeomorphism and so almost diffeomorphism defines an 
equivalence relation on smooth spin $7$-manifolds.

Let $f \colon M_0 \acong M_1$ be an almost diffeomorphism with singular set 
$\{m_0, \dots, m_a\}$.  We shall associate a 
homotopy $7$-sphere to each singular point $m_i$.  
For $i = 0, \dots, a$, let $D^7_i \ni m_i$ be a small
disc containing $m_i$ and disjoint from $D^7_j$ if $i \neq j$. 
The manifold $f(D^7_i) \subset M_1$
is a co-dimension zero submanifold of $M_1$ and so inherits a smooth structure from $M_1$
such that
\[ \wh f_i : = f|_{D^7_i - \{m_i\}} \colon D^7_i - \{m_i\} \cong f(D^7_i - \{m_i\})  \]
is a diffeomorphism.  We can therefore define the smooth homotopy $7$-sphere
\[ \Sigma_i : = D^7_i \cup_{\wh f_i} (-f(D^7_i)) \]
by gluing $D^7_i$ and $-f(D^7_i)$ together along $\wh f_i$

We set $\Sigma_f : = \Sigma_0 \sharp \Sigma_1 \sharp \dots \sharp \Sigma_a.$
If $D^7 \subset M_0$ contains the singular points of $f$ 
in its interior, then by \cite[Proposition 2.1]{crowley02} 
there is a diffeomorphism 
$f' \colon M_0 \sharp \Sigma_f \to M_1$
such that $f'|_{M_0 - D^7} = f|_{M_0 - D^7}$.
It follows that $M_0$ is almost diffeomorphic to $M_1$
if and only if there is a homotopy sphere $\Sigma$ and a diffeomorphism
$M_0 \sharp \Sigma \cong M_1$.

Before defining pseudo-isotopy for almost diffeomorphisms with one singular point
we recall the definition for diffeomorphisms.
Let $\Diff_{}(M)$ be the group of diffeomorphisms of~$M$.
A pseudo-isotopy
between $f_0, f_1 \in \Diff(M)$
is a diffeomorphism
$F \colon M \times I \cong M \times I$ which restricts to $f_i$ on $M \times \{i\}$.
We define
\[ \wt \pi_0\Diff_{}(M) := \{ [f \colon M \cong M] \}, \]
the group of the pseudo-isotopy classes of diffeomorphisms of $M$.

For $m_0 \in M$,
let $\ADiff(M, m_0)$ be the group of almost diffeomorphisms of $M$ 
with singular point $m_0$.
A pseudo-isotopy between $f_0, f_1 \in \ADiff(M, m_0)$ is a homeomorphism
$F \colon M \times I \to M \times I$
with $F|_{{M_0} \times \{i\}} = f_i$ and which is smooth, except possibly
along $\{m_0\} \times I$.
We define
\[ \wt \pi_0\ADiff_{}(M, m_0) := \{ [f \colon M \acong M] \}, \]
the group of pseudo-isotopy classes of almost diffeomorphisms of $M$ with singular point $m_0$.

\subsection{The almost diffeomorphism classification}
\label{ss:alsmost_classification}
In this subsection we show how Theorem \ref{thm:a_class}
follows from the classification results of \cite{crowley02}.
The almost diffeomorphism classification given in \cite{crowley02} used a different but closely
related definition of a quadratic linking family.  We begin by explaining
the relationship between the two definitions of linking family and showing that
Theorem \ref{thm:a_class} is equivalent to \cite[Theorem B]{crowley02}.  
We then describe the main ideas of the proof of \cite[Theorem B]{crowley02} and interpret linking families
in terms of connected sum splittings.
Throughout this subsection $M$ is $2$-connected
and we have the global notation $G = H^4(M)$ with torsion subgroup 
$T \subseteq G$
and free quotient $F = G/T$.

Let us start with some elementary algebra for the group $G$.
Let $\iota \colon T \to G$ be the inclusion 
and $\pi \colon G \to F$ be the canonical projection.
We let $\sect(\pi) := \{ \sigma \colon F \to G\}$ be the set of sections of $\pi \colon G \to F$ and 
we let $\proj(\iota) := \{ \tau \colon G \to T \}$ be the set of projections over $\iota$;
\ie $\tau \circ \iota = {\rm Id}_T$.
The sets $\sect(\pi)$ and $\proj(\iota)$ are in bijection by mapping 
$\sigma \mapsto \tau_\sigma$, where $\im(\sigma) = \ker(\tau_\sigma)$.  
Both sets admit simple transitive actions of
$\hom(F, T)$ via addition of functions.
For $\phi \in \hom(F, T)$, $f \in F$ and $g \in G$ we have
\[ (\sigma + \phi)(f) = \sigma(f) + \phi(f) \quad \text{and} \quad
(\tau + \phi)(g) = \tau(g) + \phi(\pi(g)).   \]
Notice that $\tau_{\sigma + \phi} = \tau_\sigma - \phi$.

\begin{rmk*}
The action of $\hom(F, T)$ on $\sect(\pi)$ used above differs by a sign from the corresponding
action in \cite[p.\,39]{crowley02}.
\end{rmk*}

Let $(G, b, p)$ be a base so that $b$ is a torsion form on $T$ and $p \in 2G$.
Recall that $\cal{Q}(b)$ is the set of refinements of $b$ and given $q \in \cal{Q}(b)$,
let us write $\beta(q)$ for the homogeneity defect of $q$: see Section \ref{ss:tfs_and_Gauss_sums}.
In \cite[Definition 2.39]{crowley02} a quadratic linking
family on a base $(G, b, p)$ was defined as a function
\[ q^\bullet \colon \sect(\pi) \to \cal{Q}(b)\]
such that for all $\sigma \in \sect(\pi)$ and for all $\phi \in \hom(G, T)$,
\[  q^{\sigma + \phi} = q^\sigma_{-\phi(\pi(p)/2)} \quad \text{and} \quad
\beta(q^\sigma) = \tau_\sigma(p). \]
We explain the topological significance of these conditions below,
focussing for now on the algebra.

In this paper we work with linking families which are functions on $S_2$
and we now explain how to pass between linking families defined on $S_2$
and linking families defined on $\sect(\pi)$.
Given a section $\sigma \in \sect(\pi)$ there is a unique element
$k(\sigma) := \sigma(\pi(p)/d_\pi)$
of $S_{d_\pi}$ which lies in ${\rm Im}(\sigma)$
and so we obtain the function
\[ \sect(\pi) \to S_{d_\pi}, 
\quad \sigma \mapsto k(\sigma) \in {\rm Im}(\sigma) \cap S_{d_\pi}. \]
Now multiplication by $e_\pi = \frac{d_\pi}{2}$ gives a map $S_{d_\pi} \to S_2$ 
and we set
$\wh S_2 := e_\pi S_{d_\pi} \subset S_2$.
Given a refinement $q^\circ \colon S_2 \to \cal{Q}(b)$ we define 
\begin{subequations}
\begin{equation} \label{eq:q_bullet_and_q_sect}
q^\bullet \colon \sect(\pi) \to \cal{Q}(b), \quad q^\sigma := q^{e_\pi k(\sigma)} .
\end{equation}
Conversely, given $q^\bullet \colon \sect(\pi) \to \cal{Q}(b)$ we define
\begin{equation} \label{eq:q_sect_and_q_bullet}
q^\circ \colon \wh S_2 \to \cal{Q}(b), \quad q^{e_\pi k(\sigma)} := q^\sigma
\end{equation}
\end{subequations}
and extend $q^\circ$ to all of $S_2$ by the transformation rule of Definition \ref{def:foqr} \ref{it:q_trans}.
The transformation rules for $q^\bullet$ and $q^\circ$ ensure that they are determined
by their value on a single section or element of $S_2$.
Moreover, these transformation rules are compatible since
$k(\sigma + \phi) = k(\sigma) + \phi(\pi(p)/d_\pi)$,
$ q^{\sigma + \phi} = q^\sigma_{-\phi(\pi(p)/2)} $
and
$$ q^{e_\pi k(\sigma + \phi)} = 
q^{e_\pi ( k(\sigma) + \phi ( \pi(p)/d_\pi ) )} = 
q^{e_\pi k(\sigma) + \phi(\pi(p)/2)} = 
q^{e_\pi k(\sigma)}_{-\phi(\pi(p)/2)}.$$
Hence we have
\begin{lem} \label{lem:2_types_of_qlf}
The mappings $q^\circ \mapsto q^\bullet$ and $q^\bullet \mapsto q^\circ$ of 
\eqref{eq:q_bullet_and_q_sect} and \eqref{eq:q_sect_and_q_bullet} define inverse equivalences 
of categories between linking families defined on $S_2$ and linking families defined on $\sect(\pi)$. \qed
\end{lem}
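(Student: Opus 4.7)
The two constructions are essentially bookkeeping translations between two equivalent presentations of the same data. The plan is to verify well-definedness of each map on objects, show they are mutually inverse on objects, and then extend to morphisms to give an equivalence of categories.

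First I would record the key algebraic identity relating sections of $\pi$ to elements of $S_{d_\pi}$. Since $p \in 2G$ and $F$ is free, $\pi(p) \in 2F$, and there is a unique $\bar k \in F$ with $d_\pi \bar k = \pi(p)$. For any $\sigma \in \sect(\pi)$, $k(\sigma) = \sigma(\bar k)$, and for $\phi \in \hom(F, T)$ one has $k(\sigma + \phi) - k(\sigma) = \phi(\bar k) \in T$. This identity relates the $\hom(F, T)$-action on $\sect(\pi)$ to the $T$-action on $\wh S_2$, and also shows $\frac{d_\pi}{2}k(\sigma) \in S_2$ (using that $d_\pi$ is even, which follows from $p \in 2G$).

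Next I would check that \eqref{eq:q_bullet_and_q_sect} outputs a valid linking family on $\sect(\pi)$ in the sense of \cite[Definition 2.39]{crowley02}: the homogeneity-defect clause is immediate since $\beta_{\frac{d_\pi}{2}k(\sigma)} = p - d_\pi k(\sigma) = \tau_\sigma(p)$, while the transformation clause under $\sigma \mapsto \sigma + \phi$ follows from the key identity combined with Definition \ref{def:foqr}\ref{it:q_trans}. Conversely, \eqref{eq:q_sect_and_q_bullet} defines $q^\circ$ on $\wh S_2 \subseteq S_2$, and since $S_2$ is a $T$-torsor and $\wh S_2$ is non-empty, the transformation rule of Definition \ref{def:foqr}\ref{it:q_trans} forces a unique extension to all of $S_2$. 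The two round-trips are the identity on the generating points ($\wh S_2$ on one side, $\sect(\pi)$ on the other), and hence globally by the uniqueness of each kind of family. Finally, an isomorphism $F \colon (G, q^\circ\!, p) \cong (G', q'^\circ\!, p')$ in $\catq$ induces a compatible bijection $\sect(\pi) \leftrightarrow \sect(\pi')$ commuting with $k(\cdot)$, so both constructions are natural; this upgrades the object-level bijection to an equivalence of categories.

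The step I expect to be the main obstacle is the transformation-rule check: one must carefully match the shift appearing in the convention of \cite{crowley02} (an element of $T$ defined in terms of $\phi$ and $\pi(p)$) with the shift by $-t$ in Definition \ref{def:foqr}\ref{it:q_trans}, via the correspondence $\sigma \leftrightarrow \frac{d_\pi}{2}k(\sigma)$. Once this sign and divisibility bookkeeping is settled, the remaining verifications are formal.
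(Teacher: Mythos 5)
Your proposal is correct and matches the paper's (terse) argument, which simply observes that both kinds of families are determined by their value at a single point, so the bijection $\sigma \leftrightarrow \frac{d_\pi}{2}k(\sigma)$ between parameter sets induces a bijection of families once the transformation and homogeneity-defect clauses are checked to correspond; your key identity $k(\sigma+\phi) = k(\sigma)+\phi(\bar k)$ and the computation $\beta_{\frac{d_\pi}{2}k(\sigma)} = p - d_\pi k(\sigma) = \tau_\sigma(p)$ are exactly right. The bookkeeping ``obstacle'' you flag is real: as printed, the paper's restatement of \cite[Definition 2.39]{crowley02} has $q^{\sigma+\phi} = q^\sigma_{-\phi(\pi(p))}$, which is not internally consistent with $\beta(q^\sigma) = \tau_\sigma(p)$ (the defect of $q_a$ shifts by $2a$, so this would change $\beta$ by $2\phi(\pi(p))$ rather than the required $\phi(\pi(p))$); the shift should be by $-\phi(\pi(p)/2) = -\frac{d_\pi}{2}\phi(\bar k)$, which is precisely what Definition \ref{def:foqr}\ref{it:q_trans} gives under your correspondence, so the verification goes through once this typo is corrected.
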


\begin{proof}[Proof of Theorem \ref{thm:a_class}]
Let $M$ be $2$-connected, $q_M^\circ$ the linking family of $M$ as defined in Definition \ref{def:qlf}
and $-q^\bullet_M$ the linking family of $M$ as defined in \cite[Definition 2.39]{crowley02}
(we have introduced the sign to correct the mistake in \cite[Definition 2.50]{crowley02}: see 
Remark \ref{rmk:sign_of_q}.)
Comparing these definitions, we see that for each $\sigma \in \sect(\pi)$
\begin{equation} \label{eq:qs_related}
q^{e_\pi k(\sigma)}_M = q^\sigma_{M}.
\end{equation}
Now \cite[Theorem B]{crowley02} states that
all linking families defined on $\sect(\pi)$ arise as the quadratic linking families of $2$-connected $M$ and 
that any isomorphism of linking families defined on $\sect(\pi)$ is realised by an almost diffeomorphism.  
Hence Theorem \ref{thm:a_class} follows by combining \cite[Theorem B]{crowley02}, \eqref{eq:qs_related}
and Lemma \ref{lem:2_types_of_qlf}.
\end{proof}

We now explain the proof of \cite[Theorem B]{crowley02}.  
Recall that every $2$-connected $M$ is the boundary
of a $3$-connected $W$ and that the characteristic form of $W$, $(H^4(W, \del W), \lambda_W, \alpha_W)$,
is a complete invariant of $3$-connected $W$ under diffeomorphisms by \cite{wall62};
see \cite[Corollary~2.5]{crowley02}.  
Let $\natural$ denote the boundary connected sum of manifolds with boundary.
A foundational theorem of Wilkens 
\cite[Theorem 3.2]{wilkens71} (see also \cite[Theorem 2.24]{crowley02})
states that for any diffeomorphism $f \colon \del W_0 \cong \del W_1$, 
there are $W_2$ and $W_3$ with
$\del W_2 = \del W_3 = S^7$ and a diffeomorphism 
\[ g \colon W_0 \natural W_2 \cong W_1 \natural W_3 \]
extending $f$.
The boundary
of $W$ is a homotopy sphere, if and only if
$(H^4(W, \del W), \lambda_W, \alpha_W)$ is nonsingular.  Hence we 
say that two characteristic forms are \emph{stably isomorphic} if they
become isomorphic after addition of nonsingular characteristic forms.  The above discussion shows that
classifying $2$-connected $7$-manifolds up to almost diffeomorphism is equivalent to classifying
characteristic forms up to stable isomorphism.  
This was achieved in \mbox{\cite[Theorem~3.4]{crowley02}} by extending ideas of Wall 
\cite[Theorem p.\,156]{wall72} from the setting of even
forms to the setting of characteristic forms.  The point is that an isomorphism 
$F \colon \del(H_0, \lambda_0, \alpha_0) \cong \del(H_1, \lambda_1, \alpha_1)$ of the boundaries of characteristic
forms can be used to glue them together to obtain a nonsingular characteristic form
\[ (H_0, -\lambda_0, \alpha_0) \cup_F (H_1, \lambda_1, \alpha_1). \]
It is then possible to explicitly write down an isomorphism of characteristic forms
\begin{multline*}
 E \colon (H_0, \lambda_0, \alpha_0) \oplus \bigl( (H_0, -\lambda_0, \alpha_0) \cup_F (H_1, \lambda_1, \alpha_1) \bigr) \to \\
(H_1, \lambda_1, \alpha_1) \oplus \bigl( (H_0, -\lambda_0, \alpha_0) \cup_{\rm Id} (H_0, \lambda_0, \alpha_0) \bigr), 
\end{multline*}
such that $\del E = F$.  Combined with Lemma \ref{lem:splitq}, these methods give the following theorem, which 
is a refinement of a special case of \cite[Theorem 3.4]{crowley02}.

\begin{thm}[\cf\,{\cite[Theorem 3.4]{crowley02}}] \label{thm:stable_iso_and_qlfs}
For $i = 0, 1$, let $(H_i, \lambda_i, \alpha_i)$ be two characteristic forms.
The following are equivalent:
\begin{enumerate}
\item 
There is an isomorphism of refinements
$F \colon \del(H_0, \lambda_0, \alpha_0) \cong \del(H_1, \lambda_1, \alpha_1)$;
\item
There are nonsingular characteristic forms $(H_j, \lambda_j, \alpha_j)$, $j = 2, 3$, and
an isomorphism 
\[ E \colon (H_0, \lambda_0, \alpha_0) \oplus (H_2, \lambda_2, \alpha_2) \cong (H_1, \lambda_1, \alpha_1) \oplus
(H_3, \lambda_3, \alpha_3) \]
such that $\del E  = F$;
\item
There is a nonsingular characteristic form $(H, \lambda, \alpha)$ containing 
$(H_0, -\lambda_0, \alpha_0)$
and $(H_1, \lambda_1, \alpha_1)$ as orthogonal summands. 
\end{enumerate}
In addition, there is a canonical isomorphism 
$F_\lambda = F \colon \del(H_0, \lambda_0, \alpha_0) \cong \del (H_1, \lambda_1, \alpha_1)$
in case~$(\rm{iii})$.
\end{thm}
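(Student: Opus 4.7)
The plan is to prove the cycle (ii) $\Rightarrow$ (i) $\Rightarrow$ (iii) $\Rightarrow$ (ii). The implication (iii) $\Rightarrow$ (i) is immediate from Lemma \ref{lem:splitq}, which says precisely that the canonical isomorphism $F_\lambda$ of \eqref{eq:canonical_iso} intertwines the two boundary linking families. The implication (ii) $\Rightarrow$ (i) is equally direct: nonsingular characteristic forms have trivial boundary since $\hlambda_j$ is an isomorphism, so $\del E$ restricts to an isomorphism $\del(H_0,\lambda_0,\alpha_0) \cong \del(H_1,\lambda_1,\alpha_1)$, which is $F$ by hypothesis.

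For (i) $\Rightarrow$ (iii), I would construct the glued characteristic form $(H,\lambda,\alpha) := (H_0, -\lambda_0, \alpha_0) \cup_F (H_1, \lambda_1, \alpha_1)$ by an algebraic Mayer--Vietoris construction, analogous to Lemma \ref{lem:split_mfd}. Concretely, $H$ is the integral lattice inside $(H_0 \oplus H_1) \otimes \Q$ spanned by $H_0 \oplus H_1$ together with the rational classes $-\hlambda_0^{-1}(x_0) + \hlambda_1^{-1}(x_1)$ for those pairs $(x_0, x_1) \in \res_0 \oplus \res_1$ whose images in $G_0$ and $G_1$ are identified by $F$, equipped with the pairing and characteristic element induced from $(-\lambda_0 \oplus \lambda_1, \, \alpha_0 \oplus \alpha_1)$. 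That $\lambda$ is nonsingular and that $\alpha$ is an integral characteristic element for $\lambda$ follows from the hypothesis that $F$ intertwines the refinement families via the formula \eqref{eq:alg_qdef}. By construction, the projections appearing in \eqref{eq:canonical_iso} recover $F$ as the canonical isomorphism $F_\lambda$.

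For (iii) $\Rightarrow$ (ii), following the template in the excerpt I would take $(H_2, \lambda_2, \alpha_2) := (H, \lambda, \alpha)$ and $(H_3, \lambda_3, \alpha_3) := (H_0, -\lambda_0, \alpha_0) \cup_{\mathrm{Id}} (H_0, \lambda_0, \alpha_0)$, and exhibit a change-of-basis isomorphism
\[ E \colon (H_0, \lambda_0, \alpha_0) \oplus (H_2, \lambda_2, \alpha_2) \cong (H_1, \lambda_1, \alpha_1) \oplus (H_3, \lambda_3, \alpha_3) \]
whose matrix, relative to the natural block decomposition of both sides into copies of $H_0$, $H_1$, and the diagonal lattices used in the two gluings, is a shear that slides the $H_0$ summand across. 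This is a characteristic-form adaptation of Wall's addition argument in \cite[Theorem p.\,156]{wall72}; the essentially new bookkeeping is to track the characteristic element $\alpha$ through the shear and verify that $\del E$ equals $F_\lambda = F$ via \eqref{eq:canonical_iso}.

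The main obstacle is the gluing step (i) $\Rightarrow$ (iii): in the possibly degenerate case one must assemble $H$ using the annihilators $\res_i$ of the radicals rather than the full duals $H_i^*$, and then check that the resulting $\lambda$ is honestly integral rather than merely rational and that $\alpha$ descends to an integral characteristic element. The nondegenerate version of the theorem is already \cite[Theorem 3.4]{crowley02}, so the essentially new content here is extending that argument to characteristic forms with nontrivial radical, in parallel with the extension of Lemma \ref{lem:splitq} beyond the nondegenerate case noted in the excerpt.
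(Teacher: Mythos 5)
Your overall strategy — a cycle of implications, with (ii)$\Rightarrow$(i) and (iii)$\Rightarrow$(i) immediate, (i)$\Rightarrow$(iii) by gluing and (iii)$\Rightarrow$(ii) by a Wall-style shear — is exactly the strategy the paper outlines in the paragraph preceding the theorem, and the paper's own ``proof'' simply cites \cite[Lemma 3.12 and Theorem 3.4]{crowley02} for these steps. The easy implications are fine.

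However, the explicit gluing you propose for (i)$\Rightarrow$(iii) does not work in the degenerate case, which you yourself flag as the crux. Two problems. First, when $K_0 := \ker\hlambda_0 \ne 0$ the expression $\hlambda_0^{-1}(x_0)$ for $x_0 \in R_0$ is only defined in $(H_0/K_0)\otimes\Q$, not in $H_0\otimes\Q$; lifting it requires an arbitrary choice. Second, and more seriously, any overlattice $H \supseteq H_0 \oplus H_1$ inside $(H_0 \oplus H_1)\otimes\Q$ with $H_0 \perp H_1$ must have $K_0$ in the radical of $\lambda$: for $k \in K_0$ and $h = h_0 + h_1 \in H$ with $h_i \in H_i\otimes\Q$, one has $\lambda(k,h_1)=0$ by orthogonality and $\lambda(k,h_0) = -\lambda_0(k,h_0) = 0$ since $k$ is in the radical of $\lambda_0$. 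So $\lambda$ cannot be nonsingular unless $K_0 = K_1 = 0$. The correct glued lattice does not sit inside $(H_0\oplus H_1)\otimes\Q$: in the geometric model $X = (-W_0)\cup_{\Id_M} W_1$ of Lemma \ref{lem:split_mfd}, the images of $H^4(W_0,\del W_0)$ and $H^4(W_1,\del W_1)$ in $H^4(X)$ intersect in the common radical $K \cong K_0 \cong K_1$ (both equal the image of $H^3(M)$), and $H^4(X)$ then contains an extra rank-$\rk K$ piece pairing nontrivially with $K$. Your gluing forces $H_0 \cap H_1 = 0$, so it models the wrong thing. This is the genuine content missing from the proposal; the nondegenerate gluing you describe is indeed \cite[Theorem 3.4]{crowley02}, but the extension past nontrivial radicals requires a different lattice construction, not merely swapping $H_i^*$ for $R_i$.
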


\begin{rmk*}
By Lemma \ref{lem:2_types_of_qlf} the statement of
Theorem \ref{thm:stable_iso_and_qlfs} and the discussion before it applies
equally well to linking families defined over $\sect(\pi)$ and linking families
defined over~$S_2$.
\end{rmk*}

\begin{proof}[Proof of Theorem \ref{thm:stable_iso_and_qlfs}]
This follows from \cite[Lemma 3.12]{crowley02} and the proof
of \cite[Theorem~3.4]{crowley02}.
\end{proof}

We now explain how the linking family $q^\circ_M$ of $M$ parametrises connected 
sum decompositions of $M$ into a summand with torsion-free homology and a summand
which is a rational homotopy sphere.
By Theorem \ref{thm:a_class}, $2$-connected
rational homotopy spheres $M$ with torsion linking form $(T, b)$ are classified up to 
almost diffeomorphism by their quadratic refinements $q^0_M \in \cal{Q}(b)$.  
We shall write $M(q)$ for any rational homotopy sphere with linking form isomorphic to $q$. 
The simplest
examples of $2$-connected 
$M$ with $H^4(M)$ torsion-free are given in the following
\begin{defn} \label{def:MF}
Let $F \cong \Z^b$ be a free abelian group of rank $b$ and $d_\pi$ be an even integer.
We define the $2$-connected $7$-manifold
$$ M(F, d_\pi) := \sharp_b(S^3 \tilde \times_{d_\pi} S^4),$$
where $S^3 \tilde \times_{d_\pi} S^4$ is the total space of the
$S^3$-bundle over $S^4$ with trivial Euler class and
first Pontrjagin class equal to $2d_\pi$ 
times the the preferred generator of $H^4(S^4)$
and $\sharp_b$ denotes the $b$-fold connected sum of 
a manifold with itself.
The base of $M(F, d_\pi)$ is identified with $(\Z^b, 0, (d_\pi, \dots, d_\pi))$.
\end{defn}

We define an {\em almost splitting} of $M$ to be an almost diffeomorphism with a singular 
point $m_0 \in M$
\[ f \colon M \acong M(q^f) \sharp M(F, d_\pi), \]
where $q^f$ is some quadratic refinement of $b_M$
and $f_0(m_0) \in M(q^f)$.
Two almost splittings $f_0$ and $f_1$ are called
{\em $H^*$-equivalent} if there is an almost diffeomorphism
$g \in \ADiff(M, m_0)$
with $g(m_0) = m_0$ and $H^*(g) = \Id$,
an almost diffeomorphism 
$g_T \colon M(q^{f_0}) \acong 
M(q^{f_1})$ with singular point $f_0(m_0)$ and a diffeomorphism
$g_F \colon M(F, d_\pi) \cong M(F, d_\pi)$ 
such that $g_T(f_0(m_0)) = f_1(m_0)$ and the following diagram commutes up to pseudo-isotopy:

\[ \xymatrix{ M \ar[d]^g  \ar[r]^(0.25){f_0} & M(q^{f_0}) \sharp M(F, d_\pi) \ar[d]^{g_T \sharp g_F} \\
M \ar[r]^(0.25){f_1} & M(q^{f_1}) \sharp M(F, d_\pi) }  \]
We define $\Asplt(M) := \{ [f] : \text{$f$ an almost splitting of $M$} \}$ 
to be the set of $H^*$-equivalence classes of almost splittings of
$M$ and note that there is a well-defined map
\[ \Asplt(M) \mapsto \sect(\pi), \quad [f] \mapsto \sigma(f), \]
where $\im(\sigma(f)) = f^*\bigl(H^4(M(F, d_\pi))\bigr)$.  The following theorem is implicit in \mbox{\cite[Definition 2.50]{crowley02}}. 


\begin{thm} \label{thm:almost_split1}
Let $M$ have linking family $q^\bullet_M \colon \sect(\pi) \to \cal{Q}(b)$.
For each $\sigma \in \sect(\pi)$ there is a unique $H^*$-equivalence class of almost splitting 
\[ f_\sigma \colon M \acong M(q_M^\sigma) \sharp M(F, d_\pi). \]
Consequently the map $\Asplt(M) \to \sect(\pi)$ is a bijection.
\end{thm}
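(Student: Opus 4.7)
The plan is to establish both the uniqueness of the $H^*$-equivalence class of $f_\sigma$ and its existence; the bijectivity of $\Asplt(M) \to \sect(\pi)$ follows directly. Throughout I would apply Theorem \ref{thm:almost_classification} twice: once to $M$ itself to produce $f_\sigma$, and once separately to the summands $M(q_M^\sigma)$ and $M(F, d_\pi)$ to compare two almost splittings with the same section.

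For existence, fix $\sigma \in \sect(\pi)$, set $q := q_M^\sigma \in \cal{Q}(b)$, and let $N := M(q) \sharp M(F, d_\pi)$. The connected-sum decomposition gives a canonical direct-sum splitting $H^4(N) = T \oplus F$ with canonical section $\sigma_N \colon f \mapsto (0, f)$. Since $M(F, d_\pi)$ is torsion-free in degree $4$, its refinement on the trivial torsion group is trivial, and so by the connected-sum formula for refinements (as in Theorem \ref{thm:classification_categorical}(i)) the linking family of $N$ at $\sigma_N$ equals $q$. Now define $F_\sigma \colon H^4(N) \to H^4(M)$ by $(t, f) \mapsto \iota(t) + \sigma(f)$; this carries $\sigma_N$ to $\sigma$ and $p_N$ to $p_M$, and under it the refinement of $N$ at $\sigma_N$ corresponds to $q_M^\sigma$. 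Because a linking family is determined by Definition \ref{def:foqr}(ii) from its value at a single section, $F_\sigma$ is an isomorphism of refinements. Theorem \ref{thm:almost_classification} then yields an almost diffeomorphism $f_\sigma \colon M \acong N$ with $f_\sigma^* = F_\sigma$, and hence $\sigma(f_\sigma) = \sigma$ as required.

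For uniqueness, let $f_0, f_1 \colon M \acong N$ be two almost splittings with $\sigma(f_0) = \sigma(f_1) = \sigma$ (so they share the same $N$). Set $h := f_1 \circ f_0^{-1} \colon N \acong N$. Since both $f_i^* H^4(M(F, d_\pi)) = \im(\sigma)$, the map $h^*$ preserves $F \subset H^4(N)$ and therefore also its complement $T$, so $h^* = h_T \oplus h_F$. As $h$ is spin, $h^*(p_N) = p_N$, the restriction $h_T$ is an automorphism of the refinement $(T, b, q)$, and $h_F$ is an automorphism of $(F, p_F)$. Applying Theorem \ref{thm:almost_classification} to $M(q_M^\sigma)$ and to $M(F, d_\pi)$ realises $h_T$ and $h_F$ by almost diffeomorphisms $g_T$ and $g_F$. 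Their connected sum $g_T \sharp g_F$, formed as in Section \ref{subsec:almost_diffeomorphisms}, is an almost self-diffeomorphism of $N$ with the same action on $H^4$ as $h$. Then $g := f_1^{-1} \circ (g_T \sharp g_F) \circ f_0 \colon M \acong M$ satisfies $H^*(g) = \Id$ and $f_1 \circ g = (g_T \sharp g_F) \circ f_0$, giving the required $H^*$-equivalence between $f_0$ and $f_1$.

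The main obstacle is the connected-sum behaviour of the linking family: both steps rest on the fact that the refinement of $M(q) \sharp M(F, d_\pi)$ at the canonical section is precisely $q$ on $T$, with no contribution from the torsion-free piece. Here the equivalence between the two descriptions of linking families (Lemma \ref{lem:2_types_of_qlf}) is essential to ensure that the equality $q_N^{\sigma_N} = q$ matches the refinement $q_M^\sigma$ of $M$ on the nose under $F_\sigma$. A secondary point, settled automatically by the classification theorem, is that the only automorphism of a linking family on a torsion-free base $(F, 0, p)$ that must be realised by an almost diffeomorphism of $M(F, d_\pi)$ is just an automorphism of $F$ fixing $p$.
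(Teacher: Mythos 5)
Your proof is correct in outline but takes a genuinely different route from the paper's. The paper works directly with the characteristic form $(H^4(W,\del W),\lambda_W,\alpha_W)$ of a $3$-connected spin coboundary $W$: for existence, it decomposes the form as $(R,\lambda_R,\alpha_\psi)\oplus(F,0,\alpha|_F)$ and invokes Wall's classification of $3$-connected $8$-manifolds to produce a boundary-connected-sum splitting $W\cong W_\psi\natural W_F$, whose restriction to the boundary is $f_\sigma$ (and which gives the explicit formula $q_M^\sigma=\del(R,-\lambda_R,\alpha_\psi)$); for uniqueness, it represents the Poincar\'e dual of $\im(\sigma)$ by disjointly embedded framed $3$-spheres, does surgery, and shows the almost-diffeomorphism type of the result is independent of the choice of embeddings by gluing the surgery trace onto $W$ and inspecting the glued characteristic form. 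You instead treat Theorem~\ref{thm:almost_classification} as a black box and reduce everything to algebra: the additivity of linking families under connected sum plus the observation that a linking family is determined by its value at one section. Your version is cleaner and more modular -- it needs no further geometric input -- while the paper's version is more constructive and yields the identification of $q^\sigma_M$ in terms of the coboundary.

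One imprecision worth flagging in your uniqueness argument: you write ``let $f_0,f_1\colon M\acong N$ \dots\ (so they share the same $N$),'' but that is not given. A priori the two splittings have targets $N_i = M(q^{f_i})\sharp M(F,d_\pi)$ with possibly different $q^{f_0}$, $q^{f_1}$; showing $M(q^{f_0})\acong M(q^{f_1})$ is part of what must be proved, and the definition of $H^*$-equivalence explicitly asks for an almost diffeomorphism $g_T\colon M(q^{f_0})\acong M(q^{f_1})$. Fortunately your argument needs only cosmetic adjustment: $h:=f_1\circ f_0^{-1}\colon N_0\acong N_1$ still has $h^*=h_T\oplus h_F$ because $\sigma(f_0)=\sigma(f_1)$ forces $h^*$ to preserve the canonical $F$-summand, and the pullback identity $q^\circ_{N_1}=(h^*)^\#q^\circ_{N_0}$ evaluated at the canonical sections gives $h_T^\#q^{f_0}=q^{f_1}$, i.e.\ $h_T$ is an \emph{isomorphism} of refinements (not an automorphism). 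Theorem~\ref{thm:almost_classification} then produces $g_T\colon M(q^{f_0})\acong M(q^{f_1})$ with $g_T^*=h_T$, and the rest of your argument goes through verbatim. Phrased that way, your proof actually makes explicit a step the paper's proof glosses over, namely that isomorphism of the torsion summands plus equality of sections suffices to build the $H^*$-equivalence.
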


\begin{proof}
Let $W$ be a $3$-connected coboundary of $M$ with characteristic form 
$(H, \lambda, \alpha) = (H^4(W, \del W), \lambda_W, \alpha_W)$.
We recall from the definition of the linking family defined by $W$
in~\eqref{eq:alg_qdef}, that there are orthogonal splittings
of $(H, \lambda, \alpha)$
\[ \psi \colon (H, \lambda, \alpha) \cong (R, \lambda_R, \alpha_\psi) \oplus (F, 0, \alpha|_F), \]
where $(R, \lambda_R, \alpha_\psi)$ is nondegenerate.  For every such splitting $\psi$,
the classification of $3$-connected coboundaries (see \cite[Corollary 2.5]{crowley02}) implies that 
there is a corresponding boundary connected sum splitting $g_\psi \colon W \cong W_\psi \natural W_F$.
In addition, there is a corresponding section $\sigma = \sigma(\psi) \in \sect(\pi)$ where $\im(\sigma) = j(H^4(W_f))$,
for $j$ the natural homomorphism $H^4(W, M) \to H^4(M)$.
By definition, (\cf \cite[Definition 2.50]{crowley02}),
\[ q_M^{\sigma} = \del(R, \lambda_R, \alpha_\psi), \]
and we define $f_\sigma \colon M \cong M(q_M^\sigma) \sharp M(F, d_\pi)$ to be the diffeomorphism on the boundary induced by the
the splitting $g_\psi$.  
This shows that $\Asplt(M) \to \sect(\pi)$ is onto.  

Suppose that $f_0$ and $f_1$ are two splittings of $M$ defining the same section $\sigma$.
Then the $H^*$-equivalence class of $f_i$ is determined by the almost diffeomorphism type
of $M(q^{f_i})$.
Now the Poincar\'{e} dual of $\im(\sigma)$ is a finitely generate free abelian group $\wh F \subset H_3(M)$.
We choose a basis $\{x_1, \dots, x_b\}$ for $\wh F$ and this is represented by a set of disjoint embeddings 
$\phi \colon \sqcup_{i=1}^b \colon D^4 \times S^3 \subset M$.
We let $M_\phi$ be the outcome of surgery on $\phi$.
Clearly there are choices $\phi_0$ and $\phi_1$ for $\phi$ so that $M_{\phi_i} \acong M(q^{f_i})$.
We claim that the almost diffeomorphism type of $M_\phi$ is independent of the 
choice of $\phi$ and this 
implies that $\sigma \colon \Asplt(M) \to \sect(\pi)$ in injective.

To prove the claim, let $W_\phi$ be the trace of surgeries on $\phi$ and let 
$W_1 : = W \cup_M W_\phi$ be the union of $W_\phi$ and our original $3$-connected coboundary.
By construction, we see that there is a fixed $\alpha_\sigma \in R^*$ such that the characteristic form of 
$W_1$ is isomorphic to the orthogonal sum
$(R, \lambda_R, \alpha_\sigma) \oplus (H_1, \lambda_1, \alpha_1)$ where 
$(H_1, \lambda_1, \alpha_1)$ is nonsingular.  It follows that the almost diffeomorphism type of
$M_\phi$ is well-defined.
\end{proof}

We conclude this subsection by identifying a simpler complete almost diffeomorphism invariant of $2$-connected $M$.  
Recall that the quadratic refinements $q \in \mathcal{Q}(b)$ of a torsion form $(b, T)$
are classified by their homogeneity defect $\beta \in 2T$ and 
Arf invariant $A(q) \in \Q/\Z$.
For a refinement $(G, q^\circ, p)$ with torsion form $(b, T)$, 
$\beta_h = p-2h$ is the homogeneity defect of
$q^h$ and $\aut(b)$, the group of automorphisms of $b$, 
acts on $\cal{Q}(b)$, the set of refinements of~$b$.
We define the {\em almost smooth splitting set} of $M$ to be the set
\[ \bar{\cal{Q}}^{\mathrm{as}}(M) := \{ \bigl( [\beta_h], A(q^h) \bigr) : h \in S_{2} \}
\subset \bigl(2 TH^4(M)/\!\aut(b) \bigr) \times \Q/\Z .\]
The following classification theorem is a direct corollary of Theorem \ref{thm:almost_split1}
and \eqref{eq:qs_related}.

\begin{cor} \label{cor:minima_almost_classification}
Let $F \colon (H^4(M_1), b_{M_1}, p_{M_1}) \to (H^4(M_0), b_{M_0}, p_{M_0})$
be an isomorphism of the bases of $M_0$ and $M_1$.
The following are equivalent:
\begin{enumerate}
\item \label{it:ad1}
$M_0$ is almost diffeomorphic to $M_1$;
\item \label{it:ad2}
$(F^{\#} \times \Id)(\bar{\cal{Q}}^{\mathrm{as}}(M_0)) =
\bar{\cal{Q}}^{\mathrm{as}}(M_1)$; 
\item \label{it:ad3}
$(F^{\#} \times \Id)(\bar{\cal{Q}}^{\mathrm{as}}(M_0)) \cap
\bar{\cal{Q}}^{\mathrm{as}}(M_1) \neq \emptyset$.
\qed
\end{enumerate} 
\end{cor}

%
%
%

\subsection{The homotopy classification} \label{ss:homotopy}
In \cite[\S 6]{crowley02} $2$-connected $M$ were classified up to homotopy equivalence using
{\em $J$-quadratic linking families}, as we now review.
For a torsion form $(b, T)$, $\mathcal{Q}_J(b) \subset \mathcal{P}({\mathcal{Q}(b)})$ 
was defined to be the 
set of subsets of $\mathcal{Q}(b)$ of the form
$$ S(q) := \{ q_{12t} : t \in T\}.$$
Notice that for $q_0, q_1 \in S(q)$, $\beta(q_0) - \beta(q_1) \in 24 T$
and so $S(q)$ has a well-defined homogeneity defect $\beta(S(q)) \in T \otimes \Z/24$.
For a group $G$, recall that $T \subseteq G$ is the torsion subgroup,
$\pi \colon G \to F = G/T$ is the map to the torsion-free quotient of $G$,
$\sigma \colon F \to G$ denotes a section of $\pi$ and $\tau_\sigma \colon G \to T$
denotes the projection defined by $\sigma$.
A {\em $J$-quadratic linking family} was defined to be a triple
$(G, q^\bullet_J, \rho_{24}(p))$,
where $\rho_{24}(p) \in G \otimes \Z/24$ is an even element and 
$q^\circ_J \colon \sect(\pi) \to \mathcal{Q}_J(b)$ is a function 
such that for all $\phi \in \mathrm{Hom}(F, T)$ we have
$q^{\sigma + \phi}_J = (q^\sigma_J)_{-\phi(\pi(p))}$ 
and $\beta(q^{\sigma}_J) = (\tau_\sigma \otimes \Id)(\rho_{24}(p))$.
The $J$-quadratic linking family of $M$,
written $(H^4(M), q^\bullet_{J,M}, \rho_{24}(p_M))$ is induced from its 
quadratic linking family in the obvious way.

We now up-date the notion of a $J$-quadratic linking family to that of a $J$-refinement.
Recall from Section \ref{ss:elaboration} that 
$\rho_{12} \colon \mathcal{Q} \to J\mathcal{Q}(b)$ is the quotient map
which identifies quadratic refinements $q \simeq q_{12t}$ and that 
a $J$-refinement of a base $(G, b, p)$ 
is a triple $(G, Jq^\circ, \rho_{24}(p))$ 
where $Jq^\circ \colon S_2 \to J\mathcal{Q}(b)$ is a function
satisfying $Jq^{h + t} = (Jq^j)_{-t}$ and
$\rho_{24}(\beta_h) = \rho_{24}(p-2h)$.
The $J$-refinement of $M$ is the triple
$(H^4(M), \rho_{12} \circ q^\circ_M, \rho_{24}(p_M))$.

Given a $J$-refinement $(G,  Jq^\circ, \rho_{24}(p))$
we define the corresponding $J$-quadratic linking family by setting
\begin{subequations}
\begin{equation} \label{eq:J1}
q^\bullet \colon \sect(\pi) \to \cal{Q}_J(b), \quad 
q_J^\sigma := Jq^{e_\pi k(\sigma)} 
\end{equation}
and given the function $q^\bullet \colon \sect(\pi) \to \cal{Q}_J(b)$ of a $J$-quadratic
linking family we define the corresponding $J$-refinement by setting
\begin{equation} \label{eq:J2}
q^\circ \colon \wh S_2 \to \cal{Q}(b), \quad q^{e_\pi k(\sigma)} := q^\sigma
\end{equation}
\end{subequations}
and we then extend the definition of $Jq^\circ$ to all of $S_2$ using the 
transformation rule for $J$-refinements.
The correspondence between quadratic linking functions define on $\sect(\pi)$ and 
on $S_2$ identified in Lemma \ref{lem:2_types_of_qlf} is easily modified to give

\begin{lem} \label{lem:J-refinements}
The mappings $Jq^\circ \mapsto q^\bullet_J$ and $q^\bullet_J \mapsto Jq^\circ$ of 
\eqref{eq:J1} and \eqref{eq:J2} define inverse equivalences 
of categories between $J$-refinements defined on $S_2$ and 
$J$-quadratic linking families defined on $\sect(\pi)$. \qed
\end{lem}

\begin{proof}[Proof of Theorem \ref{thm:homotopy}]
Let  $(G, Jq^{\circ}, \rho_{24}(p))$ be a $J$-refinement.
The transformation rule for $J$-refinements ensures that a $J$-refinement is determined by 
$[q^h]$ for any $h \in S_2$.
Since $\mathcal{Q}(b) \to \mathcal{Q}_{12}(b)$ is onto,
it follows that every $J$-refinement $(G, [q^\circ], \rho_{24}(p))$ 
is the mod~$24$ reduction of a refinement $(G, q^\circ, p)$.
Theorem \ref{thm:a_class} then entails that every $J$-refinement
is realised as $(H^4(M), \rho_{12} \circ q^\circ_M, \rho_{24}(p_M)$,
for a $2$-connected $M$.

If $F \colon (H^4(M_1), b_{M_1}, p_{M_1}) \to (H^4(M_0), b_{M_0}, p_{M_0})$ is an isomorphism of bases where
$M_0$ and $M_1$ are $2$-connected, then by
\cite[Theorem 6.11]{crowley02}, $F = f^*$ for a homotopy equivalence
$f \colon M_0 \to M_1$ if and only if 
$(q_{J, M_0}^\bullet, \rho_{24}(p_{M_0})) = 
F^{\sharp}(q_{J,M_1}^\bullet, \rho_{24}(p_{M_1}))$
and Lemma \ref{lem:J-refinements}, this is happens if and only if
$(Jq_{M_0}^\circ, \rho_{24}(p_{M_0})) = 
F^{\sharp}(Jq_{M_1}^\circ, \rho_{24}(p_{M_1}))$.
\end{proof}

\subsection{Inertia and reactivity in more detail} 
\label{ss:inertia_and_reactivity2}
Recall that $I(M)$, the inertia group of $M$, is 
the subgroup of the group of homotopy spheres $\Sigma \in \Theta_7$
such that 
$M \sharp \Sigma \cong_{} M$,
and that
\[ I_H(M) \subseteq I(M), \]
is the subgroup of homotopy spheres $\Sigma$ for which there is a diffeomorphism 
$f \colon M \sharp \Sigma \cong M$
such that $H^*(f) = \Id$, where we regard $M \sharp \Sigma$ and $M$ as the same topological space.

One might expect that a complete understanding of $I(M)$ is needed to pass from the almost diffeomorphism classification of 2-connected 7-manifolds
to the diffeomorphism classification, but it turns that that a lower bound on the order of $I_H(M)$
suffices.
The main result of this subsection, Proposition \ref{prop:p2null}, establishes
this required lower bound on $I_H(M)$ for $2$-connected $M$: see Remark \ref{rmk:p2null}.
In general, computing $I^{}(M)$ is a delicate problem which we take up 
for $2$-connected $M$ in Section \ref{sec:auto}.
We begin this section by relating the groups $I_H(M)$ and $I(M)$ to certain mapping class
groups of $M$.

Given an almost diffeomorphism $f \in \ADiff(M, m_0)$,
we consider the problem of deciding whether $f$ is pseudo-isotopic to a diffeomorphism.
From Section \ref{ss:almost_diffeomorphism} we recall the homotopy
sphere $\Sigma_0$ which measures the singularity of $f$ at $m_0$.
From the definition
of pseudo-isotopy in Section \ref{ss:almost_diffeomorphism}
we see that the diffeomorphism class of the homotopy sphere
\[ \Sigma_f : = \Sigma_0 \]
is invariant under pseudo-isotopies.  Moreover, 
it is clear that $f$ defines a diffeomorphism
\begin{equation} \label{eq:Sigma_f}
f \colon  M \sharp \Sigma_f \cong M,	
\end{equation}
and that $\Sigma_{f \circ g} = \Sigma_f \sharp \Sigma_g$.
Further, an application of the Alexander trick---see Rourke and Sanderson
\mbox{\cite[Proposition~3.22]{rourke72}}---shows that $f$ is pseudo-isotopic to
a diffeomorphism if and only if~$\Sigma_f \cong S^7$.
It follows that there is a {\em singularity homomorphism},
\[ \del \colon \wt \pi_0\ADiff(M, m_0) \to \Theta_7, \quad [f] \mapsto \Sigma_f, \]
with kernel isomorphic to the image of $\wt \pi_0 \Diff(M)$ in $\wt \pi_0\ADiff(M, m_0)$.
We define the subgroup 
$\wt \pi_0\ADiff_H(M, m_0) \subseteq \wt \pi_0\ADiff_{}(M, m_0)$ 
of pseudo-isotopy classes inducing the identity on $H^*(M)$ and define the
singularity homomorphism 
$\del_H \colon \wt \pi_0\ADiff_{H}(M, m_0) \to \Theta_7$
to be the restriction of~$\del$.
From \eqref{eq:Sigma_f} we see that
\begin{equation} \label{eq:I_and_del}
 I_H(M) = \im(\del_H) \quad \text{and} \quad I(M) = \im(\del).
\end{equation}
Given $f \in \ADiff(M, m_0)$
we now show how to determine 
$\Sigma_f \in \Theta_7$ using the mapping torus of $f$, $T_f$,
which is the almost smooth manifold constructed from
the cylinder $M \times I$ by using $f$ to identify points at either end:
\[ T_f : = (M \times [0, 1])/ (m,0) \sim (f(m),1) \]
Since $f$ is an almost diffeomorphism, the closed $8$-manifold $T_f$ admits a smooth
structure, except perhaps at the point $\ol m_0 = [m_0, 0]$ corresponding
to the singular point of $f$.  Indeed if $B^8_0 \ni \ol m_i$ is a small open
ball containing $\ol m_0$, then
\begin{equation} \label{eq:W_f}
 W_f : = T_f - B^8_0
\end{equation}
is a compact smooth manifold with boundary
\[ \del W_f \cong \Sigma_f. \]
We choose a spin structure on $T_f$ and denote the corresponding $8$-dimensional almost smooth 
spin manifold by $T_f$ also: 
no confusion shall arise since we are interested only in the characteristic number
\[  p^2(f) : = \langle p_{T_f}^2, [T_f] \rangle \in \Z, \]
which depends only on the oriented almost diffeomorphism type of $T_f$ since
$2p_{T_f} = p_1(T_f)$ and $H^8(T_f) \cong \Z$ (in fact $p_{T_f}$ is independent
of the choice of spin structure by \cite[p.\,170]{cadek08}).
It follows that $p^2(f)$ is an invariant of the pseudo-isotopy class of $f$.
For the statement of the next lemma, we recall the renormalised Eells-Kuiper
invariant of a homotopy sphere~$\Sigma$, $\mu(\Sigma) \in \Z/28$,
defined in \eqref{eq:classical}.
By \cite[(13)]{eells62}, $\mu(\Sigma_1) = \mu(\Sigma_2)$ if and only if
$\Sigma_1 \cong \Sigma_2$.

\begin{lem} \label{lem:mapping_torus}
For every almost diffeomorphism $f \in \ADiff(M, m_0)$ the following hold:
\begin{enumerate}
\item \label{lem:mapping_torus:8}
$p^2(f) \in 8 \Z$,
\item \label{lem:mapping_torus:mu}
$\mu(\Sigma_f) = \frac{p^2(f)}{8} \in \Z/28$,
\item \label{lem:mapping_torus:smooth}
$f$ is pseudo-isotopic to a diffeomorphism if and only if $p^2(f) \in 224 \Z$.
\end{enumerate}
\end{lem}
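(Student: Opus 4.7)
The plan is to compute $p^2(f)$ via the smooth compact spin $8$-manifold $W_f$ of \eqref{eq:W_f}, whose boundary is the disjoint union $\sqcup_i \Sigma_i$ of the homotopy spheres encoding the singularities of $f$. The key identifications will be $p^2(f) = \langle p_{W_f}^2, [W_f, \del W_f]\rangle$ and $\sigma(W_f) = 0$, after which parts \ref{lem:mapping_torus:8} and \ref{lem:mapping_torus:mu} both drop out of the classical Eells--Kuiper formula, and \ref{lem:mapping_torus:smooth} follows from the singularity exact sequence together with Eells--Kuiper's classification of $\Theta_7$.

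First I would verify that the inclusion $W_f \hookrightarrow T_f$ induces an isomorphism $H^4(T_f) \cong H^4(W_f)$ sending $p_{T_f}$ to $p_{W_f}$. By excision, $H^k(T_f, W_f) \cong \bigoplus_i H^k(B_i^8, \del B_i^8)$, which vanishes for $k \leq 7$ since each pair $(B_i^8, \del B_i^8)$ is homeomorphic to $(D^8, S^7)$. The long exact sequence of $(T_f, W_f)$ then yields $H^4(T_f) \cong H^4(W_f)$, and topological naturality of $p$ (using Remark \ref{rmk:topological_p_M}) identifies $p_{T_f}$ with $p_{W_f}$. Since $\del W_f$ is a disjoint union of homotopy $7$-spheres, $H^3(\del W_f) = H^4(\del W_f) = 0$, giving $H^4(W_f, \del W_f) \cong H^4(W_f)$; matching the fundamental classes, the intersection pairings of $T_f$ and of $(W_f, \del W_f)$ coincide, hence $p^2(f) = \langle p_{W_f}^2, [W_f, \del W_f]\rangle$ and $\sigma(W_f) = \sigma(T_f)$. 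The signature of any mapping torus vanishes (since $T_f$ fibres over $S^1$), so $\sigma(W_f) = 0$.

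Next I would apply the Eells--Kuiper formula: for any spin coboundary $V$ of a homotopy $7$-sphere $\Sigma$, $\mu(\Sigma) = (p_V^2 - \sigma(V))/8 \mod 28 \in \Z/28\Z$. Attaching $a-1$ spin $1$-handles to $\del W_f$ (which affects neither $p^2$ nor $\sigma$) turns $W_f$ into a spin coboundary $V_f$ of $\Sigma_f = \sharp_i \Sigma_i$, and gives
\[
\mu(\Sigma_f) \; = \; \frac{p_{V_f}^2 - \sigma(V_f)}{8} \; = \; \frac{p^2(f)}{8} \mod 28.
\]
This proves \ref{lem:mapping_torus:mu}, and the divisibility \ref{lem:mapping_torus:8} is forced since $\mu(\Sigma_f) \in \Z/28\Z$.

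Finally \ref{lem:mapping_torus:smooth} combines the observation stated just before the lemma that $f$ is pseudo-isotopic to a diffeomorphism if and only if $\Sigma_f \cong S^7$, with Eells--Kuiper's theorem that $\mu$ classifies $\Theta_7$: the former condition is equivalent to $\mu(\Sigma_f) = 0$, which by \ref{lem:mapping_torus:mu} is equivalent to $p^2(f) \in 224\Z$. The main technical obstacle is the careful cohomological identification in the second paragraph, between the integer $\langle p_{T_f}^2, [T_f]\rangle$ defined on the merely almost-smooth mapping torus and the genuinely smooth invariant $\langle p_{W_f}^2, [W_f, \del W_f]\rangle$; once this is in hand the rest of the proof is routine.
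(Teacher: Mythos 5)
Your proof is correct and follows essentially the same route as the paper: regard $W_f$ as a smooth spin coboundary of $\Sigma_f$, note $\sigma(T_f)=0$, and apply the Eells--Kuiper coboundary formula, with (iii) then immediate from the observation that $f$ is pseudo-isotopic to a diffeomorphism iff $\Sigma_f \cong S^7$. The only difference is in part (i): the paper proves $p^2(f) \in 8\Z$ directly by noting that $p_{T_f}$ is characteristic for the intersection form of $T_f$ (Lemma \ref{lem:p_M}\ref{it:boundary}), so that $p^2(f) \equiv \sigma(T_f) = 0 \bmod 8$, whereas you deduce it as a corollary of (ii) together with the known integrality of $\mu$ on $\Theta_7$; both are valid, and the extra cohomological bookkeeping you supply (the excision identification of $p^2(f)$ with $\langle p_{W_f}^2, [W_f, \del W_f]\rangle$ and the $1$-handle attachment converting $\sqcup_i \Sigma_i$ into $\sharp_i \Sigma_i$) is left implicit in the paper's more terse treatment.
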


\begin{proof}
\ref{lem:mapping_torus:8} This follows since by
Lemma \ref{lem:p_M}\ref{it:boundary}, $p_{T_f}$ is characteristic for
the intersection form of $T_f$.  Hence by \cite[Lemma 5.2, \S 5]{milnor73},
$p^2(f) \equiv \sigma(T_f)$~mod~$8$.  But by Novikov additivity, the signature of
$T_f$ is zero.

\noindent
\ref{lem:mapping_torus:mu} This follows since $W_f$ defined in \eqref{eq:W_f} above
is a smooth spin coboundary for $\Sigma_f$ and so can be used to compute $\mu(\Sigma_f)$.
Since $\sigma(W_f) = \sigma(T_f) = 0$, applying \eqref{eq:classical} gives the result.

\noindent
\ref{lem:mapping_torus:smooth} The almost diffeomorphism $f$ is pseudo-isotopic to a diffeomorphism if and only 
if $\Sigma_f \cong S^7$.  Hence \ref{lem:mapping_torus:smooth} follows directly from \ref{lem:mapping_torus:mu}.
\end{proof}

In the light of Lemma \ref{lem:mapping_torus}, we define the function
\[ p^2 \colon \wt \pi_0 \ADiff_{}(M, m_0) \to \Z, \quad [f] \mapsto p^2(f). \]
Since the image of $p^2$ plays a key role, we define non-negative integers called
the {\em reactivity of $M$}, $R(M)$, and the \emph{(co)homologically fixed reactivity}
of $M$, $R_H(M)$, by the following equations
\[ p^2(\wt \pi_0 \ADiff_{}(M, m_0)) = 
 R(M) \Z \quad \text{and} \quad
p^2(\wt \pi_0 \ADiff_{H}(M, m_0)) = R_H(M) \Z .\]
By Lemma~\ref{lem:mapping_torus}~\ref{lem:mapping_torus:8}, $R(M)$ and $R_H(M)$ are
both divisible by 8. 
By Lemma \ref{lem:mapping_torus}~\ref{lem:mapping_torus:mu} and the definition
of reactivity we have
\begin{prop} \label{prop:role_of_reactivity} \hfill
\begin{enumerate}
\item \label{it:r_of_r}
$I(M) = \frac{R(M)}{8} \Theta_7$,
\item $I_H(M) = \frac{R_H(M)}{8} \Theta_7$. \qed
\end{enumerate}
\end{prop}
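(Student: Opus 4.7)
The plan is to deduce both identities by pushing forward through the group isomorphism $\mu: \Theta_7 \isomap \Z/28$ provided by Eells--Kuiper, using the singularity homomorphism $\del$ together with Lemma~\ref{lem:mapping_torus}.

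First I would assemble the ingredients already in place. By \eqref{eq:I_and_del} we have $I(M) = \im(\del)$ and $I_H(M) = \im(\del_H)$, where $\del: \wt\pi_0\ADiff(M) \to \Theta_7$ sends $[f] \mapsto \Sigma_f$. By Lemma~\ref{lem:mapping_torus}\ref{lem:mapping_torus:mu}, composition with $\mu$ gives
\[ \mu \circ \del \;=\; \tfrac{1}{8} p^2 \pmod{28}, \]
as functions $\wt\pi_0\ADiff(M) \to \Z/28$; the analogous identity holds for $\del_H$ and the restriction of $p^2$ to $\wt\pi_0\ADiff_H(M)$.

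Next I would apply $\mu$ to both sides of $I(M) = \im(\del)$. The image of $\mu \circ \del$ is the image in $\Z/28$ of $\tfrac{1}{8} p^2(\ADiff(M)) = \tfrac{R(M)}{8}\Z$, that is, the cyclic subgroup $\tfrac{R(M)}{8}\Z/28\Z \subseteq \Z/28$. This makes sense as a subgroup of $\Z/28$ because Lemma~\ref{lem:mapping_torus}\ref{lem:mapping_torus:8} ensures $R(M) \in 8\Z$. Since $\mu : \Theta_7 \to \Z/28$ is an isomorphism, pulling back the equality $\mu(I(M)) = \tfrac{R(M)}{8}\Z/28\Z$ yields $I(M) = \tfrac{R(M)}{8}\Theta_7$, proving (i). Replacing $\ADiff(M)$ by $\ADiff_H(M)$ and $\del$ by $\del_H$ throughout gives (ii).

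There is no serious obstacle here: the argument is essentially a tautology once one has the injectivity of $\mu$ on $\Theta_7$ (recorded in the excerpt as \cite[(13)]{eells62}) together with Lemma~\ref{lem:mapping_torus}. The only thing to be careful about is to note that $p^2$ is a homomorphism (so its image is a subgroup of $\Z$, hence of the form $R(M)\Z$), which follows from $\Sigma_{f \circ g} = \Sigma_f \sharp \Sigma_g$ and Lemma~\ref{lem:mapping_torus}\ref{lem:mapping_torus:mu} together with the fact that $\mu$ is a group homomorphism.
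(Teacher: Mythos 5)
Your argument is correct and matches the paper's, which states the Proposition as an immediate consequence of Lemma \ref{lem:mapping_torus}\ref{lem:mapping_torus:mu} and the definition of reactivity: apply $\mu$ to $I(M) = \im(\del)$, use $\mu(\Sigma_f) = p^2(f)/8 \bmod 28$, and invert the isomorphism $\mu \colon \Theta_7 \to \Z/28$. One small inaccuracy in your final paragraph: the combination of $\Sigma_{f\circ g} = \Sigma_f \sharp \Sigma_g$, Lemma \ref{lem:mapping_torus}\ref{lem:mapping_torus:mu}, and $\mu$ being a homomorphism only shows that $p^2$ is additive modulo $224$, not that it is a $\Z$-valued homomorphism; the latter is needed to make $R(M)$ well-defined via \eqref{eq:defR}, and is asserted in the paper without proof (it follows, for example, from the fact that $T_{f\circ g}$ is cobordant to $T_f \sqcup T_g$ and $p^2$ is a characteristic number). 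This does not affect your main argument.
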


For other problems, for example counting the number of deformation equivalence classes of $G_2$-structures
on $M$ as in \cite{nu}, it important to know the value of $p^2$ for diffeomorphisms.
Hence we defined the {\em smooth reactivity of $M$}, $R^\Diff(M)$, and the
{\em smooth (co)homologically fixed reactivity of $M$}, $R^\Diff_H(M)$ by the equations
\[ p^2(\wt \pi_0\Diff_{}(M)) = R^\Diff(M) \Z
\quad \text{and} \quad 
 p^2(\wt \pi_0\Diff_{H}(M)) = R^\Diff_H(M) \Z, \]
where $\wt \pi_0\Diff_H(M) \subseteq \wt \pi_0\Diff(M)$ is
the subgroup of pseudo-isotopy classes acting trivially on $H^*(M)$.
By Lemma~\ref{lem:mapping_torus}~\ref{lem:mapping_torus:smooth} we have

\begin{lem} \label{lem:smooth_reactivity} \hfill
\begin{enumerate}
\item $R^\Diff(M) = \lcm(R(M), 224)$,
\item $R^\Diff_H(M) = \lcm(R_H(M), 224)$. \qed
\end{enumerate}
\end{lem}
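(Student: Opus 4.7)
The plan is to combine the fact that $p^2 \colon \ADiff(M) \to \Z$ is a group homomorphism and a pseudo-isotopy invariant with the smoothing criterion of Lemma \ref{lem:mapping_torus}\ref{lem:mapping_torus:smooth}. Both parts of the lemma follow from the same argument, so I would write part (i) in detail and then note the small additional check needed for part (ii).

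First I would show the containment $R^\Diff(M)\Z \subseteq \lcm(R(M), 224)\Z$. Any diffeomorphism is also an almost diffeomorphism, giving $R^\Diff(M)\Z \subseteq R(M)\Z$; and any diffeomorphism is trivially pseudo-isotopic to a diffeomorphism (namely itself), so by Lemma \ref{lem:mapping_torus}\ref{lem:mapping_torus:smooth} its $p^2$-value lies in $224\Z$. Intersecting these two inclusions yields
\[ R^\Diff(M)\Z \subseteq R(M)\Z \cap 224\Z = \lcm(R(M), 224)\Z . \]

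For the reverse containment, I would pick $f \in \ADiff(M)$ with $p^2(f) = R(M)$ and set $n := 224/\gcd(R(M), 224)$, so that $nR(M) = \lcm(R(M), 224)$. Since $p^2$ is a homomorphism, $p^2(f^n) = nR(M) \in 224\Z$, so Lemma \ref{lem:mapping_torus}\ref{lem:mapping_torus:smooth} produces a genuine diffeomorphism $g \in \Diff(M)$ pseudo-isotopic to $f^n$. As $p^2$ depends only on the pseudo-isotopy class, $p^2(g) = \lcm(R(M), 224) \in R^\Diff(M)\Z$, completing part (i).

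Part (ii) follows by running the same argument inside $\ADiff_H(M)$ and $\Diff_H(M)$. The only additional point to verify is that the diffeomorphism $g$ pseudo-isotopic to $f^n$ is still homologically trivial: if $f$ induces the identity on $H^*(M)$ then so does $f^n$, and pseudo-isotopy preserves the induced map on cohomology. There is no real obstacle here; the substance of the lemma is already packaged into Lemma \ref{lem:mapping_torus}\ref{lem:mapping_torus:smooth}, and the \qed already appearing in the statement indicates that the author views this as essentially a formal consequence of the preceding setup.
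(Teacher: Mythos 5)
Your proposal is correct and fills in exactly the argument the paper leaves implicit by citing Lemma~\ref{lem:mapping_torus}\ref{lem:mapping_torus:smooth} and then asserting the statement with no further comment. The two observations you use — that $p^2(\Diff(M)) \subseteq R(M)\Z \cap 224\Z$ because $\Diff(M) \subseteq \ADiff(M)$ and every diffeomorphism is pseudo-isotopic to itself, and that a suitable power $f^n$ of a generator of $p^2(\ADiff(M))$ lands in $224\Z$ and hence is pseudo-isotopic to an honest diffeomorphism with the same $p^2$-value — are precisely what the paper has in mind, and your note for part~(ii) that pseudo-isotopy preserves the induced map on cohomology (so the resulting diffeomorphism still lies in $\Diff_H(M)$) covers the only extra point.
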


We next construct almost diffeomorphisms $f \!\in\! \ADiff_{}(M, m_0)$ 
on $2$-connected $M$
with $p^2(f) \neq 0$.
Recall that $d_\pi$ is the divisibility of $\pi(p_M) \in H^4(M)/TH^4(M)$ and
$\tilde d_\pi = \lcm(4, d_\pi)$.

\begin{prop} \label{prop:p2null}
If $M$ is 2-connected then $R_H(M) | 2\tilde d_\pi;\!\,$ 
\ie~$2\td_\pi\Z \subseteq p^2(\wt \pi_0 \ADiff_H(M, m_0))$.
\end{prop}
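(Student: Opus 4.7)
The plan is to realize enough almost diffeomorphisms $f \in \ADiff_H(M)$ so that the subgroup $p^2(\ADiff_H(M)) \subseteq \Z$ contains $2\tilde d_\pi \Z$. The approach combines a cohomological analysis of the mapping torus $T_f$ with an explicit geometric construction of twist-type self-diffeomorphisms on $S^3$-bundle summands.

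First, I would analyse $p^2(f)$ via the fibration $M \to T_f \to S^1$. For $f \in \ADiff_H(M)$ the Serre spectral sequence collapses, giving $H^*(T_f) \cong H^*(M) \otimes H^*(S^1)$ as an $H^*(S^1)$-module. Writing $p_{T_f} = \tilde p_M + \theta\, \gamma_f$ for some lift of $p_M$ and a ``twist class'' $\gamma_f \in H^3(M)$, and using that $H^8(M) = 0$ and $H^6(M) = 0$ (which hold since $M$ is $2$-connected, so $H^3(M) \cong FH^4(M)$ is free), a direct computation of $p_{T_f}^2$ evaluated on $[T_f] = [M]\times[S^1]$ yields
\[ p^2(f) = 2\langle p_M \cup \gamma_f,\,[M]\rangle + c_f, \]
where $c_f \in H^7(M) \cong \Z$ is a secondary cup-product correction. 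Writing $p_M = d_\pi u + t$ with $u$ primitive in $FH^4(M)$ and $t \in TH^4(M)$, the torsion part $t\cup\gamma_f$ vanishes (pairing torsion against a free class into $\Z$), so the first summand equals $2 d_\pi\langle u\cup\gamma_f,[M]\rangle \in 2 d_\pi \Z$, and Poincar\'e duality ensures that $\langle u\cup -,[M]\rangle : H^3(M) \to \Z$ is surjective.

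To realise the necessary twist classes geometrically, I would use Theorem \ref{thm:almost_split1} to reduce to constructing self-almost-diffeomorphisms of $M(F, d_\pi)$, extending by the identity over the rational-homotopy-sphere factor. On an $S^3 \tilde\times_{d_\pi} S^4$ summand, pick an embedded $3$-sphere (say a fibre) with trivial normal bundle $S^3 \times D^4$ and re-glue by a map $(s, v) \mapsto (s, \phi(s) v)$ for a suitable null-homotopic $\phi : S^3 \to SO(4)$. The resulting $f$ lies in $\ADiff_H$, and its mapping torus is an explicit $S^3$-bundle over $S^4 \times S^1$ whose Pontrjagin numbers, and hence $p^2(f)$, are computed directly from the classifying data for $\phi$. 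By choosing $\phi$ appropriately, $\langle u\cup\gamma_f,[M]\rangle$ may be prescribed arbitrarily; combined with the spin constraint $p^2(f) \in 8\Z$ from Lemma~\ref{lem:mapping_torus}\ref{lem:mapping_torus:8}, the realised subgroup contains $2d_\pi \Z \cap 8\Z = \lcm(8, 2d_\pi)\Z = 2\tilde d_\pi \Z$.

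When $p_M$ is torsion ($d_\pi = 0$, $\tilde d_\pi = 4$) the shear computation is vacuous, so I would instead use the ``homotopy-sphere twist'': for $\Sigma$ generating $\Theta_7$, the canonical almost diffeomorphism $g_\Sigma : M \acong M\sharp\Sigma$ (a topological identity, smooth off one point) lies in $\ADiff_H(M)$ with $\Sigma_{g_\Sigma} = \Sigma$, so by Lemma~\ref{lem:mapping_torus}\ref{lem:mapping_torus:mu}, $p^2(g_\Sigma) \equiv 8 \pmod{224}$. Composing with $g_\Sigma^{28}$ (which contributes a multiple of $224$) realises $p^2 = 8 = 2\tilde d_\pi$ exactly. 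The main obstacle lies in the second step: carrying out the explicit geometric construction of the shear, computing the Pontrjagin numbers of the associated sphere bundle over $S^4 \times S^1$, and controlling the secondary term $c_f$ independently of the primary term so that the realised subgroup is genuinely $2\tilde d_\pi \Z$ and not a proper subgroup.
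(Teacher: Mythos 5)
Your overall strategy---twist on the $S^3 \tilde\times_{d_\pi} S^4$ summand of $M$, extending by the identity---is the same as the paper's, and your Serre spectral sequence analysis of $T_f$ correctly identifies $p^2(f) = 2\langle p_M \cup \gamma_f, [M]\rangle + c_f$ with the first term lying in $2d_\pi\Z$ because the torsion part of $p_M$ pairs trivially with $H^3(M)$. But the proposal stops exactly where the work begins, and what you defer as ``the main obstacle'' is in fact the entire content of the proof.

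The central gap is the uncomputed correction $c_f$. In the paper's explicit computation one gets $p^2(f_{n,p}) = d_\pi(2p - nd_\pi)$, where the twist is parametrised by a relative class $\gamma_{n,p} \in \pi_3(SO(4)) \cong \{(n,p) : n \equiv p \!\!\mod 2\}$ applied to a $3$-handle of $M_1^\bullet$. In your notation the second parameter $n$ produces $c_f = -nd_\pi^2$, which is \emph{not} negligible: when $4 \mid d_\pi$ one needs $p^2(f) = 2d_\pi$, and the $n=0$ family only reaches $4d_\pi\Z$ (the parity constraint forces $p$ even when $n=0$), so the nonzero correction is essential. Your ``intersection'' logic---that the realised subgroup contains $2d_\pi\Z \cap 8\Z = 2\tilde d_\pi \Z$---does not follow from knowing the first term alone; you need to show that $c_f$ can be controlled independently, and the paper does this by exhibiting the second free parameter $n$. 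Moreover your proposed geometric construction is not correct as stated: a \emph{null-homotopic} $\phi : S^3 \to SO(4)$ gives a regluing isotopic to the identity, so it realises nothing. The construction you need is a relative twist $\gamma : (D^3, S^2) \to (SO(4), \Id)$ on a $3$-handle of $S^3 \tilde\times_{d_\pi} S^4$, whose interesting invariant is the relative class in $\pi_3(SO(4))$; that is what produces both parameters $(n,p)$. Finally the paper computes $p^2(f)$ not from $T_f$ directly but from the twisted double $X_f = W \cup_f (-W)$ of a $3$-connected coboundary, where $H_4(X_{n,p}) \cong \Z^2$ makes the intersection form and $p_{X_{n,p}}$ computable by hand (via Lemma~\ref{lem:T_and_X}); you would still have to carry out an equivalent concrete computation.

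A minor point: when $p_M$ is torsion, $d_\pi = 0$ gives $\td_\pi = \lcm(4,0) = 0$, not $4$, so the claim $2\td_\pi\Z \subseteq p^2(\ADiff_H(M))$ is vacuous and no separate argument is needed. Your homotopy-sphere twist in that case also does not quite make sense, since $p^2(g_\Sigma) \equiv 8 \pmod{224}$ does not yield the exact value $8$, and composing powers of $g_\Sigma$ only scales $p^2$ rather than adjusting it modulo $224$.
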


\begin{rmk} \label{rmk:p2null}
If $M$ is $2$-connected, Propositions \ref{prop:role_of_reactivity} and \ref{prop:p2null} together give
$\frac{\tilde d_\pi}{4} \Theta_7 \subseteq I_H(M)$.
In Corollary \ref{cor:p2image}~\ref{it:p2null} below we will show that $R_H(M) = 2 \tilde d_\pi$
and hence $\frac{\tilde d_\pi}{4} \Theta_7 = I_H(M)$.
\end{rmk}

For the proof of Proposition \ref{prop:p2null} it will be useful to compute the
characteristic number $p^2(T_f)$ using a co-bounding spin $8$-manifold $W$.
We define
the closed almost smooth \mbox{$8$-manifold}
\[ X_f : = (-W) \cup_f W. \]

\begin{lem} \label{lem:T_and_X}
With the notation above, $p^2(f) = \an{p_{X_f}^2, [X_f]}$.
\end{lem}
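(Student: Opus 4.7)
The plan is to construct a compact topological spin $9$-manifold $N$ whose boundary realizes both $T_f$ and $X_f$ (up to orientation), and then invoke bordism invariance of the characteristic number $p_M^2$.

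First I would define
\[ N := (W \times [0,1])/\!\sim, \]
where $\sim$ identifies $(m, 0)$ with $(f(m), 1)$ for every $m \in \del W = M$. Since $f$ is a spin homeomorphism, $N$ inherits a topological spin structure from $W$ and is a compact topological $9$-manifold with boundary. Writing $\del(W \times [0,1])$ with corners as $W \times \{0\} \cup (-W \times \{1\}) \cup (M \times [0,1])$, the relation $\sim$ acts on the corners $M \times \{0\}$ and $M \times \{1\}$ via $f$. After passing to the quotient, $(M \times [0,1])/\!\sim$ becomes the mapping torus $T_f$, while $W \times \{0\}$ and $-W \times \{1\}$ glue along their common boundary $M$ via $f$ to produce $W \cup_f (-W) = X_f$. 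With the standard orientation conventions, this gives $\del N = T_f \sqcup (-X_f)$.

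Next I would invoke Remark \ref{rmk:topological_p_M}, which ensures that $p_Y \in H^4(Y)$ is defined for any topological spin manifold $Y$; consequently the characteristic number $\an{p_Y^2, [Y]}$ of a closed topological spin $8$-manifold is a topological spin bordism invariant. Applying this to the bordism $N$ between $T_f$ and $X_f$ yields $p^2(f) = \an{p_{T_f}^2, [T_f]} = \an{p_{X_f}^2, [X_f]}$, as required. Note that the singular points of $T_f$ and $X_f$ arise from the finitely many singular points of $f$, and since $N$ is built from the smooth manifold $W \times [0,1]$ via an identification that is smooth away from these points, the failure of smoothness for $N$ is also confined to a finite set on the boundary; working in the topological category sidesteps this altogether.

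The main obstacle I anticipate is bookkeeping the orientations at the corners $M \times \{0\}$ and $M \times \{1\}$, to confirm that the boundary orientations on $T_f$ and $X_f$ induced from $N$ agree with the ones used to define $p^2(f)$ and $\an{p_{X_f}^2, [X_f]}$; any discrepancy at this step is absorbed into the choice of convention $X_f = W \cup_f (-W)$ versus $(-W) \cup_f W$, and so does not affect the conclusion.
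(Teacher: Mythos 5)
Your proof is correct and follows essentially the same route as the paper's. The paper's proof reduces to showing that $T_f$ and $X_f$ are (spin) bordant, observing that $T_f$ and $X_f$ are obtained by gluing $M \times I$ and $W \sqcup -W$, respectively, along $M \sqcup -M$ via $f$, and noting that $W \times I$ is a bordism rel boundary between these pieces; gluing yields the required closed bordism. Your construction of $N := (W \times [0,1])/\!\sim$ is exactly this glued bordism written out explicitly, so the two arguments are the same modulo presentation.
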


\begin{proof}
Since $p^2(f) = \an{p_{T_f}^2, [T_f]}$ and $\an{p_{X_f}^2, [X_f]}$ are characteristic numbers,
if suffices to prove that $T_f$ is oriented bordant to $X_f$.
Consider the manifolds $M \times I$ and $W \sqcup -W$.  Both have boundary 
$-M \sqcup M$,
$T_f$ is formed from $M \times I$ by gluing $-M$ to $M$ via $f$ and $X_f$ if formed
from $-W \sqcup W$ by gluing $-M$ to $M$ via $f$.  It therefore suffices to prove
that $-W \sqcup W$ is bordant relative to the boundary to $M \times I$.  But the manifold
$W \times I$ is a rel.~boundary bordism from $-W \sqcup W$ to $M \times I$, and
we are done.
\end{proof}

\begin{proof}[Proof of Proposition \ref{prop:p2null}]
We assume $d_\pi \not= 0$, since otherwise there is nothing to prove.
By Theorem \ref{thm:almost_split1} or by \cite[Theorem 1]{wilkens72},
we may decompose $M$ as a connected sum of spin manifolds
\[ M \cong_{} M_0 \sharp M_1 \]
where $M_0 = M(\Z, d_\pi) = S^3 \tilde \times_{d_\pi} S^4$ 
is the total space of a $3$-sphere bundle over $S^4$ from Definition \ref{def:MF}.
We shall produce the required almost diffeomorphisms on the manifold $M_0$ and
then extend by the identity to $M$.  Let 
\[ M_0^\bullet : = M_0 - \Int(D^7) \]
be $M_0$ minus a small open disc.  Since $M_0$ is the total space of an $S^3$-bundle over $S^4$, 
there is a diffeomorphism
\[ M_0^\bullet \cong (D^3 \tilde \times_{d_\pi} S^4) \cup_{S^2 \times D^4} (D^3 \times D^4), \]
where $D^3 \tilde \times_{d_\pi} S^4$ is a tubular neighbourhood of a section of 
$M_0 \to S^4$ and $D^3 \times D^4$ is a $3$-handle added to $D^3 \tilde \times_{d_\pi} S^4$ along the tubular neighbourhood of a fibre $2$-sphere,
$S^2 \times D^4 \subset S^2 \tilde \times_{d_\pi} S^4$.

By \cite[p.\,171\,(2)]{wall62} we may identify $\pi_3(SO(4))$ as the group of
pairs of integers $(n, p)$ where $n \equiv p \mmod 2$, so that the
corresponding $S^3$-bundle over $S^4$ has Euler class $n \in H^4(S^4) = \Z$ and
first Pontrjagin class $2p$.
Let $\gamma_{n, p} \colon (D^3, S^2) \to (SO(4), {\rm Id})$ be a smooth
function representing $(n, p)$.  We define a diffeomorphism
\[ f^\bullet_{n, p} \colon M^\bullet_0 \cong M^\bullet_0 \]
where $f^\bullet_{n, p}|_{D^3 \tilde \times_\alpha S^4}$ is the identity and on
the $3$-handle we use the $D^3$ co-ordinate to twist the $D^4$-coordinate
using $\gamma_{n, p}$.  To be explicit:
\[ f^\bullet_{n, p}|_{D^3 \times D^4}(u, v) = (u, \gamma_{n, p}(u)(v)) .\]
We observe that
there is a subspace $S^3 \vee S^4 \subset M^\bullet_1$
such that the restriction $f^\bullet_{n, p}|_{S^3 \vee S^4}$ is the identity and 
$M^\bullet_1$ deformation retracts to $S^3 \vee S^4$.
It follows that $f^\bullet_{n, p}$ acts trivially on cohomology. 

Let $m_0 \subset D^7 \subset M_0$ be the centre of the 
disc removed to make $M^\bullet_0$.
By coning the restriction of $f^\bullet_{n, p}$ to the boundary of $M_0^\bullet$,
we extend $f^\bullet_{n, p}$ to 
an almost diffeomorphism $f_{n, p}$ of $M_0$ with a single singular point $m_0$.
Since $f^\bullet_{n, p}$ acts trivially on cohomology, so does $f_{n, p}$.
Since $M_0$ admits a unique spin structure for each
orientation and since $f_{n, p}$ is orientation preserving, $f_{n, p}$ is a
spin almost diffeomorphism.
By construction, $f_{n, p}$ is the identity on any 7-disc contained in $D^3 \tilde \times_{d_\pi} S^4$ and 
hence we may we extend $f_{n, p}$ to $M$ by taking the connected
sum with the identity on~$M_1$. Thus we define the spin almost diffeomorphism
\[ g_{n, p} := f_{n, p} \sharp \Id_{M_1} \colon M \cong M \]
with single singularity at $m_0$ and which acts trivially on cohomology.
We claim that 
\begin{equation} \label{eq:spin}
p^2(g_{n, p}) = p^2(f_{n, p}) = d_\pi(2p -nd_\pi).
\end{equation}
The manifold $M_0 \cong S^3 \tilde \times_{d_\pi} S^4$ bounds the
$8$-dimensional $D^4$-bundle $W_0 : = D^4 \tilde \times_{d_\pi} S^4$,
and we let $W_1$ be any spin coboundary for $M_1$.  
We form the twisted doubles $X_{f_{n, p}} := (-W_0) \cup_{f_{n, p}} W_0$
and
\begin{equation} \label{eq:Xg}
 X_{g_{n, p}} := (-W_0 \natural -W_1) \cup_{g_{n, p}} (W_0 \natural W_1) \cong  
X_{f_{n, p}} \sharp \bigl( (-W_1) \cup_{{\rm id}} W_1 \bigr). 
\end{equation}
Applying Lemma \ref{lem:T_and_X} we have,
\[ p^2(g_{n, p}) = \an{p^2(X_{g_{n, p}}), [X_{g_{n, p}}]} = 
\an{p^2(X_{f_{n, p}}), [X_{f_{n, p}}]} = p^2(f_{n, p}),\]
where the second equality holds by \eqref{eq:Xg} since the characteristic number $p^2$
is a bordism invariant, 
which is additive for connected sums and 
$(-W_1) \cup W_1 = \del (W_1 \times I)$.
Writing $X_{n, p} := X_{f_{n, p}}$, it therefore remains to compute $\an{p^2(X_{{n, p}}), [X_{{n, p}}]}$.
From the construction of $X_{n, p}$ we see that $H_4(X_{n, p}) \cong \Z(x) \oplus \Z(y)$ 
where $x$ is represented by the zero section of $W_1$ and $y = [D^4 \cup D^4]$ is represented 
by an embedded $4$-sphere obtained by gluing two fibres of the $D^4$-bundle $W_1$ together, 
one from each copy of $W_0$.  
By construction, the normal bundle of the $4$-sphere $D^4 \cup D^4$ has
characteristic function $\gamma_{n, p}$ and hence Euler number~$n$.
It follows that the intersection form of $X_{n, p}$ 
with respect to the basis $\{x, y\} $ is given by the following matrix:
\[ \left( \begin{array}{cc} 0 & 1 \\ 1 & n  \end{array} \right) \]
Moreover since $x$ is represented by an embedded $4$-sphere with tubular neighbourhood diffeomorphic
to $D^4 \tilde \times_{d_\pi} S^4$
and since $y$ is represented by an embedded $4$-sphere with normal bundle $\gamma_{n, p}$, 
we have $p_{X_{n, p}}(x) = d_\pi$ and $p_{X_{n,p}}(y) = p$.  We conclude that the Poincar\'{e} 
dual of $p_{X_{n, p}}$ is given by
\[ PD(p_{X_{n, p}}) = (p - nd_\pi) x + d_\pi y. \]
It follows that 
$\an{p_{X_{n, p}}^2, [X_{n, p}]}= 2d_\pi(p-nd_\pi) + n d_\pi^2 = d_\pi(2p - n d_\pi)$, 
and the claim \eqref{eq:spin} is proven.

Finally we need to choose $n$ and $p$ so that $d_\pi(2p-nd_\pi) = 2\td_\pi$.
Recall that we may choose $n$ and $p$ freely subject to the constraint that 
$n \equiv p$~mod~$2$.  If $d_\pi = 4k+2$, then $2 \tilde d_\pi = 4 d_\pi$ and we choose 
$(n, p) = (0, 2)$.  If $d_\pi = 4k$, then $2 \tilde d_\pi = 2d_\pi$ and we set 
$(n, p) = (1, 2k+1)$.
\end{proof}
\subsection{The proof of the main classification theorem} \label{ss:classification_theorem}
The mod 28 distillation of $M$ is the quadruple $(H^4(M), q_M^\circ, \EK_M, p_M)$
where $q^\circ_M$ is the quadratic linking family of $M$ as in
Definition \ref{def:qlf} and the generalised Eells-Kuiper invariant
\[ \EK_M \colon \divsp \to \Q/ \dM \Z \]
is the mod 28 Gauss refinement of $q^\circ_M$ defined by \eqref{eq:EK_of_M}.  
In this subsection we prove Theorem \ref{thm:class} which states, in part, that
mod 28 distillations give a complete invariant of diffeo\-morphisms
of $2$-connected $M$. For the remainder of the subsection $M$ is $2$-connected.

Recall that the (renormalised) classical Eells-Kuiper invariant, as defined
by \eqref{eq:classical}, gives a group isomorphism
\[ \Theta_7 \cong \Z/28 \Z, \quad \Sigma \mapsto \mu(\Sigma) : = \mu_\Sigma(0) .\]
The following lemma is obvious from the definitions of $q^\circ_M$ and $\EK_M$.

\begin{lem} \label{lem:g_M_sharp_Sigma}
For all $\Sigma \in \Theta_7$, $q^\circ_{M \sharp \Sigma} = q^\circ_M$ and
$\EK_{M \sharp \Sigma} = \EK_M + [\mu(\Sigma)]$, where $[\mu(\Sigma)]$ is the mod~$\dM$
reduction of $\mu(\Sigma)$. \qed
\end{lem}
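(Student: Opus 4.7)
First I observe that $\Sigma$ is a homotopy $7$-sphere, so $H^3(\Sigma) = H^4(\Sigma) = 0$ and there is a canonical identification $H^4(M \sharp \Sigma) = H^4(M)$ under which $p_{M \sharp \Sigma} = p_M$ and $b_{M \sharp \Sigma} = b_M$. In particular the base $(G,b,p)$ and the subsets $\divs_2$ and $\divsp$ are unchanged, so the statement makes sense as written.

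The plan is to realise both invariants from a common coboundary. I would pick a $3$-connected spin coboundary $W$ of $M$ and a $3$-connected spin coboundary $W_\Sigma$ of $\Sigma$, and take $W' := W \natural W_\Sigma$ as a $3$-connected spin coboundary of $M \sharp \Sigma$. A Mayer--Vietoris argument then gives a natural orthogonal decomposition of characteristic forms
\[
(H^4(W', M \sharp \Sigma), \lambda_{W'}, p_{W'}) \cong (H^4(W, M), \lambda_W, p_W) \oplus (H^4(W_\Sigma, \Sigma), \lambda_{W_\Sigma}, p_{W_\Sigma}).
\]
Since $\Sigma$ is a homotopy sphere, $\hlambda_{W_\Sigma}$ is an isomorphism and the boundary of the second summand (in the sense of Section \ref{subsec:families}) is the trivial family on the zero group. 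Because boundaries of characteristic forms are additive under orthogonal sums, $q^\circ_{M \sharp \Sigma} = q^\circ_M$.

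For the Eells--Kuiper invariant, given $k \in \divsp$ I would pick $n \in H^4(W)$ with $jn = k$ and view $n$ in $H^4(W')$ via the inclusion. Under the orthogonal decomposition, $p_{W'} - d_\pi n = (p_W - d_\pi n) + p_{W_\Sigma}$ with summands in the respective factors, so Novikov additivity of the signature together with the vanishing of cross-terms in the intersection pairing give
\[
\gr_{W'}(k) \; = \; \frac{(p_W - d_\pi n)^2 - \sign(W)}{8} + \frac{p_{W_\Sigma}^2 - \sign(W_\Sigma)}{8} \mmod \tdf.
\]
The first summand is $\gr_W(k)$, and by \eqref{eq:classical} the second summand represents the classical Eells--Kuiper invariant $\mu(\Sigma) \in \Z/28\Z$. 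Reducing modulo $\dM$ and invoking \eqref{eq:EK_of_M} yields $\EK_{M \sharp \Sigma} = \EK_M + [\mu(\Sigma)]$.

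The only real point requiring care is the orthogonal splitting of the characteristic form on $W'$; this is routine given that $W$ and $W_\Sigma$ are $3$-connected (so is their boundary connected sum) and that cup products between classes supported on disjoint pieces of $W'$ vanish. Everything else follows directly from additivity of the $\wh A$-genus and the signature already invoked elsewhere in the paper.
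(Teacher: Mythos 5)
Your proof is correct and realises exactly the "obvious from the definitions" argument the paper alludes to but does not write out: one takes the $3$-connected coboundary $W' = W \natural W_\Sigma$ of $M \sharp \Sigma$, notes the orthogonal splitting of its characteristic form, and uses additivity of the signature and of $\lambda^{-1}$ to split $\gr_{W'}$ as $\gr_W$ plus the contribution of $W_\Sigma$, which recovers $\mu(\Sigma)$ by \eqref{eq:classical}. This is the same mechanism the paper deploys explicitly in the proof of Theorem~\ref{thm:classification_categorical} (characteristic form of a boundary connected sum is the orthogonal sum of characteristic forms), so there is no genuine difference of method.
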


\begin{proof}[Proof of Theorem \ref{thm:class}]
The existence of a smooth $M$ with mod 28 distillation isomorphic
to a prescribed $(G,q^\circ,\EK,p) \in \catek$ follows from the corresponding
existence statement in Theorem \ref{thm:a_class}, since
Lemma \ref{lem:g_M_sharp_Sigma} lets us freely change the Eells-Kuiper
invariant of a manifold with a prescribed refinement $(G,q^\circ,p) \in \catq$.

By Lemma \ref{lem:ek_defined}, which is proven in \eqref{eq:compare},
the generalised Eells-Kuiper invariant is a diffeomorphism invariant.
Now we suppose
$F^\#(q^\circ_{M_0}, \EK_{M_0}, p_{M_0}) = (q^\circ_{M_1}, \EK_{M_1}, p_{M_1})$.
As explained in~\eqref{eq:Sigma_f}, Theorem \ref{thm:a_class}
means that there is a homotopy sphere $\Sigma$ and a diffeomorphism 
\[ f \colon M_0 \sharp \Sigma \cong M_1 \]
such that  $H^*(f) = F \colon H^4(M_1) \cong H^4(M_0)$.
It remains to show that $\Sigma \in I_H(M_0)$.  For if so, there is a diffeomorphism 
\[ h \colon M_0 \cong M_0 \sharp \Sigma \]
with $H^*(h) = \Id$ and then $f \circ h \colon M_0 \cong M_1$ is a diffeomorphism 
with $H^*(f \circ h) = H^*(f) = F$.

Since $f$ is a diffeomorphism it preserves the mod~28 Gauss refinements.  
Applying Lemma \ref{lem:g_M_sharp_Sigma} we have
\[ \EK_{M_1} = F^\#(\EK_{M_0} + \mu(\Sigma)) = F^\#(\EK_{M_0}) + \mu(\Sigma) .\]
On the other hand, our assumption is that $F^\#(\EK_{M_0}) = \EK_{M_1}$.  Since $d_{M_0} = d_{M_1}$,
\[
\mu(\Sigma) = \EK_{M_1} - F^\#(\EK_{M_0}) = 0 \in \Z/d_{M_1} \Z = \Z/d_{M_0} \Z.
\]
By Remark \ref{rmk:p2null}, $\Sigma \in I_H(M_0)$ and this completes the proof.
\end{proof}

\begin{proof}[Proof of Theorem \ref{thm:classification_categorical}]
That the functor $\cal{D} \colon \cal{M}_{7,2}^{\spg} \to \catek$ is surjective and faithful
is a restatement of Theorem \ref{thm:class}.
To see that $\cal{D}$ is additive and compatible with orientation reversal,
let $i = 0, 1$, and let $M_i = \del W_i$ where $W_i$ has characteristic from $(H_i, \lambda_i, \alpha_i)$
with boundary refinement $q^\circ_i$.
The mod $28$ Gauss refinement of $M_i$ is $(q^\circ_i, \del g_{W_i})$
where we set $\del g_{W_i} := g_{W_i}$~mod~$\dM$ as in \eqref{eq:EK_of_M}.
Since the characteristic form of $-W_i$ is $(H_i, -\lambda_i, \alpha_i)$ and
the characteristic form of the boundary connected sum $W_0 \natural W_1$ is $(H_0, \lambda_0, \alpha_0) \oplus (H_1, \lambda_1, \alpha_1)$,
it follows that mod $28$ Gauss refinements of $-M_i$ and $M_0 \sharp M_1$ are
$(-q^\circ_i, -\del g_{W_i})$ and $(q_1^\circ, \del g_{W_0}) \oplus (q_1^\circ, \del g_{W_1})$ respectively.
\end{proof}

\subsection{Smooth splitting functions} \label{ss:splitting_functions}
In this subsection we consider connected sum splittings of $2$-connected $M$
in the smooth category and we prove a smooth analogue of Theorem \ref{thm:almost_split1}.
We also prove Theorem \ref{thm:classification_minimal}, which is the smooth analogue
of Corollary \ref{cor:minima_almost_classification}. 
Throughout this subsection $M$ is 2-connected.

Let $(T, b)$ be a torsion linking form and $d$ an even integer.  We define the set
\[ \wh{\cal{Q}}_{d}(b) := \{(q, s)\} \subset \cal{Q}(b) \times \Q/d \Z, \]
which consists of pairs of quadratic refinements $q$ of $b$ and rational residues mod $d$ where 
$A(q) \equiv s$~mod $\Z$.
By Theorem \ref{thm:class}, rational homotopy spheres $M$ with torsion linking forms 
$(H^4(M), b_M) = (T, b)$ are classified up to diffeomorphism by the pair 
$(q_{M}, \EK(M)) \in \wh{\cal{Q}}_{28}(b)$.  
We denote this rational homotopy sphere by
\[ M = M(q, s), \]
where $q = q_M$ and $s = \EK(M)$.
Suppose that we are given a base $(G, b, p)$ with $F = G/T \cong \Z^b$ and
$\pi(p) \in F$ of divisibility $d_\pi$; as in \S\ref{ss:gr}, $\pi$ denotes
the projection $G \to F$. 
By Theorem~\ref{thm:almost_split1}, if $M$ has base
$(H^4(M), p_M, b_M) \cong (G, b, p)$,
then for some rational homotopy sphere $M(q, s(f))$ and 
base-point $m_0 \in M$, there is a connected sum splitting
\[ f \colon M \cong M(q, s(f)) \sharp M(F, d_\pi), \]
where $f(m_0) \in M(q, s(f))$.
Recall that $\sigma(f) \in \sect(\pi)$ is
given
by $\im(\sigma) = f^*(H^4(M(F, d_\pi)))$.  In considering the
uniqueness of $M(q, s)$ in such a splitting, we note that 
by Theorem \ref{thm:class}, $I(M(F, d_\pi)) = \dM\Theta_7$ 
(see also Remark \ref{rmk:p2null}).
As a consequence, for $i = 0, 1$, we see that if $(q, s_i) \in \cal{Q}_{28}(b)$
and $s_0 \equiv s_1$~mod~$\dM\Z$, then there is a diffeomorphism
\begin{equation} \label{eq:splitting_inertia}
h \colon M(q, s_0) \sharp M(F, d_\pi) \cong M(q, s_1) \sharp M(F, d_\pi) 
\end{equation} 
such that $H^*(h)$ preserves the induced splittings of $H^4$.  
We define two splittings $f_0$ and
$f_1$ to be $H^*$-equivalent if there is an almost diffeomorphism
$g \in \ADiff(M, m_0)$
with $g(m_0) = m_0$, $H^*(g) = \Id$
and $\Sigma_g \in \wh d_\pi \Theta_7$,
an almost diffeomorphism
$g_T \colon M(q, s_0) \acong M(q, s_1)$
with singular point $f_0(m_0)$ and $\Sigma_{g_T} = \Sigma_g$ and a 
diffeomorphism $g_F \colon M(F, d_\pi) \cong M(F, d_\pi)$
such that $g_T(f_0(m_0)) = f_1(m_0)$ and the following diagram commutes up to pseudo-isotopy:
\[ \xymatrix{ M \ar[d]^g  \ar[r]^(0.225){f_0} & M(q^{f_0}, s(f_0)) \sharp M(F, d_\pi) \ar[d]^{g_T \sharp g_F} \\
M \ar[r]^(0.225){f_1} & M(q^{f_1}, s(f_1)) \sharp M(F, d_\pi) }  \]

We define $\splt(M) := \{ [f] \}$ to be the set of $H^*$-equivalence classes of splittings of
$M$.  We also define the {\em smooth splitting function} of $M$ 
\[ \wh q^{\; \bullet}_M \colon \sect(\pi) \to \wh{\cal{Q}}_{\dM}(b), 
\quad \sigma \mapsto \wh q_M^{\;\sigma} := \bigl( q_M^{e_\pi k(\sigma)}, \EK_M(k(\sigma)) \bigr), \]
where we recall that $k(\sigma) \in S_{d_\pi}$ is defined by $k(\sigma) \in \im(\sigma) \cap\,S_{d_\pi}$.
From the diffeomorphism in \eqref{eq:splitting_inertia}, we see that 
$M\bigl( q^{e_\pi k(\sigma)}, \EK(k(\sigma)) \bigr) \sharp M(F, d_\pi)$ 
gives a well-defined diffeomorphism type for each section $\sigma$.

\begin{thm} \label{thm:split}
Let $M$ have smooth splitting function 
$\wh q^{\; \bullet}_M \colon \sect(\pi) \to \wh{\cal{Q}}_{\dM}(b_M)$.
For each $\sigma \in \sect(\pi)$ there is a unique $H^*$-equivalence class of splitting 
\[ f_\sigma \colon M \cong M(\wh q^{\; \sigma}_M) \sharp M(F, d_\pi). \]
Consequently the map $\splt(M) \to \sect(\pi), [f] \mapsto \sigma(f)$ is a bijection.
\end{thm}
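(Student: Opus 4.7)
The plan is to upgrade Theorem \ref{thm:almost_split1} from the almost diffeomorphism category to the smooth category by using the generalised Eells--Kuiper invariant to pin down the homotopy-sphere degree of freedom. The proof splits naturally into existence and uniqueness.

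For existence, given $\sigma \in \sect(\pi)$, Theorem \ref{thm:almost_split1} provides an almost splitting $f'_\sigma \colon M \acong M(q_M^{k(\sigma)}) \sharp M(F, d_\pi)$. Via \eqref{eq:Sigma_f}, this is equivalent to a genuine smooth diffeomorphism
\[
\tilde f_\sigma \colon M \cong M(q_M^{k(\sigma)}) \sharp M(F, d_\pi) \sharp \Sigma_{f'_\sigma},
\]
where $\Sigma_{f'_\sigma} \in \Theta_7$ is the singularity invariant. Since the rational homology sphere factor is classified (up to diffeomorphism) by its mod 28 distillation, I can absorb $\Sigma_{f'_\sigma}$ into it, rewriting $\tilde f_\sigma$ as a smooth splitting $f_\sigma \colon M \cong M(q_M^{k(\sigma)}, s) \sharp M(F, d_\pi)$ for some $s \in \Q/28\Z$. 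The value of $s$ modulo $\dM$ is then forced to equal $\EK_M(k(\sigma))$ by the naturality of $\EK$ under diffeomorphisms combined with its additivity under connected sum (Theorem \ref{thm:classification_categorical}). Thus $f_\sigma$ realises the desired $\wh q_M^{\;\sigma}$.

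For uniqueness, suppose $f_0, f_1$ are smooth splittings with $\sigma(f_0) = \sigma(f_1) = \sigma$. By Theorem \ref{thm:almost_split1} they are $H^*$-equivalent as almost splittings: there exist an almost diffeomorphism $g' \colon M \acong M$ with $H^*(g')=\Id$, together with almost diffeomorphisms $g'_T, g'_F$ of the two factors, making the relevant square commute up to pseudo-isotopy. Because $f_0, f_1$ are genuine diffeomorphisms, computing $\EK$ around this square using Lemma \ref{lem:g_M_sharp_Sigma} and the additivity formula forces the combined homotopy-sphere defect $\Sigma_{g'_T} + \Sigma_{g'_F} - \Sigma_{g'}$ to lie in $\dM \Theta_7$. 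Invoking Proposition \ref{prop:p2null} (specifically that $\frac{\tilde d_\pi}{4}\Theta_7 \subseteq I_H(M)$), I can absorb $\Sigma_{g'}$ into $g'$ to replace it by a genuine diffeomorphism $g$ with $H^*(g) = \Id$; using that $I(M(F, d_\pi)) = \dM \Theta_7$, I can then redistribute the remaining homotopy spheres between $g'_T$ and $g'_F$ to obtain almost diffeomorphisms $g_T, g_F$ satisfying $\Sigma_{g_T} = -\Sigma_{g_F} \in \dM\Theta_7$. This is exactly the smooth $H^*$-equivalence required.

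The main obstacle is the bookkeeping in the uniqueness step: one must precisely identify how the homotopy spheres appearing when converting almost diffeomorphisms to genuine diffeomorphisms via \eqref{eq:Sigma_f} distribute across the diagram, and then verify, via the additivity of $\EK$ and the identification $I(M(F, d_\pi)) = \dM\Theta_7$, that the resulting defects can always be rearranged into the symmetric form $\Sigma_{g_T} = -\Sigma_{g_F} \in \dM\Theta_7$. With existence and uniqueness established, surjectivity and injectivity of $\splt(M) \to \sect(\pi)$ are immediate.
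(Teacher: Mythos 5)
Your proposal is correct in substance, but you take a genuinely different route from the paper. The paper's proof is a direct extension of the proof of Theorem~\ref{thm:almost_split1} at the level of characteristic forms: the coboundary splitting $W \cong W_\psi \natural W_F$ immediately produces a splitting whose invariant is $\wh q_M^{\;\sigma}$, and uniqueness is deduced by examining the nonsingular characteristic form $(H_1,\lambda_1,\alpha_1)$ of $W \cup_M W_\phi$, whose Lagrangian $L$ (with $\alpha_1(L) = d_\pi\Z$) forces the Eells--Kuiper contribution $(\lambda_1^{-1}(\alpha_1,\alpha_1)-\sigma(\lambda_1))/8$ to be divisible by $\tdf$ by the argument of Proposition~\ref{prop:p2null}. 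You instead stay at the level of almost diffeomorphisms, mapping-torus singularities, and inertia groups, invoking Theorem~\ref{thm:almost_split1} as a black box and then correcting by homotopy spheres. Both approaches rely on Proposition~\ref{prop:p2null} as the essential input, but the paper extracts the divisibility estimate directly from the algebra of the coboundary, while you obtain it via $I_H(M) = \tdf\Theta_7$ and $I(M(F,d_\pi)) = \dM\Theta_7$. The paper's route is shorter; yours makes the reuse of the almost-classification machinery more transparent.

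One small inaccuracy worth flagging: in the uniqueness step you claim that ``computing $\EK$ around the square forces the combined defect $\Sigma_{g'_T} + \Sigma_{g'_F} - \Sigma_{g'}$ to lie in $\dM\Theta_7$''. In fact, no $\EK$ argument is needed here: since pseudo-isotopy preserves $\Sigma$ and $\Sigma_{f\circ g} = \Sigma_f \sharp \Sigma_g$, and since $\Sigma_{f_0}=\Sigma_{f_1}=0$ because $f_0, f_1$ are genuine diffeomorphisms, the commuting square gives the exact identity $\Sigma_{g'_T} + \Sigma_{g'_F} = \Sigma_{g'}$. The actual content needed is (a) $\Sigma_{g'} \in I_H(M) = \tdf\Theta_7 \subseteq \dM\Theta_7$, (b) $\Sigma_{g'_F} \in I(M(F,d_\pi)) = \dM\Theta_7$ automatically since $g'_F$ is a self-almost diffeomorphism of $M(F,d_\pi)$, so $\Sigma_{g'_T} \in \dM\Theta_7$ as well, and then (c) one composes with a suitable almost diffeomorphism of $M(F,d_\pi)$ with prescribed singularity sphere (available by Proposition~\ref{prop:p2null}) to make $g$ smooth and achieve $\Sigma_{g_T}=-\Sigma_{g_F}$. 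Your ``redistribution'' description captures (c) informally; making it precise requires conjugating the correcting almost diffeomorphism through $f_1$, but the bookkeeping closes up correctly.
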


\begin{proof}
The proof is a refined version of the proof Theorem~\ref{thm:almost_split1}
and we adopt the notation of that proof so that $M$ has $3$-connected coboundary $W$.
Specifically, the proof of the existence of $f_\sigma$ is verbally the same,
except that now by \cite[Definition 2.50]{crowley02} we have
\[ \wh q_M^{\; \sigma} = \left(\del(R, -\lambda_R, \alpha_\psi), 
\left[ \bigl( \rif_R\bigl( \alpha_\psi, \alpha_\psi \bigr) - \sigma(\lambda_R)\bigr)/8
\right] \right). \]
Hence the splitting $W \cong W_\psi \natural W_F$ defines the splitting 
$f_\sigma \colon M \cong M(\wh q_M^{\; \sigma}) \sharp M(F, d_\pi)$.	

To show that splittings $f_0$ and $f_1$ defining the same section $\sigma$ are
$H^*$-equivalent, we consider the nonsingular characteristic form $(H_1, \lambda_1, \alpha_1)$.
The symmetric form $(H_1, \lambda_1)$ has a Lagrangian $L \subset H_1$ corresponding to $H^3(M)$ and 
hence $\alpha_1(L) = d_\pi \Z$.  The proof of Proposition \ref{prop:p2null} now shows that
$\frac{\tilde d_\pi}{4}$ divides $(\rif_1(\alpha_1, \alpha_1) - \sigma(\lambda_1))/8$.
Consequently the diffeomorphism type of $M_\phi$ is determined up to connected sum with
$\Sigma \in \dM \Theta_7$, and this shows that $f_0$ and $f_1$ are $H^*$-equivalent splittings.
\end{proof}

\begin{proof}[Proof of Theorem \ref{thm:classification_minimal}]
For a $2$-connected $M$,
define an action of $\aut(b_M)$ on $\wh{\cal{Q}}_{\dM}(b_M)$ by $F \cdot (q, s) = (q \circ F, s)$
and let $[q, s]$ denote the $\aut(b_M)$ orbit of $(q, s)$.
We define the map
\[ \beta \colon \wh{\cal{Q}}_{\dM}(b_M) \to 2T/\!\aut(b_M) \times \Q/\dM\Z, \quad (q, s) \mapsto ([\beta_q], s).  \]
Since the Gauss sum of each $q$ is given by $A(q) = s \mmod \Z$, we note that
Theorem~\ref{thm:classification_of_q} ensures that $[q, s] = [q', s']$ 
if and only if $\beta(q, s) = \beta(q', s')$.

Now for $i = 0, 1$, let $M_0$ and $M_1$ have smooth splitting functions 
$\wh q_i^{\; \bullet} \colon \sect(\pi_i) \to \wh{\cal{Q}}_{\dM}(b_{M_i})$, 
where $\pi_i \colon H^4(M_i) \to H^4(M_i)/TH^4(M_i)$ is the projection
and suppose there 
there is an isomorphism $F \colon (H^4(M_1), b_{M_1}, p_{M_1}) \to (H^4(M_0), b_{M_0}, p_{M_0})$
of their bases.
By Theorem~\ref{thm:split},
$M_0$ and $M_1$ are diffeomorphic if and only if there are sections $\sigma$ and $\sigma'$, a homotopy sphere
$\Sigma \in \dM\Theta_7$ and a diffeomorphism 
$M(\wh q_0^{\; \sigma}) \cong M(\wh q_1^{\; \sigma'}) \sharp \Sigma$.
But this happens if and only if there is an isomorphism $q_0^{\sigma} \cong q_1^{\sigma'}$ and
$\mu_0(k(\sigma)) = \mu_1(k(\sigma'))$; \ie if and only if
$(F^{\#} \times \Id)(\beta(\wh q_0^{\; \sigma})) = \beta(\wh q_1^{\; \sigma'})$.
With the notation above the smooth splitting set of $M_i$, defined
in the introduction just prior to the statement of Theorem \ref{thm:classification_minimal}, 
is the set
\[ \bar{\cal{Q}}(M_i) = \{ \beta(\wh q^{\; \sigma}_i) : \sigma \in \sect(\pi_i) \}.  \]
The above shows that $M_0$ and $M_1$ are diffeomorphic if and only if the sets
$(F^{\#} \times \Id)\bigl(\bar{\cal{Q}}(M_0)\bigr)$ and $\bar{\cal{Q}}(M_1)$ intersect 
and consequently this happens if and only if these sets coincide.
\end{proof}

\section{Automorphisms of \texorpdfstring{$H^4(M)$}{H\^{}4(M)}} \label{sec:auto}
The smooth classification Theorem \ref{thm:class} implies that the number
of different smooth structures on the same 2-connected almost-smooth 7-manifold
corresponds to the number of different mod 28 Gauss refinements
of the linking family $(H^4(M), q_M^\circ, p_M)$.
The first estimate of the number of smooth structures on $M$, $\dM = \gcd(\frac{\td_\pi}{4}, 28)$, 
only counts smooth structures on $M$ modulo almost diffeomorphisms that act
trivially on $H^4(M)$. To get the full picture, we need to understand how
automorphisms of the quadratic linking family $q^\circ_M$ act on the Gauss refinements.
We begin this process in Section \ref{ss:aut_b_on_gauss_functions}.

Conveniently enough, it turns out that this
problem can be reduced to understanding how automorphisms of the base
$(H^4(M), b_M, p_M)$ act on linked functions: see Proposition \ref{prop:imtp} in
Section \ref{ss:aut_q_on_gauss_functions}.
While we do not have a complete
description of this action in general we still have control up to a 
factor $2^r$, where $r \in \{0, 1, 2\}$ is explicitly defined in \eqref{eq:rdef}.  
Moreover it is feasible to understand it for explicit examples:
see Examples \ref{ex:r0}, \ref{ex:r2}, \ref{ex:split_vs_hyperbolics} and \ref{ex:r_for_split_sums}.
With Proposition \ref{prop:imtp} in hand, we proceed in Section \ref{ss:computation_of_reactiviy}
to determine the reactivity of $2$-connected $M$ in terms of $r$ and the integer
$\mdiv$ defined in \eqref{eq:mdiv} and recalled in Section \ref{ss:auto_notation}.
\subsection{Notation} \label{ss:auto_notation}

We begin by setting up some terminology.
Given a finitely generated abelian group $G$, $p \in 2G$, and
$b \colon T \times T \to \Q/\Z$ a torsion form,
let $\aut_b$ denote the group of isomorphisms $F \colon G \to G$ preserving $p$
and $b$. %
If $q^\circ$ is a family of quadratic refinements of $(G,b,p)$, let
$\aut_{q^\circ}$ be the subgroup of $\aut_b$ that preserves $q^\circ$ too.

Let $\pi \colon G \to G/T$ be the projection to the free quotient of $G$.  
Let $\shearb \subseteq \aut_b$ be the subgroup of ``pure shears'', \ie $F$
acting trivially on $T$ and $G/T$. In other words,
$F = \Id_G + \rho \circ \pi$ for some homomorphism
$\rho \colon G/T \to T$ such that $\rho\big(\frac{\pi(p)}{d_\pi}\big)$
is $d_\pi$-torsion (the last condition is equivalent to $F(p) = p$); so
actually $\shearb$ does not depend on $b$ at all.
Similarly let $\shearq = \shearb \cap \aut_{q^\circ}$, the subgroup of shears in
$\aut_{q^\circ}$. For $h \in \divs_2$,
$(F^\#q)^h = q^{F(h)} = q^{h + \rho\left(\frac{\pi(p)}{2}\right)}
= q^h_{-\rho\left(\frac{\pi(p)}{2}\right)}$, 
so for $F$ to preserve $q$ we need
$\rho\big(\frac{\pi(p)}{2}\big) = 0$. 
Hence $\shearq$ simply corresponds to
homomorphisms $\rho \colon G/T \to T$ such that 
$\rho\big(\frac{\pi(p)}{d_\pi}\big)$
is $\frac{d_\pi}{2}$-torsion. In particular: $\shearq$ actually depends
on neither $q^\circ$ nor $b$ but only on $p$, and if $F \in \shearb$
then $F^2 \in \shearq$.

We say that a cyclic subgroup $C \subseteq T$ is a \emph{split summand}
if $T$ is a direct sum of $C$ and its $b$-orthogonal complement.
We call $x \in T$ \emph{split} if it generates a split summand; this is equivalent to
\[ b(x,x) = \frac{m}{n} , \]
where $n$ is the order of $x$ and $m$ is coprime to $n$. 

Given the element $p \in 2G$ we consider the following notions of its
divisibility (if $p$ is a torsion element we set all three integers to be 0):
\begin{align*}
d_{\phantom \pi} & :=
{\rm Max} \{s \in \Z : s \textrm{ divides } p \in G \}, \\
d_\pi & :=
{\rm Max} \{s \in \Z : s \textrm{ divides } \pi(p) \in G/T \}, \\
\mdiv & :=
{\rm Max} \{ s : s, \mpl \in \Z, \; s\mpl^2 \text{~divides~} m p \in G \} .
\end{align*}
We have an obvious chain of divisibilities
\[ 2 \mid d \mid \mdiv \mid d_\pi. \]
Further $d = d_\pi$ if and only if $\mdiv = d_\pi$, since the latter
implies that the maximum in the definition of $\mdiv$ is attained with
$\mpl = 1$.
For an integer $s$, let $\ord_2 s$ denote the exponent of $2$ in
the prime factorisation of $s$; \eg~${\ord_2 2^j = j}$.

\begin{defn} \label{def:extremal_exponent}
A non-negative integer $e$ is a {\em $2$-extremal exponent} for $(G, p)$ if
for some $\mpl$ such that $\mdiv \mpl^2$ divides $\mpl p$, $\ord_2 \mpl = e$.
\end{defn}

\begin{ex}
Let $p = (2^a, 2^c) \in \bbz \times \bbz/2^b\Z$ with $a,  b \geq c \geq 1$.
Then $d_\pi = 2^a$, $d = 2^c$, and $\mdiv = \max(2^c, 2^{a-b+c})$.
The 2-extremal exponents are 0 for $a \leq b$, and $b-c$ for $a \geq b$.
\end{ex}

\subsection{The action of \texorpdfstring{$\aut_b$}{Aut\_b} on linked functions} \label{ss:aut_b_on_gauss_functions}

Given $F \in \aut_b$ and any $k \in \divsp$, set 
$t := F(k) -k \in T$ (not necessarily $d_\pi$-torsion, unless $F_{|T}$
is the identity) and $\beta_k := p - d_\pi k$, and let
\begin{equation}
\label{eq:pnaive}
P(F) := d_\pi^2 b(t, t) - 2d_\pi b(\beta_k, t) \in \Q/2d_\pi\Z .
\end{equation}
In other words, $P(F) = -\Delta(k, t)$ from \eqref{eq:Delta}.
Equivalently, we can characterise $P(F)$ by
\begin{equation}
\label{eq:pdiff}
F^\# \lf = \lf - \frac{P(F)}{8} \mod \frac{d_\pi}{4}\Z
\end{equation}
for any linked function $\lf$ (use that $(F^\# \lf)(k) = \lf(F(k)) =
\lf(k + t) = g(k) + \frac{\Delta(k,t)}{8}$ by the condition \eqref{eq:lf_def}
for $\lf$ to be a linked function.)
The first characterisation, \eqref{eq:pnaive}, is independent of $\lf$ and the
second, \eqref{eq:pdiff}, of $k$, so in fact $P$ depends on neither.
If $F$ preserves a family of quadratic refinements, then
taking $\lf$ to be a Gauss refinement of that family shows that $P$
takes values in $8\Z/2d_\pi\Z$ (in the next subsection we study a corresponding
$8\Z/2\td_\pi\Z$-valued function $\tp$).
Even if $F$ does not preserve a family of quadratic refinements, the
fact that the $\textrm{mod } \quart \Z$ reduction of the Arf invariant of a
quadratic refinement of $(b,p)$ depends only on $(b,p)$ itself shows that $P$
takes values in $2\Z$. It is also clear
from \eqref{eq:pdiff}, or from \eqref{eq:pnaive} together with
\eqref{eq:cocycle}, that $P$ is a homomorphism
$\aut_b \to 2\Z/2d_\pi\Z$.

Let $j_\pi = \ord_2 d_\pi$, and $j_\msub = \ord_2 \mdiv$.

\begin{lem}
\label{lem:autb}
$P(\aut_b) \subseteq \mdiv \Z/2d_\pi \Z$.
If $b$ lacks a split $2^{2e+j_\msub}$ summand for some $2$-extremal
exponent $e$, then $P(\aut_b) \subseteq 2\mdiv \Z/2d_\pi \Z$.
\end{lem}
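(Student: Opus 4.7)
The plan is to exploit $F$-invariance of $b$ to force $P(F)$ to be integer-valued, and then to choose $k \in \divsp$ using the definition of $\mdiv$ so that the decomposition of $\beta_k$ reveals the claimed divisibility. First, applying invariance to $\beta_k$, the identity $b(F\beta_k, F\beta_k) = b(\beta_k, \beta_k)$ together with $F\beta_k = \beta_k - d_\pi t$ yields $d_\pi^2 b(t,t) \equiv 2d_\pi b(\beta_k, t) \pmod{\Z}$, so $P(F) \in \Z/2d_\pi \Z$, which also underlies the $2\Z$-valuedness noted above.

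To improve this to $\mdiv$-divisibility, I would choose $\mpl$ realising $\mdiv$, so $p = \mdiv \mpl x + t_0$ for some $x \in G$ and $t_0 \in T$ with $\mpl t_0 = 0$. Since $\mdiv\mpl \mid d_\pi$, setting $\ell := d_\pi/(\mdiv\mpl)$ and choosing $k \in \divsp$ with $\pi(x) = \ell \pi(k)$ gives $\beta_k = t_0 + \mdiv\mpl t_1$ for some $t_1 \in T$. Substituting into $2d_\pi b(\beta_k, t)$, the contribution $2d_\pi b(t_0, t)$ is a multiple of $2d_\pi/\mpl = 2\mdiv\ell$ since $t_0 \in T[\mpl]$ and $\mpl \mid d_\pi$; the remaining piece $2\mdiv\mpl d_\pi b(t_1, t) = 2\mdiv\mpl b(t_1, \beta_k - F\beta_k)$ using $d_\pi t = \beta_k - F\beta_k$ is visibly a multiple of $\mdiv$. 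A parallel treatment of $d_\pi^2 b(t,t) = d_\pi b(d_\pi t, t)$ with $d_\pi t \in T$ gives another $\mdiv$-multiple, and combining the contributions yields $P(F) \in \mdiv \Z/2d_\pi\Z$.

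For the refined claim, the obstruction to $P(F) \in 2\mdiv\Z/2d_\pi\Z$ is $2$-primary. Unpacking $P(F)/\mdiv \pmod 2$ using the decomposition of $\beta_k$, the only terms that can contribute non-trivially are $b(y,y)$-type evaluations for certain $y$ in the $2$-primary part of $T$ of order $2^{2\ord_2\mpl + j_\msub}$, the exponent being dictated by how many factors of $2$ appear in $\mdiv \mpl$ versus $d_\pi$. By the classification of non-degenerate torsion forms on $2$-groups, such a term vanishes unless $\langle y \rangle$ is a split summand of $b$. Hence if some $2$-extremal exponent $e$ has no corresponding split summand of order $2^{2e+j_\msub}$, choosing $\mpl$ with $\ord_2 \mpl = e$ kills the obstruction and gives $P(F) \in 2\mdiv\Z/2d_\pi\Z$.

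The hard part is the $2$-adic book-keeping of the third paragraph: tracking precisely how $P(F)/\mdiv \pmod 2$ depends on $F|_T$ and on the chosen $\mpl$, and matching each residual obstruction term to the existence of a split summand of the predicted $2$-primary order. A secondary care is that the contributions involving $t_1$ and $d_\pi^2 b(t,t)$ each require lifts from $\Q/\Z$ to $\Q$ chosen consistently modulo the right power of $d_\pi$, analogously to the care taken in Section \ref{subsec:gr} when defining the Gauss refinement $\gr_H$.
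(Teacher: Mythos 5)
Your overall plan — exploit the definition of $\mdiv$ and analyse the $2$-primary obstruction via split summands — is in the right spirit, but the central computation in your second paragraph has a genuine gap, and the third paragraph is admitted book-keeping that is never carried out. The missing idea is the paper's: don't work with $t = F(k) - k$ directly, but with $s := Fy - y$ for a $y \in G$ satisfying $\mpl^2\mdiv\, y = \mpl p$, and show
$P(F) \equiv \mpl^2\mdiv^2\, b(s,s) \pmod{2\mdiv}$.
Because $\mpl^2\mdiv\, s = 0$ (since $F$ fixes $p$), the quantity $\mpl^2\mdiv\, b(s,s)$ is automatically an integer, so the right-hand side is an integer multiple of $\mdiv$, and it is $\equiv \mdiv \pmod{2\mdiv}$ precisely when the $2$-primary part of $s$ generates a split summand of order $2^{2\ord_2 \mpl + j_\msub}$. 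Both assertions of the lemma read off in one stroke.

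The step that fails in your write-up is the term-by-term estimate after writing $\beta_k = t_0 + \mdiv\mpl\, t_1$. You claim the piece $2\mdiv\mpl\, d_\pi\, b(t_1, t)$ is ``visibly a multiple of $\mdiv$,'' and similarly for $d_\pi^2 b(t,t) = d_\pi\, b(d_\pi t, t)$. But these individual pieces are not well-defined elements of $\Q/2d_\pi\Z$: a lift of $b(t_1, t)$ to $\Q$ is determined only modulo $\Z$, so $2\mdiv\mpl\, d_\pi\, b(t_1,t)$ is determined only modulo $2\mdiv\mpl\, d_\pi\, \Z$, which is finer than $2d_\pi\Z$ only because the order of $t_1$ is uncontrolled — and indeed the lift need not be an integer at all, since you have no bound on the order of $t$ or $t_1$. (Compare the $t_0$-piece, which does work because $t_0 \in T[\mpl]$ and $\mpl \mid d_\pi$.) Only the \emph{sum} $P(F) = d_\pi^2 b(t,t) - 2d_\pi b(\beta_k, t)$ is known to be integral; splitting it and estimating pieces requires the extra control on orders that the element $s$ provides. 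Without that control, ``visibly a multiple of $\mdiv$'' is unjustified, and the rest of the argument — including the identification of the $2$-primary obstruction with the existence of a split summand of order $2^{2e + j_\msub}$, which you defer to your fourth paragraph — cannot be completed along these lines.
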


\begin{proof}
Pick some $y \in G$ such that $\mpl^2\mdiv y = m p$. Then $s := Fy - y$ is
an $\mpl^2\mdiv$-torsion element. It suffices to show that
\begin{equation}
\label{eq:pdinfty}
P(F) = \mpl^2\mdiv^2 b(s, s) \mod 2\mdiv ,
\end{equation}
because the RHS is $\mdiv$ if the 2-primary part of $s$ is split, and $0$
otherwise.

Note that $\pio := \frac{d_\pi}{\mdiv}$ and $\frac{\mpl}{\pio}$ are
integers. Let $k := \frac{\mpl}{\pio}y$. Then
$k \in \divsp$, and $\pio t = \mpl s$, so \eqref{eq:pnaive} implies 
\[ P(F) = \pio^2 \mdiv^2 b(t, t) - 2 \pio \mdiv b(\beta_k, t)
= \mdiv^2 b(\mpl s, \mpl s) - 2\mdiv b(\beta_k, \mpl s) %
\mod 2\mdiv . \]
Since $\beta_k = p - d_\pi k$ is $\mpl$-torsion, \eqref{eq:pdinfty} and the
result follows.
\end{proof}

If $F \in \shearb \subseteq \aut_b$, \ie $F = \Id_G + \rho \circ \pi$ for some
homomorphism $\rho \colon G/T \to T$, then
$t = F(k) - k = \rho\big(\frac{\pi(p)}{d_\pi}\big)$
is independent of the choice of $k \in \divsp$.
Since $\frac{\pi(p)}{d_\pi} \in G/T$ is
a primitive element of a free abelian group, we can prescribe its image
under a homomorphism $\rho$ arbitrarily. Determining the image
$P(\shearb)$ therefore amounts to computing the RHS of \eqref{eq:pnaive}
for all $d_\pi$ torsion elements $t \in T$.

\begin{lem}
\label{lem:aut0} 
$4\mdiv \Z/2d_\pi \Z \subseteq P(\shearb)$.
Moreover, if $j_\pi \not = j_\msub + 1$ or if $b$ has no split $2^{j_\pi}$
summand, then $2\mdiv \Z/2d_\pi \Z \subseteq P(\shearb)$.
\end{lem}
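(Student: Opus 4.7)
Each shear $F \in \shearb$ has the form $F = \Id + \rho \circ \pi$ for a homomorphism $\rho : G/T \to T$; setting $t := \rho(\bar y)$, where $\bar y := \pi(p)/d_\pi$ is primitive in $G/T$, the condition $F(p) = p$ is equivalent to $d_\pi t = 0$, so the assignment $F \mapsto t$ gives a bijection $\shearb \leftrightarrow T[d_\pi]$. Fixing any lift $y \in G$ of $\bar y$, $k := y$ belongs to $\divsp$, and with $\beta := p - d_\pi y \in T$ the formula \eqref{eq:pnaive} reads
\begin{equation*}
P(F) \; = \; d_\pi^2 b(t, t) - 2 d_\pi b(\beta, t) \pmod{2d_\pi}.
\end{equation*}
Since $P$ is a group homomorphism, the two inclusions reduce to exhibiting shears whose $P$-values realise a generator of $4\mdiv \Z/2d_\pi\Z$ and (under the extra hypothesis) of $2\mdiv \Z/2d_\pi\Z$.

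Exactly as in the proof of Lemma~\ref{lem:autb}, I would use the defining divisibility $\mpl^2 \mdiv z = \mpl p$ of $\mdiv$ to choose the lift $y$ so that $\beta$ is $\pio$-torsion. The immediate consequence is $\pio b(\beta, \cdot) \equiv 0$ in $\Q/\Z$, hence $2 d_\pi b(\beta, t) = 2 \mdiv \cdot \pio b(\beta, t) \in 2\mdiv \Z$ for every $t \in T[d_\pi]$; combined with $d_\pi b(t, t) \in \Z$ (coming from $d_\pi t = 0$), this yields the a priori containment $P(\shearb) \subseteq (d_\pi \Z + 2\mdiv \Z)/2d_\pi\Z$.

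For the first inclusion, substitute $t = 2u$ with $u \in T[d_\pi]$. The quadratic term $d_\pi^2 b(2u, 2u) = 4 d_\pi^2 b(u,u)$ lies in $4 d_\pi \Z \subseteq 2d_\pi \Z$ and therefore vanishes modulo $2d_\pi$, leaving
\begin{equation*}
P(F) \; = \; -4 d_\pi b(\beta, u) \; \in \; 4 \mdiv \Z / 2 d_\pi \Z.
\end{equation*}
To see that all of $4 \mdiv \Z / 2d_\pi \Z$ is hit, I would show that $b(\beta, \cdot) \restriction_{T[d_\pi]}$ is surjective onto $(1/\pio)\Z/\Z$, equivalently that $\beta$ has order exactly $\pio$ in $T/d_\pi T$. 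The argument is by contradiction from the maximality of $\mdiv$: any relation $n\beta = d_\pi s$ with $n < \pio$ can be unwound (using $\beta = p - d_\pi y$) into a divisibility relation for $p$ that strictly improves $\mdiv$.

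For the second inclusion, $2\mdiv$ itself represents an odd multiple of $2\mdiv$ mod $4\mdiv$ and so cannot arise from $t \in 2T$, where the quadratic contribution always vanishes. One must therefore take $t \in T[d_\pi] \setminus 2T$, so that $d_\pi^2 b(t,t) = d_\pi \cdot d_\pi b(t,t)$ contributes a nonzero class in $d_\pi \Z / 2 d_\pi \Z \cong \Z/2$, and combine this with the linear term to reach $2\mdiv$ modulo $2d_\pi$. The main obstacle is the 2-primary analysis: decomposing $T = T_2 \oplus T_{\mathrm{odd}}$ reduces to $T_2$, on which one must track the parity of $d_\pi b(t, t)$ as $t$ ranges over $T_2[d_\pi] \setminus 2 T_2$. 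Using the normal form of the torsion linking form on a finite $2$-group, the achievable parity is controlled precisely by the presence of a split $2^{j_\pi}$ summand of $b$ and the 2-adic gap $j_\pi - j_\msub$. When either $j_\pi \neq j_\msub + 1$ (so the gap is either wider, giving extra flexibility in the linear term, or zero, so that the quadratic part already reaches $2\mdiv$) or $b$ has no split $2^{j_\pi}$ summand (so that the parity of $d_\pi b(t,t)$ is not pinned down), one can explicitly choose $t$ realising $P(F) \equiv 2\mdiv \pmod{2d_\pi}$; in the borderline case excluded from the statement the parity is forced and $2\mdiv$ is genuinely unreachable.
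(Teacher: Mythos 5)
Your analysis of the first inclusion is sound and tracks the paper's argument reasonably closely: you need a $t \in T[d_\pi]$ for which the linear term realises $2d_\pi b(\beta_k,t) = 2\mdiv$, and taking $t \in 2T[d_\pi]$ kills the quadratic term. Two small inaccuracies there: the paper's choice of $k$ makes $\beta_k$ $\mpl$-torsion, not $\pio$-torsion (the two agree only when $\mpl = \pio$), and surjectivity of $b(\beta_k,\cdot)\restriction_{T[d_\pi]}$ onto $\frac{1}{\pio}\Z/\Z$ is equivalent to $\beta_k$ having order a \emph{multiple} of $\pio$ in $T/d_\pi T$, not order exactly $\pio$; the paper only establishes this lower bound, which is all one needs.

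The second inclusion is where your plan genuinely breaks down. Choosing $t \notin 2T$ does \emph{not} force $d_\pi^2 b(t,t) \equiv d_\pi \pmod{2d_\pi}$: the 2-primary part of $t$ might still have $d_\pi b(t,t)$ even (e.g.\ $t$ may have odd order, or lie in a non-split cyclic 2-summand, or a hyperbolic block). Moreover the quadratic and linear terms are both functions of the same $t$, so the phrase ``combine this with the linear term to reach $2\mdiv$'' is exactly the step that needs an argument, and it cannot be done by choosing them independently. Your parenthetical about $j_\pi = j_\msub$ (``the quadratic part already reaches $2\mdiv$'') is also incorrect: when $\pio$ is odd the quadratic term contributes only $0$ or $d_\pi = \pio\mdiv \ne 2\mdiv \pmod{2d_\pi}$; this case is instead handled by observing that $\gcd(4\mdiv, 2d_\pi) = 2\mdiv$ already, so the first inclusion subsumes the second.

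The paper sidesteps all of this with a single construction and a short case analysis. Fix one $t \in T[d_\pi]$ with $b(\beta_k,t) = 1/\pio$ (existence follows from the divisibility argument). Then $P(F) = d_\pi^2 b(t,t) - 2\mdiv$, and the quadratic term is either $0$ or $d_\pi$ mod $2d_\pi$, the latter occurring precisely when the 2-primary part of $t$ generates a split $2^{j_\pi}$ summand. In the first case $P(F)$ generates $2\mdiv\Z/2d_\pi\Z$; in the second case $P(F) = (\pio - 2)\mdiv$, which generates $\gcd(\pio{-}2,4)\mdiv\Z/2d_\pi\Z$, and $\gcd(\pio{-}2,4) = 4$ exactly when $\pio \equiv 2 \pmod 4$, i.e.\ $j_\pi = j_\msub + 1$. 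If $b$ has no split $2^{j_\pi}$ summand the second case never occurs. This gives both inclusions simultaneously and makes the role of the two hypotheses transparent; I'd recommend reorganising your argument along these lines rather than trying to choose $t$ separately for each inclusion.
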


\begin{proof}
The key claim is that there exists a $d_\pi$-torsion
element $t$ such that $b(\beta_k, t) = \frac{1}{\pio}$, where
$d_\pi = \pio \mdiv$.
By the non-degeneracy of $b$, this is equivalent to $\beta_k$ having order at
least~$\pio$, and not being divisible by more than $\mdiv$. That any divisor of
$\beta_k$ also divides $\mdiv$ is obvious, and if $\mpl\beta_k = 0$ then
$\mpl p$ is divisible by $\mpl d_\pi$, which indeed implies $\pio \mid \mpl$ by
the definition of $\mdiv$.

Let $\rho$ be any homomorphism $G/T \to T$ mapping
$\frac{\pi(p)}{d_\pi} \mapsto t$, and
$F := \Id_G + \rho \circ \pi \in \shearb$.
If the 2-primary part of $t$ does
not generate a split $2^{j_\pi}$ summand then $d_\pi^2 b(t, t)$ is divisible by
$2d_\pi$, so 
\[ P(F) = d_\pi^2b(t,t) - 2d_\pi b(\beta_k, t) = 2\mdiv \mod 2d_\pi , \]
and we are done. Otherwise
$P(F) = d_\pi - 2\mdiv = (\pio-2)\mdiv \mmod 2d_\pi$. The subgroup this
generates is precisely $n\mdiv\Z/2d_\pi\Z$, where $n = \gcd(\pio{-}2, 2\pio) =
\gcd(\pio{-}2,4)$. Clearly $n$ is 1 or 2 except when $j_\pi = j_\msub + 1$,
in which case $n = 4$.
\end{proof}

Lemmas \ref{lem:autb} and \ref{lem:aut0} imply that the following is well-defined.

\begin{defn} \label{def:r}
Define $r = r(G,p,b) \in \{0,1,2\}$ by
\begin{equation}
\label{eq:rdef}
\im P = 2^r \mdiv \Z/2d_\pi \Z .
\end{equation}
\end{defn}

\begin{rmk} \label{rmk:values_of_r}
Lemmas \ref{lem:autb} and \ref{lem:aut0} provide necessary conditions for $r = 0$ or $r = 2$.
In particular, if $G$ has no 2-torsion then $r = 1$.  
The next examples show that there are bases with $r = 0$ and bases with $r=2$.
\end{rmk}

\begin{ex}
\label{ex:r0}
Let $G = \Z \oplus \Z/2^j$, $b = \an{\frac{1}{2^j}}$ and $p = (2^j, 0)$
(so $d_\pi = \mdiv = 2^j$).
Then the shear $F \colon (x,y) \mapsto (x, x+y)$ has $P(F) = 2^j \mmod 2^{j+1}$,
\ie $P(F) = \mdiv \mmod 2d_\pi$. Thus $r = 0$.
\end{ex}

\begin{ex}
\label{ex:r2}
Let $G = \Z \oplus \Z/2^j $, $b = \an{\frac{1}{2^j}}$ and
$p = (2^j, 2^{j-1})$ (so $d_\pi = 2^j$,
while $\mdiv = 2^{j-1}$). Now any $t \in T$ has
$d_\pi^2 b(t,t) + 2d_\pi(\beta_k, t) = 0 \mmod 2^{j+1}$, so $r = 2$.
\end{ex}

\subsection{The action of \texorpdfstring{$\aut_{q^\circ}$}{Aut\_q} on Gauss refinements} \label{ss:aut_q_on_gauss_functions}
Now let $q^\circ$ be a family of quadratic refinements of the base $(G, b, p)$, and let
$\aut_{q^\circ}$ denote its group of automorphisms.
For an automorphism $F \in \aut_{q^\circ}$ we define
$\tp(F) \in 8\Z/2\td_\pi \Z$ by
\[ \wt P(F) := -4d_\pi q^{e_\pi k}(t) + d_\pi(d_\pi {+} 2) \, b(t,t) \]
for any $k \in \divsp$ and $t := F(k) - k$; equivalently,
$\tp(F) = -\tdelta(k, t)$.
Now $\tp$ is a homo\-morphism
$\wt P \colon \aut_{q^\circ} \to 8\Z/2\td_\pi\Z$, such that
\begin{equation} \label{eq:P_tilde}
F^\# \gr = \gr - \frac{\tp(F)}{8} \mod \frac{\td_\pi}{4}\Z 
\end{equation}
for any Gauss refinement $\gr$ of $q^\circ$.
Notice that, similarly to the proof of Lemma \ref{lem:gr_via_linked},
$\tp$ can alternatively be characterised by
\begin{gather*}
\tp(F) = P(F) \mod 2d_\pi,
\\ \tp(F) = 0 \mod 8 .
\end{gather*}
We can therefore get some control on the image of the shear subgroup
$\shearq \subseteq \aut_{q^\circ}$ just from the observation that 
$F^2 \in \shearq$ for any $F \in \shearb$.
 
\begin{lem}
\label{lem:imtp0}
$\tp(\shearq) \supseteq 4\mdiv\Z/2\td_\pi\Z$.
\end{lem}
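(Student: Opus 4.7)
The plan is to exploit the fact that squaring sends $\shearb$ into $\shearq$, combined with Lemma \ref{lem:aut0} and the homomorphism property of $P$. Indeed, if $F = \Id_G + \rho \circ \pi \in \shearb$ then $F^2 = \Id_G + 2\rho \circ \pi$; since $\rho(\pi(p)/d_\pi)$ is $d_\pi$-torsion, $2\rho(\pi(p)/d_\pi)$ is $(d_\pi/2)$-torsion, so $F^2 \in \shearq$, and $P(F^2) = 2P(F)$.

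Next I would invoke the explicit shear $F_t \in \shearb$ constructed in the proof of Lemma \ref{lem:aut0}, whose shear element $t$ is a $d_\pi$-torsion element with $b(\beta_k, t) = 1/\pio$. That proof shows $P(F_t) \equiv \pm 2\mdiv \mod 2d_\pi$ when the $2$-primary component of $t$ does not generate a split $2^{j_\pi}$ summand, and $P(F_t) \equiv (\pio - 2)\mdiv \mod 2d_\pi$ otherwise. Doubling,
\[ P(F_t^2) = 2P(F_t) \equiv \pm 4\mdiv \quad \text{or} \quad 2(\pio - 2)\mdiv = 2d_\pi - 4\mdiv \equiv -4\mdiv \pmod{2d_\pi}, \]
so in either case $P(F_t^2) \equiv \pm 4\mdiv \mod 2d_\pi$, and the cyclic subgroup it generates in $\Z/2d_\pi\Z$ contains $4\mdiv\Z/2d_\pi\Z$.

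Finally I would lift from $P$ to $\tp$. Because the chain $2 \mid d \mid \mdiv \mid d_\pi$ forces $\mdiv$ to be even, $4\mdiv$ is divisible by $8$ and represents a well-defined element of $8\Z/2\td_\pi\Z$. Since $\lcm(8, 2d_\pi) = 2\td_\pi$, the characterisation of $\tp(F_t^2)$ by the simultaneous congruences $\tp(F_t^2) \equiv P(F_t^2) \mod 2d_\pi$ and $\tp(F_t^2) \equiv 0 \mod 8$ admits a unique solution in $8\Z/2\td_\pi\Z$, namely $\pm 4\mdiv$. Hence $\tp(\shearq) \ni \pm 4\mdiv$, and the subgroup it generates in $8\Z/2\td_\pi\Z$ equals $4\mdiv\Z/2\td_\pi\Z$.

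The one point that requires brief verification is that the lift of $\pm 4\mdiv \mod 2d_\pi$ to $8\Z/2\td_\pi\Z$ is again $\pm 4\mdiv$: this is automatic when $4 \mid d_\pi$ (so $\td_\pi = d_\pi$), and when $d_\pi \equiv 2 \mod 4$ (so $\td_\pi = 2 d_\pi$) one selects the unique representative of $\pm 4\mdiv + 2d_\pi\Z$ that lies in $8\Z$ modulo $4 d_\pi$.
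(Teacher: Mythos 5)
Your proof is correct and follows essentially the same route as the paper's: square a shear $F \in \shearb$ whose value $P(F)$ is pinned down by the proof of Lemma \ref{lem:aut0}, then lift from $P$ to $\tp$. The paper states this very tersely (``we can achieve $P(F) = 2\mdiv$ or $P(F) = 2\mdiv + d_\pi$ for some $F \in \shearb$; then $F^2 \in \shearq$ has $P(F^2) = 4\mdiv$'') and leaves the lift to $\tp$ implicit, whereas you carry out the CRT-style check that the lift of $\pm 4\mdiv$ from $2\Z/2d_\pi\Z$ to $8\Z/2\td_\pi\Z$ is again $\pm 4\mdiv$ using $\lcm(8,2d_\pi) = 2\td_\pi$; this added care is harmless and makes the argument more complete. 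Your rewriting of the second case as $2(\pio-2)\mdiv \equiv -4\mdiv \pmod{2d_\pi}$ matches the paper's version up to the immaterial sign of the shear element $t$.
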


\begin{proof}
The proof of Lemma \ref{lem:aut0} showed that we can achieve
$P(F) = 2\mdiv$ or $P(F) = 2\mdiv + d_\pi$ for some $F \in \shearb$.
Then $F^2 \in \shearq$ has $P(F^2) = 4\mdiv$.
\end{proof}

\noindent
Conveniently, it turns out that the image of $\tp$ can be determined
directly from the image of $P$.

\begin{prop}
\label{prop:imtp}
$\im \tp = \{ n \in 8\Z/2\td_\pi\Z : n \mmod 2d_\pi \in \im P \}
= \lcm(8, 2^r\mdiv)\Z/2\td_\pi \Z$.
\end{prop}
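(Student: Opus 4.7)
The inclusion $\im\tp \subseteq \{n \in 8\Z/2\td_\pi\Z : n \mmod 2d_\pi \in \im P\}$ is immediate, since every $F \in \aut_{q^\circ}$ lies in $\aut_b$, and by construction $\tp(F)$ is divisible by $8$ and congruent to $P(F)$ modulo $2d_\pi$. For the second equality in the displayed statement my plan is a direct arithmetic computation: since $2^r\mdiv$ divides $2d_\pi$ (a consequence of the definition \eqref{eq:rdef} of $r$ together with Lemmas~\ref{lem:autb} and~\ref{lem:aut0}), the subset of $8\Z$ whose reduction mod $2d_\pi$ lies in $2^r\mdiv\Z/2d_\pi\Z$ is exactly $8\Z \cap 2^r\mdiv\Z = \lcm(8,2^r\mdiv)\Z$, and this reduces further modulo $2\td_\pi$ to the claimed cyclic subgroup.

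For the reverse inclusion my starting point is Lemma~\ref{lem:imtp0}, which already provides $4\mdiv\Z/2\td_\pi\Z \subseteq \im\tp$ from squares of shears. Since $\mdiv$ is always even (by the chain $2 \mid d \mid \mdiv \mid d_\pi$) we have $\lcm(8,4\mdiv) = 4\mdiv$, so the case $r = 2$ is immediate. For $r \in \{0,1\}$ the generator $\lcm(8, 2^r\mdiv)$ is a proper divisor of $4\mdiv$, and the plan is to exhibit an element $F \in \aut_{q^\circ}$ with $P(F) \equiv 2^r\mdiv \pmod{2d_\pi}$, matching the generator of $\im P$ itself; via the injective reduction map $8\Z/2\td_\pi\Z \hookrightarrow \Z/2d_\pi\Z$ any such $F$ pins down a unique $\tp(F)$, and combined with the shear contribution this will generate all of $\lcm(8, 2^r\mdiv)\Z/2\td_\pi\Z$.

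The key algebraic tool will be the cocycle $c\colon \aut_b \to {}_2T$ characterised by $(F^\# q^\circ)^h = q^h_{c(F)}$ for every $h \in \divs_2$; it is well defined because the transformation rule of Definition~\ref{def:foqr} together with the transitivity of the $T$-action on $\divs_2$ forces any two families of refinements of $(G,b,p)$ to differ by a single constant shift, and the homogeneity defect constraint places that shift in ${}_2T$. Since $F \in \aut_{q^\circ}$ if and only if $c(F) = 0$, and since $c$ satisfies the crossed-homomorphism identity $c(F_0 F_1) = c(F_1) + F_1^{-1}(c(F_0))$, the plan is to begin with $F_0 \in \aut_b$ realising $P(F_0) = 2^r\mdiv$ (as in the proofs of Lemmas~\ref{lem:autb} and~\ref{lem:aut0}) and to correct it by composing with $F_1 \in \aut_b$ satisfying $P(F_1) = 0$ and $c(F_1) = -F_1^{-1}(c(F_0))$. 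The hard part will be establishing the existence of such a corrector $F_1$, which I expect to follow from a case analysis keyed to the same structural features of $b$ (presence or absence of split $2^{j_\pi}$ and $2^{2e + j_\msub}$ summands) that already underlie the definition of $r$.
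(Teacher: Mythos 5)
Your reformulation via the obstruction cocycle $c\colon \aut_b \to {}_2T$ characterised by $(F^\#q^\circ)^h = q^h_{c(F)}$ is correct and is a tidy way to state what is implicitly going on, and the crossed-homomorphism identity $c(F_0 F_1) = c(F_1) + F_1^{-1}(c(F_0))$ is also right. The easy inclusion, the arithmetic identity, and the dispatch of the case $r=2$ via Lemma~\ref{lem:imtp0} are all fine (in fact the same computation shows that Lemma~\ref{lem:imtp0} already suffices whenever $4\nmid d_\pi$, since then $\lcm(8, 2^r\mdiv)=4\mdiv$ for every $r$, which is the first case the paper treats). The genuine gap is precisely the step you flag as ``the hard part'': you have not established the existence of the corrector $F_1 \in \aut_b$ with $P(F_1)=0$ and $c(F_0 F_1)=0$, you only propose to do so by a case analysis on split summands of $b$. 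That plan is not obviously going to work and is not what the paper does.

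What is actually needed is a different observation, and it is the crux of the argument: whenever $P(F) \in 8\Z/2d_\pi\Z$, the quantity $\tdelta(k,t)$ is divisible by $8$, which by \eqref{eq:delta_8} means $q^h\bigl(\tfrac{d_\pi}{2}t\bigr)=0$ for $h=\tfrac{d_\pi}{2}k$ and $t = F(k)-k$; by \eqref{eq:arf_shift} this says exactly that $\Arf\bigl((F^\#q)^h\bigr) = \Arf(q^h)$. Since $(F^\#q)^h$ and $q^h$ also automatically have the same homogeneity defect $\beta_h$ (because $F$ fixes $p$), Theorem~\ref{thm:classification_of_q} supplies an automorphism $F_T$ of $(T,b)$ fixing $\beta_h$ with $(F^\#q)^h\circ F_T = q^h$; extending $F_T$ by the identity on a complement $\im\sigma$ containing $k$ gives your desired corrector $F_1 = \Id_{\im\sigma}+F_T$ with $P(F_1)=0$. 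In your language: the divisibility of $P(F)$ by $8$ forces the obstruction $c(F)$ to lie in the null set of $q^h$, which is exactly what makes it killable by an automorphism of $(T,b)$ via the classification theorem. Without that observation and without invoking Theorem~\ref{thm:classification_of_q}, the correction step is unproven, so the argument is incomplete.
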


\begin{proof}
If $d_\pi$ is not divisible by 4 then $\lcm(8,2^r \mdiv) = 4\mdiv$,
so the result follows from Lemma~\ref{lem:imtp0}.

If $4 \mid d_\pi$ and $F \in \aut_b$, then note
that for any $k \in \divsp$, $t := F(k)- k$ and $h := \frac{d_\pi}{2}k$
we have
\[ q^{h}\left(-\tfrac{d_\pi}{2}t\right) = \frac{P(F)}{8} \mod \Z . \]
Thus $P(F) = n \in 8\Z/2d_\pi\Z$ implies
that $(F^\# q)^h = q^{F(h)} = q^h_{-\frac{d_\pi}{2}t}$ and $q^h$,
which have equal inhomogeneity $\beta_h = p - 2h$
by definition, also have equal Arf invariant by \eqref{eq:arf_shift}.
Therefore by Theorem \ref{thm:classification_of_q} there is an
automorphism $F_T$ of $(T,b)$ such that $(F^\# q)^h \circ F_T = q^h$
(necessarily $F_T$ fixes~$\beta_h$).

Now suppose that $\sigma$ is a section of $\pi$, and
$k \in \im \sigma \cap \divsp$ (\cf Remark \ref{rmk:projections}).
Then $G \cong \im \sigma \oplus T$, and we may define
$\Id_{\im \sigma} + F_T \in \aut_b$. This fixes $k$ and $h$, so the composition
$F' := F \circ (\Id_{\im \sigma} + F_T)$ has
\[ (F'^\#q)^h = \left((\Id_{\im(\sigma)} + F_T)^\#F^\#q\right)^h
= (F^\#q)^h \circ F_T = q^h . \]
Hence $F' \in \aut_{q^\circ}$, and $F'(k) = F(k)$ implies $P(F') = P(F)$.
\end{proof}

\begin{ex}
\label{ex:r0'}
For the base $(\Z \oplus \Z/2^j, \an{\frac{1}{2^j}}, (2^j, 0))$ of Example \ref{ex:r0}
let $q^\circ$ be the refinement with $q^{(2^{j-1}, 0)} = \anq{1}{2^{j+1}}$.
The isomorphism $F$ of the base in Example \ref{ex:r0} does not
preserve~$q^\circ$: 
if $j > 1$ then $F$ alters the homogeneity defect of $q^{(2^{j-1}, 0)}$ and if
$j=1$ then $F$ alters the Arf invariant.  However, if $j \geq 3$ then 
$F' \colon (x,y) \mapsto (x, \; x + (2^{j-1} {+} 1) y)$ is an
isomorphism of $q^\circ$ with $\tp(F') = P(F) = 2^m \mmod 2^{j+1}$.
\end{ex}

\noindent
The following examples illustrate that $r$, and hence $\im \tp$, can depend
on $b$ as well as $(G,p)$.

\begin{ex} \label{ex:split_vs_hyperbolics}
Let $G = \Z \oplus (\Z/2^j)^2$ and $p = (2^j, 0, 0)$
(so $d_\pi = \mdiv = 2^j$).
Choosing the torsion form
$b_0 = \an{\frac{1}{2^j}} \oplus \an{\frac{1}{2^j}}$ on $TG$,
using Example \ref{ex:r0} shows that $r_0 = 0$. 
Let $b_1$ be the hyperbolic torsion form on $TG$ with matrix
\[ \begin{pmatrix} 0 & 2^{-j} \\ 2^{-j} & 0 \end{pmatrix}. \]
Since $d_\pi = \mdiv$, it follows that $r_1 = 0$ or $1$.  But $b_1$ contains no split
cyclic summands, and so by Lemma \ref{lem:aut0}, we conclude that $r_1 = 1$.
\end{ex}

The next example shows that $r$ cannot be determined merely from the type
of splitting of~$b$ (cyclics versus hyperbolics), but can depend on the
isomorphism classes of split cyclic summands.
 
\begin{ex} \label{ex:r_for_split_sums}
Let $G = \Z \oplus \Z/8 \oplus \Z/64 \oplus \Z/512$ with torsion form
$\anb{1}{8}\oplus \anb{1}{64} \oplus \anb{\epsilon}{512}$ 
($\epsilon = \pm 1$),
and $p = (64, 0, 8, 0)$. Then $d_\pi = 64$ and $\mdiv = 8$,
so $r$ is 0 or 1. The 2-extremal exponents are 0 and~3. If $F \in \aut_b$ then
by \eqref{eq:pdinfty}
\begin{align*}
P(F) = \mdiv \!\mmod 2\mdiv
&\Leftrightarrow (\Id-F)(1,0,0,0) \textrm{ split 512-torsion} \\
& \Leftrightarrow (\Id-F)(8,0,1,0) \textrm{ split 8-torsion} .
\end{align*}
Thus if $r = 0$ there must be some automorphism $f$ of $(T,b)$ such that
$(\Id - f)(0,1,0)$ plus a split 8-torsion element is divisible by 8, \ie
$f(0,1,0) = (a, 8b + 1, 8c)$ with $a$ odd. If $\epsilon = +1$ then the would-be
image has norm $\frac{17}{64}$ for any $a, b, c$, so there can be no such $f$;
hence $r = 1$. On the other hand, if $\epsilon = -1$ we can define
such an $f$ by the matrix
\[ \begin{pmatrix} 1 & -8 & 0 \\ 1 & 1 & 8 \\ 1 & 1 & 1\end{pmatrix}. \]
Setting $F = \Id_\Z + \rho + f$ with $\rho \colon \Z \to T, n \mapsto (0, 0, -n)$
makes $\Id - F$ map $(8,0,1,0) \in G$ to the split 8-torsion element
$(1,0,0) \in T$ (and $(1,0,0,0)$ to $(0,0,-1)$),
so $P(F) = \mdiv \mmod 2\mdiv$, and $r = 0$.
\end{ex}

\begin{cor} \label{cor:counting_Gauss_refinements}
Modulo the action of $\aut_{q^\circ}$, the number of possible Gauss refinements of
$(G,q^\circ\!,p)$ is
\[ \NumB{\frac{2^r \mdiv}{8}} , \]
and the number of possible mod 28 Gauss refinements is
\[ \gcd\left(28, \NumB{\frac{2^r \mdiv}{8}} \right) . \]
\end{cor}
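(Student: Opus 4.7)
The plan is to reduce the counting to a direct application of Proposition \ref{prop:imtp} together with a short bookkeeping argument about translation actions on cyclic torsors. First I recall from the discussion following \eqref{eq:gr_rule} that the set of Gauss refinements of $(G,q^\circ\!,p)$ is a $\Z/\tdf\Z$-torsor, and by \eqref{eq:P_tilde} each $F \in \aut_{q^\circ}$ acts by translation by $\tp(F)/8 \in \Z/\tdf\Z$. Since $\tp$ is a group homomorphism, its image $H := \im(\tp)/8 \subseteq \Z/\tdf\Z$ is a subgroup, and the number of $\aut_{q^\circ}$-orbits of Gauss refinements is the index $[\Z/\tdf\Z : H]$.

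By Proposition \ref{prop:imtp}, $\im \tp = \lcm(8, 2^r\mdiv)\Z/2\td_\pi\Z$, so $H = s\Z/\tdf\Z$ with
\[ s \; := \; \frac{\lcm(8, 2^r \mdiv)}{8} \; = \; \frac{2^r \mdiv}{\gcd(8, 2^r \mdiv)} \; = \; \NumB{\frac{2^r \mdiv}{8}} , \]
and in particular $s \mid \tdf$. Hence $[\Z/\tdf\Z : H] = s = \NumB{\frac{2^r \mdiv}{8}}$, which is the first formula.

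For the mod 28 statement, the set of mod 28 Gauss refinements is a $\Z/\dM\Z$-torsor (since $\dM \mid \tdf$ and reduction mod $\dM$ is surjective onto all $\Z/\dM\Z$-translates of any such refinement), and the induced $\aut_{q^\circ}$-action translates by $\tp(F)/8 \mod \dM$. The image subgroup of $\Z/\dM\Z$ is generated by the residue of $s$ modulo $\dM$, hence has order $\dM/\gcd(s, \dM)$, and so the number of orbits equals $\gcd(s, \dM)$. Using $s \mid \tdf$ one finds
\[ \gcd(s, \dM) \; = \; \gcd\!\bigl(s, \gcd(\tdf, 28)\bigr) \; = \; \gcd(s, 28) \; = \; \gcd\!\left( 28, \NumB{\frac{2^r \mdiv}{8}} \right) , \]
which is the second formula.

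The essential computation behind the corollary is the determination of $\im \tp$, which has been accomplished in Proposition \ref{prop:imtp}; thus I expect no substantive obstacle in writing out the remainder beyond the elementary arithmetic on cyclic-group quotients indicated above.
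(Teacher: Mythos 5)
Your proof is correct, and it supplies precisely the implicit argument: the paper states this as a corollary with no separate proof, so the intended reading is exactly your one — the set of Gauss refinements is a $\Z/\tdf\Z$-torsor (as noted after \eqref{eq:gr_rule}), $\aut_{q^\circ}$ acts by translation by $\tp/8$ via \eqref{eq:P_tilde}, and Proposition \ref{prop:imtp} identifies the translation subgroup. Your identification $s = \lcm(8,2^r\mdiv)/8 = \NumB{\frac{2^r\mdiv}{8}}$ and the orbit count as the index are straightforward, and your reduction $\gcd(s,\dM) = \gcd(s,28)$ using $s \mid \tdf$ is valid; the divisibility $s \mid \tdf$ is indeed implicit in Proposition \ref{prop:imtp} asserting that $\lcm(8,2^r\mdiv)\Z/2\td_\pi\Z$ is a well-defined subgroup of $\Z/2\td_\pi\Z$ (and it can be verified directly from the constraint that $r=2$ forces $j_\pi = j_\msub + 1$ by Lemma \ref{lem:aut0}, so the $2$-adic valuations work out). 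You might have flagged that divisibility explicitly rather than parenthetically, but nothing is missing.
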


\begin{rmk}
Notice that Corollary \ref{cor:counting_Gauss_refinements} combined with 
Theorems \ref{thm:a_class} and \ref{thm:class} gives the computation of
the inertia group $I(M)$ for $2$-connected $M$ from Theorem \ref{thm:i_and_r}.  
\end{rmk}

\subsection{The computation of reactivity} \label{ss:computation_of_reactiviy}
In this subsection we use Proposition \ref{prop:imtp} to prove lower bounds on the
reactivity of every spin $7$-manifold $M$.  When $M$ is $2$-connected we also prove
that this lower bound is sharp and so compute the reactivity of $2$-connected $M$.
Recall from Section \ref{ss:inertia_and_reactivity2} that if $f$ is a self-almost diffeomorphism of $M$, 
then the mapping torus $T_f$ is
almost smooth, the spin characteristic class $p_{T_f} \in H^4(T_f)$ is well defined and so is the integer
\[ p^2(f) = \an{p_{T_f}^2, [T_f]} \in 8\Z.\]
The next lemma provides the bridge between the algebraic arguments of
Sections \ref{ss:aut_b_on_gauss_functions} and \ref{ss:aut_q_on_gauss_functions}
and the computation of reactivity of $M$, as defined in \eqref{eq:defR}.
Note that if $f$ is a self-almost diffeomorphism of $M$, then the induced map
$f^* : H^4(M) \to H^4(M)$ preserves $q_M^\circ$,
\ie we have $f^* \in \aut_{q_M^\circ}$
and so $\tp(f^*) \in 8\Z/2\td_\pi\Z$ is defined. 

\begin{prop} \label{prop:p2mod}
$\tp(f^*) = p^2(f) \mmod 2\td_\pi$ for any self-almost diffeomorphism of $M$.
\end{prop}


\begin{proof}
Let $f \colon M \acong M$ be a self-almost diffeomorphism and let $W$ be a
$3$-connected spin coboundary for $M$.
In \S\ref{ss:eek} we used $\gr_W$ to denote the Gauss refinement of $q_M^\circ$
induced by the form $(FH^4(W, M), \lambda_W, p_W)$ (and used that to define
$\EK_M$).
We can use $f$ to glue two copies of $W$ together along $M$
and form the almost smooth spin manifold
$X : = (-W) \cup_f W$. Lemma \ref{lem:T_and_X} gives $p^2(f) = p^2_X$.
Applying \eqref{eq:P_tilde} to $F = f^*$ and combining with the
comparison of Gauss refinements in \eqref{eq:compare} we obtain 
\[ \wt P(f^*) \equiv 8(\gr_W - (f^*)^\# g_W)
\equiv p^2_X - \sigma(X) \equiv p^2_X \equiv p^2(f) \mod 2 \tilde d_\pi ,\]
where $\sigma(X) = \sigma(W) - \sigma(W) = 0$ by Novikov additivity.
\end{proof}

\begin{cor}
\label{cor:p2image}
For any closed spin 7-manifold $M$ we have that:
\begin{enumerate}
\item \label{it:p2null-H} 
$R_H(M)$ is divisible by $2\td_\pi$;
\item \label{it:p2null}
$R(M)$ is divisible by $\lcm(8, 2^r \mdiv)$;
\item \label{it:p2null-DiffH}
$R^{\Diff}_H(M)$ is divisible by $\lcm(224, 2\td_\pi)$; \vspace{0.75 mm}
\item \label{it:p2null-Diff}
$R^{\Diff}(M)$ is divisible by $\lcm(224, 2^r \mdiv)$.
\end{enumerate}
If $M$ is 2-connected then equality holds in each case.
\end{cor}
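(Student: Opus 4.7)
The plan is to use Lemma~\ref{lem:p2mod} as the main engine: it gives the congruence $p^2(f) \equiv \tp(f^*) \pmod{2\td_\pi}$ for any self-almost diffeomorphism $f$ of $M$. For part~\ref{it:p2null-H}, the hypothesis $f \in \ADiff_H(M)$ forces $f^* = \Id$, so $\tp(f^*) = 0$ and $p^2(f) \in 2\td_\pi\Z$; hence $2\td_\pi \mid R_H(M)$. For part~\ref{it:p2null}, any spin almost diffeomorphism $f$ induces $f^* \in \aut_{q^\circ}(H^4(M))$ because almost diffeomorphisms preserve the quadratic linking family by Theorem~\ref{thm:almost_classification}, and Proposition~\ref{prop:imtp} places $\tp(f^*)$ inside $\lcm(8, 2^r \mdiv)\Z/2\td_\pi\Z$. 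A short check, using that the case $r = 2$ requires $2\mdiv \mid d_\pi$, confirms that $\lcm(8, 2^r \mdiv)$ always divides $2\td_\pi$, so the residue-level information lifts to $p^2(f) \in \lcm(8, 2^r \mdiv)\Z$, giving $\lcm(8, 2^r\mdiv) \mid R(M)$.

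Parts~\ref{it:p2null-DiffH} and~\ref{it:p2null-Diff} then follow formally from~\ref{it:p2null-H}, \ref{it:p2null}, and Lemma~\ref{lem:smooth_reactivity}, since $R^\Diff_H(M) = \lcm(R_H(M), 224)$ and $R^\Diff(M) = \lcm(R(M), 224)$, together with $8 \mid 224$.

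For the equality assertion under the $2$-connected hypothesis, I would prove matching lower bounds. Proposition~\ref{prop:p2null} produces an almost diffeomorphism acting trivially on cohomology with $p^2 = 2\td_\pi$, pinning down $R_H(M) = 2\td_\pi$. For $R(M)$, Theorem~\ref{thm:almost_classification} realises every $F \in \aut_{q^\circ}(H^4(M))$ by some $g \in \ADiff(M)$ with $g^* = F$, and Proposition~\ref{prop:imtp} says the resulting residues $p^2(g) \bmod 2\td_\pi$ exhaust $\lcm(8, 2^r \mdiv)\Z/2\td_\pi\Z$. Composing such $g$ with cohomologically trivial almost diffeomorphisms from the $R_H$ case lets me shift $p^2$ by arbitrary multiples of $2\td_\pi$, so $p^2(\ADiff(M)) = \lcm(8, 2^r\mdiv)\Z$ exactly, and the equalities in~\ref{it:p2null-DiffH} and~\ref{it:p2null-Diff} drop out of Lemma~\ref{lem:smooth_reactivity}. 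The heavy lifting is all in the preceding sections; the only genuine bookkeeping subtlety I anticipate is the divisibility $\lcm(8, 2^r \mdiv) \mid 2\td_\pi$, needed to make Proposition~\ref{prop:imtp} and part~\ref{it:p2null} compatible.
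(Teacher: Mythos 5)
Your proposal is correct and follows essentially the same route as the paper: Lemma \ref{lem:p2mod} plus Proposition \ref{prop:imtp} for the lower bounds, Proposition \ref{prop:p2null} and the realisation result Theorem \ref{thm:almost_classification} for the upper bounds when $M$ is $2$-connected, and Lemma \ref{lem:smooth_reactivity} to pass to the smooth reactivity statements. The divisibility $\lcm(8, 2^r\mdiv) \mid 2\td_\pi$ you flag is automatic (Proposition \ref{prop:imtp} exhibits $\lcm(8, 2^r\mdiv)\Z/2\td_\pi\Z$ as a subgroup of $8\Z/2\td_\pi\Z$, which forces it), and the reason $f^*$ lands in $\aut_{q^\circ}$ for \emph{any} almost diffeomorphism is the naturality of $q^\circ_M$ under homeomorphisms established in Definition \ref{def:ql}, not Theorem \ref{thm:almost_classification} (which is the $2$-connected classification) — but these are citation-level nits, not gaps.
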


\begin{proof}
For part \ref{it:p2null-H}, recall that for $[f]$ to belong to $\wt \pi_0\ADiff_H(M, m_0)$
by definition means that $f^* = \Id$ on $H^4(M)$.
Thus $\wt P(f^*) = 0 \in 8\Z/2\td_\pi$, and Proposition \ref{prop:p2mod}
implies that $2 \td_\pi \mid R_H(M)$. Meanwhile Proposition \ref{prop:p2null}
shows that $R_H(M) \mid  2 \td_\pi$ if $M$ is 2-connected.

For part \ref{it:p2null}, let $M$ have refinement $(G, q^\circ, p)$.
Proposition \ref{prop:imtp} computes the image $\im \wt P = \lcm(8, 2^r\mdiv) \Z/2\td_\pi$, so Proposition \ref{prop:p2mod} gives $\lcm(8, 2^r\mdiv) \mid R(M)$.
On the other hand, if $M$ is 2-connected then
Theorem \ref{thm:a_class} states that every automorphism of $(G, q^\circ, p)$
is realised by an almost diffeomorphism $f \colon M \acong M$,
and so part \ref{it:p2null-H} and Proposition \ref{prop:imtp} imply the
equality.

Parts \ref{it:p2null-DiffH} and \ref{it:p2null-Diff} follow from parts \ref{it:p2null-H} and \ref{it:p2null} 
and Lemma~\ref{lem:smooth_reactivity}.
\end{proof}

\begin{proof}[Proof of Theorem \ref{thm:i_and_r}]
The computation of $R(M) = \lcm(8, 2^r\mdiv)$ is given
in Corollary \ref{cor:p2image}\ref{it:p2null}.
Then $I(M) = \Num\big(\frac{2^r\mdiv}{8}\big)\Theta_7$
by Proposition \ref{prop:role_of_reactivity}\ref{it:r_of_r}.
By Remark \ref{rmk:values_of_r}, $r=1$ if $TH^4(M)$ is of odd order.  
\end{proof}

\section{Examples} \label{sec:examples}
Ever since Milnor's discovery of exotic 7-spheres \cite{milnor56}, $2$-connected
$7$-manifolds have provided interesting examples in topology and geometry.
In this section we discuss various examples of $2$-connected $7$-manifolds.
In Section \ref{ss:sphere_bundles} we consider the total spaces of $3$-sphere bundles over $S^4$ 
and their connected sums.
In Section \ref{ss:geometry} we mention some examples admitting interesting metrics.
In Section \ref{ss:the} we give examples which are
tangentially homotopy equivalent but not homeomorphic.
Finally in Section \ref{ss:monoid} we present a refinement of Wilkens' list \cite[Theorem 1]{wilkens72} 
of the indecomposable generators
for the monoid of almost diffeomorphism classes of $2$-connected $7$-manifolds.

\subsection{3-sphere bundles over \texorpdfstring{$S^4$}{S\^{}4} and their connected sums} \label{ss:sphere_bundles}
Following the notation of~\cite{crowley13}, let $(n, p)$ be integers with the same
parity and let $M_{n, p} := S(\xi_{n, p})$ denote the total space of the
$3$-sphere bundle over $S^4$ for which the corresponding vector bundle
$\xi_{n, p}$ has Euler class $e(\xi_{n, p}) = n \in H^4(S^4)$
and spin characteristic class $\frac{p_1}{2}(\xi_{n, p}) = p \in H^4(S^4)$.  
By definition, we have $M_{0, p} = M(\Z, p)$, where 
$M(\Z, p)$ is as defined in Definition \ref{def:MF}.  
Using \eqref{eq:alg_qdef} and \eqref{eq:grchat}
and recalling the notation of Example \ref{ex:cyclic_b_and_q}, we compute for 
$n \neq 0$ that there is a diffeomorphism
\[ M_{n, p} \cong M\!\left( \biganq{-1}{2n}_{-p}\, , \;
\left[ \frac{p^2 - |n|}{8n} \right] \right). \]
\begin{ex} \label{ex:Milnor_sphere}
The Milnor sphere, $\Sigma_{\rm Mi} : = M_{1, 3}$, is homeomorphic to $S^7$ but not diffeomorphic 
to $S^7$ since $\mu(\Sigma_{\rm Mi}) = 1 \neq 0$~mod $28$: see \cite{milnor56} and \cite{eells62}.
\end{ex}
\noindent
In \cite{crowley03} the total spaces of $3$-sphere bundles over $S^4$ were classified up to
homotopy homeomorphism and diffeomorphism.

We now give an example which illustrates the subtleties of the inertia group.
Building on Examples \ref{ex:r0} and \ref{ex:r2}, Theorem \ref{thm:i_and_r} 
gives to the following

\begin{ex} \label{ex:r012}
The connected sums
\[ M_0 : = M_{-8, 0} \sharp M_{0, 8}, \quad M_1 : = M_{-8, 2} \sharp M_{0, 8} 
\quad \text{and} \quad M_2 : = M_{-8, 4} \sharp M_{0, 8}, \]
have $r(M_i) = i$.  In each case $d_\pi(M_i) = 8$, whereas
$\mdiv(M_0) = 8$, $\mdiv(M_1) = 2$ and $\mdiv(M_2) = 4$.  From Theorem \ref{thm:i_and_r}
we have $I(M_0) \cong I(M_1) \cong \Theta_7$ and $I(M_2) \cong 2\Theta_7$.

Notice that when $r = 1$ the \cite[Conjecture p.\,548]{wilkens74} correctly predicts  $I(M_1) = \Theta_7$.
However when $r \neq 1$, \cite[Conjecture p.\,548]{wilkens74} incorrectly predicts that $I(M_0)$ is $2 \Theta_7$ and
that $I(M_2)$ is $\Theta_7$.
\end{ex}

\begin{ex} \label{ex:pre_exotic_G2}
While \cite[Theorem 1]{wilkens74} and Theorem \ref{thm:i_and_r} give 
$I(M(\Z^b, d)) = \NumB{\frac{d}{4}}\Theta_7$, 
the classical Eells-Kuiper invariant is not defined for $M(\Z^b, d)$ when $d_\pi = d \neq 0$.
Using \eqref{eq:grchat} we compute that
\[ \mu \bigl( M(\Z^b, d) \sharp \Sigma \bigr) = [\mu(\Sigma)] \in \Z/\dM \Z . \]
Hence we have $\mu(M(\Z^b, 8)) = 0$, whereas
$\mu(M(\Z^b, 8) \sharp \Sigma_{\rm Mi}) = 1 \in \Z/2\Z$
and we see that the generalised Eells-Kuiper 
invariant distinguishes the diffeomorphism types of $M(\Z^b, 8)$ and
$M(\Z^b, 8) \sharp \Sigma_{\rm Mi}$.  

We can also deduce from Theorem \ref{thm:classification_categorical}, for example, 
that $M(\Z^b, 8) \sharp \Sigma_{\rm Mi}$
admits an orientation reversing diffeomorphism, whereas 
$M(\Z^b, 16) \sharp \Sigma_{\rm Mi}$ does not.
\end{ex}

\begin{ex} \label{ex:circle_actions}
Let $N$ be a simply-connected oriented $6$-manifold with $\pi_2(N) \cong \Z$
and suppose that $S^1 \to M \to N$ is a principal $S^1$ bundle with primitive
first Chern class. Then $M$ is $2$-connected with a preferred orientation and
hence spin structure.  Conversely, by \mbox{\cite[Lemma~2.1]{jiang14}},
every free
$S^1$ action on $M$ is equivalent to such a principal bundle action.

In \cite[Theorem 1.3]{jiang14} Yi Jiang identifies the homeomorphism %
and diffeomorphism types of all $2$-connected $M$ which admit free circle
actions.
In particular, by \mbox{\cite[Theorem 1.3]{jiang14}} every such 
$M$ is almost diffeomorphic to a connected sum
$M_{bk, b(k+12m)} \sharp_{2r}M_{0, 0}$ for $b \in \{1, 2\}$,
$r \in \Z^{\geq 0}$ and $m, k \in \Z$.
\end{ex}
\subsection{Examples from geometry} \label{ss:geometry}
There are a many $2$-connected \mbox{$7$-manifolds} that admit metrics
with interesting geometric properties. 
Indeed, according to \cite[Theorem~B]{crowley14b},
every $2$-connected $7$-manifold admits a metric with positive Ricci curvature.

\subsubsection{The Gromoll-Meyer sphere}
Let $Sp(n)$ denote the $n$-dimensional
symplectic group of ortho\-gonal $n \times n$ quaternionic matrices.
The Gromoll-Meyer sphere is a certain quotient of $Sp(2) \times Sp(1)$ by
$Sp(1) \times Sp(1)$ and the smooth manifold underlying the Gromoll-Meyer
sphere, $\Sigma_{GM}$, is an exotic $7$-sphere admitting a metric of
non-negative sectional curvature~\cite{gromoll74}.
By \cite[Theorem 1]{gromoll74},
there are diffeomorphisms $\Sigma_{GM} \cong M_{-1, -5} \cong 3 \Sigma_{\rm Mi}$.

\subsubsection{Berger Space}
The smooth manifold underlying the Berger space $B$ is a homogeneous
space of the form $B = SO(5)/SO(3)$ (where $SO(3) \into SO(5)$ by the adjoint
representation) that admits a metric of positive sectional curvature.
The Berger space is 2-connected with $H^4(B) \cong \Z/10$
and Goette, Kitchloo and Shankar \cite[Corollary 2]{goette04} proved there is a 
diffeomorphism
\[ B \cong M_{10, 8}. \]

\subsubsection{The manifold \texorpdfstring{$P_2$}{P\_2}}
More recently Grove, Verdiani and Ziller \cite[Theorem A]{grove11} constructed
a metric of positive sectional curvature  on a 2-connected $7$-manifold $P_2$ with
an isomorphism $H^4(P_2) \cong \Z/2$. Applying \mbox{\cite[Theorem A]{crowley02}},
they deduced that there is an almost diffeo\-morphism $P_2 \acong S(TS^4)$, where $S(TS^4) = M_{2, 0}$ 
is the unit tangent sphere bundle of $S^4$.  
In~\cite[Theorem 0.3, Example 3.12]{goette11} Goette proved that there are diffeomorphisms
\[ P_2 \cong M_{2, 2} \sharp (-\Sigma_{\rm Mi})
\quad \text{and} \quad
P_2 \cong -M_{2,4}.
\]
Computation shows that $P_2$ is not
orientation preserving diffeomorphic to the total space
of any $S^3$-bundle over $S^4$.

\subsubsection{\texorpdfstring{$G_2$}{G\_2}-manifolds}
In \cite{g2m} Corti, Haskins, the second author and Pacini constructed 
a very large class of examples of simply connected manifolds with $G_2$ holonomy
metrics. Many of these examples are $2$-connected with $H^4(M)$ torsion-free.
For instance, \cite[Table 3]{g2m} gives 7 explicit ways to
construct holonomy $G_2$ metrics on
$M(\Z^{85}, 2)$.
By \cite[Theorem 1(ii)]{wilkens74},
see also Corollary \ref{cor:n_+}, the underlying topological manifold
admits a unique smooth structure.
In \cite{exotic} we find examples of manifolds with $G_2$ holonomy
where the smooth structure is not unique and calculating
the Generalised Eells-Kuiper invariant we find 
pairs of closed \gtmfd s that are homeomorphic but not diffeomorphic.
For example $(M(\Z^{89}, 8), M(\Z^{89}, 8) \sharp \Sigma_{\mathrm{Mi}})$
is a pair of homeomorphic but not diffeomorphic smooth manifolds 
both of which admit metrics with $G_2$ holonomy.

\subsection{Tangentially homotopy equivalent manifolds} \label{ss:the}
Let $N_0$ and $N_1$ be closed smooth manifolds 
with tangent bundles $TN_0$ and $TN_1$.  
A homotopy equivalence $f \colon N_0 \to N_1$ is called {\em tangential}
if there is a bundle isomorphism $f^*TN_1 \cong TN_0$.
It is natural to ask under what conditions tangentially homotopy equivalent manifolds
are necessarily homeomorphic, and this question was studied in detail 
by Madsen, Taylor and Williams in \cite{madsen80}.

In \cite[p.\,144]{crowley02} it was proven the 
$2$-connected manifolds give rise to examples of
non-homeomorphic tangentially homotopy equivalent manifolds.  We present a simplified
version of the proof here, which starts with the following

\begin{lem} \label{lem:the1}
Let $M_0$ and $M_1$ be $2$-connected and 
let $f \colon M_0 \simeq M_1$ be a homotopy equivalence such that 
$f^* p_{M_1} = p_{M_0}$.
Then $f$ is tangential.
\end{lem}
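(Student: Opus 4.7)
The plan is to reduce the claim to an obstruction-theoretic fact: over any CW complex of dimension at most $7$, stable spin vector bundles are classified up to isomorphism by the spin characteristic class $p/2 \in H^4$. Granting this, the hypothesis, which is to say that $f^*p_{M_1} = p_{M_0} \in H^4(M_0)$ under the natural identification, immediately produces a stable spin bundle isomorphism $f^*\tau_{M_1} \cong \tau_{M_0}$, hence a stable vector bundle isomorphism, which is exactly the tangentiality of $f$.

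To set this up, I first represent the stable tangent bundles by classifying maps. Since both $M_0$ and $M_1$ are spin (per the standing assumption), their stable tangent bundles correspond to homotopy classes of maps $\tau_{M_i} \colon M_i \to BSpin$, and $f^*\tau_{M_1}$ is represented by the composite $\tau_{M_1} \circ f \colon M_0 \to BSpin$. The task then becomes to show that $\tau_{M_1} \circ f$ and $\tau_{M_0}$ are homotopic as maps to $BSpin$.

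Next I would analyse the Postnikov tower of $BSpin$ through dimension $7$. By Bott periodicity, $\pi_k(BSpin) = \pi_{k-1}(SO)$ vanishes for $k \in \{1,2,3,5,6,7\}$, while $\pi_4(BSpin) = \Z$ is detected by $p/2 \in H^4(BSpin;\Z)$. Consequently the $7$-th Postnikov section of $BSpin$ is simply $K(\Z,4)$, and the tautological map $BSpin \to K(\Z,4)$ is given by $p/2$. Standard obstruction theory then yields, for any CW complex $X$ of dimension at most $7$, a natural bijection
\[ [X, BSpin] \isomap H^4(X;\Z), \quad \tau \mapsto \tfrac{p_1}{2}(\tau) . \]

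Applying this classification to $M_0$ (which has the homotopy type of a $7$-dimensional CW complex), the cohomology classes associated to $\tau_{M_0}$ and $\tau_{M_1} \circ f$ are $p_{M_0}$ and $f^*p_{M_1}$ respectively, and these agree by hypothesis; hence the two maps are homotopic. The main and indeed only nontrivial step is the Postnikov/obstruction-theory analysis, in particular checking that the vanishing of $\pi_5(BSpin)$, $\pi_6(BSpin)$ and $\pi_7(BSpin)$ really suffices to kill all obstructions and indeterminacies when lifting through the tower on a $7$-dimensional complex. This is routine but worth spelling out; everything else is immediate.
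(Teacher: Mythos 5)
Your proof is correct, and the overall strategy is the same as the paper's: reduce to the statement that stable spin bundles over $M_0$ are classified by $p/2 \in H^4(M_0;\Z)$, then apply the hypothesis to conclude $f^*\tau_{M_1}\cong\tau_{M_0}$. The difference lies in how that classification is established. The paper computes $\wt{KO}(M_0)$ via the Atiyah--Hirzebruch spectral sequence: $2$-connectedness kills $\wt H^1(M_0;KO_{-1})$ and $\wt H^2(M_0;KO_{-2})$, the vanishing of $KO_{-5},\,KO_{-6},\,KO_{-7}$ kills the remaining entries on the $p+q=0$ diagonal, and Kervaire's identification $p\colon \wt{KO}(S^4)\cong H^4(S^4)$ fixes the resulting isomorphism as $p$. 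You instead compute $[M_0,BSpin]$ through the Postnikov tower: $BSpin$ is $3$-connected with $\pi_5=\pi_6=\pi_7=0$, so its $7$-th Postnikov truncation is $K(\Z,4)$, classified by $p/2$, and obstruction theory gives $[X,BSpin]\cong H^4(X;\Z)$ for any $X$ of dimension $\le 7$. Both computations are routine and of comparable length. Your route through $BSpin$ only uses $\dim M_0 \le 7$ and that $M_0$ is spin rather than $2$-connectedness (the paper needs $2$-connectedness because it works with $[M_0,BO]=\wt{KO}(M_0)$ directly), and your final observation — that a spin-bundle isomorphism yields the stable vector-bundle isomorphism defining tangentiality — is exactly the right and only additional step this requires. (One small inaccuracy: $\pi_k(BSpin)=\pi_{k-1}(SO)$ only for $k\ge 3$; at $k=2$ one has $\pi_2(BSpin)=\pi_1(Spin)=0$ while $\pi_1(SO)=\Z/2$. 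This does not affect your conclusion about the Postnikov truncation.)
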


\begin{proof}
The proof is a relative version of Remark \ref{rem:pM_and_TM}.
The bundles $TM_0$ and $f^*TM_1$ are classified by maps 
$M_0 \to BSO(7)$.
Since $M_0$ is $2$-connected, the primary obstruction to a null-homotopy
between these maps may be identified with $p_{M_0} - f^*p_{M_1}$.
The computations of \cite{kervaire60} show that $\pi_i(BSO(7)) = 0$
for $i = 5, 6, 7$ and so there are no further obstructions to finding
a homotopy between the classifying maps of $TM_0$ and $f^*TM_1$.
Hence if $p_{M_0} = f^*p_{M_1}$ then $f^*TM_0 \cong TM_1$.
\end{proof}

\begin{prop}[\cf~{\cite[p.\,114]{crowley02}}] \label{prop:the}
The manifolds $M_{-8, 1}$ and $M_{-8, 5}$ are tangentially homotopy equivalent
but not homeomorphic.	
\end{prop}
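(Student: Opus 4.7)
My plan is to verify the two assertions separately, leveraging the classification apparatus developed in Sections \ref{sec:invariants}--\ref{sec:classification} together with the explicit formula for the invariants of $S^3$-bundles over $S^4$ recorded at the start of this section.

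For the tangential homotopy equivalence, I would argue as follows. Both $M_{8,1}$ and $M_{8,25}$ are $S^3$-bundles over $S^4$ with Euler class $8$; by the Gysin sequence their cohomology rings are abstractly isomorphic, with $H^4 \cong \bbz/8$, and in each case the spin class $p_M$ is the reduction of the integer $p$ modulo $8$. Since $1 \equiv 25 \pmod 8$, the identity on $H^*$ defines an isomorphism $F \colon H^*(M_{8,25}) \to H^*(M_{8,1})$ which preserves the spin class. The homotopy classification of $S^3$-bundles over $S^4$ (James--Whitehead/Sasao) realises such $F$ by a genuine homotopy equivalence $f \colon M_{8,1} \to M_{8,25}$ with $f^* = F$. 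By Lemma \ref{lem:the1}, any homotopy equivalence $f$ of $2$-connected $7$-manifolds satisfying $(f^*)^\sharp p_{M_{8,25}} = p_{M_{8,1}}$ is tangential, so $f$ is tangential.

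For the failure of homeomorphism, I would appeal to the homeomorphism half of Theorem \ref{thm:almost_classification}, which reduces the problem to showing that the refinements $(H^4, q^\circ, p)$ of the two manifolds are not isomorphic. The formula $M_{n,p} \cong M(\anq{-1}{2n}_{-p}, (p^2-|n|)/(8n))$ exhibits both as rational homology spheres with the same linking form $\anb{-1}{8}$ and matching element $p_M$, but the Arf invariants of their refinements can be read off (via \eqref{eq:gr_def}) from $\EK_M \!\!\mod \bbz$: they equal $-7/64$ and $617/64$ respectively, differing by $624/64 \equiv 3/4 \pmod \bbz$. Since Theorem \ref{thm:classification_of_q} says that isomorphic refinements must share the same Arf invariant (and the other condition, the existence of an automorphism of $\anb{-1}{8}$ taking one homogeneity defect to the other, is satisfied automatically by the identity), the refinements cannot be isomorphic.

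The main delicate point is the Arf-invariant bookkeeping: one must compute both Arf invariants carefully, either directly via Gauss sums on $\bbz/8$ or as in the outline above from the $\mu$-formula, and then verify that no automorphism of the base $(\bbz/8, \anb{-1}{8}, p_M)$ can absorb the difference of $3/4$. This latter point is easy once observed that $\aut(\anb{-1}{8})$ acts trivially on Arf invariants; once this is checked the conclusion is immediate from Theorems \ref{thm:almost_classification} and \ref{thm:classification_of_q}.
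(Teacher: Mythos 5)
Your proposal has the same overall structure as the paper's proof: establish a homotopy equivalence preserving $p_M$, invoke Lemma~\ref{lem:the1} for tangentiality, and distinguish the homeomorphism types by the Arf invariants of the quadratic refinements. The second half of your argument is essentially the paper's and is correct: your values $-7/64$ and $617/64$ reduce mod $\Z$ to the paper's $-7/64$ and $-23/64$; since the Arf invariant is an isomorphism invariant of a quadratic refinement, the difference of $3/4 \pmod \Z$ already shows the refinements are non-isomorphic (you do not actually need the separate observation about $\aut\bigl(\anb{-1}{8}\bigr)$ acting trivially on Arf invariants -- Theorem~\ref{thm:classification_of_q} gives this directly), and the homeomorphism half of Theorem~\ref{thm:almost_classification} then rules out a homeomorphism.

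The gap is in the homotopy-equivalence step. You argue that since $H^4 \cong \Z/8$ for both manifolds and $1 \equiv 25 \pmod 8$, the bases $(H^4(M), b_M, p_M)$ agree, and you then assert that ``the homotopy classification of $S^3$-bundles over $S^4$'' produces a homotopy equivalence $f$ with $f^* = F$. But matching bases is not a sufficient criterion for a homotopy equivalence, and you never state, let alone verify, the criterion you are invoking. The fibre-homotopy classification used in the paper's proof is stated in terms of the image of $\xi_{n,p}$ in $\pi_3(SG(4)) \cong \Z \oplus \Z/12$ under $i_*(\xi_{n,p}) = \bigl(n, [\tfrac{p-n}{2}]\bigr)$, and the paper explicitly checks that $\xi_{8,1}$ and $\xi_{8,25}$ have the same image -- a mod-$12$ condition on $\tfrac{p-n}{2}$, or equivalently a mod-$24$ condition on $p$, which is genuinely finer than your mod-$8$ check. (Likewise, the intrinsic homotopy classification of $2$-connected rational homology $7$-spheres involves Spivak normal fibration data and a mod-$24$ analogue of the Eells--Kuiper invariant, not just $(H^4, b, p)$.) It happens that the finer condition holds here, essentially because $25 - 1 = 24$, but your argument does not establish it; as written it would ``prove'' homotopy equivalence for any two $M_{8,p}$, $M_{8,p'}$ with $p \equiv p' \pmod 8$, which is false in general. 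To close the gap you should either reproduce the paper's computation in $\pi_3(SG(4))$ or cite and check the actual numerical criterion from the classification you reference.
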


\begin{proof}
We first show that $M_{-8, 2}$ and $M_{8, -10}$ are tangentially homotopy equivalent.
By Definition \ref{def:bbocf}, both manifolds have base
$\bigl( \Z/8, \anb{-1}{8}, \rho_8(2) \bigr)$
and applying \eqref{eq:alg_qdef},
we see that their quadratic refinements 
are respectively $\anq{-1}{16}_{\!-2}$ and $\anq{-1}{16}_{\!-10}$.
Now
$$\biganq{-1}{16}_{\!-10} = \left( \biganq{-1}{16}_{\!-2}\right)_{\rho_8(4)}$$
and $\rho_8(4) \in 12(\Z/8)$.
By Theorem \ref{thm:homotopy}, it follows that $M_{-8, 2}$ and $M_{-8, 5}$
are orientation preserving homotopy equivalent via a homotopy equivalence
$f \colon M_{-8, 2} \to M_{-8, 5}$, 
which is the identity with respect to the above bases. It follows that 
$f^*p_{M_{-8, 5}} = p_{M_{-8, 1}}$,
and so $f$ is tangential by Lemma \ref{lem:the1}.

Applying Proposition \ref{prop:arf}
we compute that 
$A(q_{M_0}) = -1/16$ mod $\Z$ but $A(q_{M_1}) = -9/16$ mod $\Z$ and
by Theorem~\ref{thm:a_class},
the quadratic refinement $q_M$ is a homeomorphism invariant and hence 
$M_{-8, 2}$ and $M_{-8, 10}$ are not homeomorphic.
\end{proof}

\begin{rmk} \label{rmk:the}
Proposition \ref{prop:the} contradicts \cite[Theorem C and Theorem 5.10]{madsen80} 
where it is stated, amongst other things, that all tangentially homotopy equivalent 
$2$-connected $7$-manifolds
are homeomorphic.  The source of the mistake in the arguments of \cite{madsen80} can be
found in \cite[Theorem 3.12]{madsen80} which is not correct.
It is claimed that a certain cohomology class 
\[ f^*\pi^*(l_n) \in H^{4n}(S^2\Omega^2(SG[3, \infty]); \Z_{(2)}) \]
vanishes.
Here $SG[3, \infty]$ is the $2$-connected cover of $SG$,
the space of orientation preserving stable self-homotopy equivalences of the sphere,
$S^2\Omega^2$ denotes the double suspension of the double loop space, 
the coefficient group $\Z_{(2)}$
is the integers localised at $2$ and we shall not define the maps $f$ or $\pi$ 
or the class $l_n$.
However the argument given for the proof
of \mbox{\cite[Theorem 3.12]{madsen80}} only shows that $f^*\pi^*(l_n) = 2 x$
for some $x \in H^{4n}(S^2\Omega^2(SG[3, \infty]); \Z_{(2)})$
and not that $f^*\pi^*(l_n) = 0$.
To the best of our knowledge, this is the only
flaw in the arguments of \cite{madsen80}.
\end{rmk}
\subsection{Generators for the monoid of 2-connected 7-manifolds} \label{ss:monoid}
The connected sum operation gives the set of spin diffeomorphism classes of 
$2$-connected $7$-manifolds the structure of a commutative monoid with unit $S^7$.
Owing to the existence of homotopy $7$-spheres, every $M$ has non-trivial connected sum splittings
\[ M \cong (M \sharp \Sigma) \sharp (-\Sigma) \]
for each $\Sigma \in \Theta_7$.  Hence we call $M$ {\em topologically decomposable} if
there is a diffeomorphism
\[ M \cong M_0 \sharp M_1 \]
where neither $M_0$ nor $M_1$ is a homotopy sphere and
{\em topologically indecomposable} otherwise.

By Theorem \ref{thm:classification_categorical}, every connected sum splitting of $M$
gives rise to an orthogonal splitting of the refinement of $M$,
and by Theorem \ref{thm:class} every orthogonal splitting of the
refinement of $M$ is realised by a connected sum splitting of $M$.
Hence we call a refinement $(G, q^\circ\!, p)$ or a base $(G, b, p)$ decomposable 
if it can be written as a non-trivial orthogonal sum and indecomposable otherwise.  
It is clear from the definitions that a refinement is indecomposable if and only if its base if indecomposable.  
Moreover, the indecomposable bases are of the form $(\Z, 0, p)$ and $(T, b, p)$ where $b$ is an
indecomposable torsion form; \ie~$b$ cannot be written as a non-trivial orthogonal sum. In this case 
we also call $q$ indecomposable.  A list of all isomorphism classes of indecomposable torsion forms 
was given by Wall \cite[Theorem 4]{wall63} and torsion forms were then classified by 
Kawauchi and Kojima \cite[Theorem 4.1]{kawauchi80}.  We do 
not go into details but note that if $(T, b, p)$ is indecomposable then $T \cong \Z/r^k$ for a prime $r$
or $T \cong (\Z/2^k)^2$.  Summarising the above discussion we have the following refinement of 
a theorem of Wilkens.

\begin{thm}[\cf~{\cite[Theorem 1]{wilkens72}}] \label{thm:indecomposable}
Every $2$-connected $M$ is diffeomorphic to a connected sum of topologically indecomposable 
manifolds $M_i:$
\[ M \cong \sharp_{i=1}^n M_i. \]
Moreover, $M$ is topologically indecomposable if and only if it is almost diffeomorphic to 
a manifold of one of the following forms
\[ S^7, \quad M(\Z, d), \quad M(q, s), \]
where in the final case, $q$ is a prime refinement and hence $H^4(M(q, s)) \cong \Z/r^k$ for
$r$ a prime or $H^4(M(q, s)) \cong (\Z/2^k)^2$. \qed
\end{thm}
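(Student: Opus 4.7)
The plan is to reduce the theorem to the classification of indecomposable bases in $\catb$. By Theorem \ref{thm:classification_categorical} the functor $\cal{Q}$ is additive, so a diffeomorphism $M \cong M_0 \sharp M_1$ produces an orthogonal sum decomposition of the mod 28 distillation $\cal{Q}(M) = \cal{Q}(M_0) \oplus \cal{Q}(M_1)$, and neither $M_i$ is a homotopy sphere precisely when neither base summand is trivial. Conversely, any orthogonal splitting of $(H^4(M), q_M^\circ, \EK_M, p_M)$ is realised by a connected sum decomposition of $M$, again by Theorem \ref{thm:classification_categorical}. Since the splitting of a refinement is determined by splitting the base $(G,b,p)$ together with a compatible splitting of $q^h$ for a single $h \in S_2$, and since a choice of $\EK_M$-value on the summands can always be achieved by adjusting by connected sums with homotopy spheres (Lemma \ref{lem:g_M_sharp_Sigma}), the topological (in)decomposability of $M$ is equivalent to the (in)decomposability of its base $(H^4(M), b_M, p_M)$ in $\catb$.

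The first assertion then follows by induction on the length of a decomposition of $(H^4(M), b_M, p_M)$ into indecomposable bases: finite generation of $H^4(M)$ guarantees termination, and at each step a nontrivial base decomposition yields a topological connected sum decomposition of $M$. The second assertion reduces to classifying indecomposable bases. Write $H^4(M) = F \oplus T$ with $F$ free and $T$ torsion, and $p_M = p_F + p_T$ with $p_F \in 2F$, $p_T \in 2T$. This already gives the orthogonal splitting $(F, 0, p_F) \oplus (T, b_M, p_T)$, so indecomposability forces $F = 0$ or $T = 0$. If $F$ has rank $\geq 2$ then choosing a basis $\{e_1,\dots\}$ of $F$ with $p_F = d e_1$ when $p_F \neq 0$ (or arbitrarily if $p_F = 0$) exhibits the further splitting $(\Z, 0, d) \oplus (\Z^{\rk F - 1}, 0, 0)$, so $F \cong 0$ or $F \cong \Z$. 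If $T \neq 0$ and $b_M$ splits orthogonally as $b_0 \oplus b_1$, then $p_T \in 2T = 2T_0 \oplus 2T_1$ splits accordingly, so indecomposability of $(T, b_M, p_T)$ forces indecomposability of $b_M$.

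The three cases of indecomposable base then correspond to the three listed almost-diffeomorphism types: $(0,0,0)$ to $S^7$, $(\Z, 0, d)$ to $M(\Z, d)$ (with $d = d_\pi$), and $(T, b_M, p_T)$ with $b_M$ indecomposable to a rational homotopy sphere $M(q, s)$. Finally, Wall's classification \cite[Theorem 4]{wall63} of indecomposable torsion forms yields $T \cong \Z/p^k$ for a prime $p$ or $T \cong (\Z/2^k)^2$. The main subtlety is ensuring that the translation between decomposition of $M$ and decomposition of its base is bidirectional at the refinement (not merely base) level; this is handled by the transitive action of $T$ on $\divs_2$ (Definition \ref{def:foqr}) which guarantees that any orthogonal splitting of the base lifts compatibly to a splitting of $q_M^\circ$, and the freedom in $\EK_M$ supplied by Lemma \ref{lem:g_M_sharp_Sigma} absorbs any obstruction at the Eells--Kuiper level.
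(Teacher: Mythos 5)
Your proof is correct and follows essentially the same route as the paper: the paper proves Theorem \ref{thm:indecomposable} by the discussion immediately preceding its statement, which invokes Theorem \ref{thm:classification_categorical} to translate connected sum splittings into orthogonal splittings of the mod 28 distillation, observes that a refinement is indecomposable iff its base is, and then quotes Wall's classification of indecomposable torsion forms. You reproduce exactly this structure, and additionally spell out two points the paper compresses into ``clear from the definitions'': that any orthogonal splitting of the base $(H^4(M), b_M, p_M)$ propagates to a splitting of $q_M^\circ$ (because $q^h$ restricts to quadratic refinements of $b_0$ and $b_1$, which sum back to $q^h$ by orthogonality of $T_0$ and $T_1$), and that any leftover mismatch at the Eells--Kuiper level is absorbed by connected sums with homotopy spheres via Lemma \ref{lem:g_M_sharp_Sigma}. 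Your explicit case analysis for indecomposable bases (ruling out $F$ and $T$ both nonzero, and rank $\geq 2$) is also a mild expansion of the paper's one-line assertion of the classification of indecomposable bases; it is correct. One small imprecision is harmless: the transitivity of the $T$-action on $\divs_2$ is not really what makes the refinement-splitting work — what matters is that a quadratic refinement of an orthogonal sum $b_0 \oplus b_1$ is automatically the orthogonal sum of its restrictions — but your invocation does no damage and the surrounding reasoning is sound.
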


Even up to almost diffeomorphism, the splitting $M \cong \sharp_{i=1}^n M_i$ 
of Theorem \ref{thm:indecomposable} is in general far from being unique.  For
example the manifolds $M_{4, 0}$ and $M_{4, 2}$ have non-isomorphic bases 
$(\Z/4, \an{\frac{-1}{4}}, 0)$ and $(\Z/4, \an{\frac{-1}{4}}, 2)$ respectively, but 
$M_{4, 0} \sharp M_{0, 2}$ and $M_{4, 2} \sharp M_{0, 2}$ are diffeomorphic.
Even when $H^4$ is torsion, there are many examples of torsion forms
where $b_0 \oplus b_2 \cong b_1 \oplus b_3$ but $b_0$ is isomorphic to 
neither $b_1$ nor $b_3$ and the same holds for $b_2$: see for example \cite[\S 3]{kawauchi80}.  
This leads to non-uniqueness
of connected sum splittings for manifolds with torsion linking form isomorphic to
$b_0 \oplus b_2$.
\section{Mapping class groups and inertia} \label{sec:mcg_and_inertia}
In this section we point out some
implications of our classification results for mapping class groups of $2$-connected $M$.  
Throughout this section $M$ will be $2$-connected. 

Recall the mapping class group $\wt \pi_0\Diff(M)$ 
of pseudo-isotopy classes of diffeomorphisms
from Section \ref{ss:almost_diffeomorphism} and the subgroup
$\wt \pi_0\Diff_H(M) \subseteq \wt \pi_0\Diff(M)$ of classes acting trivially on $H^*(M)$
from Section \ref{ss:inertia_and_reactivity2}.
For brevity, let $\aut_\EK(H^4(M))$ denote
the group of automorphisms of the mod 28 distillation
$(H^4(M), q_M^\circ, \EK_M, p_M)$ and let
$\aut_{q^\circ}(H^4(M))$ denote the group of automorphisms of the refinement
$(H^4(M), q_M^\circ, p_M)$.  As an immediate consequence of 
Theorem \ref{thm:class} we obtain
\begin{prop} \label{prop:mcg1}
For each $2$-connected $M$, there is a short exact sequence
\[
0 \to \wt \pi_0 \Diff_H(M) \to \wt \pi_0\Diff_{}(M) \to \aut_\EK(H^4(M)) \to 0. 
\qed
\] 
\end{prop}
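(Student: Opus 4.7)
The plan is to define a homomorphism $\Phi \colon \wt \pi_0 \Diff(M) \to \aut_\EK(H^4(M))$ sending $[f] \mapsto (f^*)^{-1}$, verify that its kernel is $\wt \pi_0 \Diff_H(M)$, and invoke Theorem \ref{thm:classification} for surjectivity.

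First I would check that $\Phi$ is well-defined. Since pseudo-isotopic diffeomorphisms induce the same map on cohomology, the assignment $[f] \mapsto f^*$ depends only on the pseudo-isotopy class. Moreover, for any spin diffeomorphism $f$ the contravariant functor $\cal{Q}$ of Theorem \ref{thm:classification_categorical} gives $(f^*)^\# (q^\circ_M, \EK_M, p_M) = (q^\circ_M, \EK_M, p_M)$, so $f^* \in \aut_\EK(H^4(M))$. Taking the inverse then gives a genuine homomorphism rather than an anti-homomorphism.

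Next I would identify the kernel. By the very definition of $\wt \pi_0 \Diff_H(M)$ as the subgroup of pseudo-isotopy classes inducing the identity on $H^*(M)$, we have $\ker \Phi = \wt \pi_0 \Diff_H(M)$, and this inclusion is obviously a normal subgroup. So exactness at the left and middle is automatic once $\Phi$ is defined.

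The main point, and the only step that uses real content, is surjectivity of $\Phi$. Given any $F \in \aut_\EK(H^4(M))$, apply the second statement of Theorem \ref{thm:classification} to the pair $M_0 = M_1 = M$ and the isomorphism $F^{-1} \colon H^4(M) \to H^4(M)$: since $F^{-1}$ preserves $(q^\circ_M, \EK_M, p_M)$ (as $\aut_\EK(H^4(M))$ is a group), it is realised by a spin diffeomorphism $f \colon M \cong M$ with $f^* = F^{-1}$, and hence $\Phi([f]) = F$. There is no obstacle here beyond the direct appeal to Theorem \ref{thm:classification}; once that theorem is in hand, the proposition is essentially a tautological reformulation.
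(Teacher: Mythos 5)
Your proof is correct and fills in exactly the reasoning that the paper summarises by calling Proposition \ref{prop:mcg1} ``an immediate consequence of Theorem \ref{thm:classification}'': well-definedness from the functoriality of the distillation, the kernel by definition of $\Diff_H(M)$, and surjectivity from the realisation half of Theorem \ref{thm:classification}. The inversion $[f]\mapsto (f^*)^{-1}$ to turn a contravariant functor into a homomorphism is the standard device, and your appeal to Theorem \ref{thm:classification} with $M_0 = M_1 = M$ and the isomorphism $F^{-1}$ is exactly right.
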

\begin{rmk} \label{rmk:mcg}
The exact sequence of Proposition \ref{prop:mcg1} serves as a starting point
for studying the mapping class groups $\wt \pi_0\Diff_{}(M)$.  
The determination of 
$\wt \pi_0 \Diff_H(M)$ and the extension in the sequence of Proposition \ref{prop:mcg1}
lie outside the scope of this paper.
For example, we do not currently know if $\wt \pi_0 \Diff_H(M)$ is abelian in general.
\end{rmk}
  
Recall the mapping class groups $\wt \pi_0 \ADiff_H(M, m_0) \subseteq \wt \pi_0 \ADiff(M, m_0)$ 
defined in Sections \ref{ss:almost_diffeomorphism} and \ref{ss:inertia_and_reactivity2} and
the homomorphism $\wt P \colon \aut_{q^\circ}(H^4(M)) \to \lcm(8, 2^r\mdiv)\Z/2 \tilde d_\pi \Z$ from 
Section \ref{ss:aut_q_on_gauss_functions}; see \eqref{eq:P_tilde} and Proposition \ref{prop:imtp}.
Noting that $\dM = \gcd(\frac{\tilde d_\pi}{4}, 28)$, we define
\[ \wh P \colon \aut_{q^\circ}(H^4(M)) \to \NumB{2^{r-3}\mdiv}\Z/\dM \Z, \quad F \mapsto \frac{\wt P(F)}{8} \mod \dM, \]
to be the mod $\dM$ reduction of $\wt P$ divided by $8$.  
By Theorem \ref{thm:i_and_r} and
Remark \ref{rmk:p2null} we have $I(M)/I_H(M) \cong \NumB{2^{r-3}\mdiv}\Z/\dM \Z$ and so we can equally regard
$\wh P$ as a homomorphism $\wh P \colon \aut_{q^\circ}(H^4(M)) \to I(M)/I_H(M)$.

\begin{thm} \label{thm:mcg_all}
For each $2$-connected $M$ there is a commutative diagram of group homomorphisms with short exact sequences for rows
and with exact columns:
\[
\xymatrix@R=1.75em{  0 \ar[r] & \wt \pi_0\Diff_H(M) \ar[d] \ar[r] & \wt \pi_0 \Diff_{}(M) \ar[d] \ar[r] & 
\aut_\EK(H^4(M)) \ar[d] \ar[r] & 0 \\
0 \ar[r] & \wt \pi_0 \ADiff_H(M, m_0) \ar[d]^(0.45){\del_H} \ar[r] & 
\wt \pi_0 \ADiff_{}(M, m_0) \ar[d]^(0.45){\del} \ar[r] & 
\aut_{q^\circ}(H^4(M)) \ar[d]^(0.45){\wh P_{}} \ar[r] & 0 \\
0 \ar[r] & I_H(M) \ar[d] \ar[r] & I(M) \ar[d] \ar[r] & I(M)/I_H(M) \ar[d]  \ar[r] & 0 \\
& 0 & 0 & 0 } 
\]
In particular, an automorphism 
$F \in \aut_{q^\circ}(H^4(M))$ is realised by a diffeomorphism of $M$ if and only if $\wh P(F) = 0$.
\end{thm}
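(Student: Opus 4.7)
The plan is as follows. The top row is Proposition \ref{prop:mcg1}. For the middle row, Theorem \ref{thm:almost_classification} provides that every $F \in \aut_{q^\circ}(H^4(M))$ is realised by some almost diffeomorphism $f$ with $f^* = F$, giving surjectivity; the kernel of $\wt\pi_0\ADiff(M) \to \aut_{q^\circ}(H^4(M))$ is $\wt\pi_0\ADiff_H(M)$ by definition. The bottom row is tautological. The left and middle columns come from the singularity homomorphism $\del \colon \wt\pi_0\ADiff(M) \to \Theta_7$ (and its restriction $\del_H$) of Section \ref{subsec:inertia_and_reactivity2}: by \eqref{eq:Sigma_f} an almost diffeomorphism is pseudo-isotopic to a diffeomorphism if and only if $\Sigma_f$ is standard, so $\ker \del$ is the image of $\wt\pi_0\Diff(M)$ and similarly for $\del_H$, while surjectivity onto $I(M)$ and $I_H(M)$ is \eqref{eq:I_and_del}.

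The substantive content lies in the right column. Define $\wh P$ to be the reduction modulo $\dM$ of $\tfrac{1}{8}\wt P$, which is well-defined because by Proposition \ref{prop:imtp} the image of $\wt P$ lies in $8\Z/2\td_\pi\Z$. For any Gauss refinement $\gr$ of $q_M^\circ$ and any $F \in \aut_{q^\circ}(H^4(M))$, equation \eqref{eq:P_tilde} reads $F^\#\gr - \gr = \wt P(F)/8 \mmod \td_\pi/4$; reducing modulo $\dM$ yields $F^\#\EK_M - \EK_M = \wh P(F)$, so $F \in \aut_\EK(H^4(M))$ if and only if $\wh P(F) = 0$, establishing exactness at $\aut_{q^\circ}(H^4(M))$. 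Surjectivity of $\wh P$ onto $I(M)/I_H(M)$ follows by comparing orders: Proposition \ref{prop:imtp} identifies $\im \wt P = \lcm(8, 2^r\mdiv)\Z/2\td_\pi\Z$, while Theorem \ref{thm:i_and_r} together with Remark \ref{rmk:p2null} supplies $|I(M)/I_H(M)|$, and the two match.

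For commutativity, the only non-trivial square is the bottom-right. Given $[f] \in \wt\pi_0\ADiff(M)$, Lemma \ref{lem:p2mod} gives $\wt P(f^*) \equiv p^2(f) \mmod 2\td_\pi$ and Lemma \ref{lem:mapping_torus}\ref{lem:mapping_torus:mu} gives $\mu(\Sigma_f) \equiv p^2(f)/8 \mmod 28$; combining, $\wh P(f^*) = \mu(\Sigma_f) \mmod \dM$, which is exactly the image of $\del(f) = \Sigma_f$ in $I(M)/I_H(M)$ under $\mu \colon \Theta_7 \cong \Z/28\Z$. The final ``in particular'' clause is then a diagram chase in the right-hand square: any preimage of $F$ in $\wt\pi_0\ADiff(M)$ (which exists by exactness of the middle row) lies in the image of $\wt\pi_0\Diff(M)$ if and only if $\del$ of that preimage vanishes (middle column), if and only if $\wh P(F) = 0$ (commutativity). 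The main technical hurdle is the bookkeeping needed to match $\im \wh P$ with $I(M)/I_H(M)$ exactly as finite cyclic subgroups of $\Z/\dM\Z$, which rests on the case analysis in Proposition \ref{prop:imtp} combined with the reactivity computation of Corollary \ref{cor:p2image}.
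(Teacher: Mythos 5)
Your proposal is correct and follows essentially the same approach as the paper's proof: top row from Proposition \ref{prop:mcg1}, middle row from Theorem \ref{thm:almost_classification}, left/middle columns from the singularity homomorphism and \eqref{eq:I_and_del}, third column from the definition of $\wh P$ together with Proposition \ref{prop:imtp} and Corollary \ref{cor:p2image}, and the bottom-right commutativity from Lemma \ref{lem:p2mod} and Lemma \ref{lem:mapping_torus}. Your version spells out some steps (the exactness of the right column, the well-definedness of the diagram chase for the final clause) in more detail than the paper's terse proof, but the route is the same.
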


\begin{proof}
The top row is the exact sequence of Proposition \ref{prop:mcg1}.
The exactness of the second row follows from Theorem \ref{thm:a_class}.
The first two columns are exact by the discussion at the beginning of Section \ref{ss:inertia_and_reactivity2} and in particular \eqref{eq:I_and_del} 
and the third column is exact by the definition of $\wh P_{}$.
The only part of the commutativity of the diagram which needs comment is the bottom right hand square,
where the commutativity follows from from Lemma \ref{lem:mapping_torus}~\ref{lem:mapping_torus:mu}
and Proposition~\ref{prop:p2mod}.  The final statement follows from the exactness of final row
and the top column.
\end{proof}

We shall call an almost diffeomorphism {\em exotic} if it is not 
pseudo-isotopic to a diffeomorphism.
A feature of the diagram in Proposition \ref{thm:mcg_all} is that when $I(M)/I_H(M) \neq 0$, $M$ admits
exotic almost diffeomorphisms which are detected by their action on $H^4(M)$.  
Specifically, if $f \colon M \acong M$ is an almost diffeomorphism,
then $\wh P_{}(f^*)$ is the obstruction to $f^* \colon H^4(M) \cong H^4(M)$ being induced by {\em any diffeomorphism}
of $M$.  Since $\wh P$ is onto, it is enough to find cases where $I(M)/I_H(M)$ is non-zero to
show that $\wh P$ is non-zero.

\begin{prop} \label{prop:pairs_of_inertia_groups}
Any pair of subgroups $I_0 \subseteq I_1 \subseteq \Theta_7$ can arise as the pair of inertia
groups $(I_0, I_1) = (I_H(M), I(M))$ for some $2$-connected $M$.
\end{prop}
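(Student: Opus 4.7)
The plan is to reduce Proposition~\ref{prop:pairs_of_inertia_groups} to an algebraic realisation question. By Theorem~\ref{thm:i_and_r} combined with Remark~\ref{rmk:p2null},
\[ I_H(M) = \dM\, \Theta_7 \quad\text{and}\quad I(M) = \NumB{\frac{2^r \mdiv}{8}} \Theta_7 \]
for every $2$-connected $M$, and by Theorem~\ref{thm:almost_classification} every refinement $(G, q^\circ\!, p)$ is realised by some such $M$. Writing $I_i = d_i \Theta_7$ with $d_1 \mid d_0 \mid 28$, it is therefore enough to construct a base $(G, b, p)$ satisfying $\dM = d_0$ and $\NumB{\frac{2^r \mdiv}{8}}\Theta_7 = d_1 \Theta_7$.

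First I would cover the diagonal case $d_0 = d_1$: take $M = M(\Z, 4 d_0)$ for $d_0 < 28$ and $M$ a $2$-connected rational homology sphere (such as $S^7$) for $d_0 = 28$. In each case $H^4(M)$ has no $2$-torsion, so $r = 1$ by the last clause of Theorem~\ref{thm:i_and_r}, and $\mdiv = d_\pi$; substituting gives $\dM = d_0$ and $\NumB{\frac{2 \mdiv}{8}}\Theta_7 = d_0 \Theta_7$. For a non-diagonal pair $d_1 \subsetneq d_0$, I would construct $M$ as a connected sum $M(\Z, 4 d_0) \sharp K$ where $K$ is a $2$-connected rational homology sphere whose base is tailored so that $\mdiv(M) < d_\pi(M) = 4 d_0$, with the torsion linking form of $K$ pinning down $r \in \{0,1,2\}$. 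The prototype is Example~\ref{ex:r012}: for $d_0 = 2$ the three choices $K = M_{-8, p}$ with $p \in \{0, 2, 4\}$ realise $r = 0, 1, 2$ respectively, and rescaling from $d_\pi = 8$ to $d_\pi = 4 d_0$, together with a $\Z/7$ summand when $7 \mid d_0$, generalises the construction.

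The hard part will be the $2$-primary case check: for each non-diagonal pair $(d_0, d_1)$ one must exhibit a compatible triple $(\mdiv, r, b)$ realisable by a genuine torsion linking form, subject to the sharp upper and lower bounds on the image of $P$ supplied by Lemmas~\ref{lem:autb} and~\ref{lem:aut0}. There are only finitely many such pairs among divisors of $28$, so this reduces to a finite list of explicit base constructions modelled on Examples~\ref{ex:r0}, \ref{ex:r2}, \ref{ex:split_vs_hyperbolics} and~\ref{ex:r012}; the $7$-primary data is inert under $r$ by Remark~\ref{rmk:values_of_r} and can be adjusted independently by inserting a $\Z/7$ summand with nontrivial characteristic class.
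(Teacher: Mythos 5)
Your proposal takes the same route as the paper: express $I_H(M) = \dM\Theta_7$ and $I(M) = \gcd\bigl(\NumB{2^r\mdiv/8},28\bigr)\Theta_7$, observe these depend only on the base $(H^4(M),b_M,p_M)$, and then exhibit a base realising each of the $18$ nested pairs of subgroups of $\Theta_7 \cong \Z/4 \times \Z/7$. Your six diagonal constructions $M(\Z,4d_0)$ (and $S^7$ for $d_0 = 28$) coincide exactly with the paper's, and your general template $M(\Z,4d_0)\,\sharp\, K$ with $K$ a rational homology sphere is precisely what the paper's table uses for the off-diagonal cases.

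The one place where you fall short of a complete proof is exactly the step you flag as ``the hard part'': you do not exhibit the twelve off-diagonal bases. This is not a cosmetic omission. First, you must verify that every pair $(d_0,d_1)$ with $d_1 \mid d_0 \mid 28$ is actually attained; the estimate $I(M)\supseteq I_H(M)$ gives only the necessary constraint $d_1 \mid d_0$, and one still has to produce a compatible $(\mdiv,r,b)$ — note that Lemma~\ref{lem:aut0} forces $r \le 1$ unless $j_\pi = j_\msub + 1$, so the admissible $(\mdiv, r)$ pairs for a given $d_\pi$ are genuinely constrained. Second, your appeal to Example~\ref{ex:r012} is a little misleading as stated: varying $p \in \{0,2,4\}$ does produce $r = 0,1,2$, but $M_0$ and $M_1$ there realise the \emph{same} pair $(2\Theta_7,\Theta_7)$ because the change in $r$ is compensated by the change in $\mdiv$ (from $8$ to $2$); the three choices only yield two distinct pairs. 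So ``rescaling Example~\ref{ex:r012}'' does not automatically sweep out all off-diagonal $2$-primary possibilities, and one must pick the torsion summand and the value of $p|_T$ case by case, which is what the paper does in its explicit $18$-row table. The remaining work is finite and elementary, and your framework makes it clear how to carry it out, but as written the proposal is an outline rather than a proof.
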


\begin{proof}
There are three pairs of subgroups $(T_0, T_1)$ in $\Z/7$ and six pairs of subgroups $(T_0, T_1)$ in $\Z/4$,
leaving 18 cases to realise.
By Theorem \ref{thm:i_and_r} and Remark \ref{rmk:p2null}, $I(M)$ and $I_H(M)$ depend only on the
base $(G, b, p)$.  We list manifolds, their bases $(G, b, p)$ 
and the pairs $(I_H, I)$ of inertia groups they realise
in the following table, where it is helpful to note that $112 = 7 \times 16$:
\vspace{0pt plus \baselineskip}
\[ \begin{array}[b]{c@{\hspace{10mm}}rcc@{\hspace{10mm}}cc@{\hspace{10mm}}c}
\toprule
M  & G\hspace{15pt} & b & p  & I_H & I & I/I_H \\ \midrule
S^7& \{0\} \hspace{10.5pt}  & 0 & 0 & 0 & 0 & 0 \\
M_{0, 56}& \Z\hspace{15pt}  & 0 & 56 & \Z/2 & \Z/2 & 0 \\
M_{0, 28}& \Z\hspace{15pt}  & 0 & 28 & \Z/4 & \Z/4 & 0 \\
M_{0, 16}& \Z\hspace{15pt}  & 0 & 16 & \Z/7 & \Z/7 & 0 \\
M_{0, 8}& \Z\hspace{15pt}  & 0 & 8 & \Z/14 & \Z/14 & 0 \\
M_{0, 4}& \Z\hspace{15pt}  & 0 & 4 & \Z/28 & \Z/28 & 0 \\
M_{-16, 0} \sharp M_{0, 112}  & \Z/16 \oplus \Z & \an{1/16} & (0, 112)  & 0 & \Z/2 & \Z/2 \\
M_{-8,0 } \sharp M_{0, 56} & \Z/8 \oplus \Z & \an{1/8} & (0, 56)  & \Z/2 & \Z/4 & \Z/2 \\
M_{-16, 2} \sharp M_{0, 112} & \Z/16 \oplus \Z & \an{1/16} & (2, 112)  & 0 & \Z/4 & \Z/4 \\
M_{-16, 0} \sharp M_{0, 16} & \Z/16 \oplus \Z & \an{1/16} & (0, 16)  & \Z/7 & \Z/14 & \Z/2 \\
M_{-8, 0} \sharp M_{0, 8} & \Z/8 \oplus \Z & \an{1/8} & (0, 8)  & \Z/14 & \Z/28 & \Z/2 \\
M_{-16, 2} \sharp M_{0, 16} & \Z/16 \oplus \Z & \an{1/16} & (2, 16)  & \Z/7 & \Z/28 & \Z/4 \\
M_{-7, 1} \sharp M_{0, 112} & \Z/7 \oplus \Z & \an{1/7} & (1, 112)  & 0 & \Z/7 & \Z/7 \\
M_{-7, 1} \sharp M_{0, 56} & \Z/7 \oplus \Z & \an{1/7} & (1, 56)  & \Z/2 & \Z/14 & \Z/7 \\
M_{-0, 14} \sharp M_{0, 14} & \Z/7 \oplus \Z & \an{1/7} & (1, 14)  & \Z/4 & \Z/28 & \Z/7 \\
M_{-7, 1} \sharp M_{112, 16} & \Z/112 \oplus \Z & \an{1/112} & (16, 112)  & 0 & \Z/14 & \Z/14 \\
M_{-56, 0} \sharp M_{0, 56} & \Z/56 \oplus \Z & \an{1/56} & (8, 56)  & \Z/2 & \Z/28 & \Z/14\\
M_{-112, 2} \sharp M_{0, 112} & \Z/112 \oplus \Z & \an{1/112} & (2, 112)  & 0 & \Z/28 & \Z/28\\
\bottomrule
\end{array} \qedhere \]
\vskip -2pt plus 4pt
%
\end{proof}

Theorem \ref{thm:hatP_short} follows immediately from Theorem \ref{thm:mcg_all} and
Proposition \ref{prop:pairs_of_inertia_groups}.  We conclude with an example drawn
from the bottom line of the table above.

\begin{ex} \label{ex:mapping_inertia}
Let $M = M_{-112, 2} \sharp M_{0, 112}$ so that $H^4(M) = \Z/112 \oplus \Z$ and consider
the automorphism of $(H^4(M), q^\circ_M, p_M)$ defined by
\[ F = \begin{pmatrix} 1 & [1] \\ 0 & 1 \end{pmatrix} \colon \Z/112 \oplus \Z \cong \Z/112 \oplus \Z. \]
\vskip -0.25cm
\noindent
In this case $\dM = 28$ and from the proof of Proposition \ref{prop:pairs_of_inertia_groups} we see that
$M$ admits an almost diffeomorphism $f \colon M \acong M$ with $f^* = F$, and $\wh P(f) = 1 \in \Z/28 \Z$.
By Theorem \ref{thm:mcg_all}, $F^n$ is realised by a diffeomorphism of $M$ if and only  if $n \equiv 0$~mod~28.
\end{ex}
%
%
%


\bibliographystyle{amsinitial}
\bibliography{g2geom}

\begin{multicols}{2}[]
\bigskip
\noindent
\emph{Diarmuid Crowley}\\
  \vskip -0.125in \noindent
  {\small
  \begin{tabular}{l}%
    School of Mathematics and Statistics\\
    University of Melbourne\\
    Parkville, VIC, 3010, Australia\\
    \textsf{dcrowley@unimelb.edu.au}
  \end{tabular}}

\noindent
\emph{Johannes Nordstr\"{o}m}\\  
\vskip -0.125in \noindent
{\small
  \begin{tabular}{l}%
   Department of Mathematical Sciences\\
   University of Bath\\
   Claverton Down, Bath BA2 7AY, UK\\
    \textsf{j.nordstrom@bath.ac.uk}
  \end{tabular}}
\end{multicols}

\end{document}